\newcommand{\R}{\mathbb{R}}
\newcommand{\sig}{\sigma}
\newcommand{{\ba}}{\bf a}
\newcommand{\ve}{\varepsilon}
\newcommand{\la}{\lambda}
\newcommand{\La}{\Lambda}
\newcommand{\ga}{\gamma}
\newcommand{\Ga}{\Gamma}
\newcommand{\pa}{\partial}
\newcommand{\ra}{\rightarrow}
\newcommand{\del}{\delta}
\newcommand{\al}{\alpha}
\newcommand{\be}{\begin{equation}}
\newcommand{\ee}{\end{equation}}
\newtheorem{lem}{Lemma}{\bf}{\it}
{\it}{\rm}
\newtheorem{rem}{Remark}{\it}{\rm}
{\it}{\rm}
\newtheorem{theorem}{Theorem}
\newtheorem{proposition}{Proposition}
\newtheorem{corollary}{Corollary}
\numberwithin{theorem}{section}
\numberwithin{lem}{section}
\numberwithin{equation}{section}
\numberwithin{proposition}{section}
\numberwithin{corollary}{section}
\numberwithin{rem}{section}
\title[Green's function on half line]{ On Properties of the Dirichlet Green's function for linear diffusions on a half line}
\author{Joseph G. Conlon and Michael Dabkowski}
\address{Joseph G. Conlon: University of Michigan\\ Department of Mathematics\\ Ann Arbor,
  MI 48109-1109}
\email{conlon@umich.edu}
\address{Michael Dabkowski: University of Michigan-Dearborn\\ Department of Mathematics and Statistics\\ Dearborn,
  MI 48128}
\email{mgdabkow@umich.edu}
\keywords{nonlinear pde, coarsening}
\subjclass{35F21,  35K20, 49N10}
\begin{document}

\maketitle

\begin{abstract}
This paper is concerned with the study of Green's functions for one dimensional diffusions with constant diffusion coefficient and  linear time inhomogeneous drift.
It is well know that the whole line Green's function is given by a Gaussian.  Formulas for the Dirichlet Green's function on the half line are only known in special cases. 
The main object of study in the paper is the ratio of the Dirichlet to whole line Green's functions.  Bounds, asymptotic behavior in the limit as the diffusion coefficient vanishes, and a log concavity result are obtained for this ratio.
\end{abstract}

\section{Introduction}
In this paper we shall be concerned with obtaining properties of Green's functions for one dimensional diffusions with linear drift. Let $b:\mathbb{R}\times \mathbb{R}\ra\mathbb{R}$ be a continuous function $(y,t)\ra b(y,t)$, which is linear in the space variable $y$. For $\ve>0$, the terminal value problem 
\be \label{A1}
\frac{\pa u_\ve(y,t)}{\pa t}+b(y,t)\frac{\pa u_\ve(y,t)}{\pa y}+\frac{\ve}{2}\frac{\pa^2 u_\ve(y,t)}{\pa y^2} \ = \ 0, \quad y\in\R, \ t<T,
\ee
\be \label{B1}
u_\ve(y,T) \ = \ u_T(y), \quad y\in\mathbb{R} \ ,
\ee
has a unique solution $u_\ve$ which has the representation
\be \label{C1}
u_\ve(y,t) \ = \ \int_{-\infty}^\infty G_\ve(x,y,t,T) u_T(x) \ dx, \quad y\in\R, \ t<T,
\ee
where $G_\ve$ is the Green's function for the problem.  
The adjoint problem to (\ref{A1}), (\ref{B1}) is the initial value problem
\be \label{I1}
\frac{\pa v_\ve(x,t)}{\pa t}+\frac{\pa}{\pa x}\left[b(x,t)v_\ve(x,t)\right] \ = \ \frac{\ve}{2}\frac{\pa^2 v_\ve(x,t)}{\pa x^2} \ , \quad x\in\R, \ t>0,
\ee
\be \label{J1}
v_\ve(x,0) \ = \ v_0(x), \quad y\in\R.
\ee
The solution to (\ref{I1}), (\ref{J1}) is given by the formula
\be \label{K1}
v_\ve(x,T) \ = \ \int_{-\infty}^\infty G_\ve(x,y,0,T) v_0(y) \ dy, \quad x\in\R, \  T>0.
\ee

Since the drift $b(\cdot,\cdot)$ is linear, $G_\ve$ is Gaussian, so the function $(x,y)\ra \log G_\ve(x,y,t,T)$ is quadratic in $(x,y)$.   Here we shall obtain properties of the corresponding Dirichlet Green's function $(x,y)\ra G_{\ve,D}(x,y,t,T)$ on the half line $x,y>0$. Thus
\be \label{D1}
u_{\ve,D}(y,t) \ = \ \int_0^\infty G_{\ve,D}(x,y,t,T) u_T(x) \ dx, \quad y>0, \ t<T,
\ee
is the solution to (\ref{A1}), (\ref{B1}) in the domain  $\{(y,t): \ y>0, \ t<T\}$ with Dirichlet boundary condition $u_{\ve,D}(0,t)=0, \ t<T$. 

The drifts $b(\cdot,\cdot)$ we consider are of the form 
\be \label{E1}
b(y,t)=A(t)y-1, \quad {\rm where \ }A:\mathbb{R}\ra\mathbb{R} \  {\rm is \  a \  continuous \  function,}
\ee
but the methods of the paper may be extended to more general linear drifts.
In the case $A(\cdot)\equiv 0$ there are simple explicit formulas for $G_\ve$ and $G_{\ve,D}$. These are given by 
\be \label{F1}
G_\ve(x,y,t,T) \ = \ \frac{1}{\sqrt{2\pi\ve(T-t)}}\exp\left[-\frac{(x+T-t-y)^2}{2\ve (T-t)}\right] \ ,
\ee
\be \label{G1}
G_{\ve,D}(x,y,t,T)  \ = \ \left\{1-\exp\left[-\frac{2xy}{\ve(T-t)}\right]\right\}G_\ve(x,y,t,T) \ . 
\ee
For non-trivial $A(\cdot)$, we write
\be \label{H1}
 G_{\ve,D}(x,y,t,T)  \ = \  \left\{1-\exp\left[-\frac{q_\ve(x,y,t,T)}{\ve}\right]\right\}G_\ve(x,y,t,T) \ ,
\ee
and study the properties of the function $q_\ve$. When $A(\cdot)\equiv 0$ we have from (\ref{G1}) that the function $(x,y)\ra q_\ve(x,y,t,T)$ is independent of $\ve$ and bilinear.  One can also obtain explicit formulas for $q_\ve$ in some other cases of linear drift, in particular for the Ornstein-Uhlenbeck process where $b(y,t)=-\ga y$ with constant $\ga$ (see Proposition 20 of \cite{svy} and Remark 3.1 of the present paper).  However there appears not to be an explicit formula for $q_\ve$ in the case of general function $A(\cdot)$.  
   
We are able to obtain linear bounds on the function $x\ra q_\ve(x,y,t,T), \ x>0,$ and its first two $x$ derivatives, which are uniform in $\ve>0$,  when the function $A(\cdot)$ is assumed to be non-negative:
\begin{theorem}
Assume the function $A(\cdot)$ of (\ref{E1}) is continuous and non-negative, and $q_\ve$ is defined by (\ref{H1}). Then there exists a continuous positive function
$\al_1(\cdot)$ and continuous non-negative functions $\beta_1(\cdot),\beta_2(\cdot)$, with domain $\{(t,T): \ t,T\in\mathbb{R}, \ t<T\}$, which bound $q_\ve$  as follows:  
\be \label{L1}
\al_1(t,T)xy \ \le q_\ve(x,y,t,T) \ \le \ [\al_1(t,T)x+\beta_1(t,T)]y \ , \quad x,y>0, \ t<T,
\ee
\be \label{M1}
\lim_{x\ra \infty}\{[\al_1(t,T)x+\beta_1(t,T)]y-q_\ve(x,y,t,T)\} \ = \ 0 \ , \quad  y>0, \ t<T,
\ee
\be \label{N1}
q_\ve(x,y,t,T) \ \le \ [\al_1(t,T)y +\beta_2(t,T)]x\ , \quad x,y>0, \ t<T,
\ee
\be \label{O1}
\al_1(t,T)y \ \le \frac{\pa q_\ve(x,y,t,T)}{\pa x} \ \le \ \al_1(t,T)y+\beta_2(t,T) \ , \quad x,y>0, \ t<T,
\ee
\be \label{P1}
\lim_{x\ra\infty}\left\{ \frac{\pa q_\ve(x,y,t,T)}{\pa x} - \al_1(t,T)y\right\} \ = \ 0 \ , \quad y>0, \ t<T,
\ee
\be \label{Q1}
{\rm The \  function}  \quad  x\ra q_\ve(x,y,t,T), \ x,y>0,  \ t<T,\quad {\rm is  \ concave.}
\ee
\end{theorem}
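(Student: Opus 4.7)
The plan is to work with the nonlinear PDE satisfied by $q_\ve$ as a function of $(y,t)$ with $(x,T)$ as parameters. Substituting (\ref{H1}) into (\ref{A1}) and dividing by $e^{-q_\ve/\ve}G_\ve$, one finds that $q_\ve$ satisfies the viscous Hamilton--Jacobi equation
\[
\frac{\partial q_\ve}{\partial t} + \bigl(b(y,t) + c_\ve\bigr)\frac{\partial q_\ve}{\partial y} + \frac{\ve}{2}\frac{\partial^2 q_\ve}{\partial y^2} - \frac{1}{2}\Bigl(\frac{\partial q_\ve}{\partial y}\Bigr)^2 = 0,
\]
with boundary data $q_\ve(x,0,t,T)=0$ and $q_\ve\to\infty$ as $t\uparrow T$, where $c_\ve=\ve(\partial_y G_\ve)/G_\ve$ is, from the Gaussian form of $G_\ve$, linear in $(x,y)$ with coefficients involving $\Phi(t,T)=\exp\int_t^T A(r)\,dr$ and $V(t,T)=\int_t^T\Phi(r,T)^2\,dr$. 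Matching the $x^2$-coefficient of this operator on bilinear ansatzes singles out $\alpha_1(t,T)=2\Phi(t,T)/V(t,T)$, and I would set $\beta_1(t,T)=2\bigl(-1+\Phi(t,T)\int_t^T\Phi(r,T)\,dr/V(t,T)\bigr)$, nonnegative because the pointwise estimate $\Phi(r,T)\le\Phi(t,T)$ for $r\ge t$ (a consequence of $A\ge 0$) implies $\Phi(t,T)\int_t^T\Phi(r,T)\,dr\ge V(t,T)$.

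The bounds (\ref{L1}) and (\ref{N1}) should follow from comparison with bilinear sub- and super-solutions. Testing $\bar q=\alpha_1 xy$ in the operator, the $x^2$-term vanishes by the choice of $\alpha_1$, the $xy$-term vanishes by the ODE $\dot\alpha_1+\alpha_1(A-\Phi^2/V)=0$ (which $2\Phi/V$ satisfies), and what remains is a nonnegative multiple of $x$, making $\bar q$ a subsolution and yielding the lower bound in (\ref{L1}). The refined ansatz $\bar q=\alpha_1 xy+\beta_1 y$ produces, after the same cancellations plus the identity $\dot\beta_1+\beta_1(A-\Phi^2/V)=-2A$, the residual $-2A(t)y\le 0$, a supersolution giving the upper bound in (\ref{L1}). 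For (\ref{N1}), setting $\beta_2(t,T)=\int_t^T 2\Phi(s,T)\bigl[-1+\Phi(s,T)\int_s^T\Phi(r,T)\,dr/V(s,T)\bigr]/V(s,T)\,ds\ge 0$, the ansatz $\bar q=(\alpha_1 y+\beta_2)x$ is an exact solution of the HJ equation but with boundary value $\beta_2 x>0$ on $\{y=0\}$ exceeding $q_\ve=0$ there; comparison then yields the bound.

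For (\ref{M1}) and (\ref{P1}) I would use the Volterra representation
\[
e^{-q_\ve/\ve}G_\ve(x,y,t,T) = \int_t^T G_\ve(x,0,s,T)\,\rho_\ve(s;y,t)\,ds
\]
coming from the strong Markov property at the first passage time $\tau$ to $0$ of the unconstrained diffusion starting from $y$ at time $t$ (with density $\rho_\ve$). The Gaussian ratio $G_\ve(x,0,s,T)/G_\ve(x,y,t,T)$ is the exponential of a quadratic in $x$ whose $x^2$-coefficient is nonpositive and vanishes at $s=t$, so for large $x$ the integral concentrates at $s\to t^+$; a Laplace-type analysis localized there extracts exactly $e^{-(\alpha_1 x+\beta_1)y/\ve}$ as the leading behavior, yielding (\ref{M1}) and (\ref{P1}). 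The derivative bounds (\ref{O1}) then follow by combining (\ref{L1}), (\ref{N1}), (\ref{M1}), and (\ref{Q1}): concavity forces $\partial_x q_\ve$ to be monotone decreasing, wedged between its limit $\alpha_1 y$ at $x=\infty$ and its value at $x=0^+$, which is at most $\alpha_1 y+\beta_2$ by (\ref{N1}).

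The main obstacle is the concavity (\ref{Q1}). Differentiating the HJ equation twice in $x$, using $\partial_x c_\ve=\Phi/V=\alpha_1/2$, the quantity $r_\ve=\partial_x^2 q_\ve$ satisfies the linear parabolic equation
\[
\frac{\partial r_\ve}{\partial t} + \Bigl[(b+c_\ve)-\frac{\partial q_\ve}{\partial y}\Bigr]\frac{\partial r_\ve}{\partial y} + \frac{\ve}{2}\frac{\partial^2 r_\ve}{\partial y^2} = \Bigl(\frac{\partial^2 q_\ve}{\partial x\partial y}\Bigr)\Bigl(\frac{\partial^2 q_\ve}{\partial x\partial y}-\alpha_1\Bigr),
\]
with $r_\ve(x,0,t,T)=0$ (since $q_\ve$ vanishes identically on $\{y=0\}$) and $r_\ve\to 0$ as $t\uparrow T$. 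To conclude $r_\ve\le 0$ via the maximum principle one needs the source to be nonnegative, equivalently $\partial_x\partial_y q_\ve\ge\alpha_1$ throughout $\{x,y>0\}$. At the boundary $y=0^+$ this is tractable from the tight form of (\ref{L1}), but propagating it to the interior is the delicate step; I anticipate it will require a coupled maximum-principle argument on the joint PDE system satisfied by $\partial_x\partial_y q_\ve-\alpha_1$ together with an auxiliary quantity (most naturally $\partial_y^2 q_\ve$, enforcing convexity in $y$), establishing an invariant region preserved by the flow.
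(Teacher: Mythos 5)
Your bilinear comparison argument for (\ref{L1}) and (\ref{N1}) is, up to the change of variables, the same device the paper uses (Proposition 3.3): the paper passes to the backward bridge/HJB equation in $(x,T)$ rather than the $(y,t)$ equation you write, and compares $v_\ve=e^{-q_\ve/\ve}$ with $\exp[-q_{\rm linear}/\ve]$, $\exp[-2m_{1,A}xy/\ve\sig^2_A]$ and $\exp[2\la(0,y,T)x/\ve]$, but the underlying coefficient computations and the use of the parabolic maximum principle are identical in spirit; your $\al_1=2\Phi/V$ and $\beta_1$ match the paper's $2m_{1,A}/\sig_A^2$ and $2[m_{1,A}m_{2,A}-\sig^2_A]/\sig^2_A$. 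Your route to (\ref{M1}) and (\ref{P1}) via the first-passage Volterra decomposition and a Laplace analysis in $x$ is genuinely different from the paper's (Propositions 5.2 and 5.3 go through the variational formula (\ref{AB2}) and Ito/optional-sampling estimates); it is plausible but far from carried out — the integrand is a product of a Gaussian ratio whose $x^2$-coefficient vanishes linearly at $s=t$ and a first-passage density vanishing like $e^{-y^2/2\ve(s-t)}$ as $s\downarrow t$, so the saddle sits at $s-t\sim y/x$ and the coefficient matching must be done with care.

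The genuine gap is in (\ref{Q1}), and it is structural, not just a missing lemma. By differentiating the $(y,t)$-form of the HJ equation twice in $x$, you obtain for $r_\ve=\pa^2_x q_\ve$ a parabolic equation whose source is $\left(\pa^2_{xy}q_\ve\right)\left(\pa^2_{xy}q_\ve-\al_1\right)$, and there is no available a priori control of the sign of $\pa^2_{xy}q_\ve-\al_1$ in the interior: (\ref{L1}) wedges $q_\ve$ between bilinear functions with the same $xy$-slope $\al_1$, but this pins $\pa^2_{xy}q_\ve$ only at $x=y=0$, and your proposed invariant-region scheme coupling $\pa^2_{xy}q_\ve-\al_1$ with $\pa^2_y q_\ve$ would require showing that the system preserves a sign, which is exactly as hard as the claim. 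The paper instead differentiates the $(x,T)$-form of the HJB equation (\ref{V2}) twice in $x$ — i.e., the \emph{viscous} variable coincides with the \emph{concavity} variable — which produces for $w=\pa^2_x q_\ve$ (after the rescaling (\ref{CK5})) an equation with no cross-derivative source at all, only the sign-definite term $-(m_{1,A}^2/\sig_A^4)w^2$ (see (\ref{CL5})). With that, the stochastic representation (\ref{CM5}) reduces the argument to the boundary sign $w(0,y,T)\le 0$, which is Corollary 3.1, together with control at $x=\infty$ via the $\del$-regularization of Lemmas 5.1--5.2. In short: to prove concavity in $x$ you should work in the coordinates where the diffusion acts in $x$, not in $y$.

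There is also a dependency issue. You propose deducing (\ref{O1}) from concavity together with (\ref{N1}) and (\ref{P1}), but since concavity is the unresolved step, (\ref{O1}) is left hanging too. The paper proves (\ref{O1}) directly and \emph{before} concavity (Proposition 5.1), by observing that $v_\ve=(\sig^2_A/m_{1,A})\pa_x q_\ve$ satisfies the drift-diffusion equation (\ref{C5}) with no zeroth-order term, hence equals the expectation of its boundary trace; the boundary values at $x=0$ lie in $[2y,\,2y+2g_{2,A}(\cdot,\cdot)]$ by the already-established bounds (\ref{AB3}), (\ref{AC3}), which gives both sides of (\ref{O1}). So the bound (\ref{O1}) neither needs nor should wait on (\ref{Q1}); in fact the lower bound of (\ref{O1}) is the boundary ingredient (Corollary 3.1) that makes the concavity proof go through.
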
  
Since $q_\ve(0,y,t,T)=0$ the lower bound in (\ref{L1}) is implied by the lower bound in (\ref{O1}). Similarly the upper bound in (\ref{O1}) implies the upper bound in (\ref{N1}).  However our proof of (\ref{O1}) in Proposition 5.1  uses the inequalities (\ref{L1}), (\ref{N1}), which have previously been established in Proposition 3.3. 
The proof of (\ref{M1}) is given in Proposition 5.2,  the proof of (\ref{P1}) in Proposition 5.3, and the proof of (\ref{Q1}) in Theorem 5.1. 

Theorem 1.1 tells us that the graph of the function   $x\ra q_\ve(x,y,t,T), \ x>0,$ lies between two parallel lines and is asymptotic to the upper line at large $x$. 
The graph also lies  in a wedge formed by two lines through the origin and is concave.  This geometric picture gives us a rather precise understanding of the global behavior of the function $x\ra q_\ve(x,y,0,T), \ x>0$.  
The significance of the upper bounds (\ref{N1}), (\ref{O1}) can be understood by considering the situation when $\ve \ra 0$.  The function $[x,T]\ra q_\ve(x,y,0,T), \ x,T>0,$ is a solution to the Hamilton-Jacobi-Bellman equation (\ref{V2}). One expects then that $\lim_{\ve\ra 0}q_\ve(x,y,t,T)=q_0(x,y,t,T)$ exists,  and in the case $t=0$ is a solution to the Hamilton-Jacobi equation (\ref{AC2}). In $\S2$ we show that  the limit does exist and  $q_0(x,y,0,T)$ is given by the  variational formula (\ref{AB2}) corresponding to the Hamilton-Jacobi equation.  In $\S4$ we study this variational problem in great detail, establishing in particular that if $A(\cdot)$ is continuous and non-negative then the function $x\ra q_0(x,y,t,T)$ is differentiable in a neighborhood of $x=0$ and $\pa q_0(0,y,t,T)/\pa x=\al_1(t,T)y+\beta_2(t,T)$.  More precisely we have the following:
\begin{theorem}
Assume the function $A(\cdot)$ of (\ref{E1}) is continuous and and $q_\ve$ is defined by (\ref{H1}). Then there is a continuous function $[x,y,t,T]\ra q_0(x,y,t,T)$ with domain $\{[x,y,t,T]: \ x,y\ge 0, \ t,T\in\mathbb{R}, \ t<T\}$ such that $\lim_{\ve\ra 0}q_\ve(x,y,t,T)=q_0(x,y,t,T)$ for all $x,y\ge 0, \ t<T$. If $A(\cdot)$ is non-negative then
the function  $x\ra q_0(x,y,t,T)$ is differentiable at $x=0$ and
\be \label{R1}
\frac{\pa q_0(x,y,t,T)}{\pa x} \Big|_{x=0} \ = \ \al_1(t,T)y+\beta_2(t,T) \ .
\ee
Furthermore, for any $t\in\mathbb{R},T_0>0,$ there are constants $C_1,C_2>0$, depending only on $T_0$ and $\sup_{t\le s\le t+T_0}A(s)$, such that
\be \label{S1}
\left|\frac{\pa q_\ve(0,y,t,T)}{\pa x}- [\al_1(t,T)y+\beta_2(t,T)]\right| \ \le \  \frac{\ve C_2(T-t)^2}{y^2} \quad {\rm for \ } y\ge C_1(T-t)^2, \ T-t\le T_0,
\ee
provided $\ve\le (T-t)^3$. 
\end{theorem}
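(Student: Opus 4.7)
The plan is to split the proof into three parts matching the three assertions of the theorem. For the first, existence and continuity of $q_0$, the argument is essentially a vanishing-viscosity convergence: by (\ref{O1}) the family $\{\pa q_\ve/\pa x\}_{\ve>0}$ is uniformly bounded on compact sets, and symmetric bounds in the other variables follow from the analogous statement for the $y$-direction and from using (\ref{V2}) to trade time derivatives for spatial ones. Hence $\{q_\ve\}$ is locally equicontinuous and admits a subsequential limit along any sequence $\ve_k\to 0$. Since each $q_\ve$ is a classical (hence viscosity) solution of the HJB equation (\ref{V2}), stability of viscosity solutions under vanishing viscosity identifies any subsequential limit with a viscosity solution of the HJ equation (\ref{AC2}). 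The variational formula (\ref{AB2}) derived in $\S 2$ supplies a unique such solution, which forces full convergence $q_\ve\to q_0$; continuity of $q_0$ is inherited from the formula.

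For the differentiability claim, note that $q_\ve(0,y,t,T)=0$, so (\ref{N1}) gives $q_\ve(x,y,t,T)/x \le \al_1(t,T)y+\beta_2(t,T)$ for all $x>0$; passing $\ve\to 0$ yields the same bound for $q_0$. The matching lower bound on $\liminf_{x\to 0^+} q_0(x,y,t,T)/x$ is obtained from the variational formula (\ref{AB2}) by inserting the near-optimal trajectory constructed in $\S 4$, whose action is $[\al_1(t,T)y+\beta_2(t,T)]x - o(x)$; non-negativity of $A(\cdot)$ is what ensures this trajectory remains in the half-line and identifies the boundary slope of the optimizer as $\al_1(t,T)y+\beta_2(t,T)$. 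The two bounds combine to give (\ref{R1}).

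The rate estimate (\ref{S1}) is the main obstacle, and I would attack it via the identity
\begin{equation*}
\frac{\pa q_\ve(0,y,t,T)}{\pa x} \ = \ \ve\,\frac{(\pa G_{\ve,D}/\pa x)(0,y,t,T)}{G_\ve(0,y,t,T)}\ ,
\end{equation*}
obtained by differentiating (\ref{H1}) at $x=0$ and using $q_\ve(0,y,t,T)=0$. The right-hand side is a normalized boundary flux of the Dirichlet Green's function, equivalently the probability density for the underlying diffusion from $y$ to first hit $0$ at time $T$. A Girsanov change of measure to the reference drift $b\equiv -1$ reduces this density to the explicit case (\ref{G1}), for which the corresponding quantity is exactly $\al_1(t,T)y+\beta_2(t,T)$; the error is a conditional expectation of a Girsanov cocycle minus one against a Brownian bridge killed at the origin. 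Under the parabolic rescaling $x=(T-t)\tilde x$, $s=T-(T-t)\tilde\tau$, the problem becomes a diffusion on unit time with effective diffusion parameter $\ve/(T-t)^3\le 1$, and the quadratic expansion of the Girsanov exponent produces the claimed $\ve C_2(T-t)^2/y^2$ bound after undoing the rescaling, with the lower bound $y\ge C_1(T-t)^2$ ensuring that the hitting time is not too close to $T$. The hardest step is to make this Girsanov comparison rigorous so that the conditioning to hit $0$ does not blow up the relevant second moments; an alternative that avoids the probabilistic conditioning is a direct PDE comparison of $q_\ve$ with the corresponding function for drift frozen at time $t$, combined with (\ref{O1}) and the concavity (\ref{Q1}) to control the remainder equation.
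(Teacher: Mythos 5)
Your plan for the first assertion has a structural problem: the uniform Lipschitz bound (\ref{O1}) is only available under the non-negativity hypothesis, whereas the convergence $q_\ve\to q_0$ in the theorem is asserted for general continuous $A(\cdot)$; and even granting local equicontinuity, the ``stability + uniqueness'' step is not free. You would need a uniqueness theorem for viscosity solutions of (\ref{AC2}) on the half-line with the singular initial condition $q_0(x,y,0)=+\infty$ and with the linear growth of $q_0$ in $x$ permitted here; such a comparison principle is nontrivial and not quoted in the paper. The paper instead bypasses viscosity theory entirely: Lemma 2.2 proves $\limsup_{\ve\to0}(q_\ve-q_0)\le 0$ by inserting the classical optimal controller into the stochastic cost and estimating the hitting-time error, and Lemma 2.4 proves $\liminf_{\ve\to0}(q_\ve-q_0)\ge 0$ by comparing the stochastic optimal trajectory $X^*_\ve$ to the deterministic path $X^*_{\ve,c}$ it shadows. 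These two lemmas require substantial quantitative control of the exit times $\tau_{\ve,x,T}$ and $\tau^*_{\ve,x,T}$ (Lemmas 2.1, 2.3), none of which is replaced by your argument.

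The more serious gap is in the rate estimate (\ref{S1}). The Girsanov change of measure from drift $A(t)y-1$ to $-1$ produces a Radon--Nikodym density whose exponent scales like $1/\ve$ (indeed like $\frac{1}{\ve}\int A\,Y\,dY - \frac{1}{2\ve}\int A^2Y^2\,ds$ after normalization), so the cocycle does \emph{not} converge to $1$ as $\ve\to 0$ and a ``quadratic expansion of the Girsanov exponent'' is not perturbatively small in $\ve$. You are aiming for an error of order $\ve(T-t)^2/y^2$, i.e.\ one power of $\ve$ beyond the leading classical term, which requires exact cancellation of the $O(1)$ and $O(1/\sqrt{\ve})$ contributions of the cocycle against the corresponding pieces in $G_\ve$; it is not apparent that this can be done without essentially reconstructing the stochastic-control bookkeeping. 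The paper's Proposition 6.1 takes a quite different route: it starts from the lower bound (\ref{A6}) coming from the optimal stochastic controller, Taylor-expands $q_0$ to second order around $(0,T)$ in (\ref{C6}) (which requires the $C^2$ regularity of $q_0$ near $x=0$ established in Proposition 4.2 and Lemma 6.1, including the extension of the characteristics method to $x<0$), exploits a cancellation between the first and third Taylor terms in (\ref{BL6}) via the identity (\ref{BK6}), and then bounds the resulting moments of $\tau^*_{\ve,x,T}$ and $Z(\tau^*_{\ve,x,T})$ through explicit solutions of one-dimensional boundary value problems (e.g.\ (\ref{AJ5}), (\ref{AK6}), (\ref{BQ6})). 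Your alternative PDE-comparison suggestion also does not reach the $\ve(T-t)^2/y^2$ rate, because freezing the drift at time $t$ introduces an $O(T-t)$ rather than $O(\ve)$ error. In short, for part two your argument is a reasonable sketch of what Proposition 4.2 proves (though the uniqueness of the minimizing $\tau$ and the positivity of the minimizing trajectory are the real work), but for parts one and three the proposed routes have gaps that cannot be closed without the stochastic-control machinery of \S2 and \S6.
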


In Theorem 2.1 we prove that  $\lim_{\ve\ra 0}q_\ve(x,y,t,T)=q_0(x,y,t,T)$ exists and is the solution to the variational problem (\ref{AB2}). In Proposition 4.2 we show if $A(\cdot)$ is non-negative that $q_0(x,y,0,T)$ may be obtained by the method of characteristics in a subdomain of $\{[x,T]:  \ x>0,T>0\}$, which includes  a neighborhood of the boundary $\{[0,T]: \ T>0\}$. The function $[x,T]\ra q_0(x,y,0,T)$ is then a classical solution of the Hamilton-Jacobi equation (\ref{AC2}) in this region, and the derivative $\pa q_0(x,y,0,T)/\pa x$ is given by the formula (\ref{Z4}). 

We use the methods of stochastic control theory to prove that $\lim_{\ve\ra0}q_\ve=q_0$ and (\ref{S1}).  The Bellman equation (\ref{V2})  corresponds to the stochastic variational problem (\ref{Z2}). The proofs of $\lim_{\ve\ra0}q_\ve=q_0$ and (\ref{S1})  are then obtained by comparing the solution of this stochastic variational problem to the solution of the classical variational problem (\ref{AB2}). The proof of (\ref{S1}) is given in Proposition 6.1, and uses in a crucial way the regularity of the function $x\ra q_0(x,y,0,T)$ in a neighborhood of $x=0$. 

The ratio of Green's functions $G_{\ve,D}(x,y,t,T)/G_\ve(x,y,t,T)$ given in (\ref{H1})  is the probability that a generalized Brownian bridge, beginning at $y$ at time $t$ and ending at $x$ at time $T$, lies entirely in the positive half line. We may therefore try to estimate $q_\ve(x,y,t,T)$ by comparing this generalized bridge to the standard Brownian bridge. This method of bridge comparison is used in the proof of Proposition 3.2. However for the most part we use the fact that the bridge process is a Gaussian Markov process with the linear drift (\ref{O2}) in order to estimate $q_\ve(x,y,t,T)$.   There is a considerable literature on the study of bridges.
In \cite{conf,cr} Conforti et al study bridges associated to diffusions with a gradient drift, using the fact that it is the {\it reciprocal characteristics} which determine the bridge uniquely.  In particular, diffusions with differing drifts may have the same bridge processes. A simple example of this is the case of the drift  (\ref{E1}) with $A(\cdot)\equiv 0$. The bridge process associated with the constant drift is the same as the Brownian bridge.  In Proposition 3 of \cite{gn}  formulas for  first passage time  for  diffusions with time-inhomogeneous drift are given. However in these cases there needs to be a relation between the graph of the boundary and the drift and diffusion coefficients of the process. The first passage time  for the half line is given in terms of the Dirichlet Green's function by the function $t\ra\int_0^\infty G_{\ve,D}(x,y,t,T) \ dx, \ t<T$. 
 An alternative approach to understanding the limit $\lim_{\ve\ra 0}q_\ve$ may be taken using the techniques of large deviation theory \cite{fw}. This is the approach in
 Baldi et al \cite{bc,bcr}, which considers the asymptotic behavior of the ratio of Green's functions in the limit  $T-t\ra 0$ for time homogeneous diffusions. 
 
 The results obtained in this paper have been motivated by some issues which occur in the problem of coarsening for the diffusive Carr-Penrose model \cite{cdw}. In order to understand the large time asymptotic behavior of this model, it is necessary to have very good control over the ratio of the Dirchlet to whole line Green's functions.  In particular, it is important for the coarsening problem that constants in inequalities such as (\ref{S1}) depend only on 
 $T_0>0$ and $\sup_{t<s<t+T_0}A(s)$.

\vspace{.1in}
\section{ Representation and convergence of the function $q_\ve$}
For any $t\in\mathbb{R}$ let $Y_\ve(s), \ s>t,$ be the solution to the initial value problem for the stochastic differential equation (SDE)
\be \label{A2}
dY_\ve(s) \ = \ b(Y_\ve(s),s) ds+\sqrt{\ve} \ dB(s), \quad Y_\ve(t)=y,
\ee
where $B(\cdot)$ is Brownian motion.  Then the Green's function $G_\ve(\cdot,y,t,T)$ defined by (\ref{C1}) is the probability density for the random variable $Y_\ve(T)$.  In the case when the function $(y,t)\ra b(y,t)$ is linear in $y$ it is easy to see that (\ref{A2}) can be explicitly solved. The solution to (\ref{A2}) with $b(y,t)=A(t)y-1$ as in (\ref{E1}) is  given by 
\begin{multline} \label{B2}
Y_\ve(s) \ = \ \exp\left[\int_t^s A(s') ds'\right] y-\int_t^s \exp\left[\int_{s'}^s A(s'') ds''\right]  \ ds' \\
+\sqrt{\ve}\int_t^s \exp\left[\int_{s'}^s A(s'') ds''\right] dB(s') \ . 
\end{multline}
Hence the random variable $Y_\ve(T)$ conditioned on $Y_\ve(0)=y$ is Gaussian with mean
$m_{1,A}(T)y-m_{2,A}(T)$ and variance $\ve\sig_A^2(T)$, where $m_{1,A},m_{2,A}$ are given by 
\be \label{C2}
m_{1,A}(T)= \exp\left[\int_0^T A(s') ds'\right], \quad 
m_{2,A}(T)= \int_0^T \exp\left[\int_{s}^T A(s') ds'\right]  \  ds \ , 
\ee
 and $\sig_A^2(T)$ by
\be \label{D2}
\sig_A^2(T)= \int_0^T \exp\left[2\int_{s}^T A(s') ds'\right] ds \ .  
\ee
The Green's function $G_\ve(x,y,0,T)$ is therefore explicitly given by the formula
\be \label{E2}
G_\ve(x,y,0,T)=\frac{1}{\sqrt{2\pi\ve\sig_A^2(T)}}\exp\left[-\frac{\{x+m_{2,A}(T)-m_{1,A}(T)y\}^2}{2\ve\sig_A^2(T)}\right] \ .
\ee

It is useful to recall that the ratio $G_{\ve,D}/G_\ve$ is a probability for a generalized Brownian bridge process. Thus
\be \label{F2}
\frac{G_{\ve,D}(x,y,0,T)}{G_\ve(x,y,0,T)} \ = \  P(\inf_{0<s<T} Y_\ve(s)>0 \ | \ Y_\ve(0)=y, \ Y_\ve(T)=x) \ ,
\ee
where $Y_\ve(\cdot)$ is the solution to the SDE  (\ref{A2}). The process $Y_\ve(\cdot)$ of (\ref{A2}), conditioned on $Y_\ve(0)=y, \ Y_\ve(T)=x$  is  Gaussian and has variance independent of $x,y$. We may obtain a formula for it by extending the functions  $m_{1,A},m_{2,A}, \sig_A^2$ of (\ref{C2}), (\ref{D2}),  defined with respect to the interval $[0,T]$, to any interval $[t,T]$ with $t<T$. Thus we define $m_{1,A}(t,T),m_{2,A}(t,T)$ by
\be \label{G2}
m_{1,A}(t,T)= \exp\left[\int_t^T A(s') ds'\right], \quad 
m_{2,A}(t,T)= \int_t^T \exp\left[\int_{s}^T A(s') ds'\right]  \  ds \ , 
\ee
 and $\sig_A^2(t,T)$ by
\be \label{H2}
\sig_A^2(t,T)= \int_t^T \exp\left[2\int_{s}^T A(s') ds'\right] ds \ .  
\ee
The variance of $Y_\ve(s)$ is then given by the formula
\be \label{I2}
{\rm Var}[ Y_\ve(s) \ | \ Y_\ve(0)=y, \ Y_\ve(T)=x] \ = \  \ve \sig_A^2(s)\sig_A^2(s,T)/\sig_A^2(T) \ .
\ee
More generally, the covariance of $Y_\ve(\cdot)$ is also independent of $x,y$ and is given by the formula
\be \label{J2}
{\rm Covar}[ Y_\ve(s_1),Y_\ve(s_2) \ | \ Y_\ve(0)=y, \ Y_\ve(T)=x] \ = \  \ve \Gamma_A(s_1,s_2) \ , \quad 0\le s_1, s_2\le T,
\ee 
where the symmetric function $\Gamma:[0,T]\times[0,T]\ra\R$   is defined by
\be \label{K2}
\Gamma_A(s_1,s_2) \ = \ \frac{m_{1,A}(s_1,s_2)\sig_A^2(s_1)\sig_A^2(s_2,T)}{\sig_A^2(T)} \ , \quad 0\le s_1\le s_2\le T  \ .
\ee
Let $y_{\rm class}(s), \ 0\le s\le T$, be the path going from $y$ at $s=0$ to $x$ at $s=T$ defined by
\begin{multline} \label{L2}
\sig_A^2(T)y_{\rm class}(s) \ = \  xm_{1,A}(s,T)\sig_A^2(s) +ym_{1,A}(s)\sig_A^2(s,T) \\
+ \ m_{1,A}(s,T)m_{2,A}(s,T)\sig_A^2(s)-m_{2,A}(s)\sig_A^2(s,T) \ .
\end{multline}
Then the mean of $Y_\ve(\cdot)$ conditioned on $Y_\ve(0)=y, \ Y_\ve(T)=x$ is given by the formula
\be \label{M2}
E[ Y_\ve(s) \ | \ Y_\ve(0)=y, \ Y_\ve(T)=x] \ = \ y_{\rm class} (s), \quad 0\le s\le T \ .
\ee
In the case $A(\cdot)\equiv 0$ the process $Y_\ve(\cdot)$, conditioned on $Y_\ve(0)=y, \ Y_\ve(T)=x$  is the standard Brownian Bridge (BB) from $y$ at time $0$ to $x$ at time $T$.  

It is well known that the conditioned process $Y_\ve(\cdot)$  is also Markovian.  In \cite{cdw} we showed that
it is the solution to an SDE  with a linear drift depending on $x$ and with initial condition $Y_\ve(0)=y$ (see (4.43), (4.46) of \cite{cdw}). The SDE in this case is run {\it forwards} in time. Here we observe that the conditioned process is also the solution of an SDE with a linear drift depending on $y$, which is run {\it backwards} in time. Denoting by $X_\ve(s), \ 0<s<T,$ the solution to this SDE with terminal condition $X_\ve(T)=x$, we have that
\be \label{N2}
dX_\ve(s) \ = \ \la(X_\ve(s),y,s) \ ds+\sqrt{\ve} \ dB(s) \ , \ \ 0<s<T, \quad X_\ve(T)=x \ .
\ee
The function $\la(x,y,s)$ is given by the formula
\be \label{O2}
\la(x,y,s) \ = \ \left[A(s)+\frac{1}{\sig_A^2(s)}\right]x-1+\frac{m_{2,A}(s)}{\sig_A^2(s)}-\frac{m_{1,A}(s)y}{\sig_A^2(s)} \ .
\ee
Integrating (\ref{N2}), (\ref{O2}), we have that 
\be \label{P2}
X_\ve(s) \ = \ y_{\rm class}(s)-\sqrt{\ve}\frac{\sig^2_A(s)}{m_{1,A}(s)}Z(s), \quad {\rm with \ } Z(s)=\int_s^T \frac{m_{1,A}(s') \ dB(s')}{\sig^2_A(s')} \ ,
\ee
where $y_{\rm class}(\cdot)$ is given in  (\ref{L2}).  Note that the drift $\la(\cdot)$ is independent of $\ve$ and  $\la(x,y,s)$ becomes singular as $s\ra 0$.  The singularity is necessary in order to ensure that $X_\ve(0)=y$ with probability $1$, 

We define now
\be \label{Q2}
v_\ve(x,y,T) \ = \ P\left(\inf_{0\le s\le T} X_\ve(s)<0 \ \big| \ X_\ve(T)=x\right) \ .
\ee
Comparing (\ref{F2}), (\ref{Q2}) we see that
\be \label{R2}
v_\ve(x,y,T) \ = \ 1-\frac{G_{\ve,D}(x,y,0,T)}{G_\ve(x,y,0,T)} \ .
\ee
The function $v_\ve$ is a solution to the PDE
\be \label{S2}
\frac{\pa v_\ve(x,y,T)}{\pa T} \ = \ -\la(x,y,T)\frac{\pa v_\ve(x,y,T)}{\pa x}+\frac{\ve}{2}\frac{\pa^2 v_\ve(x,y,T)}{\pa x^2} \ , \quad 
T>0,x>0,
\ee
with initial and boundary conditions
\be \label{T2}
\lim_{x\ra0}v_\ve(x,y,T) \ = \ 1, \ T>0,  \quad \lim_{T\ra 0}v_\ve(x,y,T) \ = \ 0,  \ x>0 \ .
\ee
Next we set $q_\ve$ to be
\be \label{U2}
q_\ve(x,y,T) \ = \ -\ve\log v_\ve(x,y,T) \ .
\ee
Then $q_\ve$ is a solution to the PDE
\be \label{V2}
\frac{\pa q_\ve(x,y,T)}{\pa T} \ = \ -\la(x,y,T)\frac{\pa q_\ve(x,y,T)}{\pa x}-\frac{1}{2}\left[\frac{\pa q_\ve(x,y,T)}{\pa x}\right]^2+\frac{\ve}{2}\frac{\pa^2 q_\ve(x,y,T)}{\pa x^2} \ , \quad 
T>0,x>0,
\ee
with initial and boundary conditions
\be \label{W2}
q_\ve(0,y,T) \ = \ 0, \ T>0,  \quad q_\ve(x,y,0) \ = \ +\infty,  \ x>0 \ .
\ee
In the case $A(\cdot)\equiv 0$ it is easy to see that the drift $\la$ and solution $q_\ve$ to (\ref{V2}), (\ref{W2}) are given by the formulae
\be \label{X2}
\la(x,y,s) \ = \ \frac{(x-y)}{s} \ , \quad q_\ve(x,y,T) \ = \ \frac{2xy}{T} \ . 
\ee
Evidently (\ref{X2}) is consistent with (\ref{G1}). 

The PDE (\ref{V2}) is the Hamilton-Jacobi -Bellman (HJB) equation for a stochastic control problem. Thus consider solutions $X_\ve(\cdot)$ to the SDE
\be \label{Y2}
dX_\ve(s) \ = \ \mu_\ve(X_\ve(s),y,s) \ ds +\sqrt{\ve}dB(s) \ , 
\ee
run {\it backwards} in time with controller $\mu_\ve(\cdot)$ and given terminal data.  For $x,y,T>0$ define $q_\ve(x,y,T)$  by
\begin{multline} \label{Z2}
q_\ve(x,y,T) \ = \\
 \min_{\mu_\ve}E\left[\frac{1}{2}\int_\tau^T[\mu_\ve(X_\ve(s),y,s)-\la(X_\ve(s),y,s)]^2 \ ds \ \Big| \ X_\ve(T)=x, \ 0<\tau<T, \ ,  \ X_\ve(\cdot)>0, \ X_\ve(\tau)=0 \  \right] \ ,
\end{multline}
where the function $\la(\cdot)$ is given by (\ref{O2}). The class of controllers $\mu_\ve(\cdot)$ in (\ref{Y2}) are those which have the property that paths $X_\ve(s), \ s<T,$ with $X_\ve(T)=x>0$, exit the half line $(0,\infty)$ before time $0$ with probability $1$.    The  HJB equation for $q_\ve$ is then given by   
(\ref{V2}), with initial and boundary conditions (\ref{W2}). The optimal controller $\mu^*_\ve(\cdot)$ in (\ref{Y2}), (\ref{Z2}) is given by the formula
\be \label{AA2}
\mu^*_\ve(x,y,T) \ = \ \la(x,y,T)+\frac{\pa q_\ve(x,y,T)}{\pa x} \ .
\ee

 The  zero noise limit $\ve\ra 0$ of (\ref{Y2}), (\ref{Z2}) yields the classical variational formula
\begin{multline} \label{AB2}
q_0(x,y,T) \ = \\
 \min\left\{\frac{1}{2}\int_\tau^T\left[\frac{dx(s)}{ds}-\la(x(s),y,s)\right]^2 \ ds \ \Big| \ 0<\tau<T, \ x(T)=x, \ x(\cdot)>0, \ x(\tau)=0 \right\} \ .
\end{multline}
At least formally, the function $q_0$ is the solution to the Hamilton-Jacobi (HJ) equation
 \be \label{AC2}
\frac{\pa q_0(x,y,T)}{\pa T} \ = \ -\la(x,y,T)\frac{\pa q_0(x,y,T)}{\pa x}-\frac{1}{2}\left[\frac{\pa q_0(x,y,T)}{\pa x}\right]^2 \ , \quad 
T>0,x>0,
\ee
with initial and boundary conditions
\be \label{AD2}
q_0(0,y,T) \ = \ 0, \ T>0,  \quad q_0(x,y,0) \ = \ +\infty,  \ x>0 \ .
\ee
When $A(\cdot)\equiv 0$ the function $q_0(x,y,T)=2xy/T$ is a classical $C^1$ solution to (\ref{AC2}), (\ref{AD2}). However in general we can only expect $q_0$ to be a  viscosity solution of the HJ equation (see Chapter 10 of \cite{evans}).  

To obtain an upper bound on $q_\ve$ by $q_0$ plus a constant which vanishes as $\ve\ra0$, we observe that the variational problem (\ref{AB2}) for {\it fixed} $\tau$ with $0<\tau<T$, without the positivity constraint $x(\cdot)>0$,  is quadratic  with a linear constraint, which may be easily solved. The Euler-Lagrange equation for the minimization problem with  fixed $\tau$ is 
\be \label{AE2}
\left\{\frac{d}{ds}+\frac{\pa \la(x(s),y,s)}{\pa x}\right\}\left[\frac{dx(s)}{ds}-\la(x(s),y,s)\right] \ = \ 0 \ .
\ee
The minimizing trajectory is then the solution to (\ref{AE2}) with initial and terminal conditions $x(\tau)=0, \ x(T)=x$. To obtain a formula for this trajectory we observe that the solution to the equation 
\be \label{AF2}
\frac{d\phi(s)}{ds}+\frac{\pa \la(x(s),y,s)}{\pa x}\phi(s) \ = \ 0 \ ,
\ee
is given by the formula
\be \label{AG2}
\phi(s) \ = \ \frac{C_1m_{1,A}(s)}{\sig^2_A(s)} \quad {\rm where \  }C_1 \ {\rm is  \ constant.}
\ee
We then need to obtain the solution to
\be \label{AH2}
\frac{dx(s)}{ds}-\la(x(s),y,s) \ = \ \phi(s) \ ,
\ee
with initial and terminal conditions $x(\tau)=0, \ x(T)=x$, and this determines the constant $C_1$ in (\ref{AG2}).
Observe that the function $x(s)=y_{\rm class}(s), \ 0<s<T,$ where $y_{\rm class}(\cdot)$ is defined by (\ref{L2}), is the solution to (\ref{AH2}) with terminal condition $x(T)=x$ in the case $\phi(\cdot)\equiv 0$.
Let $y_p(\cdot)$ be the solution to the terminal value problem
\be \label{AI2}
\frac{dy_p(s)}{ds}-\left[A(s)+\frac{1}{\sig_A^2(s)}\right]y_p(s) \ + \ \frac{m_{1,A}(s)}{\sig^2_A(s)} \ = \ 0  \ , \quad y_p(T)=0 \ .
\ee
The solution to (\ref{AH2}) with initial and terminal conditions $x(\tau)=0, \ x(T)=x$, is then $x(s)=y_{\rm class}(s)-C_1y_p(s)$, where $C_1$ is chosen so that $x(\tau)=0$.  The solution to (\ref{AI2}) is given by the formula
\be \label{AJ2}
\sig^2_A(T)y_p(s) \ = \ m_{1,A}(s)\sig_A^2(s,T) \  .
\ee
We have then from (\ref{AI2}), (\ref{AJ2}) that
\begin{multline} \label{AK2}
\sig_A^2(T)x(s) \ = \  xm_{1,A}(s,T)\sig_A^2(s) +[y-\ga(\tau)]m_{1,A}(s)\sig_A^2(s,T) \\
+ \ m_{1,A}(s,T)m_{2,A}(s,T)\sig_A^2(s)-m_{2,A}(s)\sig_A^2(s,T) \ .
\end{multline}
where $\ga(\tau)$ is chosen so that $x(\tau)=0$.  The optimal controller $\mu^*_{0,\tau}$ for the variational problem with fixed $\tau$  is obtained by evaluating $dx(s)/ds$ at $s=T$.  Thus  $\mu^*_{0,\tau}(x,y,T)$ is given by the formula 
\begin{multline} \label{AL2}
\mu^*_{0,\tau}(x,y,T) \ = \ \la(x,y,T)+\frac{\ga(\tau)m_{1,A}(T)}{\sig^2_A(T)} \\
= \ \la(x,y,T)+\frac{m_{1,A}(T)}{\sig^2_A(T)}\left[y+ g_{1,A}(\tau,T)x+g_{2,A}(\tau,T)\right] \ ,
\end{multline}
where the functions $g_{1,A},g_{2,A}$ are given by the formulae
\be \label{AM2}
g_{1,A}(s,T) \ = \ \frac{m_{1,A}(s,T)\sig_A^2(s)}{m_{1,A}(s)\sig_A^2(s,T)} \ ,  \quad  s<T \ ,
\ee
\be \label{AN2}
g_{2,A}(s,T) \ = \ \frac{m_{1,A}(s,T)m_{2,A}(s,T)\sig_A^2(s)-m_{2,A}(s)\sig_A^2(s,T) }{m_{1,A}(s)\sig_A^2(s,T)} \ , \quad s<T \ .
\ee
\begin{lem}
Let $\tau>0$ and $X_\ve(s), \ s>\tau,$ be the solution to the SDE (\ref{Y2}) with $\mu_\ve$ given by $\mu_\ve(x,y,s)=\mu^*_{0,\tau}(x,y,s), \ s>\tau$.  For $x>0, \ T>\tau$ let $\tau_{\ve,x,T}$ be the first exit time from the interval $(0,\infty)$  of $X_\ve(\cdot)$ with terminal condition $X_\ve(T)=x$.  Then $\tau_{\ve,x,T}>\tau$ with probability $1$ and
\be \label{AO2}
q_\ve(x,y,T) \ \le \  E\left[\frac{1}{2}\int_{\tau_{\ve,x,T}}^T[\mu_\ve(X_\ve(s),y,s)-\la(X_\ve(s),y,s)]^2 \ ds \ \Big| \ X_\ve(T)=x \  \right] \ .
\ee
\end{lem}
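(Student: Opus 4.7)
The plan is to verify that $\mu^* = \mu^*_{0,\tau}$ is an admissible controller in the class defining (\ref{Z2}) and then substitute it into the infimum. Because $\mu^*_{0,\tau}(x,y,s)$ is affine in $x$ with coefficients smooth on $(\tau,T]$, the backward SDE (\ref{Y2}) with terminal $X_\ve(T)=x$ has an explicit Gaussian solution $X_\ve(\cdot)$. Its mean $\bar X_\ve(s)$ solves the same Euler--Lagrange equation (\ref{AE2}) with terminal $\bar X_\ve(T)=x$ as the deterministic minimizer of the fixed-$\tau$ variational problem, so $\bar X_\ve$ coincides with the path $x(s)$ constructed in (\ref{AK2}); in particular $\bar X_\ve(\tau)=0$ and $\bar X_\ve(s)>0$ on $(\tau,T)$.

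The heart of the proof is to show that $\tau_{\ve,x,T}>\tau$ almost surely, for which I would exploit the singularity of the drift near $s=\tau$. From (\ref{AM2}) we have $g_{1,A}(\tau,s)\sim 1/(s-\tau)$ as $s\downarrow\tau$, so the coefficient $a^*(s)$ of $X_\ve$ in $\mu^*_{0,\tau}$ behaves like $1/(s-\tau)$ near $\tau$. With integrating factor $\phi(s)=\exp\int_T^s a^*(r)\,dr$ satisfying $\phi(s)\sim (s-\tau)/(T-\tau)$, the deviation $X_\ve-\bar X_\ve$ admits the representation
\[
(X_\ve-\bar X_\ve)(s) \ = \ -\sqrt{\ve}\,\phi(s)\int_s^T \frac{dB(r)}{\phi(r)} \ ,
\]
so $M(s) := (X_\ve-\bar X_\ve)(s)/\phi(s)$ is a continuous martingale whose quadratic variation $\ve\int_s^T dr/\phi(r)^2$ diverges as $s\downarrow\tau$. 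By Dambis--Dubins--Schwarz, $M$ is a time-change of a Brownian motion which reaches infinite time at $\tau$, so by the law of the iterated logarithm $M$ (and hence $X_\ve-\bar X_\ve$, since $\phi>0$) changes sign in every right-neighborhood of $\tau$ almost surely. The variance of $X_\ve - \bar X_\ve$ works out to $\ve(s-\tau)(T-s)/(T-\tau)$ to leading order, so the noise scale $\sqrt{\ve(s-\tau)}$ dominates the mean $|\bar X_\ve(s)|=O(s-\tau)$ for $s$ close enough to $\tau$. Hence $X_\ve$ itself vanishes at infinitely many $s$ in every right-neighborhood of $\tau$ almost surely, and the first zero encountered running backward from $T$ lies strictly above $\tau$.

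With admissibility in hand, (\ref{AO2}) is immediate: the controller $\mu^*_{0,\tau}$ lies in the class over which the infimum (\ref{Z2}) is taken (paths exit $(0,\infty)$ at time $\tau_{\ve,x,T}>\tau>0$, hence before time $0$ with probability one), and evaluating the cost functional at the first exit time $\tau_{\ve,x,T}$ therefore gives an upper bound on $q_\ve$. The main obstacle is the local analysis at $s=\tau$ in the middle paragraph, specifically the identification of the Gaussian fluctuation with a time-changed Brownian motion running to infinite time so that the iterated-logarithm/$0$--$1$-law machinery yields oscillatory sign changes; once this is done the rest follows mechanically from the linear Gaussian structure and the variational definition.
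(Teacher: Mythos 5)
Your argument for the first assertion, $\tau_{\ve,x,T}>\tau$ almost surely, is correct and lands on the same key mechanism as the paper: the explicit linear solution to (\ref{Y2}) with the singular drift $\mu^*_{0,\tau}$ has a fluctuation term whose quadratic variation diverges as $s\downarrow\tau$, and since the mean $\bar X_\ve(s)=O(s-\tau)$ vanishes at the same rate as the multiplicative factor $\phi(s)$, the normalized martingale $M(s)=(X_\ve-\bar X_\ve)(s)/\phi(s)$ oscillates unboundedly, forcing sign changes of $X_\ve$ in every right-neighborhood of $\tau$. The paper reaches the same conclusion from (\ref{AQ2})--(\ref{AV2}) using the reflection principle and the variance bound (\ref{AU2}) rather than Dambis--Dubins--Schwarz and the LIL, but this is a cosmetic difference.

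The second part of your proposal has a genuine gap. You treat (\ref{AO2}) as an immediate consequence of "plugging an admissible controller into the minimum (\ref{Z2})." But the $q_\ve$ appearing in (\ref{AO2}) is defined by (\ref{R2}), (\ref{U2}), i.e.\ through the Dirichlet Green's function, and the identity between this $q_\ve$ and the stochastic-control value function (\ref{Z2}) is precisely the verification statement that Lemmas 2.1 and 2.3 together establish; appealing to (\ref{Z2}) to prove Lemma 2.1 is circular. The paper's proof works directly with the $q_\ve$ of (\ref{U2}): it applies It\^o's lemma to $q_\ve(X_\ve(\cdot),y,\cdot)$ along the controlled trajectory (using the PDE (\ref{V2}) to identify the stochastic integral $M(s)$ in (\ref{AX2}), (\ref{AY2})), then the Schwarz inequality and optional sampling at the truncated stopping time $\tau_{\ve,x,T,K}$ yield (\ref{AZ2}). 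The remaining work --- showing the boundary term $E[q_\ve(X_\ve(s\vee\tau_{\ve,x,T,K}),y,s\vee\tau_{\ve,x,T,K})]$ vanishes as $s\to\tau$ and $K\to\infty$ --- is where the growth bound (\ref{I3}) and the tail estimates (\ref{BB2})--(\ref{BD2}) for exit at level $K$ enter. None of that appears in your proposal, and it is the content that makes the upper bound (\ref{AO2}) non-trivial given the singular boundary data (\ref{W2}) and the singular drift.
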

\begin{proof}
Since the function $A(\cdot)$ is continuous, we have from (\ref{AL2})-(\ref{AN2}) that
\be \label{AP2}
\mu_\ve(x,y,s) \ = \  \left[\frac{1}{s-\tau}+A_\tau(s)\right]x+(s-\tau)B_\tau(s) \ ,  \quad s>\tau,
\ee
where $A_\tau,B_\tau$ are  continuous functions on the closed interval $[\tau,\infty)$.  Let $m_{1,A_\tau}$ be defined as in (\ref{G2}). The solution to (\ref{Y2}) with $\mu_\ve$ as in (\ref{AP2}) and terminal condition $X_\ve(T)=x$ is given by the formula
\be \label{AQ2}
X_\ve(s) \ = \  \frac{s-\tau}{(T-\tau)m_{1,A_\tau}(s,T)}\left[X_{\rm class}(s)-\sqrt{\ve}Z(s)\right]  \ , \quad \tau<s<T \ ,
\ee
where $X_{\rm class}(\cdot), \ Z(\cdot)$  are given by the formulae
\be \label{AR2}
X_{\rm class}(s) \ = \ x-(T-\tau)\int_s^T m_{1,A_\tau}(s',T)B_\tau(s') \ ds' \ ,
\ee
\be \label{AS2}
Z(s) \ = \ (T-\tau)\int_s^T \frac{m_{1,A_\tau}(s',T)}{s'-\tau}dB(s') \ .
\ee
Since $Z(\cdot)$ is by a change of variable equivalent to Brownian motion, the reflection principle applies to it. Hence for any $a>0, \ \tau<s<T$,
\be \label{AT2}
P(\sup _{s<s'<T}Z(s')>a) \ = \ 2P(Z(s)>a) \ .
\ee
From (\ref{AS2}) we see that $Z(s)$ is Gaussian with mean zero. The variance ${\rm Var}[Z(s)]$  satisfies the inequality
\be \label{AU2}
\frac{c_1(T-\tau)(T-s)}{s-\tau} \ \le {\rm Var}[Z(s)] \ \le \ \frac{C_1(T-\tau)(T-s)}{s-\tau} \ ,  \quad \tau<s<T \ ,
\ee
for some positive constants $c_1,C_1$. From (\ref{AT2}), (\ref{AU2}) we conclude that
\be \label{AV2}
P(\sup_{s<s'<T}Z(s')>a) \ \ge \ 1-C_2a\frac{\sqrt{s-\tau}}{\sqrt{(T-\tau)(T-s)}} \ , \quad \tau<s<T \ ,
\ee
where $C_2>0$ is a constant.  We also have from (\ref{AR2}) there is a constant $C_3$ such that
$\sup_{\tau<s<T} X_{\rm class}(s) \ \le \ C_3$.
Choosing $a=\ve^{-1/2}C_3$ in (\ref{AV2}),  we conclude from (\ref{AQ2}), (\ref{AV2}) that $\lim_{s\ra\tau}P(\tau_{\ve,x,T}>s)=1$.  Hence $\tau_{\ve,x,T}>\tau$ with probability $1$. 

To prove (\ref{AO2}) we first observe from Ito's lemma that the mapping $s\ra M(s)$ on the interval $\tau<s\le T$, where
\begin{multline} \label{AX2}
 M(s) \ = \ q_\ve(x,y,T)-q_\ve(X_\ve(s),y,s)\\
-\int_s^T\frac{\pa q_\ve(X_\ve(s'),y,s')}{\pa s'}+\frac{\pa q_\ve(X_\ve(s'),y,s')}{\pa x}\mu_\ve(X_\ve(s'),y,s')-
\frac{\ve}{2} \frac{\pa^2 q_\ve(X_\ve(s'),y,s')}{\pa x^2} \ ds' 
\end{multline}
 is a (backwards in time) stochastic integral.  From (\ref{V2}) we see that $M(s)$ can be written as
 \begin{multline} \label{AY2}
 M(s) \ = \ q_\ve(x,y,T)-q_\ve(X_\ve(s),y,s) \\
 -\int_s^T\frac{\pa q_\ve(X_\ve(s'),y,s')}{\pa x}\left[\mu_\ve(X_\ve(s'),y,s')-\la(X_\ve(s'),y,s')\right]-
 \frac{1}{2}\left[\frac{\pa q_\ve(X_\ve(s'),y,s')}{\pa x}\right]^2 \ ds' \ .
 \end{multline}
 For $K>0$ and $0<x<K$ let $\tau_{\ve,x,T,K}$ be the first exit time of $X_\ve(s), \ s<T,$ with $X_\ve(T)=x$ from the interval $(0,K)$. By the optional sampling theorem we have that $E[M(s\vee\tau_{\ve,x,T,K})]=0$ for all $s$ in the interval $\tau<s\le T$.  Hence on using the Schwarz inequality in (\ref{AY2}) we have that
 \begin{multline} \label{AZ2}
 q_\ve(x,y,T) \ \le E[q_\ve(X_\ve(s\vee\tau_{\ve,x,T,K}),y,s\vee\tau_{\ve,x,T,K})] \\
 + E\left[\frac{1}{2}\int_{s\vee\tau_{\ve,x,T,K}}^T[\mu_\ve(X_\ve(s),y,s)-\la(X_\ve(s),y,s)]^2 \ ds \ \Big| \ X_\ve(T)=x \  \right] \ , \quad \tau<s\le T \ .
 \end{multline}
 
 Observe that for $\tau<s\le T$ we have $s\vee\tau_{\ve,x,T,K}\ge\tau_{\ve,x,T,K}\ge \tau_{\ve,x,T}>\tau$ with probability $1$.  Hence the second expectation on the RHS of (\ref{AZ2}) is bounded above by the expectation on the RHS of (\ref{AO2}). Thus it is sufficient to show that the first term on the RHS of (\ref{AZ2}) converges to $0$ as $s\ra \tau$ and $K\ra \infty$.  We first consider the limit $s\ra\tau$. Since $\tau_{\ve,x,T,K}>\tau$ with probability 
 $1$, we have by path continuity of $X_\ve(\cdot)$  that $\lim_{s\ra\tau}X_\ve(s\vee\tau_{\ve,x,T,K})=X_\ve(\tau_{\ve,x,T,K})$ with probability $1$.  Using the continuity of the function $q_\ve$, it follows by dominated convergence that
 \begin{multline} \label{BA2}
 \lim_{s\ra\tau} E[q_\ve(X_\ve(s\vee\tau_{\ve,x,T,K}),y,s\vee\tau_{\ve,x,T,K})] \ = \\ 
 E[q_\ve(X_\ve(\tau_{\ve,x,T,K}),y,\tau_{\ve,x,T,K})]  \ \le \ \sup_{\tau<s\le T} q_\ve(K,y,s)P(X_\ve(\tau_{\ve,x,T,K})=K) \ .
 \end{multline}
 From (\ref{AT2}), (\ref{AU2}) we have for some constants $C_2,c_2>0$ that
 \be \label{BB2}
 P(\inf_{s<s'<T}Z(s')<-a) \ \le \frac{C_2}{a\sqrt{s-\tau}}\exp\left[-c_2a^2(s-\tau)\right] \ , \quad a>0 \ .
 \ee
 We have from (\ref{AQ2}) there is a constant $c_3>0$ such that for all large $K$,
 \be \label{BC2}
 P(X_\ve(\tau_{\ve,x,T,K})=K)  \  \le \ P(\inf_{\tau<s\le T}(s-\tau)Z(s)<-c_3K/\sqrt{\ve}) \ .
 \ee
 From (\ref{BB2}) we see that
 \begin{multline} \label{BD2}
 P(\inf_{\tau<s\le T}(s-\tau)Z(s)<-c_3K/\sqrt{\ve}) \\
  \le  
 \sum_{n=0}^\infty P\left(\inf_{2^n<(T-\tau)/(s-\tau)\le 2^{n+1}}Z(s)<-\frac{2^nc_3K}{\sqrt{\ve}(T-\tau)}\right) \
 \le \ C_2\sum_{n=0}^\infty\exp\left[-\frac{c_2c_3^2K^2 2^{n-1}}{\sqrt{\ve}(T-\tau)}\right] \ ,
 \end{multline}
if $K$ is large.  From our bound (\ref{I3}) on $q_\ve$ we conclude from (\ref{BC2}), (\ref{BD2}) that  \\
$\lim_{K\ra\infty} \sup_{\tau<s\le T} q_\ve(K,y,s)P(X_\ve(\tau_{\ve,x,T,K})=K) =0$, whence (\ref{AO2}) follows from (\ref{AZ2}). 
\end{proof}
\begin{lem}
For  $x,y,T$  positive  one has
$\limsup_{\ve\ra 0}[q_\ve(x,y,T) -q_0(x,y,T)]=0$.
\end{lem}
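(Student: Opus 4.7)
The plan is to apply Lemma 2.1 with the classical fixed-$\tau$ optimal controller $\mu^*_{0,\tau}$ from (\ref{AL2}), and then optimize the resulting bound over $\tau\in(0,T)$. The key simplification is that with this particular choice, the integrand in (\ref{AO2}) becomes deterministic.

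Fix $\tau\in(0,T)$ and take $\mu_\ve=\mu^*_{0,\tau}$ in Lemma 2.1. By formula (\ref{AL2}) the difference
\[
\mu^*_{0,\tau}(x,y,s)-\la(x,y,s) \ = \ \phi(s) \ := \ \frac{\ga(\tau)m_{1,A}(s)}{\sig^2_A(s)}
\]
is a deterministic, continuous function of $s$ on $(0,T]$, independent of $X_\ve$. Hence the bound (\ref{AO2}) reduces, by Fubini, to
\[
q_\ve(x,y,T) \ \le \ \frac{1}{2}\int_0^T \phi(s)^2\, P(\tau_{\ve,x,T}<s)\, ds.
\]

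I would next show that $\tau_{\ve,x,T}\ra\tau$ in probability as $\ve\ra 0$. Lemma 2.1 already provides $\tau_{\ve,x,T}>\tau$ almost surely, so $P(\tau_{\ve,x,T}<s)=0$ for $s\le\tau$. For the reverse inequality, the explicit formula (\ref{AQ2}) writes $X_\ve(s)=X_{\rm class}(s)+O(\sqrt{\ve})$, where $X_{\rm class}$ is continuous on $[\tau,T]$ with $X_{\rm class}(\tau)=0$ and $X_{\rm class}(s)>0$ on $(\tau,T]$. Gaussian maximal-inequality bounds on $Z(s)$ as in (\ref{AV2}) and (\ref{BB2}) show that, for any $\delta>0$, $P(\tau_{\ve,x,T}\ge\tau+\delta)\ra 0$: on $[\tau+\delta,T]$ the function $X_{\rm class}$ is bounded below by a positive constant while the noise is $O(\sqrt{\ve})$, so $X_\ve>0$ on that interval with probability tending to $1$. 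Combining, $P(\tau_{\ve,x,T}<s)\ra \mathbf{1}_{s>\tau}$ for every $s\ne\tau$.

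Since $\phi^2$ is bounded on $[\tau,T]$, dominated convergence yields
\[
\limsup_{\ve\ra 0} q_\ve(x,y,T) \ \le \ \frac{1}{2}\int_\tau^T \phi(s)^2\, ds,
\]
which is exactly the cost of the Euler--Lagrange trajectory (\ref{AK2}) for the fixed-$\tau$ variational problem. Taking the infimum over $\tau\in(0,T)$, and noting that the minimizer of (\ref{AB2}) is the Euler--Lagrange trajectory at the optimal $\tau^*$ with the positivity constraint inactive on the open interval $(\tau^*,T)$, this infimum equals $q_0(x,y,T)$. This gives the claimed bound on the $\limsup$. The main obstacle is the upper-tail estimate $P(\tau_{\ve,x,T}\ge\tau+\delta)\ra 0$, which requires uniform control on the supremum of the backwards stochastic integral $Z(s)$ in a neighborhood of the singular drift point $s=\tau$; this is accessible via the reflection principle and variance bounds (\ref{AT2})--(\ref{AU2}) from the proof of Lemma 2.1, together with a dyadic chaining in the spirit of (\ref{BD2}). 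A secondary point is the identification $\inf_\tau\frac{1}{2}\int_\tau^T\phi(s)^2\, ds=q_0(x,y,T)$, which follows from standard arguments once one knows that at the optimum the positivity constraint can only be active at the exit-time endpoint.
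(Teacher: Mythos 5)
The proposal rests on a false claim: that with $\mu_\ve=\mu^*_{0,\tau}$, the difference $\mu^*_{0,\tau}(X_\ve(s),y,s)-\la(X_\ve(s),y,s)$ is a deterministic function of $s$. It is not. The notation $\ga(\tau)$ in (\ref{AL2}) is shorthand for $y+g_{1,A}(\tau,T)x+g_{2,A}(\tau,T)$, where $(x,T)$ are the \emph{state and time} arguments of the controller. The second expression in (\ref{AL2}) makes this explicit, and so does (\ref{AP2}): the $x$-coefficient in $\mu^*_{0,\tau}(x,y,s)$ is $\frac{1}{s-\tau}+A_\tau(s)$, which differs from the $x$-coefficient $A(s)+1/\sig_A^2(s)$ in $\la(x,y,s)$. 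Thus $\mu^*_{0,\tau}(X_\ve(s),y,s)-\la(X_\ve(s),y,s)$ contains a term proportional to $X_\ve(s)$ and is genuinely random. (Concretely, for $A\equiv 0$: $\mu^*_{0,\tau}(x,y,s)-\la(x,y,s)=y/s+\tau x/[s(s-\tau)]$, not $2y/s$.) When you plug $X_\ve(s)=X_0(s)+O(\sqrt{\ve}Z(s))$ into the squared difference, you get the deterministic action plus a cross term of size $\sqrt{\ve}$ and a quadratic term of size $\ve Z(s)^2$ --- this is exactly the expansion (\ref{BG2}) in the paper. Both corrections must be estimated.

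The harder of the two is the $\ve E[\int_{\tau_{\ve,x,T}}^T Z(s)^2\,ds]$ term: since ${\rm Var}[Z(s)]\sim (T-s)/(s-\tau)$ by (\ref{AU2}), $\int_\tau^T E[Z(s)^2]\,ds$ diverges, so you cannot simply replace $\tau_{\ve,x,T}$ by $\tau$ and invoke boundedness. The paper addresses this by estimating the distribution of $\tau_{\ve,x,T}-\tau$ near $0$ (see (\ref{BJ2})--(\ref{BM2})); that is the main technical content of the proof, and your proposal skips it entirely because of the incorrect determinism claim. Note also that if you intended to sidestep this by using the \emph{open-loop} controller $\mu_\ve(x,y,s)=\la(x,y,s)+\phi(s)$ with $\phi$ held fixed, then the difference is indeed deterministic, but Lemma 2.1 no longer applies (it is stated for the closed-loop $\mu^*_{0,\tau}$ and its conclusion $\tau_{\ve,x,T}>\tau$ a.s.\ relies on the $1/(s-\tau)$ feedback); the exit time can then fall below $\tau$, and since $\phi(s)^2\sim \ga(\tau)^2/s^2$ near $s=0$ is not integrable, the dominated-convergence step also fails. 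Either way, the convergence of $\tau_{\ve,x,T}$ to $\tau$ must be quantified, not just established in probability.
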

\begin{proof}
We first observe that a minimizing $\tau=\tau_{0,x,T}$ (which may not be unique) in (\ref{AB2}) satisfies $0<\tau_{0,x,T}<T$.  To see this we use that fact that the minimizing trajectory for fixed $\tau$ is given by (\ref{AQ2}), (\ref{AR2}) with $\ve=0$.  From (\ref{O2}) we have that  $\la(x,y,s)\simeq(x-y)/s$ as $s\ra 0$, whence the minimum action integral diverges as $\tau\ra 0$. When $\tau$ is close to $T$ we have from (\ref{AP2}), (\ref{AQ2}) that $\mu^*_{0,\tau}(X_0(s),y,s)\simeq x/(T-\tau)$, whence the minimum action integral again diverges as $\tau\ra T$. 

From (\ref{AQ2})-(\ref{AS2}) we have that 
\be \label{BF2}
X_\ve(s)-X_0(s) \ = \  -\frac{s-\tau}{(T-\tau)m_{1,A_\tau}(s,T)}\sqrt{\ve}Z(s) \ , \quad \tau<s\le T \ .
\ee
From (\ref{O2}), (\ref{AO2}), (\ref{AP2}) we have that
\begin{multline} \label{BG2}
q_\ve(x,y,T) \ \le \  \sqrt{\ve}E\left[\int_{\tau_{\ve,x,T}}^T f_\tau(s)Z(s) \  ds  \right] +C(\tau)\ve  E\left[\frac{1}{2}\int_{\tau_{\ve,x,T}}^T Z(s)^2 \  ds  \right] 
 \\ +E\left[\frac{1}{2}\int_{\tau_{\ve,x,T}}^T[\mu^*_{0,\tau}(X_0(s),y,s)-\la(X_0(s),y,s)]^2 \ ds \ \Big| \ X_0(T)=x \  \right]  \ ,
\end{multline}
where the function $f_\tau:[\tau,T]\ra\mathbb{R}$ is a continuous function depending on $\tau$, and $C(\tau)$ is a constant also depending on $\tau$.  We choose now $\tau$ to be a minimizer of the action (\ref{AB2}). Since $\tau_{\ve,x,T}>\tau$ with probability $1$, it follows that the last expectation on the RHS of (\ref{BG2}) is bounded above by $q_0(x,y,T)$.  

To bound the first term on the RHS of (\ref{BG2}) we note  from the optional sampling theorem that
 $E[Z(s\vee\tau_{\ve,x,T})]=0$. Hence $E\left[ Z(s); \ \tau_{\ve,x,T}<s\right]=-E[Z(\tau_{\ve,x,T}); \ \tau_{\ve,x,T}>s]$.
Letting $M=\sup_{\tau<s\le T}X_{\rm class}(s)$, we see from (\ref{AQ2}) that $0<Z(\tau_{\ve,x,T})<M/\sqrt{\ve}$.  Hence we have that
\be \label{BH2}
\sqrt{\ve}E\left[\int_{\tau_{\ve,x,T}}^T f_\tau(s)Z(s) \  ds  \right]  \ \le \ C_1E[\tau_{\ve,x,T}-\tau] \ , 
\ee
for some constant $C_1$ independent of $\ve$. For $0<\la<T-\tau$ we let $\ga(\la)=\inf_{\tau+\la<s<T}X_{\rm class}(s)$, whence $\ga:(0,T-\tau]\ra\mathbb{R}$ is a positive decreasing function.  We have from (\ref{BB2}) and reflection symmetry that
\begin{multline} \label{BI2}
P(\tau_{\ve,x,T}-\tau>\la) \\ \le \
 P(\sup_{\tau+\la<s<T}Z(s)>\ga(\la)/\sqrt{\ve}) \
\le \ \frac{C_2\sqrt{\ve}}{\sqrt{\la}\ga(\la)}\exp\left[-\frac{c_2\la\ga(\la)^2}{\ve}\right] \ .
\end{multline}
We see from (\ref{BI2}) that $\lim_{\ve\ra 0} P(\tau_{\ve,x,T}-\tau>\la)=0$ for all $\la>0$, whence $\lim_{\ve\ra0} E[\tau_{\ve,x,T}-\tau]=0$ by dominated convergence.

We see from (\ref{AU2}) that the second term on the RHS of (\ref{BG2}) diverges if we replace $\tau_{\ve,x,T}$ by $\tau$. Therefore it is again necessary to estimate the distribution of the variable $\tau_{\ve,x,T}-\tau>0$ as $\ve\ra0$.  With $M$ as in the previous paragraph, and using (\ref{AT2}), (\ref{AU2}) we have from (\ref{AQ2}) that
\begin{multline} \label{BJ2}
P(\tau_{\ve,x,T}-\tau<\la) \ \le \ P\left(\sup_{\tau+\la<s< T} Z(s)<M/\sqrt{\ve}\right) \\
 = \ P\left(|Z(\tau+\la)|<M/\sqrt{\ve}\right) \ \le \  C_2(\la/\ve)^{1/2} \ , \quad 0<\la<T-\tau \ ,
\end{multline}
where $c_2,C_2>0$ are constants. We write now
\be \label{BK2}
E\left[\int_{\tau_{\ve,x,T}}^T Z(s)^2 \  ds  \right]  \ \le \ E\left[\int_{\tau+\ve}^T Z(s)^2 \  ds  \right]+\sum_{n=0}^\infty a_n \ ,
\ee
where
\be \label{BL2}
a_n \ = \  E\left[\int_{\tau+2^{-(n+1)}\ve}^{\tau+2^{-n}\ve} Z(s)^2 \  ds \ ; \ \tau_{\ve,x,T}-\tau<2^{-n}\ve  \right] \ , \quad n=0,1,\dots
\ee
It follows from (\ref{AU2}) that the first term on the RHS of (\ref{BK2}) is bounded by $C_3|\log\ve|$ for some constant $C_3$.  From the Schwarz inequality we have that
\be \label{BM2}
a_n \ \le \ 2^{-(n+1)/2}\sqrt{\ve}P\left(  \tau_{\ve,x,T}-\tau<2^{-n}\ve \right)^{1/2}E\left[\int_{\tau+2^{-(n+1)}\ve}^{\tau+2^{-n}\ve} Z(s)^4 \  ds\right]^{1/2} \ .
\ee
It follows from (\ref{AU2}), (\ref{BJ2}), (\ref{BM2}) that $a_n\le C_4P\left(  \tau_{\ve,x,T}-\tau<2^{-n}\ve\right)^{1/2}\le C_52^{-n/2}$ for some constants $C_4,C_5$.  We have shown that the second term on the RHS of (\ref{BG2}) converges to $0$ as $\ve\ra 0$. 
\end{proof}
\begin{rem}
One can obtain a rate of convergence $\ve\log\ve$ in Lemma 2.2 as $\ve\ra0$ by making a further assumption that the classical trajectory $X_0(\cdot)$ has the property $X'_0(\tau)>0$. In that case $\lim_{\la\ra0}\ga(\la)\ge c_1$ for some  constant $c_1>0$, whence (\ref{BI2}) implies that $E[\tau_{\ve,x,T}-\tau]\le C_2\ve$ for some constant $C_2$. 
\end{rem}
To obtain a lower bound for $q_\ve$ by  $q_0$ plus a constant which vanishes as $\ve\ra 0$ we need to show that
the variational formula (\ref{Z2}) yields a lower bound when $\mu_\ve$ is chosen to be the optimal controller $\mu^*_\ve$ given by (\ref{AA2}).  We have from propositions  3.1, 3.2   that the function $(x,T)\ra\mu_\ve^*(x,y,T)$ is $C^1$ on the domain $x,T>0$ and $\mu_\ve^*(x,y,T)\ge \la(x,y,T)$ for $x,y,T>0$. Hence the SDE (\ref{Y2}) with $\mu_\ve=\mu_\ve^*$   may be solved backwards in time. Letting $X^*_\ve(s), \ s\le T,$ be the solution with terminal condition $X^*_\ve(T)=x$, then $X^*_\ve(s)\le X_\ve(s), \ 0<s\le T$, where $X_\ve(\cdot)$ is given by (\ref{P2}). 
\begin{lem}
For $x,T>0$ and paths $X^*_\ve(s), \ s<T$, with $X^*_\ve(T)=x$ we define $\tau^*_{\ve,x,T}=\inf\{s>0: \ X^*_\ve(s')>0, \ s\le s'\le T\}$.  Then $\tau^*_{\ve,x,T}>0$ with probability $1$ and
\be \label{BO2}
E\left[\left(\frac{1}{\tau^*_{\ve,x,T}}\right)^{1/2-\nu}\right] \ < \ \infty \quad {\rm for \ any \ }\nu \ {\rm with \ } 0<\nu\le 1/2 \ ,
\ee
\be \label{BP2}
q_\ve(x,y,T) \ \ge \ 
  E\left[\frac{1}{2}\int_{\tau^*_{\ve,x,T}}^T[\mu^*_\ve(X^*_\ve(s),y,s)-\la(X^*_\ve(s),y,s)]^2 \ ds \ \Big| \ X^*_\ve(T)=x \  \right] \ .
\ee
\end{lem}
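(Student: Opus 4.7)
The strategy is to derive the lower bound (\ref{BP2}) by applying Ito's formula to $q_\ve$ along the trajectory of $X^*_\ve$, using the HJB equation (\ref{V2}) and the concavity statement (\ref{Q1}) of Theorem~1.1. The positivity of $\tau^*_{\ve,x,T}$ and the moment bound (\ref{BO2}) follow from the singular structure of the optimal drift $\mu^*_\ve=\la+\pa_x q_\ve$ as $s\to 0$.

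For (\ref{BP2}), apply Ito to $s\mapsto q_\ve(X^*_\ve(s),y,s)$. Since $(dX^*_\ve)^2=\ve\,ds$, the drift of $dq_\ve$ is $\pa_s q_\ve+\mu^*_\ve\,\pa_x q_\ve+(\ve/2)\pa^2_x q_\ve$; substituting $\pa_s q_\ve$ from (\ref{V2}) and using $\mu^*_\ve-\la=\pa_x q_\ve$ from (\ref{AA2}), this collapses to $\tfrac{1}{2}(\pa_x q_\ve)^2+\ve\,\pa^2_x q_\ve$. Localize with the upper cutoff $\tau^*_{\ve,x,T,K}=\inf\{s\in(\tau^*_{\ve,x,T},T]:X^*_\ve(s)>K\}$ and set $\sig_K=\tau^*_{\ve,x,T}\vee\tau^*_{\ve,x,T,K}$. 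Optional sampling combined with the boundary value $q_\ve(0,y,\tau^*)=0$ from (\ref{W2}) yields
\[
q_\ve(x,y,T) \;=\; E\!\left[q_\ve(X^*_\ve(\sig_K),y,\sig_K)\right] + E\!\left[\int_{\sig_K}^T\!\big(\tfrac{1}{2}(\pa_x q_\ve)^2 + \ve\,\pa^2_x q_\ve\big)\,ds\right].
\]
Dropping the nonnegative first expectation and sending $K\to\infty$ by monotone convergence (valid since $X^*_\ve\le X_\ve$ is pathwise bounded above on $[\tau^*_{\ve,x,T},T]$, so $\sig_K=\tau^*_{\ve,x,T}$ for all sufficiently large $K$) yields $q_\ve(x,y,T)\ge E[\int_{\tau^*_{\ve,x,T}}^T(\tfrac{1}{2}(\pa_x q_\ve)^2+\ve\,\pa^2_x q_\ve)\,ds]$. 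The concavity $\pa^2_x q_\ve\le 0$ lets us discard the $\ve$ term, and substituting $\pa_x q_\ve=\mu^*_\ve-\la$ produces (\ref{BP2}).

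For $\tau^*_{\ve,x,T}>0$ almost surely, use the pathwise decomposition $X^*_\ve=X_\ve-\De$ where $\De\ge 0$ solves the linear ODE $d\De/ds=L(s)\De-\pa_x q_\ve(X^*_\ve(s),y,s)$ with $L(s)=A(s)+1/\sig_A^2(s)$ and $\De(T)=0$; it then suffices to show $\liminf_{s\to 0}\De(s)>y$. The lower bound $\pa_x q_\ve\ge \al_1(0,s)y$ from (\ref{O1}), combined with the $1/s$-type singularity of $\al_1(0,s)$ as $s\to 0$ (explicitly $\al_1(0,s)=2/s$ in the $A(\cdot)\equiv 0$ case, and a comparable $c/s$ lower bound with $c>1$ in general) and the analogous singular behavior of $L(s)$, implies this estimate via the integrating-factor representation of $\De$. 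Since $X_\ve(s)\to y$ almost surely as $s\to 0$, $X^*_\ve$ is then forced to cross zero at a strictly positive time.

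The moment bound (\ref{BO2}) is the main technical obstacle. The plan is to express $X^*_\ve$ using an integrating-factor representation analogous to (\ref{AQ2})--(\ref{AS2}), isolate a Gaussian noise process $Z(\cdot)$, and apply the reflection principle for $Z$ in the spirit of (\ref{BI2})--(\ref{BJ2}) to establish a tail bound of the form $P(\tau^*_{\ve,x,T}<\la)\le C\la^{1/2+\nu'}$ for any $\nu'<1/2$ and all sufficiently small $\la>0$. Then (\ref{BO2}) follows by integration against $\la^{-(3/2-\nu)}\,d\la$. The delicate point is keeping the constants uniform despite the singularity of $\mu^*_\ve$ at $s=0$.
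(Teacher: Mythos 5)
Your argument for (\ref{BP2}) has a sign error that leads to an unneeded --- and, in this paper, circular --- invocation of concavity. The SDE (\ref{Y2}) for $X^*_\ve$ is run backward in time, so the Ito correction in the drift of $s\mapsto q_\ve(X^*_\ve(s),y,s)$ is $-\tfrac{\ve}{2}\pa^2_x q_\ve$ (compare (\ref{AX2})), not $+\tfrac{\ve}{2}\pa^2_x q_\ve$. With the correct sign, after substituting $\pa_s q_\ve$ from (\ref{V2}) the $\ve$-terms cancel exactly and the drift collapses to $\tfrac{1}{2}(\pa_x q_\ve)^2$ (this is (\ref{AY2}) with $\mu_\ve=\mu_\ve^*$), so concavity never enters. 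Invoking (\ref{Q1}) would also be circular: Theorem~5.1 is proved via Proposition~5.1, whose proof uses the present lemma. The same circularity affects your argument that $\tau^*_{\ve,x,T}>0$ a.s., since the lower bound $\pa_x q_\ve\ge\al_1 y$ is (\ref{O1}), which also comes from Proposition~5.1.

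For (\ref{BO2}) the gap is conceptual: the optimal drift $\mu^*_\ve=\la+\pa_x q_\ve$ is nonlinear in $x$, so $X^*_\ve$ has no explicit Gaussian integrating-factor representation of the kind (\ref{AQ2})--(\ref{AS2}), and the reflection principle cannot be applied to $X^*_\ve$ directly. The step your plan is missing is the integral identity (\ref{BR2})--(\ref{BS2}): multiplying $\mu^*_\ve-\la$ by the integrating factor $m_{1,A}(\cdot)/\sig_A^2(\cdot)$ and integrating along the $X^*_\ve$-trajectory from $\tau_\del$ to $T$, then applying the Schwarz inequality to the left side, produces the pathwise bound $y/\tau^*_{\ve,x,T}\le C\bigl[1+\sqrt{\ve}\,|Z(\tau_\del)|\bigr]+(C/y)\int_{\tau^*_{\ve,x,T}}^T[\mu^*_\ve-\la]^2\,ds$, where $Z$ is the genuinely Gaussian backward martingale of (\ref{P2}). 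Multiplying by $(\tau^*_{\ve,x,T})^{1/2+\nu}$ and taking expectations, the action term is controlled by (\ref{BP2}) and the martingale term by the reflection-principle estimate (\ref{BW2}); letting $\del\to 0$ then gives $\tau^*_{\ve,x,T}>0$ a.s.\ and (\ref{BO2}) simultaneously. The tail bound $P(\tau^*_{\ve,x,T}<\la)\le C\la^{1/2+\nu'}$ that your plan aims for is not established in the paper, and the Gaussian route you sketch does not produce it because $X^*_\ve$ is not Gaussian.
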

\begin{proof}
We consider the stochastic integral $s\ra M(s), \ 0<s\le T$, defined by (\ref{AX2}), (\ref{AY2}) with $\mu_\ve=\mu_\ve^*$. For $K>0$ and $0<x<K$ let $\tau^*_{\ve,x,T,K}$ be the first exit time of $X^*_\ve(s), \ s<T,$ with $X^*_\ve(T)=x$ from the interval $(0,K)$. Since $q_\ve$ is non-negative, we have from (\ref{AA2}) and the optional sampling theorem that
 \begin{multline} \label{BQ2}
 q_\ve(x,y,T) \\
  \ge \ 
  E\left[\frac{1}{2}\int_{s\vee\tau^*_{\ve,x,T,K}}^T[\mu^*_\ve(X^*_\ve(s),y,s)-\la(X^*_\ve(s),y,s)]^2 \ ds \ \Big| \ X^*_\ve(T)=x \  \right] \ , \quad 0<s\le T \ .
 \end{multline}
 Since $X^*_\ve(\cdot)\le X_\ve(\cdot)$, it follows that $s\vee\tau^*_{\ve,x,T,K}\ra s\vee\tau^*_{\ve,x,T}$ with probability $1$ as $K\ra\infty$, whence the inequality (\ref{BQ2}) holds with $\tau^*_{\ve,x,T}$ in place of $\tau^*_{\ve,x,T,K}$. 
 
 Letting  $X_\ve(\cdot)$ be the solution to (\ref{Y2}) with terminal condition $X_\ve(T)=x>0$,  we have from (\ref{O2}), (\ref{Y2}) that
\begin{multline} \label{BR2}
\frac{m_{1,A}(s)}{\sig^2_A(s)}\left[\mu_\ve(X_\ve(s),y,s)-\la(X_\ve(s),y,s)\right]  ds \\
 = \ d\left[\frac{m_{1,A}(s)X_\ve(s)}{\sig^2_A(s)}\right]-\sqrt{\ve}\frac{m_{1,A}(s)}{\sig^2_A(s)}dB(s) 
 -d\left[\frac{m_{1,A}(s)^2\sig_A^2(s,T)}{\sig^2_A(T)\sig^2_A(s)}\right]y \\
- \ d\left[\frac{m_{1,A}(T)m_{2,A}(s,T)}{\sig^2_A(T)}-\frac{m_{1,A}(s)m_{2,A}(s)\sig_A^2(s,T)}{\sig^2_A(T)\sig^2_A(s)}\right] \ , \quad 0<s\le T \ .
\end{multline}
Let  $\del$ satisfy $0<\del\le T$ and $\tau_\del$ be the stopping time $\tau_\del=\del\vee\tau^*_{\ve,x,T}$. On integrating (\ref{BR2}) with 
$\mu_\ve=\mu^*_\ve$ over the interval 
$\tau_\del<s<T$, we obtain the identity
\begin{multline} \label{BS2}
\int_{\tau_\del}^T\frac{m_{1,A}(s)}{\sig^2_A(s)}\left[\mu^*_\ve(X^*_\ve(s),y,s)-\la(X^*_\ve(s),y,s)\right]  ds  \ = \\
 \frac{m_{1,A}(T)x}{\sig^2_A(T)}-\frac{m_{1,A}(\tau_\del)X^*_\ve(\tau_\del)}{\sig^2_A(\tau_\del)}-\sqrt{\ve}Z(\tau_\del) \\
 +\frac{m_{1,A}(\tau_\del)^2\sig_A^2(\tau_\del,T)}{\sig^2_A(T)\sig^2_A(\tau_\del)}y 
+\frac{m_{1,A}(T)m_{2,A}(\tau_\del,T)}{\sig^2_A(T)} \\
-\frac{m_{1,A}(\tau_\del)m_{2,A}(\tau_\del)\sig_A^2(\tau_\del,T)}{\sig^2_A(T)\sig^2_A(\tau_\del)} \ ,
\end{multline}
where the martingale $Z(\cdot)$ is defined in (\ref{P2}). From the Schwarz inequality we see that the LHS of (\ref{BS2}) is bounded above by
\be \label{BT2}
\frac{\al y}{2}\int_{\tau_\del}^T\frac{m_{1,A}(s)^2}{\sig^4_A(s)} \ ds+
\frac{1}{2\al y}\int_{\tau_\del}^T \left[\mu^*_\ve(X^*_\ve(s),y,s)-\la(X^*_\ve(s),y,s)\right]^2  ds  \ , 
\ee
for any $\al>0$. Choosing $\al$ sufficiently small, we conclude from (\ref{BS2}) that
\begin{multline} \label{BU2}
\frac{y}{\tau^*_{\ve,x,T}} \ \le \ C_1\left[1+\sqrt{\ve} \ |Z(\tau_\del)| \ \right] \\
+ \frac{C_1}{2\al y}\int_{\tau^*_{\ve,x,T}}^T \left[\mu^*_\ve(X^*_\ve(s),y,s)-\la(X^*_\ve(s),y,s)\right]^2  ds\quad {\rm if  \ }\del\le\tau^*_{\ve,x,T}\le T/2 \ ,
\end{multline}
for some constants $C_1,\al$ depending only on $T$.  Multiplying  (\ref{BU2}) by $(\tau^*_{\ve,x,T})^{1/2+\nu}$  and taking the expected value,  we conclude from the limit of (\ref{BQ2}) as $K\ra\infty$  that
\begin{multline} \label{BV2}
yE\left[\left(\frac{1}{\tau^*_{\ve,x,T}}\right)^{1/2-\nu}; \ \del<\tau^*_{\ve,x,T}<T/2 \right] \ \le \\ 
 C_1\left[T^{1/2+\nu}+\sqrt{\ve} \ E\left[ \tau_\del^{1/2+\nu} |Z(\tau_\del)|\right] \ \right]
 +\frac{C_1T^{1/2+\nu}q_\ve(x,y,T)}{\al y} \ .
\end{multline}

Let $\tau$ be a stopping time for the martingale $s\ra Z(s), \ 0<s<T$, of (\ref{P2}).  Then for $\nu>0$ there is a constant $C_\nu$, depending on $\nu$, such that
\be \label{BW2}
E\left[\tau^{1/2+\nu}; \ |Z(\tau)|>a \ \right] \ \le  \ \frac{C_\nu}{a^{1+2\nu}}   \quad {\rm for \ } a>0 \ .
\ee
To show (\ref{BW2}), let $\tau_a=\sup\{s: \ 0<s<T, \ |Z(s)|\ge a \ \}$. Then any stopping time $\tau$ for $Z(\cdot)$ has the property
\be \label{BX2}
\{\tau>s, \ |Z(\tau)|>a\} \ \subset \ \{\tau_a>s\} \ ,
\ee
since $\{\tau_a>s\}$ is the largest Borel set in $\mathcal{F}_{s,T}$, the $\sig-$field generated by $B(s'), \ s<s'<T,$ on which $\sup_{s<s'<T}|Z(s')|>a$.  Hence
\begin{multline} \label{BY2}
E[\tau^{1/2+\nu}; \ |Z(\tau)|>a] \ = \ \left(\frac{1}{2}+\nu\right)\int_0^T s^{\nu-1/2}P\left(   \tau>s, \ |Z(\tau)|>a  \right) \ ds \\
 \le \ \left(\frac{1}{2}+\nu\right)\int_0^T s^{\nu-1/2} P(\tau_a>s) \ ds \ = \ E[\tau^{1/2+\nu}_a] \ .
\end{multline}
The reflection principle applies to $Z(\cdot)$ and also ${\rm Var}[Z(s)]$ satisfies an inequality (\ref{AU2}) with $\tau=0$.  We have therefore from (\ref{BB2}) that
\be \label{BZ2}
P(\tau_a>n/a^2) \ = \ P\left(\sup_{n/a^2<s<T}|Z(s)|\ge a\right) \ \le \ \frac{2C_2}{\sqrt{n}}e^{-c_2n} \ , \quad n=1,2,\dots
\ee
The inequality (\ref{BW2}) follows now from (\ref{BY2}), (\ref{BZ2}). 

We easily see from (\ref{BW2}) that $E\left[ \tau_\del^{1/2+\nu} |Z(\tau_\del)|\right]$ is bounded by a constant, uniformly in $\del$ as $\del\ra 0$.  Thus for any $a>0$, 
\begin{multline} \label{CA2}
E\left[\tau^{1/2+\nu}_\del |Z(\tau_\del)| \ \right] \ \le \ aE\left[\tau^{1/2+\nu}_\del  \ \right] \\+\sum_{n=0}^\infty 2^{n+1} aE[\tau_\del^{1/2+\nu}; \ |Z(\tau_\del)|>2^na]  \
\le \ aT^{1/2+\nu}+\frac{C_{1,\nu}}{a^{2\nu}}  \ ,
\end{multline}
where $C_{1,\nu}$ depends on $\nu>0$, but not on $\del$. 
Choosing $a$ to minimize the RHS of (\ref{CA2}) we conclude that $E\left[ \tau_\del^{1/2+\nu} |Z(\tau_\del)|\right]\le C_{2,\nu}T^\nu$ for some constant depending only on $\nu$.  Hence the RHS of (\ref{BV2}) is bounded by a constant independent of $\del$. Letting $\del\ra 0$ we conclude that $\tau^*_{\ve,x,T}>0$ with probability $1$ and (\ref{BO2}) holds. 

The inequality (\ref{BP2}) follows from (\ref{BQ2}) and the monotone convergence theorem by letting $K\ra\infty$ first and then $s\ra0$. 
\end{proof}
\begin{lem}
For  $x,y,T$  positive  one has
$\liminf_{\ve\ra 0}[q_\ve(x,y,T) -q_0(x,y,T)]=0$.
\end{lem}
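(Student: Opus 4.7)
The plan is to combine the lower bound of Lemma 2.3 with a pathwise comparison against the classical variational problem (\ref{AB2}). From Lemma 2.3 we already have
\[
q_\ve(x,y,T) \;\ge\; E\!\left[\frac{1}{2}\!\int_{\tau^*_{\ve,x,T}}^T\! [\mu^*_\ve - \la]^2(X^*_\ve(s),y,s)\, ds\right],
\]
so it suffices to show that the right-hand side dominates $q_0(x,y,T)-o_{\ve\to 0}(1)$.

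For each realization $\omega$ introduce a mesh $\tau^*_{\ve,x,T}(\omega) = s_0 < s_1 < \cdots < s_K = T$ of width $\eta$ with $\ve \ll \eta \to 0$, and define $Y_\ve(\cdot,\omega)$ to be the continuous piecewise-linear interpolant of $X^*_\ve$ through $(s_k, X^*_\ve(s_k))$. Since $X^*_\ve(\tau^*_{\ve,x,T})=0$ and $X^*_\ve(s)>0$ for $s\in(\tau^*_{\ve,x,T},T]$, the path $Y_\ve$ is admissible in (\ref{AB2}) with exit time $\tau^*_{\ve,x,T}$, so
\[
q_0(x,y,T) \;\le\; \frac{1}{2}\int_{\tau^*_{\ve,x,T}}^T [Y'_\ve(s)-\la(Y_\ve(s),y,s)]^2\,ds.
\]
On $[s_k,s_{k+1}]$ one has $Y'_\ve(s)=\bar\mu_k+\sqrt{\ve}\,\Delta B_k/\eta$ with $\bar\mu_k=\eta^{-1}\!\int_{s_k}^{s_{k+1}}\!\mu^*_\ve(X^*_\ve,y,s')\,ds'$ and $\Delta B_k=B(s_{k+1})-B(s_k)$, while $|Y_\ve-X^*_\ve|=O(\eta+\sqrt{\ve\eta})$ uniformly on the sub-interval. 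Expanding the squared integrand therefore produces
\[
\frac{1}{2}\!\int_{\tau^*_{\ve,x,T}}^T[Y'_\ve-\la(Y_\ve)]^2 ds = \frac{1}{2}\!\int_{\tau^*_{\ve,x,T}}^T[\mu^*_\ve-\la]^2(X^*_\ve,y,s)\,ds + \sum_k\frac{\ve(\Delta B_k)^2}{2\eta} + R^{(\eta)}_\ve,
\]
where $R^{(\eta)}_\ve$ collects the cross terms in $\Delta B_k$ and the error from replacing $X^*_\ve$ by $Y_\ve$ inside $\la$.

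Taking expectations with $\eta=\sqrt{\ve}$, the purely Brownian sum contributes $\sqrt{\ve}(T-\tau^*_{\ve,x,T})/2\to 0$; the cross terms are controlled by conditioning on the backward filtration at $s_{k+1}$, under which $\Delta B_k$ is a fresh mean-zero Brownian increment of variance $\eta$, producing a first-moment contribution that is $O(\ve)$ after summation; and the interpolation error is handled by invoking the $x$-Lipschitz constant of $\la(\cdot,y,s)$, which is of order $1/\sig^2_A(s)\sim 1/s$ near $s=0$, together with the negative-moment estimate (\ref{BO2}) on $\tau^*_{\ve,x,T}$. Assembling these estimates yields
\[
q_0(x,y,T) \;\le\; E\!\left[\frac{1}{2}\!\int_{\tau^*_{\ve,x,T}}^T[\mu^*_\ve-\la]^2 ds\right] + o_{\ve\to 0}(1) \;\le\; q_\ve(x,y,T) + o_{\ve\to 0}(1),
\]
from which $\liminf_{\ve\to 0}[q_\ve(x,y,T)-q_0(x,y,T)]\ge 0$ follows.

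The chief technical obstacle is the control of $R^{(\eta)}_\ve$ near the random lower endpoint $\tau^*_{\ve,x,T}$. Because $\la(\cdot,y,s)$ is singular as $s\to 0$ and its $x$-Lipschitz constant blows up like $1/s$, the $O(\eta+\sqrt{\ve\eta})$ discrepancy between $Y_\ve$ and $X^*_\ve$ can be amplified inside the integral; the fractional negative moment bound (\ref{BO2}), which gives $E[(1/\tau^*_{\ve,x,T})^{1/2-\nu}]<\infty$ uniformly in $\ve$, is precisely what ensures this amplification averages to an $o(1)$ error.
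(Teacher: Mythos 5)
Your high-level plan---pushing the stochastic trajectory $X^*_\ve$ into the classical variational problem (\ref{AB2}) to get an upper bound on $q_0$ by the stochastic action---is reasonable and genuinely different from the paper's. Rather than interpolating $X^*_\ve$, the paper associates to $X^*_\ve$ the differentiable path $X^*_{\ve,c}(s)=X^*_\ve(s)+\sqrt{\ve}\,\sig^2_A(s)Z(s)/m_{1,A}(s)$, obtained by stripping the Brownian increments from the SDE while retaining the control, so that $\frac{d}{ds}X^*_{\ve,c}(s)-\la(X^*_{\ve,c}(s),y,s)=\mu^*_\ve(X^*_\ve(s),y,s)-\la(X^*_\ve(s),y,s)$ \emph{exactly}. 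The classical action of $X^*_{\ve,c}$ then equals the stochastic action with no approximation error; the only price is that $X^*_{\ve,c}(\tau^*_{\ve,x,T})=\sqrt{\ve}Z_\ve$ rather than $0$, and $X^*_{\ve,c}$ need not stay positive. The paper therefore compares against the relaxed functional $q_0(x,y,z,T)$ (no positivity constraint, endpoint $z$ instead of $0$), and the only quantitative input needed is the local Lipschitz estimate $|q_0(x,y,z,T)-q_0(x,y,T)|\le C|z|$ for small $|z|$, a cheap consequence of $0<\tau_{\min}<T$, giving $q_\ve\ge q_0-C\sqrt{\ve}E[|Z_\ve|]-q_0 P(|Z_\ve|>\del/\sqrt{\ve})$.

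The gap in your argument is the control of $R^{(\eta)}_\ve$ near the random hitting time. The discrepancy $|Y_\ve(s)-X^*_\ve(s)|$ on the first cell $[\tau^*_{\ve,x,T},\tau^*_{\ve,x,T}+\eta]$ is not $O(\eta+\sqrt{\ve\eta})$: the drift $\mu^*_\ve$ there is of size $y/\tau^*_{\ve,x,T}$ (since $\la(0,y,s)\sim -y/\sig^2_A(s)\sim -y/s$), so the oscillation of $X^*_\ve$ over that cell is at least of order $\eta/\tau^*_{\ve,x,T}$. Passing this through the $x$-Lipschitz constant of $\la$, itself of order $1/\tau^*_{\ve,x,T}$, and multiplying by the other factor $|\mu^*_\ve-\la|$, of order $y/\tau^*_{\ve,x,T}$, in the factorization $a^2-b^2=(a-b)(a+b)$, leaves a contribution to $R^{(\eta)}_\ve$ whose integral over the first cell alone is of order $\eta^2 y/(\tau^*_{\ve,x,T})^3$. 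Even with $\eta=\sqrt{\ve}$ the expectation of this term would require an $E[(\tau^*_{\ve,x,T})^{-3}]$ bound, while (\ref{BO2}) supplies only $E[(\tau^*_{\ve,x,T})^{-1/2+\nu}]<\infty$; the blow-up is far too severe for that fractional moment to absorb. Separately, the cross-term claim is not valid as stated: $\bar\mu_k$ and $\la(Y_\ve(s))$ for $s\in[s_k,s_{k+1}]$ both depend on $\Delta B_k$ through the backward SDE (via $X^*_\ve$ on that cell and via $X^*_\ve(s_k)$), so neither is measurable with respect to the backward filtration at $s_{k+1}$, and the conditional first moment of the cross term does not vanish; replacing them by measurable surrogates merely shifts the error into the same singular remainder.
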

\begin{proof}
Let $X^*_\ve(s), \ s\le T,$ be a solution to (\ref{Y2}) with $\mu_\ve=\mu^*_\ve$ and terminal condition $X^*_\ve(T)=x$. We associate with $X^*_\ve(\cdot)$ the differentiable path $X^*_{\ve,c}(\cdot)$ defined by 
\be \label{CB2}
\frac{dX^*_{\ve,c}(s)}{ds} \ = \ \la(X^*_{\ve,c}(s),y,s)+\left[\mu^*_\ve(X^*_\ve(s),y,s)-\la(X^*_\ve(s),y,s)\right] \ , \quad s<T\  ,
\ee
with terminal condition $X^*_{\ve,c}(T)=x$. From (\ref{O2}), (\ref{Y2}), (\ref{CB2}) we see that
\be \label{CC2}
d\left\{X^*_\ve(s)-X^*_{\ve,c}(s)\right\} \ = \ \left[A(s)+\frac{1}{\sig^2_A(s)}\right]\left\{X^*_\ve(s)-X^*_{\ve,c}(s)\right\} +\sqrt{\ve} \ dB(s) \ , \quad s<T \ ,
\ee
with zero terminal condition at $s=T$.  Comparing (\ref{CC2}) to (\ref{N2}), we conclude from (\ref{P2})  that
\be \label{CD2}
X^*_{\ve,c}(s) \ =  \ X^*_\ve(s)+\sqrt{\ve}\frac{\sig^2_A(s)}{m_{1,A}(s)} Z(s) \ , \quad s<T \ , 
\ee
with $Z(\cdot)$ as given in (\ref{P2}). 
We define a classical action which generalizes (\ref{AB2}).  Thus for $x,y,T>0$ and $z\in\mathbb{R}$ we define $q_0(x,y,z,T)$ just as in (\ref{AB2}) but with terminal condition $x(\tau)=z$ instead of $x(\tau)=0$, and without the positivity constraint $x(\cdot)>0$.  It follows from (\ref{CB2}), (\ref{CD2}) and (\ref{BP2}) of Lemma 2.3 that 
\be \label{CE2}
q_\ve(x,y,T) \ \ge \ E\left[q_0(x,y,\sqrt{\ve}Z_\ve,T)\right]  \ , \quad Z_\ve=\frac{\sig^2_A(\tau^*_{\ve,x,T})}{m_{1,A}(\tau^*_{\ve,x,T})} Z(\tau^*_{\ve,x,T}) \ ,
\ee
where $Z(\cdot)$ is as in (\ref{P2}). 

We have already observed  that a minimizing $\tau$ in (\ref{AB2}) satisfies $0<\tau<T$. It follows from this there exist constants $C,\del>0$, depending on $x,y,T$, such that if $|z|<\del$ then $|q_0(x,y,z,T)-q_0(x,y,T)|\le C|z|$.
Hence from (\ref{CE2}) we have that
\be \label{CF2}
q_\ve(x,y,T) \ \ge \ q_0(x,y,T)\left[1-P(|Z_\ve|>\del/\sqrt{\ve})\right]-C\sqrt{\ve}E\left[ \ |Z_\ve| \ \right] \ .
\ee
From (\ref{C2}), (\ref{D2}) we see that $|Z_\ve|\le C_1\tau^*_{\ve,x,T}|Z(\tau^*_{\ve,x,T})|$ for some constant $C_1$. Hence from the proof of Lemma 2.3 we have that $ E\left[ \ |Z_\ve| \ \right] \le C_2$ for some constant 
$C_2$ independent of $\ve$ as $\ve\ra0$. We conclude from (\ref{CF2}) and the Chebyshev inequality that 
$q_\ve(x,y,T)  \ge  q_0(x,y,T)-C_3\sqrt{\ve}$ for some constant $C_3$. 
\end{proof}
We summarize the main result of this section:
\begin{theorem}
Assume $A(\cdot)$ is continuous and the function $q_\ve$ is defined by (\ref{R2}), (\ref{U2}). Then For  $x,y,T$  positive  one has  $\lim_{\ve\ra 0}q_\ve(x,y,T)=q_0(x,y,T)$, where the function $q_0$ is defined by (\ref{AB2}). 
\end{theorem}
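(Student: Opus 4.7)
The plan is to obtain Theorem 2.1 as an immediate sandwich of the two one-sided limit statements already established in this section. The upper bound
\[
\limsup_{\ve\to 0}\bigl[q_\ve(x,y,T)-q_0(x,y,T)\bigr] \le 0
\]
is Lemma 2.2, while the lower bound
\[
\liminf_{\ve\to 0}\bigl[q_\ve(x,y,T)-q_0(x,y,T)\bigr] \ge 0
\]
is Lemma 2.4. Combining these two statements for any fixed positive $x,y,T$ yields existence of the limit and the identity $\lim_{\ve\to 0}q_\ve(x,y,T)=q_0(x,y,T)$.

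In terms of the underlying logic, the two lemmas attack the same variational characterization (\ref{AB2}) from opposite sides using the stochastic control representation (\ref{Z2}). Lemma 2.2 uses the \emph{suboptimal} controller $\mu^*_{0,\tau}$ associated with the classical minimizing trajectory and a minimizing cut-off time $\tau=\tau_{0,x,T}\in(0,T)$, whose expected action along the perturbed stochastic trajectory is within $O(\sqrt{\ve})+o(1)$ of the classical value $q_0(x,y,T)$. Lemma 2.4 goes in the other direction using the \emph{optimal} controller $\mu^*_\ve$, together with the comparison inequality $X^*_\ve(\cdot)\le X_\ve(\cdot)$ and the exit-time moment bound (\ref{BO2}), to show that the expected exit time $\tau^*_{\ve,x,T}$ is bounded away from zero in a sufficiently strong sense and that the perturbation of the endpoint by $\sqrt{\ve}Z_\ve$ changes the classical action only by $O(\sqrt{\ve})$.

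Thus the only content of Theorem 2.1 beyond the two lemmas is the trivial observation that $\limsup\le 0\le \liminf$ forces the limit to exist and to equal $q_0(x,y,T)$. No additional obstacle arises; the genuine difficulty already lives in Lemmas 2.2 and 2.4, in particular in controlling the boundary-exit time $\tau^*_{\ve,x,T}$ uniformly as $\ve\to 0$ via the reflection-principle estimates (\ref{AT2})--(\ref{AU2}) and the optional-sampling bound (\ref{BW2}). Once the sandwich is invoked, the proof of the theorem is a single line.
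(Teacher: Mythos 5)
Your proposal is correct and matches the paper's own (implicit) proof exactly: Theorem~2.1 is stated as ``We summarize the main result of this section'' immediately after Lemma~2.4, so the intended argument is precisely to combine Lemma~2.2 ($\limsup_{\ve\ra 0}[q_\ve-q_0]=0$) with Lemma~2.4 ($\liminf_{\ve\ra 0}[q_\ve-q_0]=0$), which forces the limit to exist and to equal $q_0(x,y,T)$. Your accompanying summary of where the real work lies (the stochastic control representation, the exit-time estimates, and the reflection-principle bounds) is also an accurate description of the content of those two lemmas.
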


\vspace{.1in}

\section{Regularity and bounds on the function $q_\ve$}
We first prove a regularity result for the function $(x,y,t,T)\ra G_{\ve,D}(x,y,t,T)$, which will imply the regularity results for the function $(x,y,T)\ra q_\ve(x,y,T)$ we shall need. 
\begin{proposition}
Let $A:[0,\infty)\ra\R$ be a continuous function and $G_{\ve,D}(x,y,t,T), \ x,y,>0, \ 0\le t<T<\infty,$ be the Dirichlet Green's function  for  the PDE (\ref{A1}) with drift (\ref{E1}).  Then  the derivatives
\be \label{A3}
\frac{\pa^n}{\pa x^n}\frac{\pa^m}{\pa y^m} G_{\ve,D}(x,y,t,T)  \quad {\rm    with \ } 0\le n,m\le 2, \ \  n+m\le 3 \ ,
\ee
and
\be \label{B3}
\frac{\pa^k}{\pa t^k} \frac{\pa^l}{\pa T^l}\frac{\pa^n}{\pa x^n}\frac{\pa^m}{\pa y^m} G_{\ve,D}(x,y,t,T)  \quad {\rm    with \ } 0\le k+l, \ k+m, \ l+n\le 1 \ ,
\ee
exist and are continuous in the region $\mathcal{D}=\{(x,y,t,T): \ x,y\ge 0, \ 0\le t<T<\infty\}$. 

Let $G(x,t)$ be the Gaussian distribution with mean $0$ and variance $t$, 
\be \label{C3}
G(x,t) \ = \ \frac{1}{\sqrt{2\pi t}}\exp\left[-\frac{x^2}{2t} \right]\  , \quad x\in\mathbb{R}, \ t>0 \ .
\ee
For any $L_0,T_0>0$ define $\mathcal{D}_{L_0,T_0}$ to be the region $\mathcal{D}_{L_0,T_0}=\{(x,y,t,T): \ 0\le x,y,\le L_0, \ 0\le t<T\le T_0, \ T-t\le L_0^2\}$. Then there is a constant $C(L_0,T_0,\ve)$ such that if $m,n$ satisfy the conditions of (\ref{A3}), then
\be \label{D3}
\left|\frac{\pa^n}{\pa x^n}\frac{\pa^m}{\pa y^m} G_{\ve,D}(x,y,t,T) \right| \ \le \ \frac{C(L_0,T_0,\ve)}{(T-t)^{(n+m)/2}} G(x-y,2\ve(T-t)) \ ,
\ee
for $(x,y,t,T)\in\mathcal{D}_{L_0,T_0}$. Similarly if $k,l,m,n$ satisfies the conditions of (\ref{B3}), then
\be \label{E3}
\left|\frac{\pa^k}{\pa t^k} \frac{\pa^l}{\pa T^l}\frac{\pa^n}{\pa x^n}\frac{\pa^m}{\pa y^m} G_{\ve,D}(x,y,t,T)\right| \ \le \ 
\frac{C(L_0,T_0,\ve)}{(T-t)^{k+l+(n+m)/2}} G(x-y,2\ve(T-t)) \ ,
\ee
for $(x,y,t,T)\in\mathcal{D}_{L_0,T_0}$. 

For any $y>0$ the function $(x,T)\ra\pa^2G_{\ve,D}(x,y,0,T)/\pa x^2,$ with domain $\{(x,T): \ x,T>0\}$ is continuous up to the boundary $x=0$, and is also continuously differentiable in $T$,  twice continuously differentiable in $x$. 
\end{proposition}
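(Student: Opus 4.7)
The plan is to use the parabolic PDE (\ref{I1}) satisfied by $w(x,T):=G_{\ve,D}(x,y,0,T)$ for fixed $y>0$, namely
\[
\frac{\pa w}{\pa T} + [A(T)x-1]\frac{\pa w}{\pa x} + A(T)w \ = \ \frac{\ve}{2}\frac{\pa^2 w}{\pa x^2}, \qquad x,T>0,
\]
together with the Dirichlet condition $w(0,T)=0$ for $T>0$. By the first two parts of this proposition $w$, $\pa_x w$ and $\pa_T w$ extend continuously to the boundary $x=0$. The first step is to solve the PDE algebraically for $\pa_x^2 w$,
\[
\frac{\ve}{2}\pa_x^2 w \ = \ \pa_T w + A(T)\,w + [A(T)x-1]\,\pa_x w,
\]
so that $\pa_x^2 w$ inherits continuity from the right hand side on $\{x\ge 0,\,T>0\}$. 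Evaluating at $x=0$ and using $w(0,T)=\pa_T w(0,T)=0$ yields the explicit boundary value $\pa_x^2 w(0,T)=-(2/\ve)\,\pa_x w(0,T)$.

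To upgrade this to the claim that $u:=\pa_x^2 w$ is $C^{1}$ in $T$ and $C^{2}$ in $x$ on $\{x\ge 0,\,T>0\}$, I would observe that differentiating the master PDE twice in $x$ shows $u$ itself satisfies a linear parabolic PDE of the same structure,
\[
\pa_T u + [A(T)x-1]\,\pa_x u + 3A(T)\,u \ = \ \frac{\ve}{2}\pa_x^2 u, \qquad x,T>0,
\]
with the continuous Dirichlet data $u(0,T)=-(2/\ve)\,\pa_x w(0,T)$ computed above. Standard interior parabolic theory already gives that $u$ is $C^{\infty}$ in $x$ and $C^{1}$ in $T$ on the open set $\{x>0,\,T>0\}$, so what remains is to extend the relevant derivatives $\pa_x^{3}w$, $\pa_x^{4}w$ and $\pa_T\pa_x^{2}w$ continuously to $x=0$.

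The main obstacle is that the coefficient $A(T)$ is only assumed continuous, so the classical boundary Schauder estimates (which require H\"older regularity) do not apply verbatim. I would handle this by approximation: mollify $A$ to smooth $A_n\to A$ locally uniformly, and let $G^{(n)}_{\ve,D}$ denote the corresponding Dirichlet Green's functions. For each $n$ classical parabolic theory yields $C^{\infty}$ regularity of $G^{(n)}_{\ve,D}$ up to $\{x=0,\,T>0\}$, and in particular $u_n:=\pa_x^{2}G^{(n)}_{\ve,D}$ has the desired regularity. The uniform bounds (\ref{D3})--(\ref{E3}) (whose constants depend on $A$ only through $\sup_{s\le T_0}|A(s)|$), together with the interior parabolic estimates for the differentiated equation applied to $u_n$, give uniform control of $\pa_x^{3}G^{(n)}_{\ve,D}$, $\pa_x^{4}G^{(n)}_{\ve,D}$ and $\pa_T\pa_x^{2}G^{(n)}_{\ve,D}$ on compact subsets of $\{x\ge 0,\,T>0\}$. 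A standard Arzel\`a--Ascoli passage to the limit then produces continuous extensions of the limiting derivatives, establishing the stated regularity of $\pa_x^{2}w$ up to $x=0$.
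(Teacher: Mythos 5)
Your proposal only addresses the final paragraph of the proposition; the existence and continuity statements \eqref{A3}--\eqref{B3} and the bounds \eqref{D3}--\eqref{E3} are left unproved (you invoke them as ``the first two parts of this proposition,'' which is circular). In the paper these are obtained by a perturbation argument from Lemma 3.4 of \cite{cg} applied to both the backward PDE \eqref{A1} and its adjoint \eqref{I1}, so this is not a detail one can skip.

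For the final paragraph, your computation of the boundary value $\pa_x^2 w(0,T)=-(2/\ve)\pa_x w(0,T)$ is correct and identifies the right continuous extension. Your route to the interior regularity is, however, genuinely different from the paper's and has a real gap. You mollify $A$ to $A_n$, invoke interior parabolic estimates on the differentiated equation for $u_n=\pa_x^2 G^{(n)}_{\ve,D}$, and assert uniform control of $\pa_x^3$, $\pa_x^4$ and $\pa_T\pa_x^2$ with constants depending only on $\sup|A_n|$. That assertion is not justified: interior \emph{Schauder} estimates carry the H\"older seminorm of the coefficients, and $[A_n]_{C^\alpha}$ blows up under mollification; interior $W^{2,1}_p$ estimates do depend only on $\|A_n\|_\infty$, but they yield $L^p$ control of the top-order derivatives, and converting that into the uniform $C^{2,1}$-type equicontinuity you need for Arzel\`a--Ascoli requires an explicit bootstrap that you do not carry out. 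Separately, passing to the limit also needs an argument that $G^{(n)}_{\ve,D}\to G_{\ve,D}$ together with its second $x$-derivative, which is not supplied. By contrast the paper sidesteps all of this with a short self-contained device: it observes that $v_\ve=\exp\bigl[2\int_0^T A\bigr]\pa_x^2 G_{\ve,D}$ is itself a solution of \eqref{I1}, and then expresses $v_\ve$ on a strip $(0,L_0)\times(t,T)$ by the Green's-function representation \eqref{F3} in terms of the Dirichlet kernel $G_{\ve,D,L_0}$, whose derivative bounds are exactly \eqref{D3}--\eqref{E3}; differentiating under the integral gives the claimed interior regularity directly, with no mollification or compactness step and no need for any H\"older regularity of $A$.
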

\begin{proof}
Since the drift $b(\cdot,\cdot)$ is continuous and satisfies for any $T_0>0$ the bound $\sup\{|\pa b(y,t)/\pa y|: \ y\ge 0, 0\le t\le T_0\}<\infty$ we may apply the perturbation argument of Lemma 3.4 of \cite{cg}. From this we see that the derivatives (\ref{A3}) with $n\le 1,m\le 2$ are continuous and satisfy the inequality (\ref{D3}). In making this conclusion we are using the backwards in time PDE (\ref{A1}). Since the adjoint PDE (\ref{I1}) is similar to (\ref{A1}) except run forwards in time, we  conclude that (\ref{D3}) also holds with $n\le 2,m\le 1$.  The continuity of the derivatives (\ref{B3}) and bounds (\ref{E3}) follow from the fact that $G_{\ve,D}$ is a solution to the PDEs (\ref{A1}), (\ref{I1}). Hence the derivatives in (\ref{B3}) can be expressed as a sum of the derivatives in (\ref{A3}). 

Letting $v_\ve(x,T)=\exp\left[2\int_0^T A(s) \ ds\right]\pa^2G_{\ve,D}(x,y,0,T)/\pa x^2$, we see by differentiating twice the PDE (\ref{I1}) that $v_\ve$ is also a solution to (\ref{I1}). It  is also continuous up to the boundary $x=0$. For any $L_0>0$ we consider $v_\ve$ to be a solution to (\ref{I1}) in the region $\{(x,T): \ 0<x<L_0, \ T>0\}$. Let $G_{\ve,D,L_0}(x,y,t,T)$ be the corresponding Dirichlet Green's function.  The function $G_{\ve,D,L_0}$ has the same differentiability properties as $G_{\ve,D}$  given in (\ref{A3})-(\ref{E3}) We also have  the integral representation,
\begin{multline} \label{F3}
v_\ve(x,T) \ = \ \int_0^{L_0} G_{\ve,D,L_0}(x,y,t,T) v_\ve(y,t) \ dy +\ve\int_t^T\frac{\pa G_{\ve,D,L_0}(x,0,s,T)}{\pa y} v_\ve(0,s) \ ds \\
-\ve\int_t^T\frac{\pa G_{\ve,D,L_0}(x,L_0,s,T)}{\pa y} v_\ve(L_0,s) \ ds \ , \quad 0<x<L_0, \ 0\le t<T \ .
\end{multline}
The differentiability properties of the function $v_\ve$ follow from (\ref{F3}) and the differentiability properties of $G_{\ve,D,L_0}$ by differentiating under the integral and using the estimates (\ref{D3}), (\ref{E3}).   Note that while the function $(x,T)\ra v_\ve(x,T)$ itself is continuous up to the boundary $x=0$, our proof does not establish that the derivatives are continuous up to the boundary. 
\end{proof}
In the case $A(\cdot)\equiv 0$ the process $X_\ve(\cdot)$ of (\ref{P2}) is the standard Brownian Bridge (BB) process from $x$ at time $T$ to $y$ at time $0$. In that case
\be \label{G3}
X_\ve(s) \ = \ \frac{sx+(T-s)y}{T}-\sqrt{\ve} \ s\int_s^T \frac{dB(s')}{s'} \ , \quad 0<s<T \ ,
\ee
and from (\ref{G1}) we have that
\be \label{H3}
 P\left(\inf_{0\le s\le T} X_\ve(s)<0 \ \big| \ X_\ve(T)=x\right) \ = \ \exp\left[-\frac{2xy}{\ve T}\right] \ .
\ee
We can obtain a linear upper bound on the function $x\ra q_\ve(x,y,T)$ in the case of non-trivial $A(\cdot)$ by comparing $X_\ve(\cdot)$ to the BB process.
\begin{proposition}
Let $A:[0,\infty)\ra\mathbb{R}$ be continuous and $q_\ve$  be defined by (\ref{R2}), (\ref{U2}).  Then for any $y,T>0$ the function $x\ra q_\ve(x,y,T),\ x\ge0,$ is continuous increasing with $q_\ve(0,y,T)=0$. For any $T_0>0$ there exists a constant $C_A(T_0)$, depending only on $T_0$ and 
$\sup_{0\le t\le T_0}|A(t)|$, such that  $q_\ve$ satisfies the inequality 
\be \label{I3}
\left| q_\ve(x,y,T)-\frac{2m_{1,A}(T)xy}{\sig_A^2(T)}\right| \ \le \ C_A(T_0)Ty  \ , \quad x,y>0, \ 0<T\le T_0 \ .
\ee
\end{proposition}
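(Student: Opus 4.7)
The plan is to exploit the probabilistic identity $v_\ve(x,y,T) = P(\inf_{0 \le s \le T} Y_\ve(s) < 0 \mid Y_\ve(0) = y, Y_\ve(T) = x)$ from (\ref{F2}) and to compare the generalized bridge $Y_\ve$ pathwise to a standard Brownian bridge via an explicit time-change and rescaling. The main term in (\ref{I3}) will then emerge from the classical Brownian-bridge hitting formula underlying (\ref{H3}).

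Concretely, starting from (\ref{B2}) with $t=0$ and applying Dambis--Dubins--Schwarz to $M(s) = \int_0^s m_{1,A}^{-1}(s')\,dB(s')$ in the clock $\tau(s) = \int_0^s m_{1,A}^{-2}(s')\,ds'$, one writes $Y_\ve(s)/m_{1,A}(s) = y - g(s) + \sqrt{\ve}\,\hat B(\tau(s))$ where $g(s) = \int_0^s m_{1,A}^{-1}(s')\,ds'$ and $\hat B$ is a standard Brownian motion. Conditioning on $Y_\ve(T) = x$ fixes $\hat B(\tau_T)$ with $\tau_T = \sig_A^2(T)/m_{1,A}^2(T)$, and the standard bridge representation of BM conditioned on its terminal value yields
\[
Y_\ve(s)/m_{1,A}(s) \ \stackrel{d}{=} \ \tilde h_{\rm lin}(\tau(s)) + \Delta(\tau(s)) + \sqrt{\ve}\,\mathrm{BB}(\tau(s)),
\]
where $\tilde h_{\rm lin}(\eta) = y + (\eta/\tau_T)(x/m_{1,A}(T) - y)$, $\mathrm{BB}$ is a standard Brownian bridge on $[0, \tau_T]$, and $\Delta(\eta) = (\eta/\tau_T)g(T) - g(\tau^{-1}(\eta))$ is a deterministic distortion vanishing at both $\eta = 0$ and $\eta = \tau_T$. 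A Taylor expansion in $A(\cdot)$ using $e^{-A^* T_0} \le m_{1,A}(s) \le e^{A^* T_0}$ ($A^* = \sup_{[0,T_0]}|A|$) gives $|\Delta(\eta)| \le C_A(T_0)\,T^2$ uniformly in $\eta$ for $T \le T_0$.

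Since positivity of $Y_\ve$ is equivalent to positivity of the bracketed quantity above, sandwiching $\Delta$ by $\pm C_A T^2$ sandwiches $v_\ve(x,y,T)$ between $\exp\bigl(-2(y \mp C_A T^2)(b \mp C_A T^2)/(\ve\,\tau_T)\bigr)$ with $b = x/m_{1,A}(T)$, valid when $y$ and $b$ exceed $C_A T^2$. Taking $-\ve\log$ and using $m_{1,A}^2(T)/\sig_A^2(T) = 1/\tau_T$ produces the main term $2 m_{1,A}(T) xy/\sig_A^2(T)$, and the remaining cross-terms furnish the error estimate (\ref{I3}). Continuity of $x \mapsto q_\ve$ and the boundary value $q_\ve(0,y,T) = 0$ follow from the regularity of $G_{\ve,D}$ in Proposition 3.1 together with the Dirichlet condition (\ref{W2}); monotonicity in $x$ follows from a pointwise coupling of bridges with different terminal values (the $x_2$-bridge lies uniformly above the $x_1$-bridge when $x_2 > x_1$, since the shift $(\eta/\tau_T)(x_2 - x_1)/m_{1,A}(T)$ is nonnegative).

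The main obstacle will be the bookkeeping required to bound the error by $C_A(T_0)\,T\,y$ rather than the naive $C_A(T_0)\,T(x + y)$. This calls for exploiting the vanishing of $\Delta$ at both endpoints (so that an $\eta$-dependent sandwich is preferable to a uniform shift) together with the cancellation $T\,m_{1,A}(T)/\sig_A^2(T) = O(1)$ that converts $T^2$ deviations into $T$ corrections; the degenerate regimes where $x$ or $y$ is of order $T^2$ can be treated separately via the concavity of $q_\ve$ in $x$ and the trivial bound $v_\ve \le 1$.
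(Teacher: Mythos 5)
Your approach — scale $Y_\ve$ by $m_{1,A}(s)$, apply Dambis--Dubins--Schwarz, and compare the resulting conditioned process to a standard Brownian bridge — is the same basic strategy as the paper's proof, which also performs a bridge comparison via a time change. The difference is in where the drift distortion lands. The paper's change of variable (\ref{J3}) is engineered so that the deterministic part of the conditioned process factors as $t\,\frac{m_{1,A}(T)}{\sig_A^2(T)}[x+e(t)]+(T-t)\frac{y}{T}$ with $|e(t)|\le C_AT^2$ (formulas (\ref{N3}), (\ref{R3})); the entire distortion attaches to $x$, leaving $y$ untouched, so the BB hitting formula immediately yields the error $\frac{2m_{1,A}(T)}{\sig_A^2(T)}C_AT^2 y=O(Ty)$ without any extra case analysis. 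Your $\Delta(\eta)$, by contrast, vanishes at both endpoints but is not a priori a multiple of $\eta/\tau_T$; a \emph{uniform} sandwich $|\Delta|\le C_AT^2$ perturbs both $y$ and $b=x/m_{1,A}(T)$, producing the unwanted $O(Tx)$ error and forcing the restriction $y>C_AT^2$. You correctly identify this as the main obstacle and gesture toward an $\eta$-dependent sandwich; the missing step is to actually prove $|\Delta(\eta)|\le C_A T^2\,\eta/\tau_T$, which does hold (since $\Delta(0)=0$, $|\Delta'(\eta)|=|g(T)/\tau_T-m_{1,A}(\tau^{-1}(\eta))|\le C_AT$, and $\tau_T\le CT$), and which converts $\Delta$ into a shift of $b$ alone, exactly matching the structure the paper gets for free.

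One concrete misstep: you propose handling the degenerate regime $x$ or $y\lesssim T^2$ via ``the concavity of $q_\ve$ in $x$''. Concavity is Theorem 5.1 and requires $A(\cdot)\ge 0$; Proposition 3.2 only assumes $A$ continuous (possibly signed), and in any case Theorem 5.1 appears two sections later, so this would be circular. It is also unnecessary: once $\Delta$ is absorbed into $b$ alone, only the constraint $b>C_AT^2$ remains, the lower bound for $x\lesssim T^2$ is trivial because then $2m_{1,A}(T)xy/\sig_A^2(T)\le C_ATy$, and the upper bound follows from monotonicity of $x\mapsto q_\ve(x,y,T)$ (which you already establish) by evaluating at $x=C_0T^2$. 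So replace the appeal to concavity with monotonicity plus $v_\ve\le 1$.
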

\begin{proof}
The monotonicity of the function $x\ra q_\ve(x,y,T)$  follows from (\ref{Q2}), (\ref{U2}). Since $X_\ve(s), \   s\le T,$ is the solution to (\ref{N2}), it follows from the non-intersection of paths property that the function $x\ra v_\ve(x,y,T)$ is decreasing. 

To prove (\ref{I3}) we make the change of variable $s\leftrightarrow t$ in (\ref{P2})  defined by 
\be \label{J3}
t^2\frac{ds}{dt} \ = \ g(s)^2 \ , \quad g(s) \ = \ \frac{\sig_A^2(s)}{m_{1,A}(s)} \ , \ \ 0<s<T \ ,  \ s(T)=T \ . 
\ee
Since $\sig_A(\cdot)$ is strictly positive and $\lim_{s\ra 0}g(s)/s=1$, the function $s(\cdot)$ is continuous, strictly monotonic and $\lim_{t\ra 0}s(t)/t=1$. We see that the stochastic integral in (\ref{P2}) becomes
\be \label{K3}
\int_s^T \frac{m_{1,A}(s') \ dB(s')}{\sig^2_A(s')} \ = \ \int_t^T\frac{d\tilde{B}(t')}{t'} \  ,
\ee
where $\tilde{B}(\cdot)$ is a Brownian motion.  We define the stochastic process $\tilde{X}_\ve(t), \ 0<t<T$, by
\be \label{L3}
\tilde{X}_\ve(t) \ = \ \frac{m_{1,A}(s(t))}{\sig^2_A(s(t))}t \ y_{\rm class}(s(t))-\sqrt{\ve}t \int_t^T\frac{d\tilde{B}(t')}{t'}  \ ,
\quad 0<t\le T \ ,
\ee
so that the events $\{\inf_{0<s<T} X_\ve(s)<0\}$ and  $\{\inf_{0<t<T} \tilde{X}_\ve(t)<0\}$ are the same.  Observe that
\be \label{M3}
g_{3,A}(s,T) \ = \ \int_s^T \frac{m_{1,A}(s')^2  \ ds'}{\sig_A^4(s')} \ = \ \frac{m_{1,A}(s)^2}{\sig_A^2(s)}-\frac{m_{1,A}(T)^2}{\sig_A^2(T)} \ = \ \frac{m_{1,A}(s)^2\sig_A^2(s,T)}{\sig_A^2(T)\sig_A^2(s)}  \ .
\ee
From (\ref{J3}) we see that $g_{3,A}(s(t),T)=(T-t)/tT, \ 0<t<T$. We conclude then from (\ref{L2}), (\ref{J3}), (\ref{M3}) that
\be \label{N3}
\frac{m_{1,A}(s(t))}{\sig^2_A(s(t))}t \ y_{\rm class}(s(t)) \ = \ t\frac{m_{1,A}(T)}{\sig^2_A(T)}\left\{x+\frac{g_{2,A}(s(t),T)}{g_{1,A}(s(t),T)}\right\}+(T-t)\frac{y}{T} \ , \quad 0<t<T \ ,
\ee
where $g_{1,A},g_{2,A}$ are defined by (\ref{AM2}), (\ref{AN2}). 

The expression (\ref{AN2}) for $g_{2,A}$ can be simplified by observing that
\begin{multline} \label{O3}
g_{2,A}(s,T) \ = \ \frac{m_{2,A}(s,T)}{m_{1,A}(T)}\left[\frac{\sig_A^2(T)-\sig_A^2(s,T)}{\sig^2_A(s,T)}\right]-\frac{m_{2,A}(s)}{m_{1,A}(s)} \\
= \ \frac{\sig_A^2(T)}{m_{1,A}(T)}\frac{m_{2,A}(s,T)}{\sig^2_A(s,T)}-\frac{m_{2,A}(T)}{m_{1,A}(T)} \ .
\end{multline}
We have then from (\ref{AM2}), (\ref{O3}) that
\be \label{P3}
\frac{g_{2,A}(s,T)}{g_{1,A}(s,T)} \ = \ m_{2,A}(T)- \frac{\sig^2_A(T)m_{2,A}(s)}{m_{1,A}(s,T)\sig^2_A(s)}  \ .
\ee 
We have from (\ref{P3}) that
\be \label{Q3}
\lim_{s\ra T} \frac{g_{2,A}(s,T)}{g_{1,A}(s,T)} \ = \ 0, \quad \frac{d}{ds} \left[\frac{g_{2,A}(s,T)}{g_{1,A}(s,T)}\right] \ = \ -\frac{\sig^2_A(T)}{m_{1,A}(s,T)}\frac{\left[\sig^2_A(s)-m_{2,A}(s)\right]}{\sig^4_A(s)} \ .
\ee
It is easy to see from (\ref{Q3}) that for any $T_0>0$, 
\be \label{R3}
\left|\frac{g_{2,A}(s,T)}{g_{1,A}(s,T)}\right| \ \le \ C_{A,1}(T_0)T(T-s) \ , \quad 0<s<T\le T_0 \ , 
\ee
where the constant $C_{A,1}(T_0)$ depends only on $T_0$ and $\sup_{0\le t\le T_0}|A(t)|$. 

It follows from (\ref{L3}), (\ref{N3}), (\ref{R3}) that
\begin{multline} \label{S3}
 P\left(\inf_{0\le t\le T} \tilde{X}_\ve(t)<0 \ \big| \ \tilde{X}_\ve(T)=x\right) \ \ge \\
 P\left(\inf_{0\le t\le T}\left[ \frac{tx_1+(T-t)y}{T}-\sqrt{\ve} t\int_t^T\frac{d\tilde{B}(t')}{t'} \right]<0\right) \ , 
 \quad x_1=\frac{Tm_{1,A}(T)}{\sig^2_A(T)}\left[x+C_{A,1}(T_0)T^2\right] \ .
\end{multline}
We conclude from (\ref{G3}), (\ref{H3}), (\ref{S3}) that
\be \label{T3}
q_\ve(x,y,T) \ \le \ \frac{2x_1y}{T} \ \le \ \frac{2m_{1,A}(T)xy}{\sig^2_A(T)}+C_{A,2}(T_0)Ty \ , \quad 0<T\le T_0 \ ,
\ee
for some constant $C_{A,2}(T_0)$ depending only on $T_0$ and $\sup_{0\le t\le T_0}|A(t)|$. A similar argument yields a lower bound corresponding to (\ref{T3}), whence (\ref{I3}) follows. 
\end{proof}
For general $A(\cdot)$ one can construct a linear solution to the HJ equation (\ref{AC2}), which is therefore also  a solution to the HJB equation (\ref{V2}). To find it  we set
\be \label{U3}
q_0(x,y,T) \ = \ a(y,T)+b(y,T)x \ .
\ee
Equating the coefficients of $x$ in (\ref{AC2}), we obtain the ODE
\be \label{V3}
\frac{db(y,T)}{dT} \ = \ -\left[A(T)+\frac{1}{\sig_A^2(T)}\right]b(y,T) \ .
\ee
Integrating (\ref{V3}), we conclude that
\be \label{W3}
b(y,T) \ = \ C(y)\frac{m_{1,A}(T)}{\sig^2_A(T)} \quad {\rm for \  a \  constant \ } C(y) {\rm \ depending  \ on \  } y \ . 
\ee
We choose the constant $C(y)$ in (\ref{W3})  so that our linear solution gives the function $x\ra 2xy/T$, corresponding to (\ref{G1}), when $A(\cdot)\equiv 0$.  Thus we choose $C(y)=2y$. 
Equating the terms independent of $x$ in (\ref{AC2}), we obtain the ODE
\be \label{X3}
\frac{da(y,T)}{dT} \ = \ \left[1-\frac{m_{2,A}(T)}{\sig_A^2(T)}+\frac{m_{1,A}(T)y}{\sig_A^2(T)}       \right]b(y,T)-\frac{1}{2}b(y,T)^2 \ .
\ee
With the choice of $C(y)=2y$ in (\ref{W3}), this reduces (\ref{X3}) to the equation.
\be \label{Y3}
\frac{da(y,T)}{dT} \ = \ \left[1-\frac{m_{2,A}(T)}{\sig_A^2(T)}      \right]b(y,T)  \ .
\ee
Integrating (\ref{Y3}) with initial condition $a(y,0)=0$ yields the solution
\be \label{Z3}
a(y,T) \ = \ \frac{2y}{\sig_A^2(T)}\left[m_{1,A}(T)m_{2,A}(T)-\sig_A^2(T)\right] \  .
\ee
We conclude from (\ref{W3})-(\ref{Z3}) that
\be \label{AA3}
q_{\rm linear}(x,y,T) \ = \  \frac{2y}{\sig_A^2(T)}\left[m_{1,A}(T)m_{2,A}(T)-\sig_A^2(T)+m_{1,A}(T)x\right]  
\ee
is a linear solution to (\ref{AC2}). 
We shall show that the linear solution (\ref{AA3}) tightly bounds the function $q_\ve$ defined by (\ref{R2}), (\ref{U2}) when $A(\cdot)$ is non-negative. 
\begin{proposition}
Assume the function $A(\cdot)$ is continuous  non-negative, and let $q_\ve(x,y,T)$ be defined by (\ref{R2}), (\ref{U2}). Then
\be \label{AB3}
\left\{\frac{\pa q_{\rm linear}(x',y,T)}{\pa x'}\Big|_{x'=0}\right\}x  \ \le \ q_\ve(x,y,T) \ \le \ q_{\rm linear}(x,y,T)  \quad {\rm for \ } x,y,T>0 \ ,
\ee
and also
\be \label{AC3}
 q_\ve(x,y,T) \ \le \ -2\la(0,y,T)x  \quad {\rm for \ } x,y,T>0 \ .
\ee
\end{proposition}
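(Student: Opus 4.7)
The plan is to rewrite each bound in terms of $v_\ve=\exp(-q_\ve/\ve)$, which satisfies the linear PDE (\ref{S2}) with boundary data (\ref{T2}), and to compare $v_\ve$ against an explicit exponential test function $w(x,y,T)$ via the parabolic maximum principle. Each of the three inequalities reduces to (a) verifying that $w$ is an appropriate sub- or super-solution of (\ref{S2}), and (b) checking that $w$ and $v_\ve$ compare correctly on the parabolic boundary $\{x=0\}\cup\{T=0^+\}$.

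For the upper bound $q_\ve\le q_{\rm linear}$ in (\ref{AB3}), set $w=\exp(-q_{\rm linear}/\ve)$. Linearity of $q_{\rm linear}$ in $x$ forces $\pa_{xx}^2 q_{\rm linear}=0$, so the Hamilton--Jacobi equation (\ref{AC2}) for $q_{\rm linear}$ becomes exactly the linear equation (\ref{S2}) for $w$. At $x=0$, $w(0,y,T)=\exp(-a(y,T)/\ve)\le 1=v_\ve(0,y,T)$, since the non-negativity of $a(y,T)=(2y/\sig_A^2(T))[m_{1,A}(T)m_{2,A}(T)-\sig_A^2(T)]$ is immediate from $A\ge 0$ via the integral inequality $m_{1,A}(T)m_{2,A}(T)\ge\sig_A^2(T)$. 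At $T=0^+$, $q_{\rm linear}(x,y,T)\to+\infty$ because $\sig_A^2(T)\to 0$, hence $w\to 0=v_\ve(x,y,0^+)$. Comparison gives $w\le v_\ve$, and the bound follows.

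For the lower bound in (\ref{AB3}), let $b(T)=2y\,m_{1,A}(T)/\sig_A^2(T)$ and $w=\exp(-b(T)x/\ve)$. Using the ODE identity $\pa_T[m_{1,A}/\sig_A^2]=-m_{1,A}(1+A\sig_A^2)/\sig_A^4$, the super-solution inequality $\pa_T w\ge-\la\pa_x w+(\ve/2)\pa_{xx}^2 w$ reduces algebraically to $-\la(0,y,T)\ge b(T)/2$, equivalently $m_{2,A}(T)\le\sig_A^2(T)$, which once again follows from $A\ge 0$. Since $w(0,y,T)=1$ and $w(x,y,0^+)=0$, the comparison principle yields $v_\ve\le w$, i.e.\ $q_\ve\ge b(T)x$. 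For (\ref{AC3}), take $w(x,y,T)=\exp(2\la(0,y,T)x/\ve)$; since $\la(0,y,T)<0$ for $A\ge 0$ and $y>0$, we have $w\le 1$ and $w(x,y,0^+)=0$. A direct computation gives
\[
\pa_T w-\bigl[-\la\pa_x w+(\ve/2)\pa_{xx}^2 w\bigr]=(2xw/\ve)\bigl[\pa_T\la(0,y,T)+\la(0,y,T)\kappa(T)\bigr], \quad \kappa(T)=A(T)+1/\sig_A^2(T),
\]
and differentiating the formulas for $m_{1,A},m_{2,A},\sig_A^2$ yields the key identity $\pa_T\la(0,y,T)+\la(0,y,T)\kappa(T)=-A(T)\le 0$, making $w$ a sub-solution. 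The boundary values match ($w(0,y,T)=1=v_\ve(0,y,T)$), so comparison gives $w\le v_\ve$ and hence $q_\ve\le-2\la(0,y,T)x$.

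The one technical point is applying the parabolic maximum principle on the unbounded strip $\{x>0,\,0<T<T_0\}$ where the drift $\la$ is singular at $T=0$. Standard practice handles this by localizing to cylinders $(0,K)\times(\delta,T_0)$, applying classical comparison there using the regularity supplied by Proposition~3.1, and passing to the limits $K\to\infty$ and $\delta\to 0$: both $v_\ve$ and each test function $w$ are bounded in $[0,1]$ and decay to $0$ as $x\to\infty$, so no boundary contribution survives the limit.
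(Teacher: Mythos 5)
Your approach is exactly the paper's: pass to $v_\ve=e^{-q_\ve/\ve}$, compare against explicit exponential barriers via the parabolic maximum principle. The three barriers you choose are precisely $v_{\ve,1},v_{\ve,2},v_{\ve,3}$ of the paper's proof, and your computations are all correct — in particular the ODE identity $\pa_T[m_{1,A}/\sig_A^2]=-\kappa\,m_{1,A}/\sig_A^2$ with $\kappa=A+1/\sig_A^2$, the inequalities $m_{1,A}m_{2,A}\ge\sig_A^2\ge m_{2,A}$ from $A\ge 0$, and the identity $\pa_T\la(0,y,T)+\kappa(T)\la(0,y,T)=-A(T)$ all check out, and the observation that $q_{\rm linear}$ solves the HJB equation (\ref{V2}) exactly (because $\pa^2_{xx}q_{\rm linear}=0$) so that $e^{-q_{\rm linear}/\ve}$ solves the linear PDE (\ref{S2}) is the same simplification the paper uses.

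However, the final paragraph does not actually close the argument. After localizing to $(0,K)\times(\del,T_0)$ and applying the classical comparison principle, you must verify that the extremum of the difference $w-v_\ve$ on the slice $\{T=\del\}$ tends to zero as $\del\to 0$; the justification you give — ``both bounded in $[0,1]$ and decay to $0$ as $x\to\infty$'' — only disposes of the lateral boundary $\{x=K\}$. Boundedness in $[0,1]$ lets the difference be as large as $1$ on the initial slice, and for each fixed $x>0$ both functions tend to $0$ at rates that depend on $x$, so pointwise convergence gives nothing uniform. What is actually needed is the matching of the leading short-time singularity: one must show $\sup_{x>0}\big|w(x,y,\del)-v_\ve(x,y,\del)\big|\to 0$, which follows because both $q_\ve(x,y,\del)$ and $q_{\rm linear}(x,y,\del)$ (resp.\ the other barriers) equal $2xy/\del$ up to an error that vanishes uniformly in $x$ as $\del\to 0$. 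The paper supplies this via (\ref{I3}) and (\ref{AF3}), yielding (\ref{AG3})--(\ref{AH3}) and (\ref{BK3}). Without this uniform matching of $q_\ve$ and the barrier near $T=0$, the passage $\del\to 0$ in the maximum principle does not go through.
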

\begin{proof}
We see using the formula (\ref{E2}) for $G_\ve(x,y,0,T)$  and the fact that the function $(x,t)\ra G_{\ve,D}(x,y,0,t)$ is a solution to the PDE (\ref{I1}) with drift $b(x,t)=A(t)x-1$,  that the function $(x,T)\ra v_\ve(x,y,T)$ defined by (\ref{R2}) is a solution to the PDE (\ref{S2}). Furthermore, $v_\ve$ satisfies the initial and boundary conditions (\ref{T2}).  Since the function $q_{\rm linear}$  is a solution of (\ref{V2}) it follows that $v_{\ve,1}(x,y,T)=\exp[-q_{\rm linear}(x,y,T)/\ve]$ is   a solution to the PDE (\ref{S2}). From the non-negativity of $A(\cdot)$ we also have that $q_{\rm linear}(0,y,T)\ge 0$ for $T>0$. In addition, one has for $T$ small that  $q_{\rm linear}(x,y,T)\simeq 2xy/T$. We conclude  that $v_{\ve,1}$ satisfies the initial and boundary conditions
\be \label{AD3}
v_{\ve,1}(0,y,T) \ \le \ 1, \ T>0,  \quad v_{\ve,1}(x,y,0) \ = \ 0,  \ x>0 \ .
\ee
Comparing (\ref{T2}) and (\ref{AD3}), we expect that an application of the maximum principle for linear parabolic PDE \cite{pw} implies that $v_{\ve,1}(x,y,T)\le v_\ve(x,y,T)$ for all $x,T>0$, whence the upper bound in (\ref{AB3}). 

In the application of the maximum principle we  need to take account of the fact that the domain $\{x\in\mathbb{R}: x>0\}$ is unbounded, and  also that the drift $\la(x,y,T)$ of (\ref{O2}) becomes unbounded as $T\ra 0$.  To deal with this we apply for any $M,T_0>0, \ 0<\del<T_0,$ the maximum principle to a bounded domain $\mathcal{D}_{M,T_0,\del}=\{(x,T): \ 0<x<M, \ \del<T<T_0\}$ on which the drift is continuous and bounded.  Then we let $M\ra\infty,  \del\ra 0$. 

We first consider the case $M\ra\infty$. It is evident from (\ref{AA3}) that \\
$\lim_{M\ra\infty}\sup_{0<T<T_0}v_{\ve,1}(M,y,T)=0$. It follows from (\ref{I3}) of Proposition 3.2  that also
$\lim_{M\ra\infty}\sup_{0<T<T_0}v_\ve(M,y,T)=0$.
Next we consider the case $\del\ra0$. We see from (\ref{AA3}) that for any $m>0$ then $\lim_{\del\ra0}\sup_{ x\ge m}v_{\ve,1}(x,y,\del)=0$. Observe  from  (\ref{C2}), (\ref{D2}) that since the function $s\ra A(s)$ is continuous at $s=0$  then
\be \label{AE3}
\frac{\sig_A^2(s)}{m_{1,A}(s)} \ = \ s[1+so(s)] \ , \quad \frac{m_{2,A}(s)}{m_{1,A}(s)^{1/2}} \ = \  s[1+so(s)] \   .
\ee
It follows from (\ref{AA3}), (\ref{AE3}) that
\be \label{AF3}
q_{\rm linear}(x,y,T) \ = \  \frac{2xy}{T}+o(T) \quad {\rm as \ } T\ra 0 \ .
\ee
 We conclude from (\ref{AF3})  that
\be \label{AG3}
\lim_{\del\ra0}\sup_{ 0<x\le m}\left\{v_{\ve,1}(x,y,\del)-\exp\left[-\frac{2xy}{\ve \del}\right]\right\}=0 \  .
\ee 
From Proposition 3.2 we see that  $\lim_{\del\ra0}\sup_{ x\ge m}v_\ve(x,y,\del)=0$ and a similar result to (\ref{AG3}) holds for $v_\ve$. We conclude that
\be \label{AH3}
\lim_{\del\ra0}\sup_{ 0<x<\infty}\left\{v_{\ve,1}(x,y,\del)-v_\ve(x,y,\del)\right\}=0 \ .
\ee

From Proposition 3.1 it follows that the function $(x,T)\ra u(x,T)=v_{\ve,1}(x,y,T)-v_\ve(x,y,T)$ is continuously differentiable in $T$ and twice continuously differentiable in $x$ on the domain $\mathcal{D}_{M,T_0,\del}$.  It is also continuous up to the boundaries $x=0, \ x=M, \ T=\del$. It  follows then from the maximum principle (Theorem 2 of  Chapter 3 of \cite{pw})  applied to the solution $u(x,T)$ of (\ref{S2}) that the maximum of  $u(\cdot,\cdot)$ on $\mathcal{D}_{M,T_0,\del}$ occurs on one of those boundaries.  The upper bound in (\ref{AB3}) follows by letting $\del\ra 0, M\ra\infty$ and using (\ref{T2}), (\ref{AD3}), (\ref{AH3}) and the fact that $\lim_{M\ra\infty}\sup_{0<T<T_0}u(M,T)=0$.. 

The lower bound in (\ref{AB3}) can be established similarly. Thus we define the function $v_{\ve ,2}$ by 
\be \label{BI3}
v_{\ve,2}(x,y,T) \ = \ \exp\left[    -  \frac{2m_{1,A}(T)xy}{\ve\sig^2_A(T)}   \right] \  .
\ee
Let $\mathcal{L}$ be the linear differential operator $\mathcal{L}=-\pa/\pa T+\cdots$ such that the PDE (\ref{S2}) is $\mathcal{L}v_\ve=0$.  Then we have from (\ref{BI3}) that
\be \label{BJ3}
\mathcal{L}v_{\ve,2}(x,y,T) \ = \ -\frac{2m_{1,A}(T)y}{\ve\sig^2_A(T)}\left[1-\frac{m_{2,A}(T}{\sig^2_A(T)}\right]v_{\ve,2}(x,y,T) \ . 
\ee
Since $A(\cdot)$ is non-negative the RHS of (\ref{BJ3}) is less than or equal to $0$. Arguing as in the previous paragraphs we also see that
\begin{eqnarray} \label{BK3}
\lim_{M\ra\infty}\sup_{0<T<T_0}[v_\ve(M,y,T)-v_{\ve,2}(M,y,T)] \ &=& \ 0 \ , \\
\lim_{\del\ra0}\sup_{ 0<x<\infty}\left\{v_\ve(x,y,\del)-v_{\ve,2}(x,y,\del)\right\} \ &=&  \ 0 \ . \nonumber
\end{eqnarray}
Setting $u(x,T)= v_\ve(M,y,T)-v_{\ve,2}(M,y,T)$, we see from (\ref{S2}), (\ref{BJ3}) that $\mathcal{L}u(x,T)\ge 0$ for $(x,T)\in \mathcal{D}_{M,T_0,\del}$.
Applying Theorem 2 of Chapter 3 of \cite{pw} again,  we conclude  that $u(\cdot,\cdot)$ takes its maximum on the boundaries $x=0, \ x=M, \ T=\del$ of $\mathcal{D}_{M,T_0,\del}$.  The lower bound in (\ref{AB3}) then follows from (\ref{BK3}) upon letting $\del\ra 0, \ M\ra\infty$. 

The upper bound (\ref{AC3}) also follows in a similar way. We define the function $v_{\ve ,3}$ by 
\begin{multline} \label{BL3}
v_{\ve,3}(x,y,T) \ = \ \exp\left[\frac{2\la(0,y,T)x}{\ve}\right] \\
 = \ \exp\left[    - \frac{2x}{\ve}\left\{1-\frac{m_{2,A}(T)}{\sig^2_A(T)}+ 
 \frac{m_{1,A}(T)y}{\sig^2_A(T)}\right\}   \right] \ .
\end{multline}
Taking derivatives in (\ref{BL3}) we see that
\be \label{BM3}
\mathcal{L}v_{\ve,3}(x,y,T) \ = \ 2x\ve^{-1}A(T)v_{\ve,3}(x,y,T) \  .
\ee
Setting $u(x,T)= v_{\ve,3}(x,y,T) -v_\ve(x,y,T)$, we have from (\ref{S2}), (\ref{BM3}) that  $\mathcal{L}u(x,T)\ge 0$ for $(x,T)\in \mathcal{D}_{M,T_0,\del}$. As with (\ref{BK3}), we have that
\be \label{BN3}
\lim_{M\ra\infty} \sup_{0<T<T_0}u(M,T) \ = \ 0  , \quad \lim_{\del\ra 0}\sup_{0<x<\infty}u(x,\del) \ = \ 0 \ .
\ee
Applying Theorem 2 of Chapter 3 of \cite{pw} once more, and taking the limits $\del\ra0, \ M\ra\infty$ using (\ref{BN3}), then yields the upper bound (\ref{AC3}). 
\end{proof}
\begin{rem}
In the case $A(\cdot)\equiv 0$ the upper and lower bounds in (\ref{AB3}) are identical, yielding the function $q_\ve(x,y,T)$ of (\ref{X2}). A similar situation also occurs when the drift in (\ref{A2}) has the form $b(y,s)=-\ga y$, where $\ga$ is constant.  In that case the solution to (\ref{A2}) is given by
\be \label{BQ3}
Y_\ve(s) \ = \  e^{-\ga(s-t)}y+\sqrt{\ve}\int_t^s e^{-\ga(s-s')} \ dB(s') \ .
\ee
Hence the random variable $Y_\ve(T)$ conditioned on $Y_\ve(t)=y$ is Gaussian with mean $m(T-t)y$  and variance $\ve\sig^2(T-t)$, where
\be \label{BR3}
m(T) \ = \ e^{-\ga T}, \quad  \sig^2(T) \ = \ \frac{1}{2\ga}\left[1-e^{-2\ga T}\right] \ .
\ee
 The whole line Green's function $G_\ve(x,y,t,T)$ is explicitly given by the formula
\be \label{BS3}
G_\ve(x,y,t,T)=\frac{1}{\sqrt{2\pi\ve\sig^2(T-t)}}\exp\left[-\frac{\{x-m(T-t)y\}^2}{2\ve\sig^2(T-t)}\right] \ .
\ee
We again define $q_\ve(x,y,T)$ in terms of the Dirichlet Green's function  by (\ref{R2}), (\ref{U2}).  Then the function $[x,T]\ra q_\ve(x,y,T), \ x,T>0,$ is a solution to (\ref{V2}), (\ref{W2}) with $\la(x,y,T)$ given by 
\be \label{BT3}
\la(x,y,T) \ = \ -\ga x -\ve\frac{\pa}{\pa x}\log G_\ve(x,y,0,T) \ = \ -\ga x+\frac{x-m(T)y}{\sig^2(T)} \ .
\ee
We  may solve (\ref{V2}), (\ref{W2}) in the case of (\ref{BR3}), (\ref{BT3})  by looking for a solution of the form $q_\ve(x,y,T)=a(T)xy$. Then $a(\cdot)$ is given by the formula
\be \label{BU3}
a(T) \ = \ \frac{2m(T)}{\sig^2(T)} \ = \ \frac{2\ga}{\sinh \ga T} \ .
\ee 
The relation with the function $\hat{p}$ of Proposition 20 of \cite{svy} is
\begin{multline} \label{BV3}
\hat{p}(T,x,y) \ = \ \exp\left[\frac{\ga x^2}{\ve}\right]G_{\ve, D}(x,y,0,T) \\
 = \ \exp\left[\frac{\ga x^2}{\ve}\right]G_\ve(x,y,0,T)\left[1-v_\ve(x,y,T)\right] \quad {\rm with \ } \ve=1 \ , 
\end{multline}
where $G_\ve$ is given by (\ref{BS3}), and $v_\ve, q_\ve$ are related by (\ref{U2}). 
\end{rem}
\begin{rem}
The upper bound (\ref{AC3}) suggests that the function $x\ra q_0(x,y,T)$ is concave. To see this  consider solutions to the HJ equation (\ref{AC2})  
with the initial and boundary conditions given by (\ref{W2}).  In view of the boundary condition at $x=0$ we have that $\pa q_0(0,y,T)/\pa T=0$ for $T>0$.  It follows then from the PDE (\ref{AC2}) that 
\be \label{BO3}
\frac{\pa q_0(x,y,T)}{\pa x} \Big|_{x=0} \ = \ -2\la(0,y,T)  \ .
\ee
Letting $\ve\ra 0$ in the inequality (\ref{AC3}) we see that the graph of the function $x\ra q_0(x,y,T)$  lies below the line through the origin with slope (\ref{BO3}). 
\end{rem}
\begin{corollary}
Assume the function $A(\cdot)$ is continuous  non-negative, and let $q_\ve(x,y,T)$ be defined by (\ref{R2}), (\ref{U2}). Then the function $x\ra q_\ve (x,y,T)$ is twice continuously differentiable in $x$ for $x\ge 0$ and  $\pa^2 q_\ve(x,y,T)/\pa x^2\le 0$ at $x=0$ and $y,T>0$. 
\end{corollary}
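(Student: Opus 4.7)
The plan is to split the statement into a regularity assertion and a sign assertion. The regularity will come from Proposition 3.1 applied through $q_\ve=-\ve\log v_\ve$, and the sign will be extracted by evaluating the PDE (\ref{V2}) at $x=0$ and invoking the two linear bounds from Proposition 3.3.

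For $C^2$ regularity in $x$ up to $x=0$, note that $G_\ve$ is a smooth strictly positive Gaussian, so $v_\ve=1-G_{\ve,D}/G_\ve$ inherits the $x$-regularity of $G_{\ve,D}$. The last paragraph of Proposition 3.1 shows that $\pa^2 G_{\ve,D}(x,y,0,T)/\pa x^2$ is continuous up to $x=0$ (in fact continuously differentiable in $T$ and twice continuously differentiable in $x$ there), so $v_\ve$ is twice continuously differentiable in $x$ on $\{x\ge 0\}$. Moreover $v_\ve(0,y,T)=1$ and $0<v_\ve(x,y,T)<1$ for $x>0$ (being a hitting probability for $\ve>0$), so $v_\ve$ is strictly positive on $\{x\ge 0\}$ and $q_\ve=-\ve\log v_\ve$ inherits $C^2$ regularity in $x$ on the closed half line.

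For the sign of $\pa^2 q_\ve/\pa x^2$ at the origin, the PDE (\ref{V2}) holds classically on $\{x>0\}$ and, by the regularity just established, extends by continuity to $x=0$. The boundary condition $q_\ve(0,y,T)=0$ for all $T>0$ forces $\pa q_\ve(0,y,T)/\pa T=0$, so (\ref{V2}) at $x=0$ collapses to
\[
\frac{\ve}{2}\frac{\pa^2 q_\ve(0,y,T)}{\pa x^2} \ = \ \la(0,y,T)\frac{\pa q_\ve(0,y,T)}{\pa x}+\frac{1}{2}\left[\frac{\pa q_\ve(0,y,T)}{\pa x}\right]^2 \ .
\]
Writing $p=\pa q_\ve(0,y,T)/\pa x$, this reads $\ve\,\pa^2 q_\ve(0,y,T)/\pa x^2=p\,[\,p+2\la(0,y,T)\,]$. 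The lower bound in (\ref{AB3}) combined with $q_\ve(0,y,T)=0$ gives $p\ge 2m_{1,A}(T)y/\sig_A^2(T)>0$, while the upper bound (\ref{AC3}) gives $p\le -2\la(0,y,T)$. Hence $p>0$ and $p+2\la(0,y,T)\le 0$, so $\pa^2 q_\ve(0,y,T)/\pa x^2\le 0$, as required.

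There is essentially no obstacle, since the work has already been done in Propositions 3.1 and 3.3. The one point that deserves a word of justification is that the PDE (\ref{V2}), originally stated on the open half line $x>0$, remains valid in the classical sense up to $x=0$; this is immediate from the $C^2$ regularity of $v_\ve$ up to the boundary and the non-vanishing $v_\ve(0,y,T)=1$.
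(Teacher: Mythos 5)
Your proof is correct and follows essentially the same route as the paper's: derive regularity from Proposition 3.1, evaluate the HJB equation (\ref{V2}) at $x=0$ using $\pa q_\ve(0,y,T)/\pa T=0$ to obtain the identity $\ve\,\pa^2 q_\ve(0,y,T)/\pa x^2=p\,[\,p+2\la(0,y,T)\,]$ with $p=\pa q_\ve(0,y,T)/\pa x$, and then close using $p\ge 0$ and $p\le -2\la(0,y,T)$ (the paper gets $p\ge 0$ directly from non-negativity of $q_\ve$, while you get the stronger $p\ge 2m_{1,A}(T)y/\sig_A^2(T)$ from the lower bound in (\ref{AB3}); either suffices). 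One small caveat: your parenthetical claim that $\pa^2 G_{\ve,D}/\pa x^2$ is continuously differentiable in $T$ and twice continuously differentiable in $x$ ``there'' (at $x=0$) slightly overstates Proposition 3.1, whose proof explicitly does not establish continuity of those higher derivatives up to the boundary --- but only continuity of $\pa^2 q_\ve/\pa x^2$ up to $x=0$ is needed here, so the argument stands.
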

\begin{proof}
The regularity of $q_\ve$ follows from Proposition 3.1. 
Since $q_\ve(0,y,T)=0$ and the function $x\ra q_\ve(x,y,T)$ is non-negative,  we have using the inequality (\ref{AC3})  that $0\le\pa q_\ve(x,y,T)/\pa x\le -2\la(0,y,T)$ at $x=0$. Observing that $q_\ve$  satisfies the PDE (\ref{V2})  and $\pa q_\ve(x,y,T)/\pa T=0$ at $x=0$, we also have that
\be \label{BP3}
\ve \frac{\pa^2 q_\ve(x,y,T)}{\pa x^2} \ \Big|_{x=0} \ = \   \frac{\pa q_\ve(x,y,T)}{\pa x} \Big|_{x=0} 
\left[2\la(0,y,T)+ \frac{\pa q_\ve(x,y,T)}{\pa x} \Big|_{x=0}\right] \ .
\ee
We conclude from (\ref{BP3}) and our bounds on  $\pa q_\ve(x,y,T)/\pa x$ at $x=0$ that \\
$\pa^2 q_\ve (x,y,T)/\pa x^2\le 0$ at $x=0$. 
\end{proof}

\vspace{.1in}

\section{Estimating solutions of the Hamilton-Jacobi PDE}
In $\S2$ we already observed that the infinite dimensional variational problem (\ref{AB2}) may be reduced to a single variable variational problem in the first hitting time parameter $\tau, \ 0<\tau<T$.  From (\ref{AG2}), (\ref{AH2}) we have that
\be \label{A4}
q_0(x,y,T) \ = \ \min_{0<\tau<T}\frac{\ga(\tau)^2}{2}\int_\tau^T \frac{m_{1,A}(s)^2  \ ds}{\sig_A^4(s)} \ ,
\ee
where the function $\ga(\cdot)$ is defined by (\ref{AK2}).
The RHS of (\ref{A4}) can be expressed in terms of the functions $g_{1,A},g_{2,A},g_{3,A}$ of (\ref{AM2}), (\ref{AN2}), (\ref{M3}).  Thus we have that
\be \label{B4}
q_0(x,y,T) \ = \ \min_{0<\tau<T} \frac{g_{3,A}(\tau,T)}{2} \left[y+g_{1,A}(\tau,T)x+g_{2,A}(\tau,T)\right]^2 \ .
\ee 
One sees from (\ref{AM2}) that $\lim_{\tau\ra 0} g_{1,A}(\tau,T)=0$ and $\lim_{\tau\ra T} g_{1,A}(\tau,T)=\infty$. 
From (\ref{O3}) we see that the function $\tau\ra g_{2,A}(\tau,T), \ 0<\tau<T,$ satisfies $\lim_{\tau\ra 0} g_{2,A}(\tau,T)=0$ and $\lim_{\tau\ra T} g_{2,A}(\tau,T)=[\sig_A^2(T)-m_{2,A}(T)]/m_{1,A}(T)$.  From (\ref{M3}) we see that the function $\tau\ra g_{3,A}(\tau,T), \ 0<\tau<T,$ has the properties  $\lim_{\tau\ra 0} g_{3,A}(\tau,T)=\infty$ and $\lim_{\tau\ra T} g_{3,A}(\tau,T)=0$. It follows then from (\ref{B4}) that $q_0(x,0,T)=q_0(0,y,T)=0$. In the case of $x\ra0$ with fixed $y>0$,  the minimizer $\tau(x,y,T)$  in (\ref{B4})  satisfies $\tau(x,y,T)\ra T$, with the minimum in (\ref{B4}) converging to  $0$.  
In the case of $y\ra0$ with fixed $x>0$,  the minimizer $\tau(x,y,T)$    satisfies $\tau(x,y,T)\ra 0$, with the minimum in (\ref{B4}) also converging to  $0$.  For general $x,y>0$, there may not be a unique minimizer $\tau(x,y,T)$, so one does not expect the function $q_0(x,y,T)$ of (\ref{B4}) to be a $C^1$ solution to the HJ equation  (\ref{AC2}).  Note however from (\ref{B4}) that the function $x\ra \sqrt{q_0(x,y,T)}, \ x>0,$ is concave for all $y,T>0$.  This is a simple consequence of the fact that the function is the minimum of a set of linear functions. Concavity of the function  $x\ra q_0(x,y,T)$ implies concavity of the function $x\ra \sqrt{q_0(x,y,T)}$. We shall prove  concavity of  $x\ra q_0(x,y,T)$  in the case when $A(\cdot)$ is non-negative.   

When $A(\cdot)\equiv0$ the formula (\ref{B4}) becomes
\begin{multline} \label{C4}
q_0(x,y,T) \ = \ \min_{0<\tau<T} \frac{T-\tau}{2\tau T} \left[y+\frac{\tau x}{(T-\tau)}\right]^2 \\
= \ \frac{1}{2T}\left[2xy+\min_{\al>0}\{\al x^2+y^2/\al\} \right]\ = \  \frac{2xy}{T} \ , \quad {\rm with \ } \tau(x,y,T) \ = \ \frac{yT}{x+y}\ .
\end{multline}
In this case the minimization problem (\ref{C4})  is convex in $\al$, but one does not expect for general $A(\cdot)$ that (\ref{B4}) is a convex minimization problem. 

The solution of the variational problem (\ref{AB2}) with {\it fixed} $\tau$ and without the positivity constraint on $x(\cdot)$ is given by the expression on the RHS of (\ref{B4}). In the case when the function $A(\cdot)$ is non-negative this is also the solution to the fixed $\tau$ variational problem {\it with} the positivity constraint on $x(\cdot)$.  We see this by observing that the optimizing trajectory (\ref{AK2}) for the unconstrained problem is positive. This follows from the fact that the functions $s\ra g_{1,A}(s,T)$ and $s\ra g_{2,A}(s,T), \ 0<s<T,$ are increasing if $A(\cdot)$ is non-negative.  The monotonicity of $g_{1,A}$ follows by noting that it may be written as
\be \label{D4}
g_{1,A}(s,T) \ = \ \frac{1}{m_{1,A}(T)}\left[   \frac{\sig_A^2(T)}{\sig^2_A(s,T)}-1        \right]\ .
\ee
To show monotonicity of $g_{2,A}$ we differentiate (\ref{O3}) to obtain the formula
\be \label{E4}
\frac{\pa g_{2,A}(s,T)}{\pa s} \ = \ \frac{\sig_A^2(T)}{m_{1,A}(s)\sig^4_A(s,T)}
\left[m_{1,A}(s,T)m_{2,A}(s,T)-\sig^2_A(s,T)\right] \ .
\ee
It is easy to see that $m_{1,A}(s,T)m_{2,A}(s,T)-\sig^2_A(s,T)\ge 0$ for  $0<s<T$ if the function $A(\cdot)$ is non-negative. 

For fixed $y>0$ and $x$ large, minimizers $\tau(x,y,T)$ for (\ref{B4}) are close to $0$.  We can use this observation to show that when $x$ is large, $q_0(x,y,T)$ is well approximated by $q_{\rm linear}(x,y,T)$ of (\ref{AA3}). To see this first observe from (\ref{M3}) that $\lim_{\tau\ra 0}\tau g_3(\tau,T)=1$.  We also have on differentiating (\ref{D4}) that
 \be\label{F4}
\frac{\pa g_{1,A}(s,T)}{\pa s} \ = \  \frac{\sig^2_A(T)m_{1,A}(s,T)^2}{m_{1,A}(T)\sig^4_A(s,T)} \ .
\ee
Upon setting $s=0$ in (\ref{E4}), (\ref{F4}) we see  that
\be \label{G4}
g_{1,A}(0,T)=0, \ \ \frac{\pa g_{1,A}(0,T)}{\pa s} =\frac{m_{1,A}(T)}{\sig_A^2(T)} \ , \quad g_{2,A}(0,T)=0, \ \ \frac{\pa g_{2,A}(0,T)}{\pa s} =\frac{m_{1,A}(T)m_{2,A}(T)}{\sig_A^2(T)}-1 \  .
\ee
Hence if the minimizer $\tau(x,y,T)$ of (\ref{B4}) is close to $0$, the minimization problem is given to leading order by
\be \label{H4}
q_0(x,y,T)\simeq \frac{1}{2}\left\{\min_{\tau>0}\left[\frac{y}{\sqrt{\tau}}+\sqrt{\tau}\left\{\frac{\pa g_{1,A}(0,T)}{\pa s} x+\frac{\pa g_{2,A}(0,T)}{\pa s} \right\}\right]\right\}^2 \ = \  q_{\rm linear}(x,y,T)  \ . 
\ee
The minimizer in (\ref{H4}) gives the leading order term in an expansion of $\tau(x,y,T)$, whence
\be \label{I4}
\tau(x,y,T) \ \simeq \ \frac{2y^2}{q_{\rm linear}(x,y,T)} \ \simeq\ \frac{Ty}{x} \quad {\rm for \ large \ } x.
\ee
Note from (\ref{I4})  that $\tau(x,y,T)=O(1/x)$ as $x\ra\infty$.  We make this argument precise in the following:
\begin{proposition}
Assume the function $A:[0,\infty)\ra\mathbb{R}$ is continuous and non-negative. Then for any $T_0>0$ there exists a constant $C_A(T_0)$, depending only on  
$T_0$ and $\sup_{0\le t\le T_0}A(t)$, such that   the function $(x,T)\ra q_0(x,y,T)$ satisfies the inequalities
\be \label{J4}
-\frac{C_A(T_0)Ty^2}{x}  \ \le \ q_0(x,y,T)-q_{\rm linear}(x,y,T) \ \le \ 0 \quad {\rm for \ } x\ge \max\{2y,T^2\}, \ 0<T<T_0 \ ,
\ee
and also
\be \label{K4}
0 \ \le \ q_0(x,y,T)-\frac{2m_{1,A}(T)xy}{\sig^2_A(T)} \ \le \ C_A(T_0)Tx \quad {\rm for \ } x,y>0, \ 0<T<T_0 \ . 
\ee
\end{proposition}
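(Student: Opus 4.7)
The plan is to dispatch (K4) and the upper bound in (J4) by passing $\ve\ra 0$ in Proposition 3.3 via Theorem 2.1, and then to attack the lower bound in (J4) using the variational formula (B4) together with an AM-GM argument enabled by an exact identity for $g_{1,A}\,g_{3,A}$. Specifically, the lower bound in (K4) is the $\ve\ra 0$ limit of the left side of (AB3) since $\pa q_{\rm linear}(0,y,T)/\pa x=2m_{1,A}(T)y/\sig_A^2(T)$, and the upper bound in (J4) is the analogous limit of the right side of (AB3). For the upper bound in (K4), I would pass to the limit in (AC3) to obtain $q_0\le -2\la(0,y,T)x$ and then observe that $-2\la(0,y,T)-2m_{1,A}(T)y/\sig_A^2(T)=2(\sig_A^2(T)-m_{2,A}(T))/\sig_A^2(T)$; a direct computation shows
\[
\sig_A^2(T)-m_{2,A}(T)\ =\ \int_0^T e^{\int_s^T A(u)\,du}\bigl[e^{\int_s^T A(u)\,du}-1\bigr]\,ds\ \le\ C_A(T_0)T^2
\]
for $T\le T_0$ with a constant depending only on $T_0$ and $\sup_{[0,T_0]}A$; coupled with $\sig_A^2(T)\ge T$ (valid for $A\ge 0$), this yields the desired $C_A(T_0)Tx$ bound.

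For the lower bound in (J4), the starting point is the exact identity
\[
g_{1,A}(\tau,T)\,g_{3,A}(\tau,T)\ =\ \frac{m_{1,A}(T)}{\sig_A^2(T)}\ =:\ \al_1\quad\text{for all }\tau\in(0,T),
\]
immediate from (AM2), (M3) since $m_{1,A}(\tau)m_{1,A}(\tau,T)=m_{1,A}(T)$ and the remaining factors telescope. Using this identity to expand $F(\tau):=\tfrac12 g_{3,A}(\tau,T)(y+g_{1,A}(\tau,T)x+g_{2,A}(\tau,T))^2$ one obtains
\[
F(\tau)\ =\ \al_1 xy+\al_1 g_{2,A}(\tau,T)x+\tfrac12 g_{3,A}(\tau,T)(y+g_{2,A}(\tau,T))^2+\tfrac{\al_1}{2}g_{1,A}(\tau,T)x^2,
\]
and AM-GM applied to the last two (non-negative) terms, again invoking $g_{1,A}g_{3,A}=\al_1$, gives $\tfrac12 g_{3,A}(y+g_{2,A})^2+\tfrac{\al_1}{2}g_{1,A}x^2\ge\al_1 x(y+g_{2,A})$, hence the uniform bound $F(\tau)\ge 2\al_1 xy+2\al_1 g_{2,A}(\tau,T)x$ for all $\tau\in(0,T)$. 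Evaluating at the minimizer $\tau^*=\tau^*(x,y,T)$ of $F$ yields $q_0(x,y,T)\ge 2\al_1 xy+2\al_1 x\,g_{2,A}(\tau^*,T)$, reducing the proof to showing $2\al_1 x\,g_{2,A}(\tau^*,T)\ge 2\al_2 y-C_A(T_0)Ty^2/x$ with $\al_2:=m_{1,A}(T)m_{2,A}(T)/\sig_A^2(T)-1$.

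The main obstacle is the quantitative control of $\tau^*$. Analyzing the critical-point equation $F'(\tau^*)=0$ together with the formulas (E4), (F4) for the first derivatives at $\tau=0$ ($\pa g_{1,A}/\pa\tau|_0=\al_1$, $\pa g_{2,A}/\pa\tau|_0=\al_2$) and uniform bounds on the corresponding second derivatives depending only on $T_0$ and $\sup_{[0,T_0]}A$, one should show that in the regime $x\ge\max\{2y,T^2\}$ the minimizer $\tau^*$ lies close to the leading-order value $\tau_0:=y/(\al_1 x+\al_2)$, with $g_{2,A}(\tau^*,T)=\al_2\tau^*+O(\tau^{*2})$ and $\tau^*=\tau_0+O(\tau_0^2)$. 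Substituting these expansions gives $2\al_1 x\,g_{2,A}(\tau^*,T)=2\al_2 y+O(C_A(T_0)Ty^2/x)$, completing the argument. The hypothesis $x\ge T^2$ is precisely what keeps $\tau_0$ well inside $(0,T)$ and the Taylor remainders of acceptable order, while $x\ge 2y$ converts the ambient estimates to the claimed form $C_A(T_0)Ty^2/x$; care is needed throughout to ensure constants depend only on $T_0$ and $\sup_{[0,T_0]}A$ as required.
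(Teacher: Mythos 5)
Your derivation of (K4) and the upper bound in (J4) by sending $\ve\ra 0$ in Proposition~3.3 via Theorem~2.1 — the lower bound of (AB3) gives the left side of (K4), the upper bound of (AB3) gives the right side of (J4), and (AC3) together with $\sig_A^2(T)-m_{2,A}(T)\le C_A(T_0)T^2$ and $\sig_A^2(T)\ge T$ gives the right side of (K4) — is correct and is a legitimate shortcut; the paper derives these instead directly from the variational formula (B4) via (S4), (U4), (V4).

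The lower bound in (J4), however, has a gap. You apply AM--GM to $\tfrac12 g_{3,A}(y+g_{2,A})^2+\tfrac{\al_1}{2}g_{1,A}x^2$ inside your decomposition of $F(\tau)$ and then evaluate at the minimizer $\tau^*$, discarding the slack $\tfrac12 g_{3,A}(\tau^*)[\,y+g_{2,A}(\tau^*)-g_{1,A}(\tau^*)x\,]^2$. But the stationarity equation $F'(\tau^*)=0$, used with $g_{1,A}g_{3,A}=\al_1$, gives $y+g_{2,A}(\tau^*)-g_{1,A}(\tau^*)x = -2g_{3,A}g_{2,A}'/g_{3,A}'\,\big|_{\tau^*}\approx 2\tau^*\al_2$, so the discarded slack is $\approx 2\tau^*\al_2^2\approx 2\al_2^2 y/(\al_1 x)$. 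Meanwhile your assertion that $2\al_1 x\,g_{2,A}(\tau^*,T)=2\al_2 y+O(Ty^2/x)$ is not right: substituting $g_{2,A}(\tau^*)\approx\al_2\tau_0$ with $\tau_0=y/(\al_1 x+\al_2)$ gives $2\al_1 x\al_2\tau_0=2\al_2 y\cdot\al_1 x/(\al_1 x+\al_2)=2\al_2 y-2\al_2^2 y/(\al_1 x+\al_2)$, a deficit of the \emph{same} magnitude as the slack. In the exact $q_0-q_{\rm linear}$ these two cancel to within $O(Ty^2/x)$; after dropping the slack you are left with an uncompensated error of order $\al_2^2 y/(\al_1 x)$. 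Since $\al_2=O(T)$ while $\al_1\ge c/T$, this is $O(T^3 y/x)$, which strictly exceeds the target $Ty^2/x$ whenever $y<T^2$ — a regime permitted by $x\ge\max\{2y,T^2\}$. The paper avoids this by never performing AM--GM on $F$ itself: (R4) and (P4) bound $g_{1,A}(\tau^*)x+g_{2,A}(\tau^*)\ge C/g_{3,A}(\tau^*)$ with $C=q_{\rm linear}/(2y)-\al_1 C_5C_4T^2y/x$ independent of $\tau$, and only then is $\tfrac12 g_3(y+C/g_3)^2$ minimized over $g_3>0$; keeping the small $\tau^*$-dependent error \emph{inside} the square, where it enters multiplied by $y$ upon expansion, is exactly what produces the $Ty^2/x$ scaling. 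To repair your route you would need to carry the AM--GM slack along and exhibit the cancellation explicitly, which is considerably more delicate than the Taylor sketch you give.
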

\begin{proof}
All the constants $C_1,C_2,...,$ in the following can be chosen to depend only on $T_0$ and $\sup_{0\le t\le T_0}A(t)$.
We first observe from (\ref{AM2}), (\ref{M3}) that
\be \label{L4}
g_{3,A}(s,T)g_{1,A}(s,T) \ = \ \frac{m_{1,A}(T)}{\sig^2_A(T)} \ , \quad 0<s<T \ .
\ee
Next we note from (\ref{AM2}), (\ref{E4})  that for any $T_0>0$, there are constants $C_1,C_2,C_3>0$ such that
\begin{multline} \label{M4}
 \frac{C_1s}{T-s} \ \le \ g_{1,A}(s,T)  \ \le \  \frac{C_2s}{T-s} \ , \quad
 0 \ \le \ \frac{\pa g_{2,A}(s,T)}{\pa s}  \ \le \  C_3T \ ,  \\ 
 0 \ \le \ g_{2,A}(s,T) \ \le C_3sT \ ,  \quad {\rm for \ } 0<s<T, \ 0<T\le T_0 \ .
\end{multline}
 Evaluating the functional on the RHS of (\ref{B4}) at $\tau=Ty/x$ we conclude from (\ref{L4}), (\ref{M4}) that 
\be \label{N4}
q_0(x,y,T) \ \le \  \frac{m_{1,A}(T_0)}{2C_1}[1+2C_2+C_3]^2 \frac{xy}{T} \ , \quad 0<T\le T_0, \ x\ge \max\{2y,T^2\} \ .
\ee
We also have from (\ref{L4}), (\ref{M4}) that
\be \label{O4}
 \frac{g_{3,A}(\tau,T)}{2} \left[y+g_1(\tau,T)x+g_{2,A}(\tau,T)\right]^2 \
  \ge \ \frac{m_{1,A}(T)}{2\sig^2_A(T)} g_{1,A}(\tau,T)x^2 \ \ge \ \frac{C_1\tau x^2}{2m_{1,A}(T_0)T^2} \ .
\ee
It follows from (\ref{N4}), (\ref{O4}) there is a constant $C_4$ such that any minimizing $\tau=\tau(x,y,T)$ in (\ref{B4}) satisfies the inequality
\be \label{P4}
0<\tau(x,y,T) \ \le \ \frac{C_4Ty}{x} \ , \quad 0<T\le T_0, \ x\ge \max\{2y,T^2\} \ .
\ee
We have from (\ref{P3}) that 
\be \label{Q4}
\lim_{\tau\ra 0}\frac{g_{2,A}(\tau,T)}{g_{1,A}(\tau,T)} \ = \ m_{2,A}(T)-\frac{\sig^2_A(T)}{m_{1,A}(T)} \ .
\ee
We also see from (\ref{Q3}) that if $A(\cdot)$ is non-negative, the derivative of the function $s\ra g_{2,A}(s,T)/g_{1,A}(s,T), \ 0<s<T,$ is less than or equal to zero and bounded by a constant times $T$. We conclude therefore from (\ref{Q4}) there is a constant $C_5$ such that
\be \label{R4}
-C_5\tau T \ \le \ \frac{g_{2,A}(\tau,T)}{g_{1,A}(\tau,T)}-\left\{m_{2,A}(T)-\frac{\sig^2_A(T)}{m_{1,A}(T)}\right\} \ \le \ 0 \ , 
\quad 0<\tau<T\le T_0 \ .
\ee

The inequality (\ref{J4}) follows from (\ref{L4}), (\ref{P4}), (\ref{R4}).  Thus from the upper bound in (\ref{R4}) we have that 
\be \label{S4}
q_0(x,y,T) \ \le \ \inf_{0<\tau<T}\frac{g_{3,A}(\tau,T)}{2}\left[y+\frac{1}{g_{3,A}(\tau,T)}\frac{q_{\rm linear}(x,y,T)}{2y}\right]^2  \ = q_{\rm linear}(x,y,T)\ .
\ee
Similarly from the lower bound in (\ref{R4}) we have upon using (\ref{P4})  that
\begin{multline} \label{T4}
q_0(x,y,T) \ \ge \ \inf_{0<\tau<T}\frac{g_{3,A}(\tau,T)}{2}\left[y+\frac{1}{g_{3,A}(\tau,T)}\left\{\frac{q_{\rm linear}(x,y,T)}{2y}-\frac{m_{1,A}(T)}{\sig^2_A(T)}\frac{C_5C_4 T^2y}{x}\right\}\right]^2 \\
= \ q_{\rm linear}(x,y,T)-\frac{m_{1,A}(T)}{\sig^2_A(T)}\frac{2C_5C_4 T^2y^2}{x} \ ,
\end{multline} 
provided the term inside the curly braces is positive. Observe that the lower bound in (\ref{J4}) is trivial if 
$q_{\rm linear}(x,y,T)-CTy^2/x\le 0$, whence we need only consider situations where  $q_{\rm linear}(x,y,T)-CTy^2/x$ is positive. Upon choosing $C$ sufficiently large, we see that the term of (\ref{T4})  inside the curly braces is then positive. Hence we obtain the lower bound (\ref{J4}) for all $x\ge \max\{2y,T^2\}$.  

We can argue similarly to obtain the bound (\ref{K4}). Thus since $g_{2,A}(\cdot,\cdot)$ is non-negative, we have that 
\be \label{U4}
q_0(x,y,T) \ \ge \ \inf_{0<\tau<T}\frac{g_{3,A}(\tau,T)}{2}\left[y+\frac{1}{g_{3,A}(\tau,T)}\frac{m_{1,A}(T)x}{\sig^2_A(T)}\right]^2
 \ = \  \frac{2m_{1,A}(T)xy}{\sig^2_A(T)} \  .
\ee
From (\ref{M4}) we have that $g_{2,A}(s,T)\le C_3T^2, \ 0<s<T\le T_0$, for some constant $C_3$. Hence from (\ref{B4}), (\ref{L4}) we have that
\be \label{V4}
q_0(x,y,T) \ \le \ \inf_{0<\tau<T}\frac{g_{3,A}(\tau,T)}{2}\left[y+C_3T^2+\frac{1}{g_{3,A}(\tau,T)}\frac{m_{1,A}(T)x}{\sig^2_A(T)}\right]^2
 \ = \  \frac{2m_{1,A}(T)x(y+C_3T^2)}{\sig^2_A(T)} \  .
\ee
\end{proof}
We wish also to understand the behavior of $\pa q_0(x,y,T)/\pa x$ and $\pa^2 q_0(x,y,T)/\pa x^2$ as $x\ra\infty$ and as $T\ra0$. In order to do this we will show that the function $[x,T]\ra q_0(x,y,T)$, defined by (\ref{AB2}) or equivalently (\ref{B4}),  can be obtained for $[x,T]$  in a certain domain by the method of characteristics applied to solving the HJ equation (\ref{AC2}) with boundary condition (\ref{AD2}). To implement the method of characteristics, we first  observe from (\ref{AD2}) that $\pa q_0(x,y,T)/\pa T=0$ at $x=0$, whence (\ref{AC2}) yields the formula
\be \label{W4}
u_0(0,y,T) \ = \ \frac{\pa q_0(x,y,T)}{\pa x} \Big|_{x=0} \ = \ -2\la(0,y,T) \ = \  \frac{2m_{1,A}(T)}{\sig^2_A(T)}[y+g_{2,A}(T,T)] \ ,
\ee
where from (\ref{O3}) we define $g_{2,A}(T,T)=\lim_{s\ra T}g_{2,A}(s,T)=[\sig^2_A(T)-m_{2,A}(T)]/m_{1,A}(T)$. Next we differentiate the PDE (\ref{AC2}) with respect to $x$ to obtain the Burgers' equation
\begin{multline} \label{X4}
\frac{\pa u_0(x,y,T)}{\pa T}  + \left[\la(x,y,T)+ u_0(x,y,T)\right]\frac{\pa u_0(x,y,T)}{\pa x} \\
+\left[A(T)+\frac{1}{\sig^2_A(T)}\right]u_0(x,y,T) \  = \ 0 \ ,
\end{multline}
for the function $u_0(x,y,T)=\pa q_0(x,y,T)/\pa x$. We seek to solve (\ref{X4}), with the boundary condition $u_0(0,y,T)=\pa q_0(0,y,T)/\pa x$ given by (\ref{W4}), by using the method of characteristics.  If $s\ra [x(s),s], \ s>\tau$, is a characteristic with initial condition $x(\tau)=0$,  then (\ref{W4}), (\ref{X4}) yield the  ODE initial value problem,
\begin{multline} \label{Y4}
\frac{d}{ds} u_0(x(s),y,s)+\left[A(s)+\frac{1}{\sig^2_A(s)}\right]u_0(x(s),y,s) \  = \ 0 \ ,  \ \ s>\tau,  \\
u_0(x(\tau),y,\tau) \ = \  \frac{2m_{1,A}(\tau)}{\sig^2_A(\tau)}[y+g_{2,A}(\tau,\tau)] \ .
\end{multline}
The solution to (\ref{Y4}) is given by the formula
\be \label{Z4}
u_0(x(s),y,s) \ = \ \frac{2m_{1,A}(s)[y+g_{2,A}(\tau,\tau)]}{\sig^2_A(s)}  \ , \quad s>\tau \ .
\ee
It follows from (\ref{X4}), (\ref{Z4}) that the characteristics are solutions to the ODE initial value problem  
\be \label{AA4}
\frac{dx(s)}{ds} \ = \ \la(x(s),y,s)+\frac{2m_{1,A}(s)[y+g_{2,A}(\tau,\tau)]}{\sig^2_A(s)} \ , \quad s>\tau, \ x(\tau)=0 \ .
\ee
From (\ref{O2}) we see that (\ref{AA4}) is the same as
\be \label{AB4}
\frac{dx(s)}{ds} \ = \ \left[A(s)+\frac{1}{\sig^2_A(s)}\right]x(s) +\frac{m_{1,A}(s)}{\sig^2_A(s)}\left[y+2g_{2,A}(\tau,\tau)-g_{2,A}(s,s)\right] \ , \quad s>\tau, \ x(\tau)=0 \ .
\ee
The general solution to the ODE (\ref{AB4}) is given by the formula
\be \label{AC4}
x(s) \ = \ C\frac{\sig_A^2(s)}{m_{1,A}(s)}-[y+2g_{2,A}(\tau,\tau)]m_{1,A}(s) -m_{2,A}(s) \ ,
\ee
where $C$ is an arbitrary constant.  The constant $C$ is determined for the characteristic  by the initial condition $x(\tau)=0$. In the case  $A(\cdot)\equiv0$ this yields the formula $x(s)=[s/\tau-1]y, \ s>\tau,$ for the characteristic.  

We have that
\be \label{AD4}
\frac{d}{ds} g_{2,A}(s,s) \ =  \frac{A(s)\sig^2_A(s)}{m_{1,A}(s)} \ .
\ee
If we  assume $A(\cdot)$ non-negative, it follows from (\ref{AD4}) that the function $s\ra g_{2,A}(s,s)$ is increasing. This implies  the characteristics that are solutions to (\ref{AB4}) may meet, whence one cannot expect the HJ equation (\ref{AC2}) to have a classical (continuously differentiable) solution.  We can make this more precise by considering characteristics $s\ra x(\tau,s), \ s>\tau>0,$ which are solutions to (\ref{AB4}) with initial condition $x(\tau,\tau)=0$. The first variation   $D_\tau x(\tau,s)=\pa x(\tau,s)/\pa \tau, \ s>\tau>0,$ is from (\ref{AB4}), (\ref{AD4}) the solution to the initial value problem 
\begin{multline} \label{AE4}
\frac{d}{ds}D_\tau x(\tau,s) \ = \ \left[A(s)+\frac{1}{\sig^2_A(s)}\right]D_\tau x(\tau,s) +\frac{2m_{1,A}(s)}{\sig^2_A(s)}  \frac{A(\tau)\sig^2_A(\tau)}{m_{1,A}(\tau)} \ , \ s>\tau \ , \\
D_\tau x(\tau,s)\Big|_{s=\tau} \ = \ -\frac{m_{1,A}(\tau)}{\sig^2_A(\tau)}\left[y+g_2(\tau,\tau)\right] \ .
\end{multline}
We note that (\ref{AE4}) is equivalent to
\begin{multline} \label{AF4}
\frac{d}{ds}\left[ \frac{m_{1,A}(s)}{\sig^2_A(s)}  D_\tau x(\tau,s)\right] \ = \ \frac{2m_{1,A}(s)^2}{\sig^4_A(s)}  \frac{A(\tau)\sig^2_A(\tau)}{m_{1,A}(\tau)} \ , \ s>\tau \ , \\
D_\tau x(\tau,s)\Big|_{s=\tau} \ = \ -\frac{m_{1,A}(\tau)}{\sig^2_A(\tau)}\left[y+g_2(\tau,\tau)\right] \ .
\end{multline}
Since $D_\tau x(\tau,s)<0$ at $s=\tau$ and the derivative on the LHS of (\ref{AF4}) is non-negative, we can have $D_\tau x(\tau,s)=0$ for some $s>\tau$, from which point the solution to (\ref{X4}) cannot be continued by using the method of characteristics. 

When the method of characteristics does apply to obtain the solution of (\ref{X4}) with boundary data (\ref{W4}),  we may obtain a formula for $\pa u_0(x,y,T)/\pa x$ along characteristics similarly to how we obtained (\ref{Z4}) for $u_0(x,y,T)$. To see this first note from (\ref{W4}), (\ref{AD4}) that
\begin{multline} \label{AG4}
\frac{\pa u_0(x,y,T)}{\pa T}\Big|_{x=0} \ = \ \frac{d}{dT} u_0(0,y,T) \\
 = \ -\left[A(T)+\frac{1}{\sig^2_A(T)}\right]u_0(0,y,T)+ 2A(T) \ .
\end{multline} 
Setting $x=0$ in (\ref{X4}) and using (\ref{AG4}) we conclude that
\be \label{AH4}
\frac{\pa u_0(x,y,T)}{\pa x}\Big|_{x=0} \ = \ -\frac{2A(T)\sig^2_A(T)}{m_{1,A}(T)[y+g_{2,A}(T,T)]} \ .
\ee
Differentiating (\ref{X4}) with respect to $x$, we obtain a PDE for $v_0(x,y,T)=\pa u_0(x,y,T)/\pa x$,
\begin{multline} \label{AI4}
\frac{\pa v_0(x,y,T)}{\pa T}  + \left[\la(x,y,T)+ u_0(x,y,T)\right]\frac{\pa v_0(x,y,T)}{\pa x} \\
+v_0(x,y,T)^2+2\left[A(T)+\frac{1}{\sig^2_A(T)}\right]v_0(x,y,T) \  = \ 0 \ .
\end{multline}
From the method of characteristics applied to (\ref{AI4}), we obtain using (\ref{AH4})  the ODE initial value problem
\begin{multline} \label{AJ4}
\frac{d}{ds} v_0(x(s),y,s)+v(x(s),y,s)^2+2\left[A(s)+\frac{1}{\sig^2_A(s)}\right]v_0(x(s),y,s) \  = \ 0 \ ,  \ \ s>\tau,  \\
v_0(x(\tau),y,\tau) \ = \  -\frac{2A(\tau)\sig^2_A(\tau)}{m_{1,A}(\tau)[y+g_2(\tau,\tau)]} \ ,
\end{multline}
where $s\ra x(s), \ s>\tau,$ is the characteristic defined by (\ref{AB4}).  It follows from (\ref{AJ4}) that the function $s\ra 1/v_0(x(s),y,s)$ is a solution to a linear differential equation, whence we conclude that  $1/v_0(x(s),y,s), \ s>\tau,$  is of the form
\be \label{AK4}
\frac{1}{v_0(x(s),y,s)} \ = \ C\frac{\sig_A^4(s)}{m_{1,A}(s)^2}-\sig_A^2(s) \ , \quad s>\tau \ ,
\ee
for some constant $C$. Choosing $C$ in (\ref{AK4}) to satisfy the initial condition (\ref{AJ4}), we have then that
\begin{multline} \label{AL4}
\frac{\pa u_0(x(s),y,s)}{\pa x} \ = \ -K(\tau)\frac{m_{1,A}(s)^2}{\sig_A^4(s)} 
 \Bigg/\left\{1+K(\tau)\left[\frac{m_{1,A}(s)^2}{\sig_A^2(s)}-\frac{m_{1,A}(\tau)^2}{\sig_A^2(\tau)}\right]\right\} 
 \\ {\rm for \ } s>\tau, \quad {\rm where \ } K(\tau) \ = \ \frac{2A(\tau)\sig^6_A(\tau)}{m_{1,A}(\tau)^3[y+g_{2,A}(\tau,\tau)]} \ .
\end{multline}
Since the function $s\ra m_{1,A}(s)/\sig^2_A(s)$ is decreasing, we see  that the formula (\ref{AL4}) for  $\pa u_0(x,y,T)/\pa x$ can blow up to $-\infty$. This is again a consequence of the fact that we cannot in general expect a classical solution to (\ref{W4}), (\ref{X4}) when $A(\cdot)$ is non-negative. 

We assume that $A(\cdot)$ is non-negative. Observe that the condition $x(\tau)=0$ in (\ref{AB4})  implies that the constant $C$ in  (\ref{AC4}) is given by the formula
\be \label{AM4}
C \ = \ \frac{[y+2g_2(\tau,\tau)] m_{1,A}(\tau)^2}{\sig^2_A(\tau)} +\frac{m_{1,A}(\tau)m_{2,A}(\tau)}{\sig^2_A(\tau)} \
= \ \frac{[y+g_2(\tau,\tau)] m_{1,A}(\tau)^2}{\sig^2_A(\tau)} +m_{1,A}(\tau) \ .
\ee
Substituting (\ref{AM4}) into (\ref{AC4}) gives the formula for the characteristic,
\begin{multline} \label{BJ4}
x(s) \ = \ [y+g_2(\tau,\tau)] \left\{\frac{m_{1,A}(\tau)^2}{\sig^2_A(\tau)}\frac{\sig^2_A(s)}{m_{1,A}(s)}-m_{1,A}(s)   \right\} \\
+m_{1,A}(s)m_{1,A}(\tau) \left\{      \frac{\sig^2_A(s)}{m_{1,A}(s)^2} -   \frac{\sig^2_A(\tau)}{m_{1,A}(\tau)^2}    \right\}
-m_{1,A}(s)  \left\{      \frac{m_{2,A}(s)}{m_{1,A}(s)} -   \frac{m_{2,A}(\tau)}{m_{1,A}(\tau)}    \right\} \ .
\end{multline}
The first term on the RHS of (\ref{BJ4}) is bounded below by $c_1\left[s/\tau-1\right][y+g_{2,A}(\tau,\tau)]$ for $0<\tau<s\le T_0$, where the constant $c_1>0$ depends only on $T_0$ and $\sup_{0\le t\le T_0}A(t)$. 
The remaining terms can be expressed as an integral over the interval $[\tau,s]$,
\be \label{BK4}
m_{1,A}(s)\int_\tau^s \left[\frac{m_{1,A}(\tau)}{m_{1,A}(s')}- 1\right]\frac{ds'}{m_{1,A}(s')} \ = \ m_{1,A}(s)f(s) \ .
\ee
The function $s\ra f(s), \ s\ge\tau,$ is decreasing and $f(\tau)=f'(\tau)=0$.  We conclude that the characteristic $s\ra x(\tau,s), \ s>\tau,$ is an increasing function of $s$ for $s>\tau$ such that $s-\tau$ is sufficiently small. However it could decrease for $s$ large, even to $0$.  We see from (\ref{BJ4}), (\ref{BK4}) that
\begin{multline} \label{AN4}
c_1(s-\tau)\left[\frac{[y+g_{2,A}(\tau,\tau)]}{\tau}-C_1(s-\tau)\right]  \ \le \ x(\tau,s) \\
\le \ C_2(s-\tau)\left[\frac{[y+g_{2,A}(\tau,\tau)]}{\tau}-c_2(s-\tau)\right] \ , \quad 0<\tau<s\le T_0 \ ,
\end{multline}
where $c_1,c_2,C_1,C_2>0$ depend only on $T_0$ and $\sup_{0\le t\le T_0}A(t)$. It follows from (\ref{AN4}) that $x(\tau,s)>0$ for $0<\tau<s<\min\left\{\tau+[y+g_{2,A}(\tau,\tau)]/C_1\tau,T_0\right\}$. 

Next we obtain from the variation equation (\ref{AE4}) conditions   that imply  characteristics do not intersect. Thus setting $y(\tau,s)=m_{1,A}(s)D_\tau x(\tau,s)/\sig^2_A(s)$,  we have
from (\ref{AE4}), (\ref{AF4})
\be \label{AQ4}
\frac{\pa}{\pa s} y(\tau,s)\  \le \ \frac{C_3\tau}{s^2} \ ,  \ \tau<s\le T_0 \ , \quad y(\tau,\tau)\le  -C_4\frac{y+g_{2,A}(\tau,\tau)}{\tau^2} \ ,
\ee
for some positive constants $C_3,C_4$ depending only on $T_0$ and $\sup_{0\le t\le T_0}A(t)$.  Integrating (\ref{AQ4}) we conclude that
\be \label{AR4}
y(\tau,s) \ \le \  C_3\left[1-\frac{\tau}{s}\right]-C_4\frac{y+g_{2,A}(\tau,\tau)}{\tau^2} \quad {\rm for \ } \tau<s\le T_0 \ .
\ee
It follows from (\ref{AR4}) that $D_\tau x(\tau,s)<0$ for 
$0<\tau<s<\min\left\{\tau+C_4[y+g_{2,A}(\tau,\tau)]/C_3\tau,T_0\right\}$. 

Let $\La_0>0$ be a constant such that $\La_0<1/C_1,  \ \La_0< C_4/C_3$ and consider the function $T_y(\tau)=\tau+\La_0[ y+g_{2,A}(\tau,\tau)]/\tau, \ 0<\tau\le T_0$.  Evidently  $T_y(\cdot)\ge \tilde{T}_y(\cdot)$, where $\tilde{T}_y$  is the convex function $\tilde{T}_y(\tau)=\tau+\La_0y/\tau$.  The infimum of $\tilde{T}_y$ is attained at $\tau=\sqrt{\La_0 y}$ and $\inf_{0<\tau<\infty}\tilde{T}_y(\tau)=2\sqrt{\La_0 y}$.  Since (\ref{AD4}) implies that  $g_{2,A}(\tau,\tau)\le C\tau^2$, we see that 
if $2\sqrt{\La_0y}\le T_0$ then $\inf_{0<\tau<T_0}T_y(\tau)\le2[1+C']\sqrt{\La_0 y}$, where $C'\ge 0$ depends only on $T_0$ and $\sup_{0\le t\le T_0}A(t)$. 

We wish to identify the largest domain $\mathcal{D}_{y,T_0}$ contained in $\{[x,T]: \ x>0, \ 0<T<T_0\}$ such that  characteristics do not intersect within the domain. We have already seen that if $2\sqrt{\La_0y}\ge T_0$ then we may take $\mathcal{D}_{y,T_0}=\{[x,T]: \ x>0, \ 0<T<T_0\}$, so let us assume that 
$2\sqrt{\La_0y}< T_0$.  Then $\mathcal{D}_{y,T_0}$ contains  $\{[x,T]: \ x>0, \ 0<T<2\sqrt{\La_0y}\}$, so we just need to consider the situation $2\sqrt{\La_0y}<T<T_0$. Then the equation
\be \label{BL4}
\tau+\frac{\La[y+g_{2,A}(T,T)]}{\tau} \ = \ T
\ee
has two solutions provided that  $2\sqrt{\La[y+g_{2,A}(T,T)]}<T$. Since $2\sqrt{\La_0y}<T<T_0$ we may choose $\La_1<\La_0$,  depending only on $T_0$ and $\sup_{0\le t\le T_0}A(t)$, such that $4\sqrt{\La_1[y+g_{2,A}(T,T)]}<T$.  The larger solution to (\ref{BL4}) is  given by the formula
\be \label{BM4}
\tau_{1,\La,y}(T) \ = \ \frac{T}{2}+\frac{T}{2}\left\{1-\frac{4\La [y+g_{2,A}(T,T)]}{T^2}\right\}^{1/2} \ .
\ee
If $\La\le \La_1$ then $\tau_{1,\La,y}(T)$ satisfy the inequality
\be \label{BN4}
 \frac{\La [y+g_{2,A}(T,T)]}{T} \  \le \ T-\tau_{1,\La,y}(T) \ \le \frac{2\La [y+g_{2,A}(T,T)]}{\sqrt{3}T} \ .
\ee
It follows from (\ref{AD4}), (\ref{BN4}) that
\be \label{BO4}
0\le \ g_{2,A}(T,T)-g_{2,A}(\tau_{1,\La,y}(T),\tau_{1,\La,y}(T)) \ \le \  C\La [y+g_{2,A}(T,T)] \ ,
\ee
for some constant $C$ depending only on $T_0$ and $\sup_{0\le t\le T_0}A(t)$. We conclude from (\ref{AN4}), (\ref{BO4})  there is a constant $\La_2$, depending only on $T_0$ and $\sup_{0\le t\le T_0}A(t)$, such that if
\be \label{BP4}
2\sqrt{\La_0y}<T<T_0 \ \ {\rm and \ } 0<x \le \ \frac{\La_2[y+g_{2,A}(T,T)]^2}{T^2}  \  {\rm then} \  [x,T]\in \mathcal{D}_{y,T_0} \ .
\ee

We may make a similar argument to show that when $2\sqrt{\La_0y}<T<T_0$ then $[x,T]\in\mathcal{D}_{y,T_0}$ for $x$ sufficiently large.  In that case we define $\tau_{2,\La,y}$ as the smaller solution to the equation $\tau+\La y/\tau=T$. If  $\La<\La_0$ then 
\be \label{BQ4}
\tau_{2,\La,y}(T) \ = \ \frac{T}{2}-\frac{T}{2}\left\{1-\frac{4\La y}{T^2}\right\}^{1/2} \ .
\ee
If $\La_1$ satisfies $4\sqrt{\La_1y}<T$ then for $\La<\La_1$  we have that
\be \label{BR4}
 \frac{\La y}{T} \  \le \ \tau_{2,\La,y}(T) \ \le \frac{2\La y}{\sqrt{3}T} \ .
\ee
We conclude from (\ref{AN4})  there is a constant $\La_3$, depending only on $T_0$ and $\sup_{0\le t\le T_0}A(t)$, such that if
\be \label{BS4}
2\sqrt{\La_0y}<T<T_0 \ \ {\rm and \ } x \ge \ \La_3T^2  \  {\rm then} \  [x,T]\in \mathcal{D}_{y,T_0} \ .
\ee
We  define the domain $\mathcal{U}_{y,T_0}$ by
\be \label{BT4}
\mathcal{U}_{y,T_0} \ = \ \{[\tau,s]: \ 0<\tau<T_0, \ \tau<s<\min[T_y(\tau),T_0] \} \ .
\ee 
The mapping $[\tau,s]\ra[x(\tau,s),s]$ is a diffeomorphism from $\mathcal{U}_{y,T_0} $ onto a domain
$\mathcal{D}_{y,T_0}$, which has the properties (\ref{BP4}), (\ref{BS4}).   The fact that the mapping is onto follows from the intermediate value theorem since we see from (\ref{AN4}) that $\lim_{\tau\ra 0}x(\tau,s)=\infty$ for all $0<s<T_0$.  It is one-one since $D_\tau x(\tau,s)<0$ for $[\tau,s]\in\mathcal{U}_{y,T_0}$.  
 \begin{proposition}
Assume the function $A:[0,\infty)\ra\mathbb{R}$ is continuous and non-negative.
  For $[x,T]\in \mathcal{D}_{y,T_0}$ let $[\tau,T]\in \mathcal{U}_{y,T_0}$ be such that $x(\tau,T)=x$, and define $q_0(x,y,T)$ by 
\be \label{AS4}
q_0(x,y,T) \ = \  \frac{g_{3,A}(\tau,T)}{2} \left[y+g_{1,A}(\tau,T)x+g_{2,A}(\tau,T)\right]^2 \ .
\ee
Then the function $[x,T]\ra q_0(x,y,T)$ is a $C^1$ solution of the HJ equation (\ref{AC2}) on $\mathcal{D}_{y,T_0}$ and satisfies the boundary condition $\lim_{x\ra 0} q_0(x,y,T)=0, \ 0<T<T_0$. Furthermore, the function $x\ra q_0(x,y,T)$ is $C^2$ on
$\mathcal{D}_{y,T_0}$ and the derivatives $\pa q_0(x,y,T)/\pa x, \ \pa^2 q_0(x,y,T)/\pa x^2$ are given respectively by the formulas  (\ref{Z4}), (\ref{AL4}). 

Also $\tau=\tau(x,y,T)$ in (\ref{AS4})  is the unique minimizer in the variational problems (\ref{AB2}), (\ref{B4}) for $[x,T]$ with $x>0, \ 0<T <T_0$,   in the following regions: 
 (a) all $x>0$ if $2\sqrt{\La_0 y}\ge T$, otherwise  (b) $0<x\le \La y[y+g_{2,A}(T,T)]/T^2$, (c) $x\ge T^2/\La$, where $\La>0$ is chosen sufficiently small depending only on $T_0$ and $\sup_{0\le t\le T_0}A(t)$. Therefore if $[x,T]$ is  in one of the regions (a), (b), (c) the functions (\ref{B4}) and (\ref{AS4}) are identical.
\end{proposition}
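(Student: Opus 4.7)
The first step is to upgrade the parametric setup preceding the proposition into a $C^1$ diffeomorphism. The map $\Phi(\tau,s)=(x(\tau,s),s)$ from $\mathcal{U}_{y,T_0}$ to $\mathcal{D}_{y,T_0}$ is $C^1$ (by smoothness of (\ref{AB4}) and continuity of $A$), has $D_\tau x<0$ throughout $\mathcal{U}_{y,T_0}$, and is onto $\mathcal{D}_{y,T_0}$ by (\ref{AN4}) and the intermediate value theorem; the inverse function theorem then gives a $C^1$ inverse, so $\tau=\tau(x,y,T)$ is a $C^1$ function of $(x,T)$ on $\mathcal{D}_{y,T_0}$. Composition with (\ref{AS4}) immediately yields that $q_0$ is $C^1$ in $(x,T)$ on $\mathcal{D}_{y,T_0}$.

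The main identity to establish is the transversality condition: for $(x,T)\in\mathcal{D}_{y,T_0}$ with characteristic origin $\tau=\tau(x,y,T)$, one has $\pa_\tau F(\tau,x,T)=0$ where
\[
F(\tau,x,T) \ = \ \tfrac12 g_{3,A}(\tau,T)\bigl[y+g_{1,A}(\tau,T)x+g_{2,A}(\tau,T)\bigr]^2 \ .
\]
I would establish this by interpreting it as vanishing of the Hamiltonian $H(x,p,s)=\la(x,y,s)p+p^2/2$ at the initial endpoint of the characteristic: since $x(\tau)=0$ and $p(\tau)=-2\la(0,y,\tau)$ by (\ref{W4})--(\ref{Y4}), one finds $H(0,p(\tau),\tau)=-2\la(0,y,\tau)^2+2\la(0,y,\tau)^2=0$; alternatively a direct computation from the differentiation identities for $g_{1,A},g_{2,A},g_{3,A}$ (in particular (\ref{L4}), (\ref{M3})) suffices. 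Granted transversality, the envelope theorem computes $\pa_x q_0=\pa_x F$ and $\pa_T q_0=\pa_T F$ at $\tau=\tau(x,y,T)$. Using (\ref{L4}) one obtains $\pa_x q_0=[m_{1,A}(T)/\sig_A^2(T)][y+g_{1,A}(\tau,T)x+g_{2,A}(\tau,T)]$, and substituting $x=x(\tau,T)$ from (\ref{BJ4}) reduces this to (\ref{Z4}). A parallel calculation yields $\pa_T q_0=-\la(x,y,T)\,\pa_x q_0-\tfrac12(\pa_x q_0)^2$, which is the HJ equation (\ref{AC2}).

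The boundary condition $\lim_{x\to 0}q_0(x,y,T)=0$ follows because $\tau(x,y,T)\to T$ as $x\to 0$, with $g_{3,A}(\tau,T)=O(T-\tau)$, $g_{1,A}(\tau,T)=O((T-\tau)^{-1})$, and $x(\tau,T)=O(T-\tau)$ making every term in (\ref{AS4}) vanish. For the $C^2$ property in $x$, differentiate (\ref{AC2}) in $x$ once to obtain the Burgers equation (\ref{X4}) for $u_0=\pa_x q_0$, and again to obtain the Riccati equation (\ref{AI4}) for $v_0=\pa_x^2 q_0$. Along the characteristic $s\mapsto x(\tau,s)$, equation (\ref{AI4}) reduces to the scalar ODE (\ref{AJ4}) with initial data (\ref{AH4}) at $s=\tau$, whose explicit solution (\ref{AL4}) is already in hand. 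Since $\tau(x,y,T)$ is $C^1$ in $x$, this exhibits $\pa_x^2 q_0$ as a continuous function on $\mathcal{D}_{y,T_0}$.

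The hardest point is the minimizer claim in regions (a), (b), (c). The transversality identity shows that critical points of $\tau\mapsto F(\tau,x,T)$ on $(0,T)$ are exactly origins of characteristics through $(x,T)$; since $F\to\infty$ as $\tau\to 0^+$ or $\tau\to T^-$ (read off from the $g_{i,A}$ limits catalogued in the preamble to (\ref{B4})), uniqueness of the critical point would force it to be the unique global minimum. Region (a), $T<2\sqrt{\La_0 y}$, is immediate: the whole of $(0,T)$ satisfies $[\tau,T]\in\mathcal{U}_{y,T_0}$, so $\tau\mapsto x(\tau,T)$ is strictly monotone by $D_\tau x<0$ and hits each $x>0$ exactly once. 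For region (b), with $x$ small, the intended characteristic is the root $\tau_{1,\La,y}(T)$ from (\ref{BM4}); a competing critical point would have to lie near $\tau=0$, but by the lower bound in (\ref{AN4}) such a $\tau$ forces $x\ge cT^2/\La$, contradicting the smallness hypothesis in (\ref{BP4}). Region (c) is handled symmetrically via $\tau_{2,\La,y}(T)$ from (\ref{BQ4}) and the threshold (\ref{BS4}). The main obstacle is this case-by-case exclusion of stray critical points outside $\mathcal{U}_{y,T_0}$, which requires quantitatively matching the characteristic width bounds (\ref{AN4}) against the thresholds defining the three regions.
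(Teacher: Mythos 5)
Your envelope-theorem route to the derivative formulas is a legitimate alternative to the paper's construction. The paper defines $w(\tau,s)$ via the ODE (\ref{AZ4}), verifies the closed form (\ref{BA4}), and appeals to standard Hamilton--Jacobi theory, whereas you observe that (\ref{AS4}) is $F_0(x,y,\tau,T)$ evaluated at the characteristic origin, verify the transversality condition $\pa_\tau F_0=0$ via $H(0,p(\tau),\tau)=0$, and let the envelope theorem produce $\pa_x q_0=\pa_x F_0$. Both routes yield the same formulas, and yours is conceptually cleaner for $\pa_x q_0$ and the HJ equation. One small point to tighten: your $C^2$ argument proceeds by differentiating the HJ equation twice in $x$ to reach the Riccati equation (\ref{AI4}), but that step presupposes the regularity you are trying to establish. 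The paper avoids this circularity by computing $\pa^2 q_0/\pa x^2$ directly as the quotient $(\pa p(\tau,s)/\pa\tau)/(\pa x(\tau,s)/\pa\tau)$, i.e.\ via (\ref{BB4}) and the explicit formulas (\ref{BJ4}), (\ref{AY4}); the Riccati derivation in the preamble is motivational, not the proof. You should do the same.

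The substantial gap is the minimizer claim. Your plan is to classify the critical points of $\tau\mapsto F_0(x,y,\tau,T)$ as characteristic origins (which is correct modulo a degenerate $p(\tau)=0$ case that can be ruled out for $A\ge0$), and then to exclude stray critical points by quantitative estimates. But the specific claims you offer do not hold up. For region (b), you say a competing critical point ``would have to lie near $\tau=0$,'' but for $x$ small the intended $\tau(x,y,T)$ is near $T$ and the competitors, if any, lie in the intermediate band $\tau_{2,\Lambda_0,y}(T)<\tau<\tau_{1,\Lambda_0,y}(T)$ where characteristics may have crossed; they are not near $\tau=0$. Moreover (\ref{AN4}) does not give ``$x\ge cT^2/\Lambda$'' for small $\tau$---it gives $x(\tau,T)\gtrsim Ty/\tau$, a different and $y$-dependent quantity, and it only controls the characteristic while the characteristic is well-defined. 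The paper's strategy here is different and does not rely on a classification of critical points at all: it uses the verification inequality (\ref{BC4}) (action of any admissible path staying in $\mathcal{D}_{y,T_0}$ is $\ge q_0$, with equality for the characteristic), then for each $\tau$ in $(0,T)$ either shows $F_0(x,y,\tau,T)>q_0(x,y,T)$ directly (the set $S_{x,y,T}$, via bounds like (\ref{CC4})--(\ref{CE4}), (\ref{CP4})--(\ref{CT4}) which distinguish the subcases $g_{2,A}(T,T)\le y$ and $g_{2,A}(T,T)>y$ and the size of $\del$ in (\ref{CL4})), or shows the fixed-$\tau$ optimal path $\Ga(\tau,\cdot,T,x)$ of (\ref{BD4}) remains in $\mathcal{D}_{y,T_0}$ (via (\ref{CF4})--(\ref{CG4}) and (\ref{AN4})). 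This is the bulk of the paper's proof; you acknowledge it as ``the main obstacle'' but do not carry it out, and the sketch you do give is not on the right track.
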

\begin{proof}
All constants in the following can be chosen to depend only on $T_0$ and $\sup_{0\le t\le T_0}A(t)$.
To show regularity of the function $[x,T]\ra q_0(x,y,T)$ we first differentiate the formula (\ref{AS4}) with respect to $x$.  The resulting formula for $\pa q_0(x,y,T)/\pa x$ involves $g_{j,A}(\tau,T), \ j=1,2,3,$ and their first derivatives with respect to $\tau$. It also involves $\pa \tau(x,y,T)/\pa x=[D_\tau x(\tau,T)]^{-1}$, which we see from (\ref{AE4}), (\ref{AF4}) is a continuous function of $[\tau,T]$ and hence of $[x,T]$. We conclude  that the function $x\ra q_0(x,y,T)$ is differentiable and the function  $[x,T]\ra \pa q_0(x,y,T)/\pa x$  continuous.  We can make a similar argument to see that the function $T\ra q_0(x,y,T)$ is differentiable and the function  $[x,T]\ra \pa q_0(x,y,T)/\pa T$  continuous.  In that case we need to show the continuity of the function $[x,T]\ra\pa \tau(x,y,T)/\pa T$, which is given by the formula
\be \label{AT4}
\frac{\pa \tau(x,y,T)}{\pa T} \ = \  -\frac{D_Tx(\tau,T)}{D_\tau x(\tau,T)} \ .
\ee 
Evidently $D_Tx(\tau,T)$ is given by the RHS of (\ref{AB4}) with $s=T, x(s)=x$ and hence is a continuous function of $[x,T]$.  We conclude that the function $[x,T]\ra q_0(x,y,T)$ is $C^1$ on $\mathcal{D}_{y,T_0}$.  

To show that the function (\ref{AS4}) is a solution to the HJ equation  (\ref{AC2}), we proceed by the standard method \cite{evans}, writing (\ref{AC2}) as 
\be \label{AU4}
\frac{\pa q_0(x,y,T)}{\pa T} +  H\left(x,y,\frac{\pa q_0}{\pa x},T\right)  \ = \ 0 \ ,
\ee
where the Hamiltonian $H(x,y,p,T)$ is given by the formula
\be \label{AV4}
H(x,y,p,T) \ = \ \la(x,y,T)p+\frac{1}{2}p^2 \ .
\ee
The corresponding Hamiltonian equations of motion are given by
\be \label{AW4}
\frac{dx}{ds} \ = \ \frac{\pa H(x,y,p,s)}{\pa p} \ , \quad \frac{dp}{ds} \ = \ -\frac{\pa H(x,y,p,s)}{\pa x} \ .
\ee
We solve (\ref{AW4}) with initial conditions
\be \label{AX4}
x(\tau) \ = \ 0 \ ,  \quad  H(0,y,p(\tau),\tau) \ = \ 0  \ .
\ee
Note that the initial condition (\ref{AX4}) for $p(\cdot)$ is the same as in (\ref{Y4}).
If we solve the second equation in (\ref{AW4}) with initial condition (\ref{AX4}) we obtain the formula
\be \label{AY4}
p(\tau,s) \ = \ \frac{2m_{1,A}(s)}{\sig^2_A(s)}[y+g_2(\tau,\tau)] \ , \quad s>\tau \ ,
\ee 
corresponding to (\ref{Z4}). 
Taking $p(\tau,s)$ to be given by (\ref{AY4}), the first equation in (\ref{AW4}) becomes identical to the characteristic equation (\ref{AA4}). We define the function $w:\mathcal{U}_{y,T_0}\ra\mathbb{R}$ by  
\begin{multline} \label{AZ4}
\frac{\pa}{\pa s} w(\tau,s) \ = \ -H(x(\tau,s),p(\tau,s),s)+p(\tau,s)\frac{\pa H(x(\tau,s),p(\tau,s),s)}{\pa p}  \\
{\rm for \ } s>\tau  \ ,\quad {\rm and \ initial \ condition \ } w(\tau,\tau) \ = \ 0 \ .
\end{multline}
Then by standard theory the function $q_0(\cdot,y,\cdot)$ defined on $\mathcal{D}_{y,T_0}$ by $q_0(x(\tau,s),y,s)=w(\tau,s)$ is a solution to (\ref{AU4}) and $p(\tau,s)=\pa q_0(x(\tau,s),y,s)/\pa x$. 

We see that
\be \label{BA4}
w(\tau,s) \ = \ \frac{g_{3,A}(\tau,s)}{2} \left[y+g_{1,A}(\tau,s)x(\tau,s)+g_{2,A}(\tau,s)\right]^2  ,
\ee
by verifying that the RHS of (\ref{BA4}) is a solution to the differential equation (\ref{AZ4}), where $x(\tau,s), \ p(\tau,s)$ are given by (\ref{AC4}), (\ref{AM4}), (\ref{AY4}) and with initial condition $w(\tau,\tau)=0$.  To obtain the formula (\ref{AL4}) for $\pa^2 q_0/\pa x^2$  we observe that
\be \label{BB4}
\frac{\pa}{\pa\tau} \frac{\pa q_0(x(\tau,s),y,s)}{\pa x} \ = \ \frac{\pa^2 q_0(x(\tau,s),y,s)}{\pa x^2}\frac{\pa x(\tau,s)}{\pa \tau} \ = \ \frac{\pa p(\tau,s)}{\pa \tau} \ ,
\ee
and use the formulas  (\ref{BJ4}), (\ref{AY4}).  Note that we may choose $\La_0$ sufficiently small, depending only on $T_0$, such that the denominator in the formula (\ref{AL4}) is positive
if $[\tau,s]\in \mathcal{U}_{y,T_0}$.   

Finally  we we consider for which $[x,T]\in\mathcal{D}_{y,T_0}$ that $\tau=\tau(x,y,T)$ is the minimizer for (\ref{B4}).  We apply the standard verification theorem to paths $[x(s),s], \ \tau<s\le T$, with $x(\tau)=0, \ x(T)=x,$ which lie in $\mathcal{D}_{y,T_0}$.  Using the fact that $q_0$ is a $C^1$ solution of (\ref{AU4}), (\ref{AV4}) on  
$\mathcal{D}_{y,T_0}$and $q(0,y,\tau)=0$, we have that
\begin{multline} \label{BC4}
q_0(x,y,T) \ = \ \int_{\tau}^T \frac{d}{ds} q_0(x(s),y,s) \ ds \\
 = \ \int_{\tau}^T \frac{\pa q_0(x(s),y,s)}{\pa x}\left[\frac{dx(s)}{ds}
 -\la(x(s),y,s)\right]-\frac{1}{2}\int_{\tau}^T \left[\frac{\pa q_0(x(s),y,s)}{\pa x}\right]^2 \ ds \ \\
 \le \frac{1}{2}\int_{\tau}^T \left[\frac{dx(s)}{ds}
 -\la(x(s),y,s)\right]^2 \ ds \  .
\end{multline}
If $\tau=\tau(x,y,T)$ and $x(s)=x(\tau,s), \ \tau<s<T,$ then from (\ref{Z4}), (\ref{AA4})  we get equality in (\ref{BC4}).   Let $F_0(x,y,\tau,T)$ be the function on the RHS of (\ref{B4}). We wish to find $[x,T]$ such that  $q_0$, defined by (\ref{AS4}), satisfies $q_0(x,y,T)=\inf_{0<\tau<T}F_0(x,y,\tau,T)$. We shall identify a subset $S_{x,y,T}\subset (0,T)$ such that $F_0(x,y,\tau,T)>F_0(x,y,\tau(x,y,T),T)=q_0(x,y,T)$ for $\tau\in S_{x,y,T}$.  Hence it is necessary only to consider $\tau\in(0,T)-S_{x,y,T}$. 
  We have already observed that the variational problem (\ref{AB2}) with fixed $\tau$ is quadratic and has the unique solution 
(\ref{AK2}) given by $\Ga(\tau,s,T,x)=a(\tau,s,T)x+b(\tau,s,T), \ \tau<s<T,$ where
\begin{eqnarray} \label{BD4}
\sig^2_A(T)a(\tau,s,T) \ &=& \ m_{1,A}(s)\sig^2_A(s,T)\left[g_{1,A}(s,T)-g_{1,A}(\tau,T)\right]  \ ,  \\
\sig^2_A(T)b(\tau,s,T) \ &=& \   m_{1,A}(s)\sig^2_A(s,T)\left[g_{2,A}(s,T)-g_{2,A}(\tau,T)\right]  \ . \nonumber
\end{eqnarray}
In view of the verification result (\ref{BC4}),  if we show that  the path $s\ra \Ga(\tau,s,T,x), \ \tau<s<T$, lies in $\mathcal{D}_{y,T_0}$ when
$\tau\in(0,T)-S_{x,y,T}$, then it follows that $\tau=\tau(x,y,T)$ is the unique minimizer for (\ref{B4}). If $T<2\sqrt{\La_0y}$ then $[x,T]\in\mathcal{D}_{y,T_0}$ for all $x>0$. Since the functions $\tau\ra g_{1,A}(\tau,T), \ g_{2,A}(\tau,T), \ 0<\tau<T,$ are increasing when $A(\cdot)$ is non-negative, we see from (\ref{BD4}) that   the path   $s\ra \Ga(\tau,s,T,x), \ \tau<s<T$,  lies in $\mathcal{D}_{y,T_0}$ when $0<\tau<T$. Hence $\tau=\tau(x,y,T)$ is the unique minimizer for (\ref{B4}) when $x>0$ and $0<T<2\sqrt{\La_0y}$. 

  For $x,y\ge 0$ let $\tilde{q}_0(x,y,T)$ be defined by 
 \be \label{BU4}
 \tilde{q}_0(x,y,T) \ = \  \min_{0<\tau<T} \frac{g_{3,A}(\tau,T)}{2} \left[y+g_{1,A}(\tau,T)x\right]^2 \ .
 \ee
 We have from (\ref{B4}) that
 \be \label{BV4}
 \tilde{q}_0(x,y,T) \ \le \ \inf_{0<\tau<T}F_0(x,y,\tau,T) \ \le \ \tilde{q}_0(x,y+g_{2,A}(T,T),T) \ .
 \ee
 Using the identity (\ref{L4}) we see that the minimizing $\tau$ for the RHS of (\ref{BU4}) is given by 
 \be \label{BW4}
 g_{3,A}(\tau,T) \ = \ \frac{m_{1,A}(T)x}{\sig^2_A(T)y} \  .
 \ee
 Substituting (\ref{BW4}) into the RHS of (\ref{BU4}) yields the formula
 \be \label{BX4}
 \tilde{q}_0(x,y,T) \ = \ \frac{2m_{1,A}(T)xy}{\sig^2_A(T)} \ .
 \ee
 
 It is intuitively clear from (\ref{BV4}) that the function $q_0$ defined by (\ref{AS4}) on $\mathcal{D}_{y,T_0}$ should satisfy the inequality 
 \be \label{CU4}
 q_0(x,y,T) \ \le \ \tilde{q}_0(x,y+g_{2,A}(T,T),T) \  , \quad [x,T]\in \mathcal{D}_{y,T_0} \ .
 \ee
 To see this observe that the function $v(\cdot,\cdot)$ defined by $v(\tau,s)=\tilde{q}_0(x(\tau,s),y+g_{2,A}(s,s),s)-w(\tau,s), \ 0<\tau<s<T_0,$ with $w(\cdot,\cdot)$ as in (\ref{BA4}) satisfies the differential equation
 \be \label{CV4}
 \frac{\pa v(\tau,s)}{\pa s} \ = \ 2A(s)x(\tau,s)-\frac{2m_{1,A}(s)^2}{\sig^4_A(s)}[g_{2,A}(s,s)-g_{2,A}(\tau,\tau)]^2 \ .
 \ee
 In deriving (\ref{CV4}) we use the fact that $w(\tau,s)=q_0(x(\tau,s),y,s)$ and that the function $[x,T]\ra q_0(x,y,T)$ is a solution to the HJ equation (\ref{AC2}). 
 Integrating  (\ref{CV4}) over an interval $[\tau,T]$ with $0<\tau<T\le T_0$, we obtain using (\ref{AD4}), (\ref{AN4}) the inequality,
 \be \label{CW4}
 v(\tau,T) \ \ge \ c_1\frac{y+g_{2,A}(\tau,\tau)}{\tau}\int_\tau^T(s-\tau)A(s) \ ds-2\int_\tau^T\left[\int_\tau^s A(s') \ ds'\right]^2 \ ds \ ,
 \ee
 provided $0<s-\tau\le [y+g_{2,A}(\tau,\tau)]/2C_1\tau$.  Using the inequality
 \be \label{CX4}
 \left[\int_\tau^s A(s') \ ds'\right]^2 \ \le \ 2\left[\sup_{\tau<s'<s}A(s')\right]\int_\tau^s(s'-\tau)A(s') \ ds' \ ,
 \ee
 we see that the RHS of (\ref{CW4}) is non-negative for $0<T-\tau\le [y+g_{2,A}(\tau,\tau)]/C_2\tau$,
 provided  $C_2$ is chosen sufficiently large. 
 
  Consider now the function $f(\cdot)$ defined by
 \be \label{BY4}
  f(\la) \ = \ \frac{\la}{2}[y_1+x_1/\la]^2 \ ,  \quad {\rm where} \  x_1,y_1>0.
 \ee
 Evidently the minimizing $\la=\la_{\rm min}$ and minimizer  are given by
 \be \label{BZ4}
 \la_{\rm min} \ = \ x_1/y_1, \quad f(\la_{\rm min}) \ = \  2x_1y_1 \ .
 \ee
 We have furthermore that
 \be \label{CA4}
 f(\la_{\rm min}/8) \ = \  f(8\la_{\rm min}) \ \ge \ 5x_1y_1 \ .
 \ee
 
 We consider $[x,T]\in\mathcal{D}_{y,T_0}$ which satisfies (\ref{BP4}). Let us assume now that $g_{2,A}(T,T)\le y$. Then we have from (\ref{CU4}) that
 \be \label{CB4}
 q_0(x,y,T) \ \le \ 2\tilde{q}_0(x,y,T) \ .
 \ee
 Suppose $\tau\in(0,T)$ lies outside the region
 \be \label{CC4}
 \frac{m_{1,A}(T)x}{8\sig^2_A(T)y}  \ \le \ g_{3,A}(\tau,T) \ \le \ \frac{8m_{1,A}(T)x}{\sig^2_A(T)y}  \ .
 \ee
 Then from (\ref{BW4}), (\ref{BZ4}), upon setting $y_1=y$ and $x_1=m_{1,A}(T)x/\sig^2_A(T)$ in (\ref{CA4}), we obtain the inequality
 \be \label{CD4}
  F_0(x,y,\tau,T)   \ \ge \  5\tilde{q}_0(x,y,T)/2  \ .
 \ee
 It follows from (\ref{CB4}), (\ref{CD4}) that $\tau\in S_{x,y,T}$ if $\tau$ does not satisfy (\ref{CC4}).  Observe from (\ref{BP4}) that $x\le 4\La_2y^2/T^2$,
 which implies that $Tx/y\le 4\La_2y/T\le\La_2T/\La_0$.  Using (\ref{L4}), (\ref{M4}),  (\ref{CC4})  it follows on choosing $\La_2$ sufficiently small, that  $\tau\in(0,T)-S_{x,y,T}$ satisfies  inequalities 
 \be \label{CE4}
 \frac{T}{2}<\tau<T \quad {\rm and \ } C_1\frac{Tx}{y} \ \le \ T-\tau \ \le \ C_2\frac{Tx}{y} \  ,
 \ee 
for some constants $C_1,C_2>0$.  We have from (\ref{E4}), (\ref{F4}), (\ref{BD4})  there are constants $c_3,C_3>0$, depending only on $T_0$, such that
\begin{multline} \label{CF4}
c_3(s-\tau)\frac{x}{T-\tau}  \ \le \ \Ga(\tau,s,T,x) \\
 \le \ C_3(s-\tau)\left\{\frac{x}{T-\tau}+T-s\right\} \ , \quad 0<\tau<s<T \ .
\end{multline}
Hence  we have from (\ref{CE4}), (\ref{CF4}) that if $\tau\in(0,T)-S_{x,y,T}$  then
\be \label{CG4}
\Ga(\tau,s,T,x) \ \le \ C_3\left\{x+\frac{1}{4}(T-\tau)^2\right\} \ \le \ C_3 x\left\{1+\frac{C^2_2T^2x}{4y^2}\right\} \ \le \ C_3 x\left\{1+C^2_2\La_2\right\} \ .
\ee
We conclude there exists $\La>0$, depending only on $T_0$, such that if $x\le \La y^2/T^2$ and $\tau\in(0,T)-S_{x,y,T}$  then the path $s\ra \Ga(\tau,s,T,x), \ \tau<s<T$,  lies in $\mathcal{D}_{y,T_0}$. It follows that $\tau=\tau(x,y,T)$ is the unique minimizer for the function $\tau\ra F_0(x,y,\tau,T), \ 0<\tau<T,$ when $x\le \La y^2/T^2$.

Next we consider the case $g_{2,A}(T,T)> y$ and consider $[x,T]\in\mathcal{D}_{y,T_0}$ which satisfies (\ref{BP4}).  We may estimate $\tau(x,y,T)$ from (\ref{AN4}). Thus we have from (\ref{AN4}), (\ref{BO4}) that
\be \label{CH4}
\frac{Tx}{2C_2[y+g_{2,A}(T,T)]} \ \le\  T-\tau(x,y,T) \ \le \ \frac{2Tx}{c_1[y+g_{2,A}(T,T)]} \quad {\rm if \ } \ x\le \ \frac{\La[y+g_{2,A}(T,T)]^2}{T^2} \ ,
\ee
provided $\La\le \La_2$ is chosen sufficiently small.  From (\ref{L4}), (\ref{BO4}), (\ref{CU4}) we see that (\ref{CH4}) implies
\be \label{CI4}
\tilde{q}_0(x,\{y+g_{2,A}(T,T)\}/3,T) \ \le \ F_0(x,y,\tau(x,y,T),T) \ \le \ \tilde{q}_0(x,y+g_{2,A}(T,T),T) \ ,
\ee
if $\La$ is sufficiently small.  We have already observed that if $q_0$ is defined by (\ref{B4}) then $\lim_{y\ra 0}q_0(x,y,T)=0$.  
It follows  from the lower bound (\ref{CI4}) that $\inf_{0<\tau<T}F_0(x,y,\tau,T)\ne F_0(x,y,\tau(x,y,T),T)$ if $y$ is sufficiently small. 

We show  for sufficiently small $\La>0$  that
\be \label{CO4}
{\rm if \ } x\le \ \frac{\La y[y+g_{2,A}(T,T)]}{T^2} \quad {\rm then \ } \inf_{0<\tau<T}F_0(x,y,\tau,T)= F_0(x,y,\tau(x,y,T),T) \ .
\ee
To see this observe from (\ref{E4}) that
 \be \label{CJ4}
\frac{c_3T}{(T-s)^2}\int_s^T(T-s')A(s') \ ds' \ \le \frac{\pa g_{2,A}(s,T)}{\pa s} \ \le \ \frac{C_3T}{(T-s)^2}\int_s^T(T-s')A(s') \ ds' \ ,
 \ee 
 where $c_3,C_3$ are constants. Similarly we have from (\ref{O3}) that
 \be \label{CK4}
c_4\int_0^T sA(s) \ ds \ \le \  g_{2,A}(T,T) \ \le \ C_4\int_0^T sA(s) \ ds \  .
 \ee
We define $\del, \ 0<\del<1,$ by
\be \label{CL4}
\int_0^{(1-\del)T} sA(s) \ ds \ = \ \frac{1}{2}\int_0^T sA(s) \ ds \ .
\ee
It follows from (\ref{CK4}), (\ref{CL4}) that $\del$ satisfies the inequality
\be \label{CR4}
\del \ \ge \ \frac{g_{2,A}(T,T)}{2\|A\|_\infty C_4T^2} \  .
\ee
Integrating (\ref{CJ4}) over the interval $0<s<\tau$ we have from the lower bound the inequality,
\be \label{CM4}
g_{2,A}(\tau,T) \ \ge \ c_3\left[\int_0^\tau sA(s) \ ds+\frac{\tau}{T-\tau}\int_\tau^T(T-s)A(s) \ ds\right] \ .
\ee

We assume first that $\del\ge1/2$, whence   (\ref{CK4}), (\ref{CM4}) imply that $g_{2,A}(\tau,T)\ge c_3g_{2,A}(T,T)/2C_4$ if $\tau\ge T/2$.    We have then from 
(\ref{L4}), (\ref{M4}) that
\be \label{CP4}
F_0(x,y,\tau,T) \ \ge \  \frac{g_{3,A}(\tau,T)g_{2,A}(\tau,T)y}{2}  \ \ge \ \frac{  c_5(T-\tau)g_{2,A}(T,T)y}{T^2}  \quad {\rm if \ } \tau \ge T/2 \ ,
\ee
for some constant $c_5>0$.  If $0<\tau<T/2$ then we again see from (\ref{CK4}), (\ref{CM4}) that 
$g_{2,A}(\tau,T)\ge  c_3\tau g_{2,A}(T,T)/2C_4(T-\tau)$. In this case we obtain the inequality 
\be \label{CQ4}
F_0(x,y,\tau,T) \ \ge \ \frac{  c_6g_{2,A}(T,T)y}{T}  \quad {\rm if \ } \tau < T/2 \ ,
\ee
where $c_6>0$ is constant. It is easy to see from (\ref{BX4}), the upper bound (\ref{CI4}) and (\ref{CQ4}) that if $0<\tau<T/2$ then  $F_0(x,y,\tau,T)>F_0(x,y,\tau(x,y,T),T)$ provided $x$ satisfies (\ref{CO4}) with $\La>0$ sufficiently small.  Similarly from (\ref{CP4}) we conclude that 
$F_0(x,y,\tau,T)>F_0(x,y,\tau(x,y,T),T)$ if $T-\tau\ge C_7Tx/y$, for some constant $C_7$.   Hence if we show that the paths $s\ra \Ga(\tau,s,T,x), \ \tau<s<T,$ lie in $\mathcal{D}_{y,T_0}$ if $T-\tau< C_7Tx/y$ then it follows that $\inf_{0<\tau<T}F_0(x,y,\tau,T)=F_0(x,y,\tau(x,y,T),T)$. We see from (\ref{BP4}) (\ref{CF4}), (\ref{CG4}) that this is the case provided $x$ satisfies (\ref{CO4}) with $\La>0$ sufficiently small. 

Next we assume that $\del<1/2$, whence we have from (\ref{CL4}), (\ref{CM4}) that
\be \label{CS4}
g_{2,A}(\tau,T) \ \ge \ c_3\frac{\tau\del}{2(T-\tau)}\int_0^TsA(s) \ ds \  \quad {\rm if \ } T-\tau\ge \del T \ .
\ee
Then using (\ref{L4}), (\ref{M4}) again together  with (\ref{CK4}), (\ref{CR4}) we conclude from (\ref{CS4}) the inequality
\be \label{CT4}
F_0(x,y,\tau,T) \ \ge \ c_8\frac{g_{2,A}(T,T)^2y}{T^3} \ \quad {\rm if \ } T-\tau\ge \del T \ ,
\ee
where $c_8>0$ is constant.  It follows from (\ref{BX4}), the upper bound (\ref{CI4}) and (\ref{CT4}) that if $T-\tau\ge \del T$ then  $F_0(x,y,\tau,T)>F_0(x,y,\tau(x,y,T),T)$ provided $x$ satisfies (\ref{CO4}) with $\La>0$ sufficiently small.  If $T-\tau<\del T$ then
$g_{2,A}(\tau,T)\ge c_3g_{2,A}(T,T)/2C_4$, whence the inequality (\ref{CP4}) holds provided $T-\tau<\del T$.  We  may argue now as in the previous paragraph to conclude that  $\inf_{0<\tau<T}F_0(x,y,\tau,T)=F_0(x,y,\tau(x,y,T),T)$ if $x$ satisfies (\ref{CO4}) with $\La>0$ sufficiently small. 

Finally we show that for  $2\sqrt{\La_0y}<T$ and $\La>0$  sufficiently small,  if  $x\ge T^2/\La$ then $\inf_{0<\tau<T}F_0(x,y,\tau,T)=F_0(x,y,\tau(x,y,T),T)$. 
Observe that in this case $x\ge\max\{2y,T^2\}$, whence (\ref{P4}) gives a bound on  any minimizing $\tau$ for the function $\tau\ra F_0(x,y,\tau,T), \ 0<\tau<T$. Since $x\ge T^2/\La$, this  implies $0<\tau\le C_4\La y/T$. Hence if we show that all paths  $s\ra \Ga(\tau,s,T,x), \ \tau<s<T,$ lie in $\mathcal{D}_{y,T_0}$ if $0<\tau\le C_4\La y/T$ it follows that $\inf_{0<\tau<T}F_0(x,y,\tau,T)=F_0(x,y,\tau(x,y,T),T)$. We see from (\ref{AN4}) there exists $\La_3>0$ such that that if 
$0<\tau\le \La_3 y/T=\tau^*$ then the characteristic $ s\ra x(\tau,s), \ \tau<s\le T,$ lies in $\mathcal{D}_{y,T_0}$ Furthermore  $x(\tau^*,s)\le C_5T(s-\tau)$, where $C_5$ is constant.  Since $x\ge T^2/\La$ the lower bound (\ref{CF4}) implies that $\Ga(\tau,s,T,x)\ge c_3T(s-\tau)/\La,  \tau<s<T$. We conclude that if
$\La<c_3/C_5$ then the path $s\ra \Ga(\tau,s,T,x), \ \tau<s<T,$ lies in $\mathcal{D}_{y,T_0}$.
\end{proof}
\begin{rem}
The interval (b) in the statement of Proposition  4.2  is more or less an optimal interval for which the method of characteristics yields the minimizer  in (\ref{B4}). 
One can see this  by choosing $A(\cdot)$ to have support in a small neighborhood of $T$. Thus for any $\del, \ 0<\del<1/2,$ we set the function $A(\cdot)=A_\del(\cdot)$, where $A_\del(s)=A(T)[1-(T-s)/\del T]$ for $0\le T-s\le \del T $ and $A_\del(s)=0$ for  $T-s>\del T$.  Then the ratio $g_{2,A}(\tau,T)/g_{2,A}(T,T)\simeq 1/N$ if $T-\tau\simeq N\del T$. 
 \end{rem}
\begin{corollary}
Assume the function $A:[0,\infty)\ra\mathbb{R}$ is continuous and non-negative. Then for any $T_0>0$, there exist  constants $C_1,C_2>0$, depending only on $T_0$ and $\sup_{0\le t\le T_0}A(t)$, such that   the function $(x,T)\ra q_0(x,y,T)$ defined by (\ref{AS4}) satisfies the inequalities
\begin{multline} \label{BF4}
0 \le  \frac{\pa q_0(x,y,T)}{\pa x}-\frac{2m_{1,A}(T)y}{\sig^2_A(T)} \le  \frac{C_1Ty^2}{x^2}  , \quad
-\frac{C_1Ty^2}{x^3}  \le  \frac{\pa^2 q_0(x,y,T)}{\pa x^2}  \le  0 \ ,
\\ {\rm for \ }  [x,T]\in\mathcal{D}_{y,T_0}, \ x\ge C_2\max\{2y,T^2\}, \ 0<T<T_0 \ .
\end{multline}
In addition for any $T_0>0$ there is a constant $C>0$ such that
\begin{multline} \label{BG4}
0  \le  \frac{\pa q_0(x,y,T)}{\pa x}-\frac{2m_{1,A}(T)y}{\sig^2_A(T)}  \le  CT , \quad  -\frac{CT}{y}  \le \frac{\pa^2 q_0(x,y,T)}{\pa x^2}\le 0,
\\ {\rm for \ }  [x,T]\in\mathcal{D}_{y,T_0},  \ x,y>0,   \  0<T<T_0 \ .
\end{multline}
\end{corollary}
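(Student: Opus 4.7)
The plan is to derive both inequalities from the characteristic-based formulas (\ref{Z4}) and (\ref{AL4}) of Proposition~4.2, combined with the first-passage estimate $\tau(x,y,T)\le C_4Ty/x$ from (\ref{P4}) of Proposition~4.1 in the large-$x$ regime. Throughout, $\tau=\tau(x,y,T)\in\mathcal{U}_{y,T_0}$ denotes the unique preimage of $[x,T]\in\mathcal{D}_{y,T_0}$ under the diffeomorphism $[\tau,s]\mapsto[x(\tau,s),s]$; the inclusion $\{x\ge C_2T^2\}\subset\mathcal{D}_{y,T_0}$ needed for the large-$x$ statement is furnished by (\ref{BS4}) provided $C_2\ge\La_3$.

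For the first derivative, setting $s=T$ in (\ref{Z4}) and rearranging gives
\begin{equation*}
\frac{\pa q_0(x,y,T)}{\pa x}-\frac{2m_{1,A}(T)y}{\sig_A^2(T)} \ = \ \frac{2m_{1,A}(T)g_{2,A}(\tau,\tau)}{\sig_A^2(T)},
\end{equation*}
so the lower bounds in both (\ref{BF4}) and (\ref{BG4}) follow from $g_{2,A}(s,s)\ge 0$, a trivial consequence of (\ref{AD4}) and $A(\cdot)\ge 0$. For the general upper bound $\le CT$ of (\ref{BG4}), the monotonicity of $s\mapsto g_{2,A}(s,s)$ together with $g_{2,A}(T,T)\le CT^2$ and $m_{1,A}(T)/\sig_A^2(T)\le C/T$ suffices. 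The sharper upper bound $\le C_1Ty^2/x^2$ in (\ref{BF4}) follows by invoking (\ref{P4}) to obtain $\tau\le C_4Ty/x$, whence $g_{2,A}(\tau,\tau)\le C\tau^2\le CT^2y^2/x^2$.

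For the second derivative, formula (\ref{AL4}) at $s=T$, rewritten using the identity $m_{1,A}(T)^2/\sig_A^2(T)-m_{1,A}(\tau)^2/\sig_A^2(\tau)=-g_{3,A}(\tau,T)$ from (\ref{M3}), becomes
\begin{equation*}
\frac{\pa^2 q_0(x,y,T)}{\pa x^2} \ = \ -\frac{K(\tau)\,m_{1,A}(T)^2/\sig_A^4(T)}{1-K(\tau)\,g_{3,A}(\tau,T)}.
\end{equation*}
Non-positivity is immediate from $K(\tau)\ge 0$ and the positivity of the denominator built into the construction of $\mathcal{U}_{y,T_0}$. For the magnitudes, $\sig_A^6(\tau)\le C\tau^3$ and $m_{1,A}(\tau)\ge 1$ give $K(\tau)\le C\tau^3/y$, while $m_{1,A}(T)^2/\sig_A^4(T)\le C/T^2$. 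With the denominator bounded below by a universal positive constant (addressed in the next paragraph), these yield $|\pa^2 q_0/\pa x^2|\le C\tau^3/(yT^2)$; the general bound $\tau\le T$ then produces the $CT/y$ estimate of (\ref{BG4}), and the refined estimate $\tau\le C_4Ty/x$ from (\ref{P4}) produces the $CTy^2/x^3$ estimate of (\ref{BF4}).

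The main step requiring care is establishing a uniform positive lower bound on $1-K(\tau)g_{3,A}(\tau,T)$, not merely positivity. Direct substitution, using $\sig_A^2(\tau)\le C\tau$ and $\sig_A^2(T)\ge cT$, gives
\begin{equation*}
K(\tau)\,g_{3,A}(\tau,T) \ \le \ \frac{C\,\tau^2\,\sig_A^2(\tau,T)}{T\,[y+g_{2,A}(\tau,\tau)]},
\end{equation*}
and the defining constraint $T-\tau\le\La_0[y+g_{2,A}(\tau,\tau)]/\tau$ of $\mathcal{U}_{y,T_0}$, combined with $\sig_A^2(\tau,T)\le C(T-\tau)$, bounds this by $C\La_0\tau/T\le C\La_0$. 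A single choice of $\La_0$ sufficiently small, depending only on $T_0$ and $\sup_{0\le t\le T_0}A(t)$, therefore forces $1-K(\tau)g_{3,A}(\tau,T)\ge 1/2$ uniformly on $\mathcal{U}_{y,T_0}$, completing the argument.
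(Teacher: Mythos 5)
Your proof is correct and follows the same overall strategy as the paper: combine the characteristic formulas (\ref{Z4}), (\ref{AL4}) for the first and second derivatives with the first-passage estimate (\ref{P4}) in the large-$x$ regime, and with the crude bound $\tau<T$ for the general case. The one place you genuinely diverge is in how the denominator $1-K(\tau)g_{3,A}(\tau,T)$ is bounded below. The paper proves a $\ge 1/2$ bound via (\ref{BH4}) using $\tau\le C_4Ty/x$ together with $x\ge C_2\max\{2y,T^2\}$, an argument that is confined to the large-$x$ regime of (\ref{BF4}); for (\ref{BG4}) the paper simply remarks "we again use the formulas $\ldots$ observing that $\tau(x,y,T)<T$" and relies implicitly on the statement in the proof of Proposition~4.2 that $\La_0$ is chosen so the denominator is positive. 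Positivity alone is not enough (the quotient could blow up), so something stronger is being assumed. Your argument, extracting a uniform bound $K(\tau)g_{3,A}(\tau,T)\le C\La_0$ directly from the defining constraint $T-\tau\le\La_0[y+g_{2,A}(\tau,\tau)]/\tau$ of $\mathcal{U}_{y,T_0}$ together with $\sigma_A^2(\tau,T)\le C(T-\tau)$, supplies the uniform $\ge 1/2$ bound that handles both (\ref{BF4}) and (\ref{BG4}) in one stroke, and makes explicit what the paper leaves to the reader. This is a genuine improvement in exposition, while the rest of your derivation (the $K(\tau)\le C\tau^3/y$ bound, the prefactor $m_{1,A}(T)^2/\sig_A^4(T)\le C/T^2$, and the two insertions $\tau\le T$ versus $\tau\le C_4Ty/x$) matches the paper line for line.
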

\begin{proof}
Since  $[x,T]\in\mathcal{D}_{y,T_0}$ we may  use the formulas (\ref{Z4}), (\ref{AL4})  to show (\ref{BF4}), (\ref{BG4}).  For (\ref{BF4}) we use the inequality (\ref{P4}) for $\tau(x,y,T)$.  The first inequality follows   from (\ref{Z4})  and the fact that $0\le g_{2,A}(\tau,\tau)\le C\tau^2$, where $C$ is constant.  To obtain the second inequality we observe that the function $K(\tau)$ of (\ref{AL4}) satisfies an inequality 
$K(\tau)\le C_3\tau^3/y$, where $C_3$ is constant.  Hence for $\tau=\tau(x,y,T)$ we have
using (\ref{P4}) that
\begin{multline} \label{BH4}
\frac{K(\tau)m_{1,A}(\tau)^2}{\sig^2_A(\tau)} \ \le \ \ \frac{C_3m_{1,A}(T_0)^2\tau^2}{y} \\
 \le \frac{C_3m_{1,A}(T_0)^2C_4^2T^2y}{x^2} \ \le   \frac{C_3m_{1,A}(T_0)^2C_4^2}{2C_2^2}\quad {\rm for \ }  x\ge C_2\max\{2y,T^2\}, \ 0<T\le T_0 \ .
\end{multline}
We choose $C_2$ large enough so that the final expression on the RHS of (\ref{BH4}) is less than $1/2$.   We have then from (\ref{AL4}) the lower bound 
\be \label{BI4}
\frac{\pa^2 q(x,y,T)}{\pa x^2} \ \ge \ -\frac{2C_3m_{1,A}(T_0)^2\tau^3}{T^2y} \ ,
\ee
whence the second inequality of (\ref{BF4}) follows on using the bound (\ref{P4})  for $\tau=\tau(x,y,T)$ in (\ref{BI4}). 
To prove (\ref{BG4}) we again use the formulas (\ref{Z4}), (\ref{AL4}), observing that $\tau(x,y,T)<T$. 
\end{proof}

\vspace{.1in}
\section{Uniform bounds on $q_\ve$ and its derivatives}
In this section our goal is to show that the bounds on $q_0$ and its first two space derivatives obtained in Proposition 4.1 and Corollary 4.1 may be extended to $q_\ve$ with $\ve>0$.  First we prove  results for $\pa q_\ve(x,y,T)/\pa x$ analogous to the bounds on $q_\ve(x,y,T)$ obtained in Proposition 3.3. In fact the lower bound in (\ref{A5}) implies the lower bound in (\ref{AB3}), and the upper bound in (\ref{A5}) implies (\ref{AC3}).  
\begin{proposition}
Assume the function $A:[0,\infty)\ra\mathbb{R}$ is continuous non-negative, and let $q_\ve(x,y,T)$ be defined by (\ref{R2}), (\ref{U2}). Then
\be \label{A5}
\frac{2m_{1,A}(T)y}{\sig^2_A(T)}  \     \le \  \frac{\pa q_\ve(x,y,T)}{\pa x} \ \le  \ 2\left[1-\frac{m_{2,A}(T)}{\sig^2_A(T)}\right] + \frac{2m_{1,A}(T)y}{\sig^2_A(T)} \ , \quad x,y,T>0 \ .
\ee
\end{proposition}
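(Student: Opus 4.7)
The plan is to derive a parabolic PDE for $u_\ve:=\pa q_\ve/\pa x$, identify the two candidate bounds $u_1(T):=2m_{1,A}(T)y/\sig^2_A(T)$ and $u_2(T):=-2\la(0,y,T)$ as respectively an exact solution and a super-solution of that PDE, and then propagate the inequalities $u_1(T)\le u_\ve(0,y,T)\le u_2(T)$ at $x=0$ (supplied by Proposition 3.3) to all $x>0$ via a maximum-principle argument.

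First, the regularity of Proposition 3.1 permits differentiating (\ref{V2}) in $x$; using $\pa\la/\pa x=A(T)+1/\sig^2_A(T)$ yields the Burgers-type equation
\be \label{planPDE}
\frac{\pa u_\ve}{\pa T} + [\la(x,y,T)+u_\ve]\frac{\pa u_\ve}{\pa x} + \left[A(T)+\frac{1}{\sig^2_A(T)}\right]u_\ve \ = \ \frac{\ve}{2}\frac{\pa^2 u_\ve}{\pa x^2}  .
\ee
A direct computation using $m'_{1,A}=A\,m_{1,A}$ and $(\sig^2_A)'=1+2A\sig^2_A$ gives $u'_1+[A+1/\sig^2_A]u_1=0$, so $u_1$, being independent of $x$, is an exact classical solution of (\ref{planPDE}). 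An analogous computation yields $u'_2+[A+1/\sig^2_A]u_2=2A(T)\ge 0$, making $u_2$ a super-solution of (\ref{planPDE}) whenever $A(\cdot)\ge 0$.

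Next, for the data at $x=0$: from (\ref{AB3}) and (\ref{AC3}) of Proposition 3.3 one has $u_1(T)x\le q_\ve(x,y,T)\le u_2(T)x$ for $x>0$; dividing by $x$ and letting $x\ra 0^+$ (using Proposition 3.1 for continuity of $u_\ve$ up to the wall) gives $u_1(T)\le u_\ve(0,y,T)\le u_2(T)$ for $T>0$. Writing (\ref{planPDE}) in the linear form $\mathcal{L}w:=\pa_T w+[\la+u_\ve]\pa_x w+[A+1/\sig^2_A]w-(\ve/2)\pa^2_x w=0$ with the known function $u_\ve$ treated as a frozen coefficient, one has $\mathcal{L}u_\ve=0$, $\mathcal{L}u_1=0$, and $\mathcal{L}u_2=2A(T)\ge 0$. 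Since the zero-order coefficient $A+1/\sig^2_A$ is strictly positive, Theorem 2 of Chapter 3 of \cite{pw} applied on the truncated rectangle $\mathcal{D}_{M,T_0,\del}=\{(x,T):0<x<M,\,\del<T<T_0\}$ forces any negative minimum of $w_1:=u_\ve-u_1$ or of $w_2:=u_2-u_\ve$ to occur on the parabolic boundary.

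It then remains to pass to the limits $M\ra\infty$ and $\del\ra 0$. At $x=M$, the uniform bound (\ref{I3}) of Proposition 3.2 together with the linear bounds of Proposition 3.3 show that $u_\ve$ stays bounded by a quantity depending only on $T_0$ and $\sup_{0\le t\le T_0}A(t)$, so an auxiliary perturbation of the form $\pm\eta x$ (sent to zero after the comparison) absorbs the far-boundary contribution. The main obstacle is the initial-time limit $T=\del\ra 0$: both $u_\ve$ and the candidate bounds $u_1,u_2$ diverge like $2y/T$, so to verify $u_\ve-u_1\ge 0$ and $u_2-u_\ve\ge 0$ along $T=\del$ one needs sharp control of the $O(1)$ correction. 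The cleanest route is to combine the bound $|q_\ve-2m_{1,A}xy/\sig^2_A|\le C_A(T_0)Ty$ from Proposition 3.2 with the interior parabolic estimates of Proposition 3.1 to show that $u_\ve(x,y,\del)-u_1(\del)$ and $u_2(\del)-u_\ve(x,y,\del)$ are non-negative up to a vanishing error as $\del\ra 0$; alternatively, one perturbs the comparison functions by $\pm\eta/T$ and lets $\eta\ra 0$ after the bounded-domain comparison has been carried out.
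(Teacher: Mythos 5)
Your derivation of the Burgers-type PDE for $u_\ve=\pa q_\ve/\pa x$, the verification that $u_1(T)=2m_{1,A}(T)y/\sig^2_A(T)$ solves it exactly and that $u_2(T)=-2\la(0,y,T)$ is a super-solution with $\mathcal{L}u_2=2A(T)\ge 0$, and the reading of the $x=0$ data from Proposition 3.3 are all correct and coincide with the paper's starting points (the paper derives exactly (\ref{B5}) and uses the same $x=0$ boundary values). However, your proposed mechanism for concluding --- a parabolic comparison on $\mathcal{D}_{M,T_0,\del}$ followed by $\del\ra 0$ --- has a genuine gap precisely at the bottom boundary $T=\del$, and neither of the two fixes you sketch resolves it. The bound $|q_\ve-2m_{1,A}xy/\sig^2_A|\le C_A(T_0)Ty$ from Proposition 3.2 is an $L^\infty$ bound on $q_\ve$ and gives no pointwise control on $\pa q_\ve/\pa x$: the difference $q_\ve - q_{\rm linear}$ could a priori oscillate with small amplitude but large slope, so one cannot conclude $u_\ve(x,y,\del)-u_1(\del)\ge -\eta(\del)$ with $\eta(\del)\ra 0$ uniformly in $x$. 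The derivative estimates of Proposition 3.1 are stated for $G_{\ve,D}$ with an $\ve$-dependent constant $C(L_0,T_0,\ve)$ and do not transfer to a sharp $O(1)$ control on $u_\ve-u_1$ as $\del\ra 0$ (recall $u_\ve,u_1,u_2\sim 2y/\del$). The perturbation $\pm\eta/T$ also does not help because the needed sign of $u_\ve-u_1$ at $T=\del$ (up to tolerance $\eta(\del)$ with $\eta(\del)\ra0$) is exactly what is missing.

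The paper circumvents this singular-initial-data obstruction by going through a probabilistic, rather than PDE-comparison, argument. It first passes to $v_\ve=\sig^2_A(T)u_\ve/m_{1,A}(T)$, which removes the zero-order term (see (\ref{C5})), so that $v_\ve$ is a martingale along the optimally controlled diffusion. Ito's lemma plus the optional sampling theorem then give $v_\ve(x,y,T)=E[v_\ve(X^*_\ve(\cdot),y,\cdot)]$ evaluated at the first exit time $\tau^*_{\ve,x,T}$, which is almost surely strictly positive by Lemma 2.3. Combined with $v_\ve\ge 0$ and $v_\ve(0,y,\cdot)\ge 2y$ this immediately yields the lower bound $v_\ve\ge 2y$ after letting the cutoff $K\ra\infty$ and $s\ra0$ --- one never has to examine $T=\del$. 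For the upper bound the paper works with the difference quotient $v_{\ve,h}=[q_\ve(x+h\sig^2_A/m_{1,A},y,T)-q_\ve(x,y,T)]/h$, which again satisfies a zero-order-free linear PDE and hence a stopped-expectation representation; the $x=0$ data is bounded by (\ref{AC3}), and a separate argument shows the contribution from small times vanishes. In short, the paper's stochastic representation exploits that the exit time is a.s. positive to avoid ever confronting the $T=\del$ blow-up, whereas your maximum-principle route requires precisely the sharp $T=\del$ control that is not available from the earlier propositions. If you want to complete a deterministic version, the natural repair is the regularization the paper itself uses for Theorem 5.1: set a well-defined and exactly comparable initial condition $q_{\ve,\del}(x,y,\del)=2m_{1,A}(\del)xy/\sig^2_A(\del)$ at $T=\del$ (Lemma 5.1), run the comparison there (where $u_{\ve,\del}(\cdot,y,\del)\equiv u_1(\del)$ exactly), and then argue convergence as $\del\ra 0$.
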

\begin{proof}
Letting $u_\ve(x,y,T)=\pa q_\ve(x,y,T)/\pa x$, we note from Proposition 3.1 that the function $(x,T)\ra u_\ve(x,y,T)$ is continuous in the domain $\{(x,T): \ x,T>0\}$, with continuous derivatives $\pa u_\ve(x,y,T)/\pa x, \ \pa u_\ve(x,y,T)/\pa T, \ \pa^2 u_\ve(x,y,T)/\pa x^2$.  Furthermore $(x,T)\ra u_\ve(x,y,T)$ is continuous up to the boundary $\{(x,T): \ x=0,T>0\}$. Differentiating (\ref{V2}) with respect to $x$, we see using (\ref{O2}) that $u_\ve(x,y,T)$ is a solution to the diffusive Burger's PDE
\begin{multline} \label{B5}
\frac{\pa u_\ve(x,y,T)}{\pa T}  + \left[\la(x,y,T)+ u_\ve(x,y,T)\right]\frac{\pa u_\ve(x,y,T)}{\pa x} \\
+\left[A(T)+\frac{1}{\sig^2_A(T)}\right]u_\ve(x,y,T) \  = \ \frac{\ve}{2}\frac{\pa^2 u_\ve(x,y,T)}{\pa x^2} \ , \quad x,y,T>0 \ .
\end{multline}
It follows from (\ref{B5}) that
\begin{multline} \label{C5}
\frac{\pa v_\ve(x,y,T)}{\pa T}  + \left[\la(x,y,T)+ u_\ve(x,y,T)\right]\frac{\pa v_\ve(x,y,T)}{\pa x} 
 \  = \ \frac{\ve}{2}\frac{\pa^2 v_\ve(x,y,T)}{\pa x^2} \ , \\
 {\rm where \ } v_\ve(x,y,T) \ = \ \frac{\sig^2_A(T)u_\ve(x,y,T)}{m_{1,A}(T)} \ .
\end{multline}
From Proposition 3.2 it follows that the function $(x,T)\ra v_\ve(x,y,T)$ is non-negative.  Since $q_\ve(0,y,T)=0$ we also have from the lower bound (\ref{AB3}) of Proposition 3.3 that $v_\ve(0,y,T)\ge 2y, \ y,T>0$.  

Using Ito's lemma and the martingale optional sampling theorem, we have that 
\be \label{D5}
v_\ve(x,y,T) \ = \ E\left[ v_\ve(X^*_\ve(s\vee\tau^*_{\ve,x,T,K}),y,s\vee\tau^*_{\ve,x,T,K}) \ |  \ X^*_\ve(T)=x\right] \ , \quad 0<s<T \ ,
\ee
where the stopping time $\tau^*_{\ve,x,T,K}$ is defined in the proof of Lemma 2.3. In view of the non-negativity of $v_\ve$ and lower bound on $v_\ve(0,y,\cdot)$, we conclude from (\ref{D5})  that
\begin{multline} \label{E5}
v_\ve(x,y,T) \ \ge \ E\left[ v_\ve(0,y,\tau^*_{\ve,x,T}) \ ;  \ \tau^*_{\ve,x,T}=\tau^*_{\ve,x,T,K}>s\right] \\
\ge \ 2y P\left(       \tau^*_{\ve,x,T}=\tau^*_{\ve,x,T,K}>s           \right) \ , \quad 0<s<T \ .
\end{multline}
Observe that
\be \label{F5}
\{\tau^*_{\ve,x,T}>s\} \ = \ \{\tau^*_{\ve,x,T}=\tau^*_{\ve,x,T,K}>s\}\cup\{\tau^*_{\ve,x,T,K}>s, \ X^*_\ve(\tau^*_{\ve,x,T,K})=K\} \ .
\ee
As in the proof of Lemma 2.3 we use the  inequality $X^*_\ve(\cdot)\le X_\ve(\cdot)$, where $X_\ve(\cdot)$ is given by (\ref{P2}), to show that
$\limsup_{K\ra\infty}P\left( \tau^*_{\ve,x,T,K}>s, \ X^*_\ve(\tau^*_{\ve,x,T,K})=K           \right)\le \limsup_{K\ra\infty} P\left(  \sup_{s<s'<T}   X_\ve(s')>K \right)=0$. Letting $K\ra\infty$ in (\ref{E5}) we then have from (\ref{F5}) that $v_\ve(x,y,T)\ge 2yP\left(  \tau^*_{\ve,x,T}>s   \right)$ for $0<s<T$. Since Lemma 2.3 implies that $\lim_{s\ra 0} P\left(  \tau^*_{\ve,x,T}>s   \right)=1$ we conclude that $v_\ve(x,y,T)\ge 2y$, whence the lower bound in (\ref{A5}). 

To get the upper bound in (\ref{A5}) we define for $h>0$ a function $q_{\ve,h}$ by
\be \label{G5}
q_{\ve,h}(x,y,T) \ = \ q_\ve\left(x+\frac{\sig^2_A(T)}{m_{1,A}(T)}h,y,T\right) \quad x,T>0 \ .
\ee
The function $(x,T)\ra q_{\ve,h}(x,y,T)$ is also a solution to (\ref{V2}), whence the function $v_{\ve,h}(x,y,T)=\left[q_{\ve,h}(x,y,T)-q_\ve(x,y,T)\right]/h$ is a solution to the PDE
\begin{multline} \label{H5}
\frac{\pa v_{\ve,h}(x,y,T)}{\pa T}  +\la(x,y,T)\frac{\pa v_{\ve,h}(x,y,T)}{\pa x} \\
+ \frac{1}{2}\left[\frac{\pa q_{\ve,h}(x,y,T)}{\pa x}+\frac{\pa q_\ve(x,y,T)}{\pa x}\right]\frac{\pa v_{\ve,h}(x,y,T)}{\pa x} 
 \  = \ \frac{\ve}{2}\frac{\pa^2 v_{\ve,h}(x,y,T)}{\pa x^2} \ , \quad x,T>0 \  .
\end{multline}
Let $X^*_{\ve,h}(\cdot)$ denote solutions to the backwards in time  SDE (\ref{Y2}) with $\mu_\ve$ given by 
\be \label{I5}
\mu_\ve(x,y,T) \ = \ \la(x,y,T)+ \frac{1}{2}\left[\frac{\pa q_{\ve,h}(x,y,T)}{\pa x}+\frac{\pa q_\ve(x,y,T)}{\pa x}\right] \ .
\ee
Letting $\tau^*_{\ve,h,x,T,K}$ be the first exit time for $X^*_{\ve,h}(s), \ s<T,$ with terminal condition $X^*_{\ve,h}(T)=x$ from the interval 
$(0,K)$ we have again from Ito's lemma and the martingale optional sampling theorem the identity
\be \label{J5}
v_{\ve,h}(x,y,T) \ = \ E\left[ v_{\ve,h}(X^*_{\ve,h}(s\vee\tau^*_{\ve,h,x,T,K}),y,s\vee\tau^*_{\ve,h,x,T,K}) \ |  \ X^*_{\ve,h}(T)=x\right] \ , \quad 0<s<T \ .
\ee
We may simplify the expression (\ref{J5}) by taking $K\ra \infty$.  Observe that
\begin{multline} \label{K5}
 \left|E\left[ v_{\ve,h}(K,y,\tau^*_{\ve,h,x,T,K}); \ \tau^*_{\ve,h,x,T,K}>s,  X^*_\ve(\tau^*_{\ve,h,x,T,K})=K   \ |  \ X^*_{\ve,h}(T)=x\right]\right|
 \\
  \le \sup_{s<s'<T} |v_{\ve,h}(K,y,s')|P\left(\sup_{s<s'<T} X^*_{\ve,h}(s')>K \ | \ X^*_{\ve,h}(T)=x\right)  \ .
\end{multline}
Since the drift $\mu_\ve$ of (\ref{I5}) satisfies $\mu_\ve(x,y,T)\ge \la(x,y,T)$ we see that the probability on the RHS of (\ref{K5}) is bounded by 
$P\left(\sup_{s<s'<T} X_\ve(s')>K \ | \ X_\ve(T)=x\right)$, where $X_\ve(\cdot)$ is given by (\ref{P2}). We may bound this latter probability by using the reflection principle (\ref{AT2}), whence the probability converges to zero as $K\ra \infty$ like $\exp[-cK^2]$ for some constant $c>0$.  From Proposition 3.3 we see that $ \sup_{s<s'<T} |v_{\ve,h}(K,y,s')|$ is bounded linearly in $K$ as $K\ra \infty$. We conclude that the RHS of (\ref{K5}) converges to $0$ as $K\ra\infty$.  Letting $K\ra\infty$ in (\ref{J5}) we have then that
\begin{multline} \label{L5}
v_{\ve,h}(x,y,T) \ = \ E\left[ v_{\ve,h}(0,y,\tau^*_{\ve,h,x,T}); \  \tau^*_{\ve,h,x,T}>s \ |  \ X^*_{\ve,h}(T)=x\right] \\
+ E\left[ v_{\ve,h}(X^*_{\ve,h}(s),y,s);  \ \tau^*_{\ve,h,x,T}<s \ |  \ X^*_{\ve,h}(T)=x\right] \ , \quad 0<s<T \ ,
\end{multline}
where $\tau^*_{\ve,h,x,T}$ is the first hitting time at $0$ for the diffusion $X^*_{\ve,h}(s), \ s<T,$ with terminal condition $X^*_{\ve,h}(T)=x$.

It is easy to bound from above the first expectation on the RHS of (\ref{L5}) by using Proposition 3.3. Thus we have from the upper bound (\ref{AC3})  the inequality $v_{\ve,h}(0,y,\tau) \ \le \  2g_{2,A}(\tau,\tau)+2y, \tau>0,$ where $g_{2,A}(\tau,\tau)=[\sig^2_A(\tau)-m_{2,A}(\tau)]/m_{1,A}(\tau)$ has derivative (\ref{AD4}). Since $A(\cdot)$ is non-negative, whence the function $\tau\ra g_{2,A}(\tau,\tau)$ is increasing,  we see that the first expectation on the RHS of (\ref{L5}) is bounded above by $2g_{2,A}(T,T)+2y$ for all $s$ satisfying $0<s<T$.  We shall show that the limit of the second expectation on the RHS of (\ref{L5})  converges to $0$ as $s\ra 0$. The upper bound in (\ref{A5}) follow  then by first letting $s\ra 0$ in (\ref{L5}) and then $h\ra 0$. 

Using the bound (\ref{I3}) of Proposition 3.2, we see that the second expectation on the RHS of (\ref{L5}) is bounded in absolute value by
\be \label{M5}
\frac{C(T)y}{sh}  E\left[ X^*_{\ve,h}(s)+sh+s^2;  \ \tau^*_{\ve,h,x,T}<s \ |  \ X^*_{\ve,h}(T)=x\right]  \ , \quad 0<s<T,
\ee
where the constant $C(T)$ depends only on $T$. In order to estimate the expression (\ref{M5}) we use the lower bound (\ref{A5}) which has been already proven. Let $X_{\ve,{\rm linear}}(\cdot)$ denote solutions to the SDE (\ref{Y2}) with $\mu_\ve$ given by 
\be \label{O5}
\mu_\ve(x,y,T) \ = \ \la(x,y,T)+ \frac{\pa q_{\rm  linear}(x,y,T)}{\pa x} \ = \la(x,-y,T) \ .
\ee
If $ X^*_{\ve,h}(T)= X_{\ve,{\rm linear}}(T)=x$ then $ X^*_{\ve,h}(s)\le X_{\ve,{\rm linear}}(s)$ for all $\tau^*_{\ve,h,x,T}<s<T$. Letting
$\tau_{\ve,{\rm linear},x,T}$ be the first hitting time at $0$ for $X_{\ve,{\rm linear}}(s), \ s<T,$ with $X_{\ve,{\rm linear}}(T)=x$, we see that
 $\tau^*_{\ve,h,x,T}\ge \tau_{\ve,{\rm linear},x,T}$. Hence we have that
 \be \label{P5}
  E\left[ X^*_{\ve,h}(s);  \ \tau^*_{\ve,h,x,T}<s \ |  \ X^*_{\ve,h}(T)=x\right] \ \le \ 
  E\left[ X_{\ve,{\rm linear}}(s);    \ \tau_{\ve,{\rm linear},x,T}<s \ |   \   X_{\ve,{\rm linear}}(T)=x   \right] \ .
 \ee
 
 Since the drift (\ref{O5}) is linear the SDE (\ref{Y2}) can be explicitly solved in this case and the solution is obtained by replacing $y$ by $-y$ in the formula (\ref{P2}).  Thus we have that
 \be \label{Q5}
X_\ve(s) \ = \ x_{\rm class}(s)-\sqrt{\ve}\frac{\sig^2_A(s)}{m_{1,A}(s)}Z(s), \quad s<T \ ,
\ee
where $Z(\cdot)$ is defined in (\ref{P2}), and from (\ref{L2}) we have that
\begin{multline} \label{R5}
\sig_A^2(T)x_{\rm class}(s) \ = \  xm_{1,A}(s,T)\sig_A^2(s) -ym_{1,A}(s)\sig_A^2(s,T) \\
+ \ m_{1,A}(s,T)m_{2,A}(s,T)\sig_A^2(s)-m_{2,A}(s)\sig_A^2(s,T) \ .
\end{multline}
We see from (\ref{R5}) that  $\lim_{s\ra 0} x_{\rm class}(s)=-y$, whence there exists $s_0$ with $0<s_0<T$ such that $ x_{\rm class}(s)\le -y/2$ for $0<s\le s_0$.  It follows then from (\ref{Q5}) that
\be \label{S5}
P\left( \tau_{\ve,{\rm linear},x,T}<s   \right) \ \le \ P\left( \inf_{s<s'<s_0} s'Z(s')> cy/\sqrt{\ve}                     \right) \ , \quad 0<s\le s_0 \ ,
\ee
for some constant $c>0$ depending only on $s_0$.  The variables $sZ(s), \ s<T,$ are Gaussian with zero mean and variance bounded above by $C(T)s$ for some constant $C(T)$ depending only on $T$. Hence the probability on the RHS of (\ref{S5}) is bounded above by $\exp[-c/s]$ for some constant $c>0$. Observing also that   $\sup_{0<s<T}E\left[ X_{\ve,{\rm linear}}(s)^2 \ | \ X_{\ve,{\rm linear}}(T)=x   \right] <\infty$, 
we conclude from (\ref{S5}) and  the Schwarz inequality applied to the RHS of (\ref{P5}) that the expression (\ref{M5}) converges to $0$ as $s\ra 0$. 
\end{proof}
\begin{proposition}
Assume the function $A:[0,\infty)\ra\mathbb{R}$ is continuous and non-negative. Then for any $T_0>0$, there exists a constant $C>0$, depending only on $T_0$ and $\sup_{0\le t\le T_0}A(t)$,  such that   the function $(x,T)\ra q_\ve(x,y,T)$ satisfies the inequality
\be \label{T5}
-\frac{CTy^2}{x}  \ \le \ q_\ve(x,y,T)-q_{\rm linear}(x,y,T)  \quad {\rm for \ } xy\ge \ve T, \ x\ge \max\{2y,T^2\}, \ 0<T<T_0 \ ,
\ee
and also the inequality
\be \label{U5}
q_\ve(x,y,T)-\frac{2m_{1,A}(T)xy}{\sig^2_A(T)} \ \le \ CTx \quad {\rm for \ } x>0, \ 0<T<T_0 \ . 
\ee
\end{proposition}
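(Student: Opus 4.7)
The plan is to handle the two inequalities separately, as they are of quite different natures.

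The upper bound (\ref{U5}) follows immediately from inequality (\ref{AC3}) of Proposition 3.3. Indeed, using (\ref{O2}), one has $-2\la(0,y,T) = 2[1 - m_{2,A}(T)/\sig_A^2(T)] + 2m_{1,A}(T)y/\sig_A^2(T)$, whence $q_\ve(x,y,T) - 2m_{1,A}(T)xy/\sig_A^2(T) \le 2x[1 - m_{2,A}(T)/\sig_A^2(T)]$. An elementary computation gives $\sig_A^2(T) - m_{2,A}(T) = \int_0^T e^{\int_s^T A}(e^{\int_s^T A} - 1) ds = O(T^2)$, with constant depending only on $T_0$ and $\sup_{0 \le t \le T_0} A(t)$, while $\sig_A^2(T) \ge T$; this gives $1 - m_{2,A}(T)/\sig_A^2(T) \le CT$ and (\ref{U5}) follows.

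For (\ref{T5}), I adapt the maximum principle machinery of Proposition 3.3 but start from $q_0$ instead of $q_{\rm linear}$. The key observation is that on $\mathcal{D}_{y,T_0}$ supplied by Proposition 4.2, $q_0$ is a $C^2$ classical solution of the HJ equation (\ref{AC2}) satisfying $\pa^2 q_0/\pa x^2 \le 0$ by Corollary 4.1; therefore, writing $\mathcal{H}_\ve[u] := \pa u/\pa T + \la \pa u/\pa x + \tfrac12 (\pa u/\pa x)^2 - \tfrac{\ve}{2} \pa^2 u/\pa x^2$ for the HJB operator of (\ref{V2}), one has $\mathcal{H}_\ve[q_0] = -(\ve/2) \pa^2 q_0/\pa x^2 \ge 0$, so that $q_0$ is itself a \emph{super}solution of HJB, which alone gives only $q_\ve \le q_0$. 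To obtain the desired lower bound I will work with $w = q_0 - \psi_\ve$ where $\psi_\ve \ge 0$ is constructed along the HJ characteristics $s \mapsto x(\tau,s)$ of (\ref{AB4}) by prescribing $\psi_\ve(0,y,\tau) = 0$ and
\[
\frac{d}{ds}\psi_\ve(x(\tau,s),y,s) \ \ge \ \frac{\ve}{2}\Bigl|\frac{\pa^2 q_0}{\pa x^2}\Bigr| + \frac{1}{2}\Bigl(\frac{\pa\psi_\ve}{\pa x}\Bigr)^2 + \frac{\ve}{2}\frac{\pa^2 \psi_\ve}{\pa x^2},
\]
which, since $d/ds$ along the HJ characteristic equals $\pa/\pa T + (\la + \pa q_0/\pa x)\pa/\pa x$, exactly yields $\mathcal{H}_\ve[w] \le 0$. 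Using the explicit formula (\ref{AL4}) along characteristics and the identity $d[m_{1,A}^2/\sig_A^2]/ds = -m_{1,A}^2/\sig_A^4$, integration of the leading term gives $\psi_\ve \le (\ve/2) K(\tau) g_{3,A}(\tau,T)$ (with $K(\tau)$ as in (\ref{AL4})). For $[x,T] \in \mathcal{D}_{y,T_0}$ with $x \ge \max\{2y, T^2\}$ one has $\tau(x,y,T) \le CTy/x$ by (\ref{P4}), and the small-$\tau$ asymptotics $\sig_A^2(\tau) \sim \tau$, $m_{1,A}(\tau) \sim 1$, $g_{2,A}(\tau,\tau) = O(\tau^2)$ reduce this to $\psi_\ve \le C\ve T^2 y/x^2$. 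The hypothesis $xy \ge \ve T$ then gives
\[
\psi_\ve \ \le \ C\ve T^2 y/x^2 \ = \ C(\ve T/x)(Ty/x) \ \le \ Cy\cdot(Ty/x) \ = \ CTy^2/x,
\]
so combined with Proposition 4.1's bound $q_0 \ge q_{\rm linear} - CTy^2/x$ and the max-principle conclusion $q_\ve \ge w = q_0 - \psi_\ve$, this yields (\ref{T5}).

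The main technical obstacle will be rigorously verifying $\mathcal{H}_\ve[w] \le 0$ \emph{including} the nonlinear correction $(1/2)(\pa\psi_\ve/\pa x)^2$ and the viscous term $(\ve/2)\pa^2\psi_\ve/\pa x^2$; for the ansatz $\psi_\ve \sim \ve T^2 y/x^2$ these are of orders $\ve^2 T^4 y^2/x^6$ and $\ve^2 T^2 y/x^4$ respectively, each of smaller order than the leading right-hand side $(\ve/2)|\pa^2 q_0/\pa x^2| \lesssim \ve T y^2/x^3$ precisely under $\ve T \le xy$, so a bounded multiplicative enlargement of $\psi_\ve$ suffices. A secondary issue is the parabolic boundary of $\Omega = \mathcal{D}_{y,T_0} \cap \{xy \ge \ve T\}$: as in Proposition 3.3, truncation at $x \le M$ with $M \to \infty$ handles infinity, while the comparison $w \le q_\ve$ on the lateral curves $x = \max\{2y, T^2\}$ and $xy = \ve T$ is checked using the Proposition 3.3 lower bound $q_\ve \ge 2m_{1,A}(T)xy/\sig_A^2(T)$ together with the explicit estimates on $q_0$ and $\psi_\ve$ above, at the possible cost of enlarging the constant $C$.
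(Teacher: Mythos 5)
Your proof of the upper bound (\ref{U5}) is correct, and in fact simpler than the paper's. You observe that $-2\la(0,y,T)=2[1-m_{2,A}(T)/\sig^2_A(T)]+2m_{1,A}(T)y/\sig^2_A(T)$, so (\ref{AC3}) gives $q_\ve-2m_{1,A}(T)xy/\sig_A^2(T)\le 2x[1-m_{2,A}(T)/\sig_A^2(T)]$, and then $\sig^2_A(T)\ge T$ together with $\sig^2_A(T)-m_{2,A}(T)=\int_0^T e^{\int_s^T A}(e^{\int_s^T A}-1)\,ds\le C T^2$ gives the result in one line. This is essentially what the paper records afterward in Remark 5.1 (via the upper bound in (\ref{A5}), whose integral over $[0,x]$ is exactly (\ref{AC3})); the paper's actual proof of (\ref{U5}) takes a longer stochastic route, perhaps because it pre-dates the observation in Remark 5.1. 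Either way, your shorter argument works.

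The lower bound (\ref{T5}) is where I see a genuine gap, and it is not fixable by enlarging constants. You replace the paper's stochastic argument (which applies the stopping identity (\ref{BS2}) and the martingale estimates (\ref{BW2})--(\ref{BZ2}) to obtain $E[\tau^*_{\ve,x,T}]\le CTy/x$ when $xy\ge\ve T$, $x\ge\max\{2y,T^2\}$) with a PDE comparison on $\Omega=\mathcal{D}_{y,T_0}\cap\{xy\ge\ve T\}$ between $q_\ve$ and the subsolution $w=q_0-\psi_\ve$. The difficulty is that the lateral boundary of $\Omega$ is not the slab's edge $x=0$ but an internal curve, roughly $x\sim cT^2$ (the edge of the characteristic domain $\mathcal{D}_{y,T_0}$ in the regime $T>2\sqrt{\La_0 y}$). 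On that curve one needs $q_0-q_\ve\le\psi_\ve$. Using the bounds you cite, the best one can say is $q_0-q_\ve\le q_{\rm linear}-2m_{1,A}(T)xy/\sig_A^2(T)=\frac{2y}{\sig_A^2(T)}\big[m_{1,A}(T)m_{2,A}(T)-\sig_A^2(T)\big]=O(Ty)$, a quantity that is $O(1)$ in $\ve$, while your $\psi_\ve$ satisfies $\psi_\ve=O(\ve T^2 y/x^2)=O(\ve y/T^2)$ on $x\sim cT^2$, which is $O(\ve)$. The two sides differ by a whole power of $\ve$, so the comparison inequality on the lateral boundary fails for $\ve$ small and $T$ not small, and therefore the maximum principle cannot be closed on $\Omega$. (One cannot escape to the full slab either: there $q_0$ is not known to be $C^2$ or concave, since Corollary 4.1 only applies inside $\mathcal{D}_{y,T_0}$, and the concavity of $q_0$ on the whole slab is deduced in the paper only afterward from Theorem 5.1.) The paper's exit-time argument does not face this issue at all, because it never needs to control $q_0-q_\ve$ on an internal boundary curve; it simply bounds $E[q_{\rm linear}(X^*_\ve(\tau^*_{\ve,x,T}),y,\tau^*_{\ve,x,T})]\le CyE[\tau^*_{\ve,x,T}]$ and then estimates the expected stopping time directly.
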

\begin{proof}
All constants in the following can be chosen to depend only on $T_0$ and $\sup_{0\le t\le T_0}A(t)$.
We consider the stochastic integral $s\ra M(s)$ defined similarly to (\ref{AX2}) but with $q_{\rm linear}$ in place of $q_\ve$ and  $\mu_\ve$ in (\ref{Y2}) given by $\mu_\ve=\mu^ *_\ve$ of (\ref{AA2}).  Then similarly to (\ref{AZ2}) we obtain the inequality
 \begin{multline} \label{V5}
 q_{\rm linear}(x,y,T) \ \le E[q_{\rm linear}(X^*_\ve(s\vee\tau^*_{\ve,x,T,K}),y,s\vee\tau^*_{\ve,x,T,K})] \\
 + E\left[\frac{1}{2}\int_{s\vee\tau^*_{\ve,x,T,K}}^T[\mu^*_\ve(X^*_\ve(s),y,s)-\la(X^*_\ve(s),y,s)]^2 \ ds \ \Big| \ X^*_\ve(T)=x \  \right] \ , \quad 0<s\le T \ ,
 \end{multline}
 where $X^*_\ve(\cdot)$ is the solution to the SDE (\ref{Y2}) with $\mu_\ve=\mu_\ve^*$. The stopping time  $\tau^*_{\ve,x,T,K}$ in (\ref{V5}) is the first exit time of $X^*_\ve(s), \ s<T, $ with $X^*_\ve(T)=x$ from the interval $(0,K)$. As in the proof of  Lemma 2.3 we use the inequality
 $X^*_\ve(\cdot)\le X_\ve(\cdot)$, where $X_\ve(\cdot)$ is given by (\ref{P2}). Letting $K\ra \infty$ in (\ref{V5}) we have then using (\ref{BP2}) of Lemma 2.3 the inequality
 \be \label{W5}
  q_{\rm linear}(x,y,T) \ \le E[q_{\rm linear}(X^*_\ve(s\vee\tau^*_{\ve,x,T}),y,s\vee\tau^*_{\ve,x,T})]+q_\ve(x,y,T) \ , \quad 0<s<T \ ,
 \ee
 with $\tau^*_{\ve,x,T}$ the stopping time defined in Lemma 2.3. Using the inequality $q_{\rm linear}(x,y,\tau)\le Cy[x/\tau+\tau], \ 0<\tau\le T_0,$ where $C$ is constant,  we have from (\ref{W5}) the inequality
  \be \label{X5}
  q_{\rm linear}(x,y,T)-q_\ve(x,y,T) \ \le CyE[\tau^*_{\ve,x,T}]+\frac{Cy}{s}E\left[X^*_\ve(s)+s^2; \     \tau^*_{\ve,x,T}<s \  \right] \ , \quad 0<s<T \ .
 \ee
 Just as in the proof of Proposition 5.1 we use the lower bound (\ref{A5}) to show that the second expectation on the RHS of (\ref{X5}) converges to $0$ as $s\ra 0$. 
 
 In order to bound $E[\tau^*_{\ve,x,T}]$ we use the identity (\ref{BS2}), observing since $A(\cdot)$ is non-negative that the sum of the last two terms on the RHS are non-negative. We have then upon  applying the Schwarz inequality to the RHS of (\ref{BS2})  that  for any $T_0>0$ one has
\begin{multline} \label{Y5}
\frac{C_1}{\sqrt{\tau^*_{\ve,x,T}}}  \left\{\frac{1}{2}\int_{\tau^*_{\ve,x,T}}^T[\mu^*_\ve(X^*_\ve(s),y,s)-\la(X^*_\ve(s),y,s)]^2 \ ds\right\}^{1/2} \\
 \ge \  \frac{c_1x}{T} -\sqrt{\ve}Z(\tau^*_{\ve,x,T})
 \quad {\rm for \ } 0<T\le T_0 \ ,
\end{multline}
where  $C_1,c_1>0$ are constants.  It follows from (\ref{Y5}) that if $|Z(\tau^*_{\ve,x,T})|\le c_1x/2\sqrt{\ve}T$ then
\be \label{Z5}
\tau^*_{\ve,x,T} \ \le  \ \left(\frac{2TC_1}{c_1x}\right)^2\frac{1}{2}\int_{\tau^*_{\ve,x,T}}^T[\mu^*_\ve(X^*_\ve(s),y,s)-\la(X^*_\ve(s),y,s)]^2 \ ds \ .
\ee
We conclude from (\ref{BP2}) of Lemma 2.3 and (\ref{Z5}) that
\be \label{AA5}
E\left[\tau^*_{\ve,x,T}; \ |Z(\tau^*_{\ve,x,T})|\le c_1x/2\sqrt{\ve}T\right]\ \le  \ \left(\frac{2TC_1}{c_1x}\right)^2 q_\ve(x,y,T) \ .
\ee
From (\ref{BY2}), (\ref{BZ2}) we have that
\be \label{AB5}
E\left[\tau^*_{\ve,x,T}; \ |Z(\tau^*_{\ve,x,T})|>a\right] \ \le \  E[\tau_a] \ \le \ \frac{C_2}{a^2} , \quad a>0, \ 0<T\le T_0,
\ee
where  $C_2$ is constant. It follows from (\ref{AB5}) that
\be \label{AC5}
E\left[\tau^*_{\ve,x,T}; \ |Z(\tau^*_{\ve,x,T})|> c_1x/2\sqrt{\ve}T\right] \ \le \  
\frac{4C_2\ve T^2}{c_1^2x^2} \ , \quad 0<T\le T_0 \ .
\ee
We conclude from (\ref{AA5}), (\ref{AC5}) and Proposition 3.2  that
\be \label{AD5}
E\left[\tau^*_{\ve,x,T}\right] \ \le \  \frac{C_3Ty}{x} \quad {\rm if \ } x\ge 2y, \ x\ge T^2, \ xy>\ve T, \ 0<T\le T_0,
\ee
where  $C_3$ is constant. The lower bound (\ref{T5})  follows from (\ref{X5}), (\ref{AD5}) on letting $s\ra 0$ in (\ref{X5}). 

To prove (\ref{U5}), we first observe that if $x\ge y$ the inequality follows from the inequality $q_\ve(x,y,T)\le q_{\rm linear}(x,y,T)$. Hence we may assume $0<x<y$. 
 We consider the stochastic integral $s\ra M(s)$ defined similarly to (\ref{AX2}) but with  $\mu_\ve$ in (\ref{Y2}) given by (\ref{O5}).  Arguing as in the proof of Proposition 5.1 we have analogously to (\ref{AZ2}) the inequality
  \be \label{AE5}
 q_\ve(x,y,T) \ \le \
  E\left[\frac{1}{2}\int_{\tau_{\ve,{\rm linear},x,T}}^T[\mu_\ve(X_\ve(s),y,s)-\la(X_\ve(s),y,s)]^2 \ ds \ \Big| \ X_\ve(T)=x \  \right] \ .
 \ee
Next we consider the stochastic integral $s\ra M(s)$ defined similarly to (\ref{AX2}) but with $q_{\rm linear}$ in place of $q_\ve$ and  $\mu_\ve$ in (\ref{Y2})  again given by (\ref{O5}). Then using Ito's formula and the martingale optional sampling theorem  we have the identity
\begin{multline} \label{AF5}
q_{\rm linear}(x,y,T) \ = \ E\left[q_{\rm linear}\left(0,y,\tau_{\ve,{\rm linear},x,T}\right)\right] \\
+  E\left[\frac{1}{2}\int_{\tau_{\ve,{\rm linear},x,T}}^T[\mu_\ve(X_\ve(s),y,s)-\la(X_\ve(s),y,s)]^2 \ ds \ \Big| \ X_\ve(T)=x \  \right]  \ .
\end{multline}
Observe next that
\begin{multline} \label{AG5}
q_{\rm linear}(x,y,T) -E\left[q_{\rm linear}\left(0,y,\tau_{\ve,{\rm linear},x,T}\right)\right] \\
 \le \ \frac{2m_{1,A}(T)xy}{\sig^2_A(T)} +
C_4yE\left[T-\tau_{\ve,{\rm linear},x,T}\right] \ ,  \quad 0<T\le T_0 \ ,
\end{multline}
for some constant $C_4$. The inequality (\ref{U5}) follows from (\ref{AE5})-(\ref{AG5}) if we can show that
\be \label{AH5}
E\left[T-\tau_{\ve,{\rm linear},x,T}\right] \ \le \ \frac{C_5Tx}{y} \ ,  \quad 0<x<y, \ 0<T\le T_0 \ ,
\ee
for a constant $C_5$. 

We show that (\ref{AH5}) holds if $y\ge C_6T^2$ for some constant $C_6$.  We choose $C_6$ such that the drift $\mu_\ve$ defined by (\ref{O5})  satisfies the inequality $\mu_\ve(x,y,s)\ge y/C_5s, \ 0<s\le T_0$ for some constant $C_5>0$.  This follows from (\ref{O2}) since  $\sig^2_A(s)-m_{2,A}(s)\le C_7s^2, \ 0<s\le T_0,$ where $C_7$ is constant. Then the LHS of (\ref{AH5}) is bounded above by $u_\ve(x)=
E\left[T-\tau^*_{\ve,{\rm linear},x,T}\right] $, where $\tau^*_{\ve,{\rm linear},x,T}$ is the first exit time from $(0,\infty)$ for the diffusion $X_\ve(s), \ s<T,$ which is the solution to (\ref{Y2}) with terminal condition $X_\ve(T)=x$ and drift $\mu_\ve (x,y,T)=y/C_5T$. Now  $u_\ve(\cdot)$ is the solution to the boundary value problem,
\be \label{AJ5}
-\frac{\ve}{2}\frac{d^2u_\ve(x)}{dx^2}+\frac{y}{C_5T} \frac{du_\ve(x)}{dx} \ = \ 1 \ ,  \ \ x>0, \quad u_\ve(0)=0 \ .
\ee
The solution to (\ref{AJ5}) is the linear function $u_\ve(x)=C_5Tx/y$, whence we obtain the upper bound (\ref{AH5}). 

To finish the proof of (\ref{U5}) we need to deal with the case $0<x<y<C_6T^2$.  In this case (\ref{U5}) reduces to the inequality $q_\ve(x,y,T)\le C_8Tx, \ 0<T\le T_0$, where $C_8$ is constant. We consider again the stochastic integral $s\ra M(s)$ defined similarly to (\ref{AX2}) but with  $\mu_\ve$ in (\ref{Y2})
given by $\mu_\ve(x,y,s)=\la(x,y,s)+C_9T, \ 0<s<T\le T_0$, where the constant $C_9>0$  is chosen sufficiently large.   Let $\tau_{\ve,x,T}$ be the first exit time for the diffusion $X_\ve(s), \ s<T,$ of (\ref{Y2}) with $X_\ve(T)=x$. Then  a similar inequality to (\ref{AE5}) holds, whence we have
\be \label{AK5}
q_\ve(x,y,T) \ \le \  \frac{C_9^2T^2}{2}E[T-\tau_{\ve,x,T}] \ , \quad 0<x\le y\le C_6T^2, \ 0<T\le T_0 \ .
\ee
 We also see  as before that $E[T-\tau_{\ve,x,T}]\le C_{10}x/T, \ 0<T\le T_0,$, where $C_{10}$ is constant.  The result follows.
\end{proof}
\begin{rem}
The inequality (\ref{U5}) also follows from the upper bound in (\ref{A5}) by integration over the interval $[0,x]$. In our proof in Proposition 5.2 we 
use the fact that the optimizing $\tau$ in (\ref{B4}) is close to $T$ as $x\ra 0$. 
\end{rem}
We have already shown in Proposition 5.1 that all  of the bounds on the derivative $\pa q_\ve(x,y,T)/\pa x$ with $\ve=0$ in Corollary 4.1, with the exception of the upper bound (\ref{BF4}),    extend to $\ve>0$. Next we extend the upper bound (\ref{BF4}) to $\ve>0$. 
\begin{proposition}
Assume the function $A:[0,\infty)\ra\mathbb{R}$ is continuous and non-negative. Then for any $T_0>0$ there exists a constant $C>0$,  depending only on $T_0$ and $\sup_{0\le t\le T_0}A(t)$, such that   the function $(x,T)\ra q_\ve(x,y,T)$ satisfies the inequality
\be \label{AL5}
\frac{\pa q_\ve(x,y,T)}{\pa x}-\frac{2m_{1,A}(T)y}{\sig^2_A(T)}  \ \le \  \frac{CTy^2}{x^2}  , \quad {\rm for \ }xy\ge \ve T, \   x\ge\max\{2y,CT^2\}, \ 0<T\le T_0 \ .
\ee
\end{proposition}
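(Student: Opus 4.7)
Plan.  I would extend the perturbation argument of Proposition 5.1 to obtain a quantitative martingale representation for $u_\ve=\pa q_\ve/\pa x$.  Starting from the shifted difference $v_{\ve,h}=[q_{\ve,h}-q_\ve]/h$ of (\ref{G5}), which solves the linear PDE (\ref{H5}), I would pass to the limit $h\to 0^+$ in the stochastic representation (\ref{L5}).  With uniform integrability verified as in the proof of Proposition 5.1 --- using the reflection-principle tail bound on the dominating process $X_\ve$ of (\ref{P2}) together with the linear-in-$K$ bound on $q_\ve$ from Proposition 3.2 --- the limit reads
\begin{equation*}
\frac{\sig_A^2(T)}{m_{1,A}(T)}u_\ve(x,y,T) \ = \ E\!\left[\frac{\sig_A^2(\tau^*_{\ve,x,T})}{m_{1,A}(\tau^*_{\ve,x,T})}u_\ve(0,y,\tau^*_{\ve,x,T})\right],
\end{equation*}
where $\tau^*_{\ve,x,T}$ is the first hitting time at $0$ of the optimal-controller path $X^*_\ve$ of (\ref{AA2}), a.s.\ positive by Lemma 2.3.

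Combining this identity with the upper bound of Proposition 5.1 at $x=0$, namely $\sig_A^2(\tau)u_\ve(0,y,\tau)/m_{1,A}(\tau)\le 2[y+g_{2,A}(\tau,\tau)]$, and with the estimate $g_{2,A}(\tau,\tau)\le C_0\tau^2$ that follows from (\ref{AD4}) and the non-negativity of $A$, reduces the desired bound (\ref{AL5}) to a second-moment estimate on the hitting time of the form $E[(\tau^*_{\ve,x,T})^2]\le CT^2y^2/x^2$ in the stated domain.

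The main obstacle is this hitting-time second-moment bound.  I would use the path identity (\ref{BS2}) with $\mu_\ve=\mu^*_\ve$, so that $\mu^*_\ve-\la=u_\ve$.  Dropping the non-negative $y$- and $m_{2,A}$-dependent terms on its right side yields the lower bound $\int_{\tau^*}^T(m_{1,A}/\sig_A^2)u_\ve\,ds\ge m_{1,A}(T)x/\sig_A^2(T)-\sqrt{\ve}|Z(\tau^*)|$.  Applying the Schwarz inequality with $\int_{\tau^*}^T m_{1,A}^2/\sig_A^4\,ds=g_{3,A}(\tau^*,T)$ then gives the pointwise inequality
\begin{equation*}
\tau^*_{\ve,x,T} \ \le \ \frac{CT^2}{x^2}\int_{\tau^*_{\ve,x,T}}^T u_\ve(X^*_\ve(s),y,s)^2\,ds
\end{equation*}
on the event $\{\sqrt{\ve}|Z(\tau^*)|\le cx/T\}$.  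The $L^\infty$ bound on $u_\ve$ from Proposition 5.1 then controls the action integral pointwise, and the Gaussian tail bounds (\ref{BB2})--(\ref{BW2}) control the bad-event contribution.  The key technical point --- and where the hypotheses $x\ge CT^2$ and $xy\ge \ve T$ enter essentially --- is that obtaining the $y^2$ power in the target bound requires a careful use of the $y$-weighted Schwarz splitting of (\ref{BT2}) rather than the naive $L^\infty$ bound on $u_\ve$, so as to absorb the residual $T$- and $\ve$-powers into the final constant $C$.
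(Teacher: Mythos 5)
Your reduction is the same as the paper's: pass to $h\to0$ in the representation (\ref{L5}) (the paper states this directly as (\ref{AM5})), use the Proposition~5.1 upper bound $v_\ve(0,y,\tau)\le 2y+2g_{2,A}(\tau,\tau)$ together with $g_{2,A}(\tau,\tau)\le C\tau^2$, and reduce the problem to showing $E[(\tau^*_{\ve,x,T})^2]\le CT^2y^2/x^2$, which is precisely the paper's (\ref{AN5}). Up to that point you are on track.

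The gap is in your proof of the hitting-time second moment. From (\ref{BS2}) with Schwarz and $g_{3,A}(\tau^*,T)\lesssim1/\tau^*$ you get, on the good event,
$\tau^*\le (CT^2/x^2)\int_{\tau^*}^T u_\ve(X^*_\ve(s),y,s)^2\,ds$.
But the best pathwise bound available from Proposition~5.1 is $u_\ve(\cdot,y,s)\le 2\bigl[1-m_{2,A}(s)/\sig_A^2(s)\bigr]+2m_{1,A}(s)y/\sig_A^2(s)\le C\bigl[s+y/s\bigr]$, which gives $\int_{\tau^*}^T u_\ve^2\,ds\le C\bigl[T^3+y^2/\tau^*\bigr]$. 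The $y^2/\tau^*$ piece does produce $(\tau^*)^2\le CT^2y^2/x^2$, but the $T^3$ piece produces $\tau^*\lesssim T^5/x^2$, which for $x$ of order $T^2$ is of order $T$, i.e., no improvement over the trivial bound $\tau^*<T$. The $y$-weighted splitting of (\ref{BT2}) does not repair this: optimizing the Young parameter still leaves a residual $CT^3/y$ that is only dominated by $x/T$ when $xy\gtrsim T^4$, a hypothesis the proposition does not grant. The statement permits $y$ as small as $\ve/(CT)$ with $x\sim CT^2$, and in that regime your residual is not absorbable into $CTy^2/x^2$.

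The paper avoids this by never estimating the action integral along $X^*_\ve$. Instead it uses the upper bound in (\ref{A5}) to dominate $X^*_\ve$ pathwise by the process $X^*_{\ve,\rm linear}$ with the \emph{linear} drift (\ref{AO5}) $=\la(x,-y,T)+2[1-m_{2,A}(T)/\sig^2_A(T)]$, so that $\tau^*_{\ve,x,T}\le\tau^*_{\ve,{\rm linear},x,T}$, and then estimates the latter hitting time directly from the explicit solution (\ref{AP5})--(\ref{AQ5}). The key point is that the deterministic hitting time obeys (\ref{AS5}): $\tau^*_{0,{\rm linear},x,T}\le 2C_3Ty/(c_3x+2C_3y)$ as soon as $x\ge CT^2$, which is of order $Ty/x$ \emph{uniformly} in small $y$; the stochastic correction is then controlled by the polynomial Gaussian tail bound (\ref{AT5}) via the dyadic decomposition (\ref{AU5}), and this is exactly where the hypothesis $xy\ge\ve T$ is used. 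You should adopt the linear-drift comparison rather than the Schwarz bound on the action along $X^*_\ve$; the latter is inherently too lossy here.
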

\begin{proof}
We use the identity (\ref{D5}). The upper bound (\ref{A5})  implies that for any $T_0>0$ there is a constant $C_1$ such that $v_\ve(x,y,\tau)\le 2y+C_1\tau^2, \ 0<\tau\le T_0$. Hence we may let $K\ra \infty$ and $s\ra 0$ in (\ref{D5}) to obtain the identity
\be \label{AM5}
v_\ve(x,y,T) \ = \ E\left[ v_\ve(0,y,\tau^*_{\ve,x,T}) \ |  \ X^*_\ve(T)=x\right] \ , \quad 0<s<T \ .
\ee
 The inequality (\ref{AL5})  follows then from (\ref{AM5})  if we can show that
\be \label{AN5}
E\left[ (\tau^*_{\ve,x,T})^2\right] \ \le \ \frac{C_2T^2y^2}{x^2} \ , \quad x\ge\max\{2y,C_2T^2\}, \ 0<T\le T_0 \ ,
\ee
for a constant $C_2$. To prove (\ref{AN5}) we use the upper bound (\ref{A5}).  Analogously to (\ref{O5}) we consider solutions $
X^*_{\ve,{\rm linear}}(\cdot)$ to the SDE (\ref{Y2})  with $\mu_\ve$ given by 
\be \label{AO5}
\mu_\ve(x,y,T) \ = \ \la(x,-y,T)+ 2\left[1-\frac{m_{2,A}(T)}{\sig^2_A(T)}\right] \ .
\ee
Letting
$\tau^*_{\ve,{\rm linear},x,T}$ be the first hitting time at $0$ for $X^*_{\ve,{\rm linear}}(s), \ s<T,$ with $X^*_{\ve,{\rm linear}}(T)=x$, we see that
 $\tau^*_{\ve,x,T}\le \tau^*_{\ve,{\rm linear},x,T}$. In order to prove (\ref{AN5}) it will be sufficient therefore to estimate $E\left [ \left(\tau^*_{\ve,{\rm linear},x,T}\right)^2   \right]$.
 
 Since the drift (\ref{AO5}) is linear the SDE (\ref{Y2}) can be again explicitly solved, and the solution is given by  
 \be \label{AP5}
X_\ve(s) \ = \ x^*_{\rm class}(s)-\sqrt{\ve}\frac{\sig^2_A(s)}{m_{1,A}(s)}Z(s), \quad s<T \ ,
\ee
where $Z(\cdot)$ is defined in (\ref{P2}), and $x^*_{\rm class}(\cdot)$ is obtained from (\ref{R5}) by switching the signs of the terms which do not involved $x$ or $y$. Thus we have that
\begin{multline} \label{AQ5}
\sig_A^2(T)x^*_{\rm class}(s) \ = \  xm_{1,A}(s,T)\sig_A^2(s) -ym_{1,A}(s)\sig_A^2(s,T) \\
- \ m_{1,A}(s,T)m_{2,A}(s,T)\sig_A^2(s)+m_{2,A}(s)\sig_A^2(s,T) \ .
\end{multline}
We have from (\ref{AN2}), (\ref{E4}) and (\ref{AQ5})   that
\be \label{AR5}
x^*_{\rm class}(s) \ \ge \ \ c_3\frac{sx}{T}-\left[1-\frac{s}{T}\right]\left\{C_3y+C_4sT\right\} \ , \quad 0<s< T, \ \ \ 0<T\le T_0, 
\ee
for some constants $C_3,c_3,C_4>0$.  We conclude from (\ref{AR5}) that
\be \label{AS5}
 \tau^*_{0,{\rm linear},x,T} \ \le \ T\frac{2C_3y}{c_3x+2C_3y} \quad {\rm if \ } x\ge \frac{2C_4T^2}{c_3} \ .
\ee

We may extend the inequality (\ref{AS5}) to $\ve>0$ by considering for $n=1,2,\dots,$ events $A_n$ where  $A_n=\{\tau^*_{\ve,{\rm linear},x,T}>nTy/x\}$. Assuming $x\ge\max\{2y,2C_4T^2/c_3\}$, we have  from (\ref{AR5}) the inequality $x^*_{\rm class}(\tau^*_{\ve,{\rm linear},x,T})\ge c_3\tau^*_{\ve,{\rm linear},x,T}(x/4T)$ on the event $A_n$  provided $n\ge 4C_3/c_3$.    Hence from (\ref{AP5}) we have on $A_n$ with $n\ge 4C_3/c_3$ the inequality $Z\left(   \tau^*_{\ve,{\rm linear},x,T} \right)<-c_5x/\sqrt{\ve} T$, where $c_5>0$ depends only on $T_0$.  From (\ref{BZ2}) we have that
\be \label{AT5}
P(\sup_{s<s'<T}|Z(s')|>a) \ \le \  \frac{C_6}{(a^2s)^4} \ , \quad a>0, \ 0<s<T\le T_0 \ ,
\ee
where $C_6>0$ depends only on $T_0$.  Choosing integers $n_0,n_1$ such that $n_0\ge 4C_3/c_3$ and $n_1\ge x/y$, we have from (\ref{AT5}) that
\begin{multline} \label{AU5}
E\left[\left(      \tau^*_{\ve,{\rm linear},x,T}    \right)^2\right] \ \le \ \frac{T^2 y^2}{x^2}\left[n^2_0+ \sum_{n_0\le n\le n_1} (n+1)^2P(A_n)\right] \\
\le \  \frac{T^2 y^2}{x^2}\left[n^2_0+ \frac{C_6 }{c_5^8}\left(\frac{\ve T}{xy}\right)^4\sum_{n_0\le n\le n_1}\frac{(n+1)^2}{n^4} \right]  \ .
\end{multline}
 The inequality (\ref{AN5}) follows from (\ref{AU5}) provided $xy\ge \ve T$. 
\end{proof}
Finally we show that the function $x\ra q_\ve(x,y,T)$ is concave, thereby extending the upper bound  on $\pa^2q_\ve(x,y,T)/\pa x^2$  with $\ve=0$ in Corollary 4.1 to $\ve>0$. Because of the singularity in the drift $[x,T]\ra\la(x,y,T), \ x,y,T>0,$ of (\ref{O2}) as $T\ra0$,  we use an approximation method.  
\begin{lem}
Assume the function $A:[0,\infty)\ra\mathbb{R}$ is continuous and non-negative. Then for any $\del>0$ there is a unique classical solution 
$[x,T]\ra q_{\ve,\del}(x,y,T), \ x>0, \ T>\del,$ to the PDE (\ref{V2})  with boundary and initial conditions
\be \label{AV5}
q_{\ve,\del}(0,y,T) \ = \ 0, \ T>\del, \quad q_{\ve,\del}(x,y,\del) \ = \ \frac{2m_{1,A}(\del)xy}{\sig^2_A(\del)} \ ,  \ \ x>0 \ .
\ee
Furthermore, the function $[x,T]\ra q_{\ve,\del}(x,y,T)$ satisfies the inequalities
\be \label{AW5}
 \frac{2m_{1,A}(T)xy}{\sig^2_A(T)} \ \le \ q_{\ve,\del}(x,y,T) \ \le \ -2\la(0,y,T)x \ , \quad  x>0, \ T>\del \ ,
\ee
\be \label{BA5}
\frac{2m_{1,A}(T)y}{\sig^2_A(T)}  \     \le \  \frac{\pa q_{\ve,\del}(x,y,T)}{\pa x} \ \le  \ 2\left[1-\frac{m_{2,A}(T)}{\sig^2_A(T)}\right] + \frac{2m_{1,A}(T)y}{\sig^2_A(T)} \ , \quad x>0, \ T>\del \ .
\ee
Let $q_\ve(x,y,T)$ be defined by (\ref{R2}), (\ref{U2}). Then  $\lim_{\del\ra 0}[q_{\ve,\del}(x,y,T)-q_\ve(x,y,T)]=0$, and the limit is uniform in all sets $\{[x,T]: \ x>0, \ T_0<T<T_1\}$ with $ 0<T_0<T_1<\infty$.  
\end{lem}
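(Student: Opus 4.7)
The plan is to reduce existence and the bounds (\ref{AW5}), (\ref{BA5}) to linear-PDE theory via a Cole-Hopf substitution, then establish the uniform convergence by a maximum-principle argument applied directly to the difference $w_\del=q_{\ve,\del}-q_\ve$, which solves a \emph{linear} parabolic PDE. Setting $v_{\ve,\del}=\exp(-q_{\ve,\del}/\ve)$, a direct calculation shows that $q_{\ve,\del}$ solves (\ref{V2}) iff $v_{\ve,\del}$ solves the linear PDE (\ref{S2}), now with smooth initial datum $v_{\ve,\del}(x,y,\del)=\exp[-2m_{1,A}(\del)xy/(\ve\sig_A^2(\del))]$ at $T=\del>0$ and boundary datum $v_{\ve,\del}(0,y,T)=1$. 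Because the drift $\la$ of (\ref{O2}) is smooth and non-singular on $\{T\ge\del\}$, the standard parabolic theory used in Proposition 3.1 produces a unique positive classical solution $v_{\ve,\del}\in(0,1]$, and $q_{\ve,\del}=-\ve\log v_{\ve,\del}$ is the desired classical solution of (\ref{V2}).

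For the bounds (\ref{AW5}), I would compare $v_{\ve,\del}$ with the functions $v_{\ve,2}$ of (\ref{BI3}) and $v_{\ve,3}$ of (\ref{BL3}) from the proof of Proposition 3.3. The key observation is that the initial datum $v_{\ve,\del}(x,y,\del)$ coincides exactly with $v_{\ve,2}(x,y,\del)$, and dominates $v_{\ve,3}(x,y,\del)$ because $m_{2,A}(\del)\le\sig_A^2(\del)$ when $A\ge 0$. Combined with $\mathcal{L}v_{\ve,2}\le 0$ from (\ref{BJ3}) and $\mathcal{L}v_{\ve,3}\ge 0$ from (\ref{BM3}), the truncated-domain max-principle argument of Proposition 3.3 yields $v_{\ve,3}\le v_{\ve,\del}\le v_{\ve,2}$, i.e., (\ref{AW5}). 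The lower bound in (\ref{BA5}) then follows at $x=0$ from (\ref{AW5}) and $q_{\ve,\del}(0,y,T)=0$ by the mean value theorem, and propagates to $x>0$ through the Feynman-Kac representation of $\sig_A^2(T)\pa q_{\ve,\del}/(m_{1,A}(T)\pa x)$ exactly as in Proposition 5.1, now simpler because the initial data at $T=\del$ are smooth. The upper bound in (\ref{BA5}) comes from the finite-difference argument of Proposition 5.1 applied to $q_{\ve,\del}(x+h\sig_A^2(T)/m_{1,A}(T),y,T)$; the identity $\pa_T[\sig_A^2/m_{1,A}]=A\sig_A^2/m_{1,A}+1/m_{1,A}$ verifies that this shifted function again solves (\ref{V2}).

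For the convergence, set $w_\del(x,T)=q_{\ve,\del}(x,y,T)-q_\ve(x,y,T)$. Subtracting two copies of (\ref{V2}) shows $w_\del$ solves the linear parabolic PDE
\[
\frac{\pa w_\del}{\pa T}+\left[\la+\frac{1}{2}\left(\frac{\pa q_{\ve,\del}}{\pa x}+\frac{\pa q_\ve}{\pa x}\right)\right]\frac{\pa w_\del}{\pa x}=\frac{\ve}{2}\frac{\pa^2 w_\del}{\pa x^2}
\]
on $\{x>0,\,T>\del\}$, with $w_\del(0,T)=0$ and, crucially, $|w_\del(x,\del)|\le C_A(T_1)\del y$ \emph{uniformly in $x>0$} by the bound (\ref{I3}) of Proposition 3.2. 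Applying Ito's formula and the optional sampling theorem at the first exit time from $(0,M)\times(\del,T)$, then letting $M\to\infty$, yields the stochastic representation $w_\del(x,T)=E[w_\del(X(\tau),\tau)]$, where $X$ is the backwards-in-time diffusion with drift $\mu=\la+\tfrac{1}{2}(\pa q_{\ve,\del}/\pa x+\pa q_\ve/\pa x)$ and $\tau$ is the first exit from $\{x>0,\,s>\del\}$. Because $\mu\ge\la$, the process $X$ is dominated by the Gaussian process $X_\ve$ of (\ref{P2}), which has bounded variance on $[\del,T_1]$, so $M\,P(\sup X\ge M)\le CM\exp(-cM^2)\to 0$ and the truncation error vanishes. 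On the parabolic boundary $|w_\del|$ is bounded by $C_A(T_1)\del y$, so $|w_\del(x,T)|\le C_A(T_1)\del y\to 0$ uniformly on $\{x>0,\,T_0<T<T_1\}$.

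The main obstacle is the last step. The uniform initial closeness $|w_\del(x,\del)|\le C_A\del y$ drops out of Proposition 3.2 for free, but pushing it through the linear PDE on the \emph{unbounded} spatial half-line requires controlling the contribution at $x=M$ as $M\to\infty$; this is where the domination $X\le X_\ve$ is essential, exactly as in the proofs of Lemma 2.1 and Lemma 2.3 of the paper. Once that truncation is justified, the Feynman-Kac representation gives the uniform bound essentially for free because $|w_\del|$ is controlled by the parabolic boundary data, not by its (merely linear) growth in $x$ in the interior.
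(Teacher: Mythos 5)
Your proposal is correct and follows essentially the same strategy as the paper: Cole-Hopf reduction to the linear PDE (\ref{S2}) for existence and uniqueness, maximum-principle comparison with $v_{\ve,2}$ and $v_{\ve,3}$ for (\ref{AW5}), the arguments of Proposition 5.1 for (\ref{BA5}), and the observation that the difference $q_{\ve,\del}-q_\ve$ solves the linear equation (\ref{AY5}) with initial data of size $O(\del y)$ so that the (probabilistic or PDE) maximum principle gives the uniform bound. The only cosmetic differences are your use of (\ref{I3}) in place of the two-sided bound (\ref{AB3}) for the initial-layer estimate, and your explicit Feynman--Kac/truncation argument where the paper simply invokes the maximum principle; both are sound and interchangeable here.
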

\begin{proof}
We  proceed as in the proof of Proposition 3.3 by setting $v_{\ve,\del}(x,y,T)=\exp\left[-q_{\ve,\del}(x,y,T)/\ve\right]$.  If $[x,T]\ra v_{\ve,\del}(x,y,T) $ is a solution to the PDE (\ref{S2}) in the region $x>0,T>\del$ with boundary and initial conditions
\be \label{AX5}
v_{\ve,\del}(0,y,T) \ = \ 1, \ T>\del, \quad v_{\ve,\del}(x,y,\del) \ = \ \exp\left[-\frac{2m_{1,A}(\del)xy}{\sig^2_A(\del)\ve}\right] \ ,  \ \ x>0 \ ,
\ee
then $q_{\ve,\del}(x,y,T)=-\ve\log v_{\ve,\del}(x,y,T)$ is a solution to (\ref{V2}) with boundary and initial conditions (\ref{AV5}).  Since the drift $[x,T]\ra\la(x,y,T)$ is linear in $x$ and continuous in $T$ for $T\ge \del$, standard regularity theory implies that $[x,T]\ra v_{\ve,\del}(x,y,T)$ is a classical solution to (\ref{S2}), (\ref{AX5}), from whence we conclude that $[x,T]\ra q_{\ve,\del}(x,y,T)$ is a classical solution to (\ref{V2}), (\ref{AV5}).   The proof of (\ref{AW5}) proceeds as in the proof of Proposition 3.3 by using the maximum principle.  With (5.47) established, the proof of (\ref{BA5}) then follows along the same lines as the proof of Proposition 5.1. 

To prove the convergence of $q_{\ve,\del}$ as $\del\ra 0$ we define the function $u_{\ve,\del}(x,y,T)  =  q_\ve(x,y,T)-q_{\ve,\del}(x,y,T)$ .
Since both functions $[x,T]\ra q_\ve(x,y,T)$ and  $[x,T]\ra q_{\ve,\del}(x,y,T)$ are solutions to (\ref{V2}) it follows that the function 
$[x,T]\ra u_{\ve,\del}(x,y,T)=q_\ve(x,y,T)-q_{\ve,\del}(x,y,T)$ is a solution to the PDE
\begin{multline} \label{AY5}
\frac{\pa u_{\ve,\del}(x,y,T)}{\pa T} \ = \ \frac{\ve}{2}\frac{\pa^2 u_{\ve,\del}(x,y,T)}{\pa x^2} \\
-\left\{ \la(x,y,T)+\frac{1}{2}\left[\frac{\pa q_\ve(x,y,T)}{\pa x}+\frac{\pa q_{\ve,\del}(x,y,T)}{\pa x}\right]\right\} \frac{\pa u_{\ve,\del}(x,y,T)}{\pa x}  \ ,
\end{multline}
in the region$\{[x,T]: \ x>0, \ T>\del\}$.  It follows from (\ref{AV5}) and the upper bound (\ref{AB3}) of Proposition 3.3 that the boundary and initial conditions satisfy
\be \label{AZ5}
u_{\ve,\del}(0,y,T) \ = \ 0, \ T>\del, \quad 0  \ \le \   u_\ve(x,y,\del) \ \le  \ C\del y \ ,  \ \ x>0 \ ,
\ee
where the constant $C$ may be chosen uniformly in any interval $0<\del<\del_0<\infty$.  From (\ref{AZ5}) and the maximum principle applied to (\ref{AY5})  we conclude that $0\le u_{\ve,\del}(x,y,T)\le C\del y$ for $x>0,T>\del$. 
\end{proof}
\begin{lem}
Assume the function $A:[0,\infty)\ra\mathbb{R}$ is continuous  non-negative, and  $q_{\ve,\del}(x,y,T), \ x,y>0, \ T>\del,$   the function defined in Lemma 5.1.
Then for any $T_0>\del$ there exist constants $C, M>0$, depending on $\ve,\del,y,T_0$  and $\sup_{0\le t\le T_0}A(t)$, such that 
\be \label{BB5}
\left|\frac{\pa^2 q_{\ve,\del}(x,y,T)}{\pa x^2}\right| \ \le \  C \quad {\rm for \ } x\ge M, \ \del<T\le T_0 \ .
\ee 
\end{lem}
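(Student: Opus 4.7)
The strategy is to differentiate the PDE (\ref{V2}) twice in $x$ to obtain a semilinear parabolic PDE for $V_q := \partial^2 q_{\ve,\del}/\partial x^2$, and then apply a parabolic maximum-principle argument with explicit barriers that decay at infinity. Since the initial data in (\ref{AV5}) is linear in $x$ and the drift $\la(x,y,T)$ has coefficients smooth and bounded on $[\del,T_0]$ (using $\sig_A^2(T)\ge\sig_A^2(\del)>0$), standard parabolic regularity, applied as in Proposition 3.1, gives that $q_{\ve,\del}$ is $C^\infty$ in $x$ with all derivatives continuous on $\{x\ge 0,\,\del\le T\le T_0\}$, so $V_q$ is well-defined and smooth.

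Differentiating (\ref{V2}) twice in $x$, using $\pa\la/\pa x=\ka(T):=A(T)+1/\sig_A^2(T)$, yields
\begin{equation}
\frac{\pa V_q}{\pa T}+(\la+w)\frac{\pa V_q}{\pa x}+V_q^2+2\ka(T)V_q \ = \ \frac{\ve}{2}\frac{\pa^2 V_q}{\pa x^2},
\end{equation}
where $w=\pa q_{\ve,\del}/\pa x$ is uniformly bounded by (\ref{BA5}) and $\ka$ is bounded on $[\del,T_0]$. The initial condition $V_q(x,\del)\equiv 0$ follows because the initial data in (\ref{AV5}) is linear in $x$; at $x=0$ one obtains $\ve V_q(0,y,T)=w(0,y,T)\left[2\la(0,y,T)+w(0,y,T)\right]$, derived exactly as in (\ref{BP3}), which is uniformly bounded on $[\del,T_0]$ by (\ref{BA5}).

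For the large-$x$ analysis, introduce the barriers $\pm K/x$. Using $\la(x,y,T)=\ka(T)x+O(1)$ as $x\ra\infty$, a direct computation shows that the PDE operator applied to $\bar V(x)=K/x$ equals $K\ka(T)/x+O(1/x^2)$, which is strictly positive for $x$ sufficiently large with a threshold depending only on $\ve,\del,y,T_0,\sup A$ (through $\ka$, $w$, and the bounded part of $\la$); hence $\bar V$ is a supersolution on this region, and symmetrically $-K/x$ is a subsolution. Apply the parabolic maximum principle on the truncated domain $\{M<x<M',\,\del<T\le T_0\}$: at $T=\del$ the barriers straddle $V_q\equiv 0$; at $x=M$ continuity of $V_q$ on the compact set $\{M\}\times[\del,T_0]$ gives a uniform bound which is dominated by $K/M$ when $K$ is chosen sufficiently large; at $x=M'$ the barriers are $\pm K/M'\ra 0$, and the comparison is closed by observing that $V_q(x,T)$ grows at most exponentially in $x$ (a consequence of the classical smoothness of $v_{\ve,\del}=\exp(-q_{\ve,\del}/\ve)$ as a solution to the linear PDE (\ref{S2}), combined with the sandwich bounds (\ref{AW5}) on $q_{\ve,\del}$), placing $V_q$ within Tychonoff's uniqueness class for the unbounded-domain maximum principle. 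Letting $M'\ra\infty$ yields $|V_q(x,T)|\le K/x$ for $x\ge M$, so in particular $|V_q|\le K/M$.

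The main obstacle lies in the barrier computation and its companion at-infinity control: verifying that the leading contribution $K\ka(T)/x$ in the operator applied to $\bar V$ strictly dominates the lower-order terms coming from $w$, the bounded part of $\la$, and the quadratic $V_q^2$, uniformly in $T\in[\del,T_0]$; and extracting from the linear structure of (\ref{S2}) together with (\ref{AW5}) a subexponential growth estimate on $V_q$ that is sharp enough to justify the maximum-principle comparison as $M'\ra\infty$.
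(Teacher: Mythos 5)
The barrier approach diverges substantially from the paper, and it has a genuine gap that makes the lower bound fail as stated. The upper-barrier comparison $V_q\le K/x$ closes: at a hypothetical positive interior maximum of $W=V_q-K/x$ one has $V_q> K/x>0$, so the zeroth-order coefficient in the linearized equation for $W$, namely $V_q+K/x+2\ka(T)$, is positive there, giving a contradiction. But the lower-barrier comparison $V_q\ge -K/x$ does not: at a positive maximum of $W=-K/x-V_q$ one only knows $V_q<-K/x<0$, and the zeroth-order coefficient is $V_q-K/x+2\ka(T)$, which can be arbitrarily negative if $V_q$ is very negative. The nonlinearity $V_q^2$ is focusing for $V_q<0$ (it drives $V_q\to-\infty$), so the comparison you want to run requires an a priori lower bound on $V_q$ — which is exactly what the lemma is trying to prove. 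This circularity is not a detail to be filled in; a barrier argument alone cannot break it. The paper sidesteps the issue entirely by never applying a maximum principle to $\pa^2 q_{\ve,\del}/\pa x^2$: instead it works with $v_{\ve,\del}\propto\pa q_{\ve,\del}/\pa x$, which is already uniformly bounded by Lemma~5.1, changes variables (\ref{BF5})--(\ref{BI5}) to eliminate the linear drift, and runs a local Schauder iteration ($L^\infty\to$ H\"older $\to C^1$) via the method-of-images Green's function (\ref{BL5}), (\ref{BO5}) to bound $\pa_z u=\pa^2 q_{\ve,\del}/\pa x^2$ without any sign information.

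The growth estimate you flag as the ``main obstacle'' is also underspecified in a way that matters. Writing $V_q=-\ve\,\pa_x^2 v_{\ve,\del}/v_{\ve,\del}+(\pa_x q_{\ve,\del})^2/\ve$ and invoking interior regularity for the linear PDE (\ref{S2}) does not give a uniform-in-$x$ estimate directly, because the drift $\la(x,y,T)$ grows linearly in $x$, so the local Schauder constant on a unit box around $x_0$ grows (polynomially) with $x_0$. The resulting bound on $V_q$ is at best polynomial in $x$, not ``at most exponentially growing'' as a free observation but something that must be extracted with care; once you are forced to control the drift, the natural move is the change of variables the paper already makes — at which point the Green's function/Schauder route becomes the path of least resistance anyway. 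So: the upper-barrier half and the supersolution computation are fine and would give a clean one-sided bound if the a-priori-growth issue were handled; the lower half needs a different mechanism (which is where the paper's Schauder argument, acting directly on the bounded quantity $\pa_x q_{\ve,\del}$, does the work).
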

\begin{proof}
Similarly to the derivation of (\ref{C5}) we see that the function
\be \label{BC5}
v_{\ve,\del}(x,y,T) \ = \ \frac{\sig^2(T)}{m_{1,A}(T)} \frac{\pa q_{\ve,\del}(x,y,T)}{\pa x}
\ee
is a solution to the PDE
\begin{multline} \label{BD5}
\frac{\pa v_{\ve,\del}(x,y,T)}{\pa T}  + \left[\la(x,y,T)+ \frac{m_{1,A}(T)}{\sig^2_A(T)}v_{\ve,\del}(x,y,T)\right]\frac{\pa v_{\ve,\del}(x,y,T)}{\pa x} 
 \  = \ \frac{\ve}{2}\frac{\pa^2 v_{\ve,\del}(x,y,T)}{\pa x^2} \ 
\end{multline}
in the region $\{[x,T]: \ x>0, \ T>\del\}$ with constant  initial condition $2y$ on the half line $\{[x,\del]: \ x>0\}$.  We make a change of variable to eliminate the linear drift $\la(\cdot,\cdot,\cdot)$ from (\ref{BD5}). To see this consider the PDE
\be \label{BE5}
\frac{\pa w(x,T)}{\pa T} +[\al(T)x+\beta(T)]\frac{\pa w(x,T)}{\pa x} \ = \ \frac{\ve}{2}\frac{\pa^2 w(x,T)}{\pa x^2} \ , \quad x\in\mathbb{R}, \ T>\del \ .
\ee
If we make the transformation $w(x,T)=u(z,t)$ where
\begin{multline} \label{BF5}
z=\exp\left[-\int_\del^T \al(s) \ ds\right]x-\int_\del^T\beta(s)\exp\left[-\int_\del^s \al(s') \ ds'\right] \ ds \ , \\
t=\int_\del^T\exp\left[-2\int_\del^s \al(s') \ ds'\right] \ ds  \ .
\end{multline}
then $u$ is a solution to the heat equation
\be \label{BG5}
\frac{\pa u(z,t)}{\pa t} \ = \ \frac{\ve}{2}\frac{\pa^2 u(z,t)}{\pa ^2 z} \ , \quad z\in\mathbb{R}, \ t>0 \ .
\ee
Now writing $\la(x,y,T)=\al(T)x+\beta(T)$ and setting $ v_{\ve,\del}(x,y,T)=u(z,t)$ according to the change of variables (\ref{BF5}), we see from (\ref{BD5}) that $u$ is a solution to the Burgers' equation
\be \label{BH5}
\frac{\pa u(z,t)}{\pa t}+\ga(t)u(z,t)\frac{\pa u(z,t)}{\pa z} \ = \ \frac{\ve}{2}\frac{\pa^2 u(z,t)}{\pa ^2 z} \ , 
\ee
where $\ga(\cdot)$ is the function
\be \label{BI5}
\ga(t) \ = \ \exp\left[2\int_\del^T \al(s) \ ds\right]\frac{m_{1,A}(T)}{\sig^2_A(T)} \ , \quad T\ge \del \ .
\ee

For any $z_0\in\mathbb{R}, \ t_0>0$ we define the domain $\mathcal{D}(z_0,\ve)=\{[z,t]: \ |z-z_0|<\sqrt{\ve t_0}, \ 0<t<t_0\}$.  The Dirichlet Green's function for the heat equation (\ref{BG5}) on the domain $\mathcal{D}(z_0,\ve)$ is simply a space translation and  dilation of the Green's function on the domain  $\mathcal{D}(0,1)$. This latter Green's function can be obtained by the method of images. Thus for $t>0$ let $z\ra G(z,t)$ be the pdf of the Gaussian variable with mean $0$ and variance $t$, so
\be \label{BK5}
G(z,t) \ = \ \frac{1}{\sqrt{2\pi t} }\exp\left[-\frac{z^2}{2t}\right] \  , \quad z\in\mathbb{R} \ .
\ee
Then the Dirichlet Green's function $G_D(z,z',t)$   for $\mathcal{D}(0,1)$ is given by the series
\be \label{BL5}
G_D(z,z',t) \ = \ \sum_{m=0}^\infty p(m)G(z-z_m,t) \ , 
\ee
where $z_0=z'$ and $z_m, \ m=1,2,\dots,$ are reflections of $z'$ in the boundaries $z'=\pm\sqrt{t_0}$ with parities $p(m)=\pm 1$.  The function
\be \label{BM5}
u(z,t) \ = \ \int_{-\sqrt{t_0}}^{\sqrt{t_0}}G_D(z,z',t)u_0(z') \ dz'  \ ,  \quad [z,t]\in \mathcal{D}(0,1)
\ee
is then a solution to (\ref{BG5}) with $\ve=1$. It satisfies the initial condition $u(z,0)=u_0(z), \ |z|<\sqrt{t_0},$ and boundary condition  $u(z,t)=0, \ z=\pm\sqrt{t_0}, \ 0<t<t_0$. 

Letting $t=t_0$ correspond to $T=T_0$ in (\ref{BF5}), we see from (\ref{BA5}) of Lemma 5.1  there exist a constant $C_0>0$, 
depending only on $T_0$,  and a constant $M_0$, depending only on $\del,T_0$, such that the solution $u$ to  the Burgers' equation (\ref{BH5}) satisfies
\be \label{BN5}
|u(z,t)| \ \le \ \ C_0+2y \quad {\rm for \ } z\ge M_0, \ 0<t\le t_0 \ .
\ee
We may integrate (\ref{BH5}) on the domain $\mathcal{D}(z_0,\ve)$  with $z_0>M_0+\sqrt{\ve t_0}$ by using the Green's function (\ref{BL5}). We  obtain the integral equation 
\begin{multline} \label{BO5}
u(z,t) \ = \ \int_{-\sqrt{t_0}}^{\sqrt{t_0}}G_D((z-z_0)/\sqrt{\ve},z', t)u(\sqrt{\ve}z'+z_0,0) \ dz' \\
+ 2\sum_{z'=\pm\sqrt{t_0}} p(z')\int_0^t \frac{\pa G_D((z-z_0)/\sqrt{\ve},z',(t-s))}{\pa z'} u(\sqrt{\ve} z'+z_0,s) \ ds \\
+\frac{1}{2\sqrt{\ve}} \int_0^t  \int_{-\sqrt{t_0}}^{\sqrt{t_0}}\frac{\pa G_D((z-z_0)/\sqrt{\ve},z',(t-s))}{\pa z'}\ga(s) u(\sqrt{\ve} z'+z_0,s)^2 dz' \ ds  \ , 
\quad [z,t]\in \mathcal{D}(z_0,\ve) \ ,
\end{multline}
where $p(z')=-1$ if $z'=\sqrt{t_0}$ and $p(z')=1$ if $z'=-\sqrt{t_0}$.  We can use the representation (\ref{BO5}) and the bound (\ref{BN5}) to obtain a bound on $\pa u(z,t)/\pa z$ at $z=z_0, \ 0<t\le t_0$, which is independent of $z_0$ as $z_0\ra\infty$.  Observe from (\ref{AV5})  that $u(\cdot,0)\equiv 2y$ is constant. Hence if $u_1(z,t)$ denotes the first term on the RHS of (\ref{BO5}) we have from (\ref{BK5}), (\ref{BL5}) the inequality
\be \label{BP5}
\left|\frac{\pa u_1(z,t)}{\pa z}\right| \ \le \ \frac{C_1y}{\sqrt{\ve}} \ , \quad |z-z_0|<\frac{\sqrt{\ve t_0}}{2}, \ 0<t\le t_0 \ ,
\ee
where $C_1$ depends only on $t_0$.  Letting $u_2(z,t)$ be the second (boundary) term on the RHS of (\ref{BO5}), we may use (\ref{BN5}) to bound 
$\pa u_2(z,t)/\pa z$ at $z=z_0$. Thus we have that
\be \label{BQ5}
\left|\frac{\pa u_2(z,t)}{\pa z}\right| \ \le \ \frac{C_2(C_0+2y)}{\sqrt{\ve}} \ , \quad |z-z_0|<\frac{\sqrt{\ve t_0}}{2}, \ 0<t\le t_0 \ ,
\ee
where $C_2$ depends only on $t_0$.

To bound the derivative of the third term on the  RHS of (\ref{BO5}) we define an operator $\mathcal{L}$ on functions  $w:\mathcal{D}(0,1)\ra\mathbb{R}$ by
\be \label{BR5}
\mathcal{L} w(z,t) \ = \ \int_0^t\int_{-\sqrt{t_0}}^{\sqrt{t_0}}\frac{\pa G_D(z,z',t-s)}{\pa z'}\ga (s)w(z',s) \ dz' \ ds  \ ,  \quad [z,t]\in \mathcal{D}(0,1)
\ee
We see from (\ref{BK5}), (\ref{BL5}) there is a constant $C_3$, depending only on $t_0$ such that
\be \label{BS5}
\left|\frac{\pa^2 G_D(z,z',t)}{\pa z\pa z'}\right| \ \le \ \frac{C_3}{t}G(z-z',2t) \ , \quad [z,t],[z',t]\in \mathcal{D}(0,1) \ .
\ee
It follows from (\ref{BS5}) that
\be \label{BT5}
\left|\frac{\pa G_D(z_1,z',t)}{\pa z'}-\frac{\pa G_D(z_2,z',t)}{\pa z'}\right| \ \le \ \frac{C_3|z_1-z_2|}{t}\int_0^1G(\la(z_1-z')+(1-\la)(z_2-z'),2t) \ d\la  \ .
\ee
Let $U_{z_1,z_2}=\{z'\in\mathbb{R}:  \ |z'|<\sqrt{t_0}, \ |z_1-z'|\ge 2|z_1-z_2|\}$. It is evident that 
\be \label{BU5}
|z_1-z_2| \ \le \ |\la(z_1-z')+(1-\la)(z_2-z')|  \quad {\rm for \ } 0<\la<1, \ z'\in U_{z_1,z_2} \ .
\ee
Next we write
\be \label{BV5}
\mathcal{L}w(z_1,t)-\mathcal{L}w(z_2,t) \ = \ F_1(z_1,z_2,t)+F_2(z_1,z_2,t) \ , \quad {\rm where}
\ee
\be \label{BW5}
F_1(z_1,z_2,t) \ = \ \int_0^t\int_{U_{z_1,z_2}}\left[    \frac{\pa G_D(z_1,z',t-s)}{\pa z'}-\frac{\pa G_D(z_2,z',t-s)}{\pa z'}\right]\ga (s)w(z',s) \ dz' \ ds  \ .
\ee
It follows from (\ref{BT5}), (\ref{BU5}) that  for all $\al$ satisfying $0<\al<1$ there is a constant $C_4$ such that
\be \label{BX5}
|F_1(z_1,z_2,t)| \ \le \ C_4|z_1-z_2|^\al\|w\|_\infty\int_0^t\frac{ds}{ s^{(1+\al)/2}} \
= \ \frac{2C_4}{1-\al}|w\|_\infty\sqrt{ t}\left(\frac{|z_1-z_2|}{\sqrt{ t}}\right)^\al \ .
\ee
To bound $F_2(z_1,z_2,t)$ we use the inequalities
\be \label{BY5}
\left|\frac{\pa G_D(z,z',t)}{\pa z'}\right| \ \le \  \frac{C_5}{\sqrt{ t}}G(z-z',2 t)  \ , \ [z,t],[z',t]\in \mathcal{D}(0,1) \ ,
\ee
\be \label{BZ5}
\int_{|z'|<a} G(z',2 t) \ dz' \ \le \ C_6\min\left\{\frac{a}{\sqrt{ t}},1\right\} \ ,
\ee
where $C_5,C_6$ are constants depending only on $t_0$. 
We have from (\ref{BY5}), (\ref{BZ5}) that
\begin{multline} \label{CA5}
\int_0^t\int_{[-\sqrt{t_0},\sqrt{t_0}]-U_{z_1,z_2}}   \left|\frac{\pa G_D(z_1,z',t-s)}{\pa z'}-\frac{\pa G_D(z_2,z',t-s)}{\pa z'}\right|    \ dz' \ ds \\
 \ \le \ 
2C_5C_6\int_0^t \frac{1}{\sqrt{t-s)}}\min\left\{\frac{3|z_1-z_2|}{\sqrt{t-s}},1\right\}  \ ds \\
 = \ 
6C_5C_6|z_1-z_2|\int_0^{ t/9|z_1-z_2|^2} \min\left\{\frac{1}{\sqrt{s'}},1\right\} \frac{ds'}{\sqrt{s'}} \
\le \ \frac{C_7}{1-\al}\sqrt{t}\left(\frac{|z_1-z_2|}{\sqrt{ t}}\right)^\al \quad {\rm if \ } \frac{|z_1-z_2|}{\sqrt{ t}} \ \le \  1 \ .
\end{multline}
We conclude from (\ref{BV5}), (\ref{BX5}), (\ref{CA5}) that
\be \label{CB5}
|\mathcal{L}w(z_1,t)-\mathcal{L}w(z_2,t)| \ \le \ C_\al\|w\|_\infty\sqrt{ t}\left(\frac{|z_1-z_2|}{\sqrt{t}}\right)^\al \quad {\rm if \ } \frac{|z_1-z_2|}{\sqrt{ t}} \ \le \  1 \ ,
\ee
where the constant $C_\al$ depends only on $\al, t_0$. 
It follows from (\ref{BR5}), (\ref{BY5})  that $\|\mathcal{L}w\|_\infty\le C\sqrt{ t}\|w\|_\infty$ for some universal constant $C$. Hence (\ref{CB5}) holds for all $[z_1,t],[z_2,t]\in\mathcal{D}(0,1)$. 

Next for a continuous function $f:[-\sqrt{t_0},\sqrt{t_0}]\ra\mathbb{R}$  and $\al$ satisfying $0<\al<1$  define the $\al$ H\"{o}lder norm of $f$ by 
\be \label{CC5}
\|f\|_{0,\al} \ = \  \sup\{|f(z)|: z\in[-\sqrt{t_0},\sqrt{t_0}] \ \} +\sup\left\{ \frac{|f(z_1)-f(z_2)|}{|z_1-z_2|^\al}  : z\in[-\sqrt{t_0},\sqrt{t_0}]  \right\} \ .
\ee
We may bound the derivative of $(\pa/\pa z) \mathcal{L}w(z,t)$ in terms of the norms (\ref{CC5}) for $w(\cdot,s), \ 0<s<t$.  To see this observe from (\ref{BR5})  that \begin{multline} \label{CD5}
\frac{\mathcal{L}w(z+h,t)-\mathcal{L}w(z,t)}{h} \ = \\
 \int_0^t\int_{-\sqrt{t_0}}^{\sqrt{t_0}}\int_0^1\frac{\pa^2 G_D(z+\la h,z',t-s)}{\pa z\pa z'}d\la  \ \ga (s)w(z',s) \ dz' \ ds  \\
 = \  \int_0^t\int_{-\sqrt{t_0}}^{\sqrt{t_0}}\int_0^1\frac{\pa^2 G_D(z+\la h,z',t-s)}{\pa z\pa z'}  \ \ga (s)[w(z',s)-w(z+\la h,s)] \ d\la \ dz' \ ds \ .
\end{multline}
We have from (\ref{BS5}), (\ref{CD5}) that
\be \label{CE5}
\left|\frac{\mathcal{L}w(z+h,t)-\mathcal{L}w(z,t)}{h}\right| \ \le \ C\int_0^t \frac{ds}{(t-s)^{1-\al/2}} \|w(\cdot,s\|_{0,\al} \ ds \ , \quad 0<\al<1 \ , 0<t\le t_0,
\ee
for some constant $C_8$ depending on $t_0$. 

Let $u_3(z,t)$ with $[z,t]\in\mathcal{D}(z_0,\ve)$ be the third term on the RHS of (\ref{BO5}).  Then 
$u_3(z,t)=\mathcal{L}w_1((z-z_0)/\sqrt{\ve},t)$ with $[z,t]\in\mathcal{D}(z_0,\ve)$, where  $w_1$ is given by the formula
\be \label{CF5}
w_1(z,t) \ = \ \frac{1}{2\sqrt{\ve}} u^2(\sqrt{\ve}z+z_0,t) \ , \quad [z,t]\in\mathcal{D}(0,1) \ .
\ee
We have then from (\ref{BN5}), (\ref{CB5}) that
\be \label{CG5}
\|\mathcal{L}w_1(\cdot,t)\|_{0,\al} \ \le \ \frac{C_\al}{2\sqrt{\ve}} [C_0+2y]^2 t^{(1-\al)/2} \ , \quad 0<t<t_0, \ 0<\al<1 \  .
\ee
Now defining $w_2;\mathcal{D}(0,1)\ra\R$ by $w_2((z-z_0)/\sqrt{\ve},t)=u(z_0+(z-z_0)/2,t)$, we have from (\ref{BP5}), (\ref{BQ5}), (\ref{CG5}) the bound 
\be \label{CH5}
\|w_2(\cdot,t)\|_{0,\al} \ \le \ C_9[C_0+2y]+ \frac{C_\al}{2\sqrt{\ve}} [C_0+2y]^2 t^{(1-\al)/2} \ , \quad 0<t<t_0, \ 0<\al<1 \  ,
\ee
for some constant $C_9$ depending only on $t_0$.  

The inequality (\ref{CH5}) show that the solution $u(\cdot,\cdot)$ of (\ref{BH5}) is H\"{o}lder continuous in the domain $\mathcal{D}(z_0,\ve/4)$. We represent $u(z,t)$ again as in (\ref{BO5})  but with $t_0$ replaced by $t_0/4$, whence (\ref{CH5}) implies that $u(\cdot,\cdot)$  is H\"{o}lder continuous in the domain $\mathcal{D}(z_0,\ve)$. Letting $w_3(z,t)=w_2(z,t)^2/2\sqrt{\ve}$, then we see that
\be \label{CI5}
u_3(z,t)=\mathcal{L}w_3((z-z_0)/\sqrt{\ve},t) \ ,  \quad \|w_3(\cdot,t)\|_{0,\al} \ \le \ \frac{1}{\sqrt{\ve}}\|w_2(\cdot,t)\|^2_{0,\al} \ .
\ee
It follows from (\ref{CE5}), (\ref{CH5}), (\ref{CI5}) that
\be \label{CJ5}
\left|\frac{\pa u_3(z,t)}{\pa z}\right| \ \le \  C \quad {\rm for \ } |z-z_0|<\sqrt{\ve t_0}/2,  \ 0<t<t_0/2 \ ,
\ee
for some constant depending on $\ve,y,t_0$, but not on $z_0$.  The inequality (\ref{BB5}) follows from (\ref{BP5}), (\ref{BQ5}), (\ref{CJ5}). 
\end{proof}
\begin{theorem}
Assume the function $A:[0,\infty)\ra\mathbb{R}$ is continuous non-negative, and let $q_\ve(x,y,T)$ be defined by (\ref{R2}), (\ref{U2}). Then for all $y,T>0$ the function $x\ra q_\ve(x,y,T)$ is concave.
\end{theorem}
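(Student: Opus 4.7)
The strategy is to prove concavity for the regularised approximations $q_{\ve,\del}$ from Lemma 5.1 and then pass $\del \to 0$ via the uniform convergence asserted there. Since concavity in $x$ is preserved under pointwise limits, this reduces the problem to showing $\pa^2 q_{\ve,\del}/\pa x^2 \le 0$ for $x > 0$, $T > \del$.

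Set $w = \pa^2 q_{\ve,\del}/\pa x^2$ and $u = \pa q_{\ve,\del}/\pa x$. Differentiating (\ref{V2}) twice in $x$, and using $\pa \la/\pa x = A(T) + 1/\sig_A^2(T)$, gives the semilinear parabolic equation
\[
\frac{\pa w}{\pa T} - \frac{\ve}{2}\frac{\pa^2 w}{\pa x^2} + [\la(x,y,T) + u(x,y,T)]\frac{\pa w}{\pa x} + \Big[2\big(A(T) + 1/\sig_A^2(T)\big) + w\Big]w \ = \ 0.
\]
The crucial structural feature is that the reaction term $[2(A + 1/\sig_A^2) + w]\,w$ is strictly positive whenever $w > 0$, since $A(\cdot) \ge 0$ and $\sig_A^2(T) > 0$. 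Consequently, if $w$ were to attain a positive interior maximum at $(x^*, T^*)$, one would have $\pa w/\pa T \ge 0$, $\pa w/\pa x = 0$ and $\pa^2 w/\pa x^2 \le 0$ at that point, and the PDE would then force $\pa w/\pa T \le -[2(A + 1/\sig_A^2) + w]\,w < 0$, a contradiction.

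For the boundary and initial data, the linear $x$-dependence of $q_{\ve,\del}(x,y,\del) = 2m_{1,A}(\del)xy/\sig_A^2(\del)$ gives $w(x,y,\del) \equiv 0$. At $x = 0$ the identity $q_{\ve,\del}(0,y,T) = 0$ yields $\pa q_{\ve,\del}(0,y,T)/\pa T = 0$, and evaluating (\ref{V2}) at $x = 0$ together with the bounds $0 \le u(0,y,T) \le -2\la(0,y,T)$ from (\ref{BA5}) yields $w(0,y,T) \le 0$, exactly as in Corollary 3.1. The preceding maximum-principle argument therefore implies that on the truncated strip $(0,K)\times(\del, T_0]$ the supremum of $w$ must occur on the parabolic boundary, and hence $\sup w \le \max\{0,\sup_T w(K,T)^+\}$.

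The main obstacle is to control $w$ at $x = \infty$: Lemma 5.2 only supplies $|w(K,T)| \le C$ uniformly in $T\in[\del,T_0]$, whereas one needs $\sup_T w(K,T) \to 0$ as $K\to\infty$. To obtain this decay the plan is first to adapt Proposition 5.3 to $q_{\ve,\del}$ (the same argument goes through, with the initial data at $T = \del$ replacing the boundary condition at $T = 0$), giving $u(x,y,T) - 2m_{1,A}(T)y/\sig_A^2(T) \to 0$ as $x\to\infty$ uniformly in $T\in[\del,T_0]$; the interior Schauder-type estimates developed inside the proof of Lemma 5.2 then upgrade this pointwise decay of $u$ to uniform decay of $w = \pa u/\pa x$. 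Once $w \le 0$ is established for each $\del > 0$, the uniform convergence $q_{\ve,\del}\to q_\ve$ from Lemma 5.1 (valid on sets $\{x > 0,\ T_0 < T < T_1\}$) yields the concavity of $x\mapsto q_\ve(x,y,T)$.
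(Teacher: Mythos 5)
Your overall strategy is the same as the paper's: regularize via $q_{\ve,\del}$, observe that the second space derivative $w$ solves a semilinear parabolic equation whose zeroth-order term is $[2(A+1/\sig_A^2)+w]w$, control the sign of $w$ on the parabolic boundary, conclude $w\le 0$, and let $\del\to 0$. Your PDE (and the sign analysis at $x=0$ and $T=\del$) agrees with the paper's (CL5), up to the harmless normalization $w_{\ve,\del}=(\sig_A^4/m_{1,A}^2)\pa^2q_{\ve,\del}/\pa x^2$.

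The genuine divergence, and where the gap lies, is in the treatment of $x=\infty$. The paper avoids needing decay of $w$ there: it converts the comparison into a stochastic representation (CM5)--(CN5), and the contribution from $X^*=K$ is a product of a \emph{uniformly bounded} quantity $|w_{\ve,\del}(K,y,s)|\le C$ (Lemma 5.2) with a hitting probability that vanishes as $K\to\infty$. Your purely analytic maximum-principle argument has no such probability factor, so it really does require $\sup_T w(K,T)^+\to 0$ as $K\to\infty$. You have correctly identified this as the crux, but the fix you propose is not available with what you cite. Lemma 5.2's estimate (BB5) is a \emph{uniform bound}, not a decay statement; its constants come from $\|u\|_\infty$ on the box, which is of order $2y$ via (BN5), and this does not improve as $z_0\to\infty$. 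Even granting that the analog of Proposition 5.3 holds for $q_{\ve,\del}$ (which is plausible -- the identity (AM5) and the $E[(\tau^*)^2]$ estimate carry over with the initial time shifted to $\del$), the passage from ``$u - 2m_{1,A}y/\sig_A^2$ decays'' to ``$w=\pa u/\pa x$ decays'' is not an application of the estimates inside Lemma 5.2 as they stand. One would have to redo the box Green's-function argument for $\tilde u = u - 2y$ (in the transformed variables), which satisfies a Burgers-type equation with an additional convection term $2\ga(t) y\,\tilde u_z$; the initial datum becomes $\tilde u(\cdot,0)\equiv 0$, the boundary terms are controlled by $\sup|\tilde u|$ on the box (which does decay), and the quadratic bulk term must be rewritten as $\ga\tilde u\tilde u_z$ plus a linear-in-$\tilde u$ piece, after which the H\"older bootstrap in (CG5)--(CI5) has to be rerun for $\tilde u$. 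None of this is impossible, but it is additional work that your proposal presents as if it were already contained in Lemma 5.2; as written, the step ``the interior Schauder-type estimates \ldots upgrade this pointwise decay of $u$ to uniform decay of $w$'' is a gap. If you intend to keep the pure maximum-principle route, you should either prove the decay of $w$ by the $\tilde u$-rewrite sketched above, or replace the rectangle $(0,K)\times(\del,T_0]$ argument with a barrier at $x=K$ of height $C$ (the uniform bound from Lemma 5.2) times a cutoff that is zero on the compact set of interest, and then let $K\to\infty$; the latter also requires some care because the coefficient $\la+u$ grows linearly in $x$.
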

\begin{proof}
We show that the function  $x\ra q_{\ve,\del}(x,y,T)$ is concave, and then the result follows from Lemma 5.1 by letting $\del\ra 0$.  We define the function 
$w_{\ve,\del}$ by
\be \label{CK5}
w_{\ve,\del}(x,y,T) \ = \ \frac{\sig^4_A(T)}{m_{1,A}(T)^2}\frac{\pa^2 q_{\ve,\del}(x,y,T)}{\pa x^2} \ .
\ee
By differentiating the PDE (\ref{C5})  we see from Proposition 3.1 that the function $[x,T]\ra w_{\ve,\del}(x,y,T)$ is a classical solution of the PDE 
\begin{multline} \label{CL5}
\frac{\pa w_{\ve,\del}(x,y,T)}{\pa T}  + \left[\la(x,y,T)+ \frac{\pa q_{\ve,\del}(x,y,T)}{\pa x}\right]\frac{\pa w _{\ve,\del}(x,y,T)}{\pa x} \\
+\frac{m_{1,A}(T)^2}{\sig^4_A(T)}w_{\ve,\del}(x,y,T)^2 \  = \ \frac{\ve}{2}\frac{\pa^2 w_{\ve,\del}(x,y,T)}{\pa x^2} \ .
\end{multline}
We proceed as in the proof of Proposition 5.1 using Ito's lemma and the martingale optional sampling theorem. 
Thus  similarly to (\ref{D5}) we have the representation 
\begin{multline} \label{CM5}
w_{\ve,\del}(x,y,T) \ = \ E\left[ w_{\ve,\del}(X^*_{\ve,\del}(\del\vee\tau^*_{\ve,\del,x,T,K}),y,\del\vee\tau^*_{\ve,\del,x,T,K}) \ |  \ X^*_{\ve,\del}(T)=x\right]  \\
- E\left[      \int^T_{  \del\vee\tau^*_{\ve,\del,x,T,K}}        \frac{m_{1,A}(s)^2}{\sig^4_A(s)}w_{\ve,\del}(X^*_{\ve,\del}(s),y,s)^2   \ ds            \right] \ ,
\end{multline}
where $X^*_{\ve,\del}(\cdot)$ is the solution to the SDE (\ref{Y2}) with drift $\mu_\ve(x,y,T)$ given by the coefficient of $\pa w _{\ve,\del}(x,y,T)/\pa x$ in (\ref{CL5}). 
The stopping time $\tau^*_{\ve,\del,x,T,K}$ is the first exit time of $X^*_{\ve,\del}(s), \ s\le T,$ with $X^*_{\ve,\del}(T)=x$ from the interval $(0,K)$.  From (\ref{AV5}) we have that $w_{\ve,\del}(\cdot,y,\del)\equiv 0$, whence the first term on the RHS of (\ref{CM5}) can be written as
\begin{multline} \label{CN5}
E\left[    w_{\ve,\del}(0,y,\tau^*_{\ve,\del,x,T,K}) ; \   \tau^*_{\ve,\del,x,T,K}>\del, X_{\ve,\del}(\tau^*_{\ve,\del,x,T,K} )=0           \right] \\
+ E\left[    w_{\ve,\del}(K,y,\tau^*_{\ve,\del,x,T,K}) ; \   \tau^*_{\ve,\del,x,T,K}>\del, X_{\ve,\del}(\tau^*_{\ve,\del,x,T,K} )=K           \right] \ .
\end{multline}
Using Lemma 5.1 and arguing as in the proof of Corollary 3.1, we see that $w_{\ve,\del}(0,y,s)\le0$ for $s>\del,y>0$. Hence the first term in (\ref{CN5}) is non-positive. The second term converges to $0$ as $K\ra\infty$.  In order to prove this we  use Lemma 5.2, which yields a uniform upper bound on 
$|w_{\ve,\del}(K,y,s)|,\  \del<s<T,$ as $K\ra\infty$.  Then we follow the corresponding  argument around (\ref{K5}) in the proof of Proposition 5.1.  By letting $K\ra\infty$ in (\ref{CM5}) we conclude that $w_{\ve,\del}(x,y,T)\le 0$ for $x\ge 0,y>0,T>\del$. 
\end{proof}

\vspace{.1in}
\section{Convergence of the function  $\pa q_\ve(x,y,T)/\pa x$ as $\ve\ra0$} 
In this section we assume the function $A(\cdot)$ is non-negative, whence the results of Proposition 4.2 and Corollary 4.1 imply that the function 
$x\ra q_0(x,y,T)$ of (\ref{AB2}), (\ref{B4}) is $C^1$ for certain ranges of $[x,T]$. We will show that $\lim_{\ve\ra0}\pa q_\ve(0,y,T)/\pa x=\pa q_0(0,y,T)/\pa x$. In view of the upper bound (\ref{AC3}), we only need to prove for small $x$ a lower bound for $q_\ve(x,y,T)$ in terms of $q_0(x,y,T)$ and a correction term which 
goes to $0$ as  $\ve\ra 0$. We already obtained such a lower bound in Lemma 2.4. Our starting point was the inequality (\ref{CE2}), which leads to the inequality (\ref{CF2}).  However  the second term on the RHS of (\ref{CF2}) is not sufficient for our purposes since we need  the correction to be bounded by a constant times $x$ as $x\ra 0$.  Instead of (\ref{CE2}) we observe  from (\ref{CB2})-(\ref{CE2}) that
\be \label{A6}
\frac{1}{2}\int_{\tau^*_{\ve,x,T}}^T\left[\mu^*_\ve(X^*_\ve(s),y,s)-\la(X^*_\ve(s),y,s)\right]^2 \ ds \ \ge \ 
q_0(x,y,T)-q_0(\sqrt{\ve}Z_\ve,y,\tau^*_{\ve,x,T}) \ .
\ee
The function  $[x,T]\ra q_0(x,y,T)$ is defined by (\ref{AB2}) for $x,T>0$, and for $x<0,T>0$ by
\begin{multline} \label{B6}
q_0(x,y,T) \ = \\
- \min\left\{\frac{1}{2}\int^\tau_T\left[\frac{dx(s)}{ds}-\la(x(s),y,s)\right]^2 \ ds \ \Big| \ \tau>T, \ x(T)=x, \ x(\cdot)<0, \  \ x(\tau)=0 \right\} \ .
\end{multline}
Assuming the function $[x,T]\ra q_0(x,y,T)$ defined by (\ref{AB2}), (\ref{B6}) is sufficiently differentiable at $[0,T]$, we can do a Taylor expansion,
\begin{multline} \label{C6}
q_0(\sqrt{\ve}Z_\ve,y,\tau^*_{\ve,x,T}) \  \ = \ \frac{\pa q_0(0,y,T)}{\pa x}\sqrt{\ve}Z_\ve+\ve Z_\ve^2\int_0^1\int_0^1\la \  d\la \ d\mu  \frac{\pa^2 q_0(\la\mu\sqrt{\ve}Z_\ve ,y,T)}{\pa x^2} \\
-[T-\tau^*_{\ve,x,T}]\int_0^1d\la \frac{\pa q_0(\sqrt{\ve}Z_\ve,y,\la\tau^*_{\ve,x,T}+(1-\la)T)}{\pa t} \ .
\end{multline} 
Then we can estimate the expectation of the correction term in (\ref{A6}) by estimating the expectation of each term on the RHS of (\ref{C6}). 

We may obtain a formula similar to (\ref{B4})  for  $q_0(x,y,T), \ x<0,$ defined by (\ref{B6}).  If the minimization in (\ref{B6}) is for {\it fixed} $\tau>T$ then there is a unique minimizing trajectory $x(s), \ T<s<\tau,$ given by (\ref{AK2}), where $\ga(\tau)$ is chosen so that $x(\tau)=0$.  The functions $s\ra g_{1,A}(s,T), \ g_{2,A}(s,T)$, which were defined in (\ref{AM2}), (\ref{AN2})  for $0<s<T$ may be extended by the same formulas to $s>T$. Similarly we may extend the function $s\ra g_{3,A}(s,T)$  by
using (\ref{L4}).   Note that the functions $s\ra g_{1,A}(s,T), \ g_{3,A}(s,T), \ s>T,$ are negative. We have then from (\ref{B6}) that
 \be \label{D6}
q_0(x,y,T) \ = \ -\min_{\tau>T} \frac{|g_{3,A}(\tau,T)|}{2} \left[y+g_{1,A}(\tau,T)x+g_{2,A}(\tau,T)\right]^2 \ .
\ee 
When $A(\cdot)\equiv0$ the formula (\ref{D6}) becomes
\be \label{E6}
q_0(x,y,T) \ = \ -\min_{\tau>T} \frac{\tau-T}{2\tau T} \left[y+\frac{\tau x}{(T-\tau)}\right]^2 \ 
= \ -\frac{1}{2T}\left[-2xy+\min_{\al>1}\{\al x^2+y^2/\al\} \right] \ .
\ee
Hence we have that
\begin{eqnarray} \label{F6}
q_0(x,y,T) \ &=& \ \frac{2xy}{T}  \ \ {\rm if \ } -y<x<0, \\
q_0(x,y,T) \ &=& \ -\frac{(x-y)^2}{2T}  \ \ {\rm if \ } x<-y \ . \label{G6}
\end{eqnarray} 
For $-y<x<0$ the minimizing $\tau>T$ in (\ref{D6}) is given by  $\tau(x,y,T) \ = \ yT/(x+y)$. Otherwise the minimum is obtained  by letting $\tau\ra \infty$.  The function $[x,T]\ra q_0(x,y,T)$ defined by (\ref{F6}), (\ref{G6})  is a $C^1$ solution to the HJ equation (\ref{AC2}) in the region  $\{[x,T]: \ x<0,T>0\}$.  However the second derivative $\pa^2 q_0(x,y,T)/\pa x^2$ is discontinuous across the boundary $\{[x,T]: x=-y, T>0\}$. The characteristics which yield the function (\ref{F6}) are the same as in the situation $x>0$ studied in $\S4$, and are given by $x(\tau,s)=(s-\tau)y/\tau, \ s>0$. Then we have
\be \label{H6}
q_0(x,y,T) \ = \  \frac{1}{2}\int_\tau^T p(\tau, s)^2 \ ds \ = \ 2\int_\tau^T \frac{y^2}{s^2} \ ds \ = \ \frac{2xy}{T} \ , \quad x=x(\tau,T) \ .
\ee
This set of characteristics covers the region $\{[x,T]: \ x>-y, \ T>0\}$ without intersecting, but all characteristics converge to the point $[-y,0]$.  The characteristics which yield the function (\ref{G6})  are given by $x(\la,s)=\la, \ \la<-y, \ s>0$.  In that case
\be \label{I6}
q_0(x,y,T) \ = \ \frac{1}{2}\int_\infty^T p(\la,s)^2 \ ds \ = \ \frac{1}{2}\int_\infty^T \frac{(\la-y)^2}{s^2} \ ds \ = \ -\frac{(x-y)^2}{2T} \ , \quad x=x(\la,T) \ .
\ee
This set of characteristics covers the region $\{[x,T]: \ x<-y, \ T>0\}$, also without intersecting.

We shall show that the function $[x,T]\ra q_0(x,y,T)$ defined by (\ref{AB2}), (\ref{B6}) is differentiable for $[x,T]$ in a neighborhood of the initial line $\{[0,T]: \ T>0\}$, by proving 
 it may be obtained via the method of characteristics.  To do this we extend the domain $\mathcal{D}_{y,T_0}$ defined just prior to (\ref{BL4}). It follows from (\ref{AF4}) that the characteristics $s\ra x(\tau,s)$ do not meet if $0<s<\tau\le T_0$.  Hence we may extend the domain $\mathcal{D}_{y,T_0}$  of $\S4$ to include 
 the set $\{[x,T]: \ 0<T<T_0,  \ x(T_0,T)<x\le 0\}$, and similarly extend the region $\mathcal{U}_{y,T_0}$.  The inequality (\ref{AN4}) for the characteristic $s\ra x(\tau,s), \ \tau\le  s\le T_0,$ continues to hold for $0<s<\tau$.  More precisely, we have from (\ref{BJ4}), (\ref{BK4}) that 
\begin{multline} \label{T6}
C_1(s-\tau)\left[\frac{[y+g_{2,A}(\tau,\tau)]}{\tau}+(\tau-s)\right]  \ \le \ x(\tau,s) \\
\le \ c_1(s-\tau)\left[\frac{[y+g_{2,A}(\tau,\tau)]}{\tau}+c_2(\tau-s)\right] \ , \quad 0<s<\tau\le T_0 \ ,
\end{multline}
where $C_1,c_1,c_2$ depend only on $T_0$ and $\sup_{0\le t\le T_0} A(t)$. 
Hence there is a constant $\La_3>0$, depending only on $T_0$ and $\sup_{0\le t\le T_0} A(t)$,  such that if
\be \label{J6}
0<T\le 3T_0/4 \ \ {\rm and \ } \La_3[y+g_{2,A}(T_0,T_0)] \ < x \ \le 0,  \ \ {\rm then \ } [x,T]\in\mathcal{D}_{y,T_0} \ .
\ee
We extend the results of Proposition 4.2 to include $[x,T]$ in the region (\ref{J6}). 
\begin{lem}
The results of Proposition 4.2, with $q_0(x,y,T)$ defined by (\ref{AS4}), continue to hold in the extended region $\mathcal{D}_{y,T_0}$.  Also $\tau=\tau(x,y,T)$ in (\ref{AS4})  is the unique minimizer in the variational problems (\ref{B6}), (\ref{D6}) for $[x,T]$ with $x<0, \ 0<T <T_0/2$,   in the following regions: 
 (a) $-\La[y+g_{2,A}(T,T)]<x<0$ if $4\sqrt{\La_0 y}/3\ge T$, otherwise  (b) $-\La [y+g_{2,A}(T,T)]^2/T^2<x<0$, where $\La>0$ is chosen sufficiently small depending only on $T_0$ and $\sup_{0\le t\le T_0} A(t)$. Therefore if $[x,T]$ is  in one of the regions (a), (b),  the functions (\ref{D6}) and (\ref{AS4}) are identical.
\end{lem}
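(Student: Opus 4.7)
The plan is to replicate the arguments of Proposition 4.2 in the extended region, exploiting the fact that the characteristic flow $s\mapsto x(\tau,s)$ from (\ref{BJ4}) remains well defined and non-self-intersecting for $s<\tau$ as well as for $s>\tau$. Indeed, differentiating (\ref{BJ4}) in $\tau$ gives the same variation equation (\ref{AF4}), whose RHS is non-negative, so $m_{1,A}(s)D_\tau x(\tau,s)/\sig^2_A(s)$ is monotone increasing in $s$ and hence stays strictly negative for $0<s<\tau$; this is precisely what yields the bound (\ref{T6}) used to define the extended domain and what will guarantee the extended flow is a diffeomorphism from the extended $\mathcal{U}_{y,T_0}$ onto the extended $\mathcal{D}_{y,T_0}$.

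First I would verify that the formula (\ref{AS4}) for $q_0$ continues to define a $C^1$ function on the enlarged $\mathcal{D}_{y,T_0}$ that solves the HJ equation (\ref{AC2}) with boundary value $0$ at $x=0$, and that $\pa q_0/\pa x,\;\pa^2q_0/\pa x^2$ are still given by (\ref{Z4}), (\ref{AL4}). The proof in Proposition 4.2 goes through unchanged: it uses only that $(x(\tau,s),p(\tau,s),w(\tau,s))$ solves the Hamiltonian flow (\ref{AW4})--(\ref{AZ4}) with initial data at $s=\tau$, which is symmetric under $s\gtrless\tau$; the verification that the RHS of (\ref{BA4}) solves (\ref{AZ4}) is algebraic and makes no use of the sign of $s-\tau$. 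The analogues of the inequalities (\ref{BF4}), (\ref{BG4}) of Corollary 4.1 then follow from the very same estimates on $\tau(x,y,T)$, since the characteristic bound (\ref{T6}) gives $|T-\tau|\le \widetilde C|x|[y+g_{2,A}(\tau,\tau)]^{-1}\tau$ on the extended region as well.

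Next I would establish that $\tau(x,y,T)$ is the unique minimizer in (\ref{B6}), (\ref{D6}) in regions (a), (b). The standard verification identity (\ref{BC4}) still applies, but with a reversal of orientation: for a trial trajectory $[x(s),s]$, $T\le s\le \tau$, with $x(T)=x<0$, $x(\tau)=0$, $x(\cdot)<0$, and assumed to stay in $\mathcal{D}_{y,T_0}$, one integrates the identity $\tfrac{d}{ds}q_0(x(s),y,s)=\pa_T q_0+\pa_x q_0\,\dot x$ from $s=T$ to $s=\tau$, uses the HJ equation (\ref{AC2}) and the equality $q_0(0,y,\tau)=0$, completes the square, and obtains
\begin{equation*}
-q_0(x,y,T)\ \le\ \frac{1}{2}\int_T^\tau\left[\dot x(s)-\la(x(s),y,s)\right]^2ds,
\end{equation*}
with equality when $\dot x(s)-\la(x(s),y,s)=\pa_x q_0(x(s),y,s)$, i.e.\ along the characteristic emanating from $(0,\tau(x,y,T))$. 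The unconstrained quadratic problem for fixed $\tau>T$ has the unique optimizer $s\mapsto \Ga(\tau,s,T,x)$ given by the same expressions (\ref{BD4}) (extended to $\tau>s$), and since $g_{1,A}(\cdot,T)<0$ on $(T,\infty)$ with $g_{1,A}(T,T)=-\infty$, one verifies $\Ga(\tau,s,T,x)<0$ in the required range. To rule out competing $\tau$'s whose optimal $\Ga(\tau,\cdot,T,x)$ escapes the extended $\mathcal{D}_{y,T_0}$, I would estimate $\Ga$ by the analogue of (\ref{CF4})--(\ref{CG4}); under the assumption $|x|\le\La[y+g_{2,A}(T,T)]^2/T^2$ (resp.\ $|x|\le\La[y+g_{2,A}(T,T)]$ when $T$ is small) and with the a priori bound on $\tau-T$ analogous to (\ref{P4}), one checks $|\Ga(\tau,s,T,x)|$ stays below the allowed threshold from (\ref{J6}) for any $\tau-T$ inside the complement of the bad set.

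The main obstacle will be the analogue of the ``outside $S_{x,y,T}$'' analysis in Proposition 4.2: one needs to show that any minimizer $\tau$ yields a trajectory $\Ga(\tau,\cdot,T,x)$ confined to the extended $\mathcal{D}_{y,T_0}$, and simultaneously that all $\tau$'s whose optimal trajectory leaves this domain are strictly worse than $\tau(x,y,T)$. The comparison function $\tilde q_0$ of (\ref{BU4}), suitably extended to $x<0$ via (\ref{D6}) with the $y+g_{2,A}(T,T)$ shift, will again control the minimum value (as in (\ref{CI4})), but the monotonicity properties of $g_{1,A}(\cdot,T)$ and $g_{3,A}(\cdot,T)$ for $\tau>T$ are opposite in sign to the $\tau<T$ case, so each estimate (\ref{CP4})--(\ref{CT4}) will have to be redone with the correct sign; this is the only genuinely new calculation. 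Once this is in place, the threshold $\La$ in regions (a), (b) is chosen small enough (depending only on $T_0$ and $\sup_{[0,T_0]}A$) to make both implications of the comparison work, and the identity of (\ref{D6}) with (\ref{AS4}) on these regions follows.
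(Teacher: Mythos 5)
Your plan correctly recognizes the skeleton: the characteristic flow extends to $0<s<\tau$ without self-intersection by (\ref{T6}) and (\ref{AF4}), the representation (\ref{AS4}) stays a $C^1$ solution of the HJ equation with derivatives (\ref{Z4}), (\ref{AL4}), the verification identity (\ref{BC4}) reverses orientation to give (\ref{K6}), and the crux is the ``outside $S_{x,y,T}$'' analysis. You also correctly flag that only $\tilde q_0$-type comparisons will control the minimum value. However, the proposal misidentifies what the new calculation actually is and leaves it undone, and that is where the gap lies.

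You propose to ``redo (\ref{CP4})--(\ref{CT4}) with the correct sign.'' In Proposition 4.2 those inequalities were used only in the sub-case $g_{2,A}(T,T)>y$, after a case split, while in the complementary case $g_{2,A}(T,T)\le y$ the comparison came from $q_0\le 2\tilde q_0(x,y,T)$, derived from (\ref{CU4}) and a crude factor-two bound. The proof of this lemma does \emph{not} follow either branch; the paper avoids the case split entirely. Instead, the essential new step is the two-sided sandwich (\ref{Y6}), (\ref{W6}):
\begin{equation*}
(1+C_5\La)\,\tilde q_0\bigl(x,\,y+g_{2,A}(T,T),\,T\bigr)\ \le\ q_0(x,y,T)\ \le\ \tilde q_0\bigl(x,\,y+g_{2,A}(T,T),\,T\bigr),
\end{equation*}
with a factor close to $1$ rather than $2$. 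The upper bound (\ref{Y6}) comes from the differential identity (\ref{CV4}) adapted to $T<s<\tau$, where $x(\tau,s)<0$ forces $\pa v(\tau,s)/\pa s\le 0$, flipping the sign of the integrated inequality. The lower bound (\ref{W6}) then requires the quantitative estimates (\ref{U6}), (\ref{V6}) --- that $\tau-T$ is $O(\La T)$ and $g_{2,A}(\tau,\tau)-g_{2,A}(T,T)$ is $O(\La)$-small relative to $y+g_{2,A}(T,T)$ --- applied back inside (\ref{CV4}), much as in (\ref{CW4})--(\ref{CX4}). Choosing $\La$ small so that $1+C_5\La<5/2$ then makes the exclusion bound (\ref{P6}) conclusive. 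Your proposal does not supply this inequality, does not explain why the factor in the lower bound must collapse to $1$ as $\La\to0$, and the estimates (\ref{CP4})--(\ref{CT4}) you point to do not port in any obvious way to $\tau>T$, where $g_{1,A}(\cdot,T)$ and $g_{3,A}(\cdot,T)$ are negative, singular near $\tau=T^+$, and have opposite monotonicity. Without (\ref{W6}) you have no lower bound on $q_0$ to compare against the large-$F_0$ estimate for $\tau$ outside (\ref{O6}), so the minimizer is not pinned to the characteristic value and the proof does not close.
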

\begin{proof}
All constants in the following can be chosen to depend only on $T_0$ and $\sup_{0\le t\le T_0} A(t)$.
It is clear from (\ref{T6}) that the characteristics $s\ra x(\tau,s), \ 0<s<\tau,$ satisfy $x(\tau,s)<0$, and from (\ref{AF4}) that  $D_\tau x(\tau,s)<0$. The differentiability properties of the function $[x,T]\ra q_0(x,y,T)$ and the fact that it is a solution to the HJ equation (\ref{AC2}) follow as in the proof of Proposition 4.2. We also have from (\ref{BC4}) that  
\be \label{K6}
-q_0(x,y,T) \ \le \  \frac{1}{2}\int^{\tau}_T \left[\frac{dx(s)}{ds}-\la(x(s),y,s)\right]^2 \ ds \  ,
\ee
for any path $s\ra x(s), \ T<s<\tau<T_0,$ in $\mathcal{D}_{y,T_0}$  with $x(T)=x<0, \ x(\tau)=0$.  Equality holds in (\ref{K6})  if $x(\cdot)$ is the characteristic.  

Let $F_0(x,y,\tau,T)$ be the function on the RHS of (\ref{D6}).  
We wish to find $[x,T]\in\mathcal{D}_{y,T_0}$ with $x<0$ such that $q_0$, defined by (\ref{AS4}), satisfies $-q_0(x,y,T)=\inf_{\tau>T} F_0(x,y,\tau,T)$. To do this we first observe from (\ref{E4}) that since $A(\cdot)$ is non-negative, the function $s\ra g_{2,A}(s,T), \ s>0,$ is increasing and hence non-negative.  We also have from 
(\ref{H2}) that the function $s\ra \sig^2_A(s,T), \ s>T,$ is negative and decreasing with $\lim_{s\ra T}\sig^2_A(s,T)=0$.  Letting  $\lim_{s\ra\infty}\sig^2_A(s,T)=\sig^2_A(\infty,T)$,  one sees in the case $A(\cdot)\equiv 0$   that $\sig^ 2_A(\infty,T)=-\infty$. It is however possible for some non-negative $A(\cdot)$ that  $\sig^ 2_A(\infty,T)>-\infty$. We have then from (\ref{D4}) that the function $s\ra g_{1,A}(s,T), \ s>T,$ is  increasing with $\lim_{s\ra T}g_{1,A}(s,T)=-\infty$,  and $g_{1,A}(s,T)<-1/m_{1,A}(T), \ s>T$.  It follows that  $\lim_{s\ra\infty}g_{1,A}(s,T)=g_{1,A}(\infty,T)\le -1/m_{1,A}(T)$.

Similarly to (\ref{BU4}) we consider $[x,T]\in\mathcal{D}_{y,T_0}$ which satisfies (\ref{J6}), and define $\tilde{q}_0(x,y,T)$  by 
 \be \label{L6}
 -\tilde{q}_0(x,y,T) \ = \  \min_{\tau>T} \frac{|g_{3,A}(\tau,T)|}{2} \left[y+g_{1,A}(\tau,T)x\right]^2 \ .
 \ee
  Using the identity (\ref{L4}) we see that the minimizing $\tau$ for the RHS of (\ref{L6}) is given by 
 \be \label{M6}
 g_{3,A}(\tau,T) \ = \ \frac{m_{1,A}(T)x}{\sig^2_A(T)y} \  , \quad {\rm if \ } \frac{y}{g_{1,A}(\infty,T)}<x<0 \ .
 \ee
  Substituting (\ref{M6}) into the RHS of (\ref{L6}) then yields the formula
 \be \label{N6}
 \tilde{q}_0(x,y,T) \ = \ \frac{2m_{1,A}(T)xy}{\sig^2_A(T)} \ ,
 \ee
which is the same as (\ref{BX4}). We should however note that the RHS of (\ref{N6}) is negative in this case. 
 Following the argument in the  proof of Proposition 4.2, we see that  if $\tau>T$ lies outside the region
 \be \label{O6}
 \frac{8m_{1,A}(T)x}{\sig^2_A(T)[y+g_{2,A}(T,T)]}  \ \le \ g_{3,A}(\tau,T) \ \le \ \frac{m_{1,A}(T)x}{8\sig^2_A(T)[y+g_{2,A}(T,T)]}  \ ,
 \ee
$F_0$ satisfies the  inequality
 \be \label{P6}
  F_0(x,y,\tau,T)   \ \ge \  -5\tilde{q}_0(x,y+g_{2,A}(T,T),T)/2  \ .
 \ee
 In concluding (\ref{P6}) we have used the fact that the function $s\ra g_{2,A}(s,T), \ s>T,$ is increasing. 
 
 We require $x<0$ to be sufficiently close to $0$ so that if $\tau>T$ with $0<T\le T_0/2$ lies in the region (\ref{O6}) then $\tau\le 3T/2\le3T_0/4$.  This is the case if $x>-\La_4[y+g_{2,A}(T,T)]$, where $\La_4>0$ is constant.    Next we determine how small $|x|$ needs to be so that  the minimizing paths $s\ra\Ga(\tau,s,T,x)=a(\tau,s,T)x+b(\tau,s,T),  \ T<s<\tau,$ for fixed $\tau$ defined by (\ref{BD4})   lie in $\mathcal{D}_{y,T_0}$.  To see this first note that the functions $s\ra a(\tau,s,T), \ b(\tau,s,T), \ T<s<\tau,$ are non-negative. Hence if $x<0$ is sufficiently small one may have $\Ga(\tau,s,T,x)>0$ for some $s\in(T,\tau)$. We see from (\ref{E4}), (\ref{F4}), (\ref{BD4})  there are constants $C_1,c_1,C_2>0$ such that
\be \label{Q6}
 C_1(\tau-s)\frac{x}{\tau-T}  \ \le \ \Ga(\tau,s,T,x) \ \le\  (\tau-s)\left[\frac{c_1x}{\tau-T}+C_2(s-T)\right]  \ , \quad T<s<\tau<T_0 \ .
\ee
Let $\La_0$ be the constant defined just after (\ref{AR4}), whence if $T<4\sqrt{\La_0 y}/3$ then  $\mathcal{D}_{y,T_0}$ contains the domain $\{[x,s]:  \ x>0, \ 0<s<3T/2\}$.  We assume that $\tau>T$ lies in the region (\ref{O6}) and  that $-\La_4[y+g_{2,A}(T,T)]<x<0$, whence $\tau\le 3T_0/4$. Choosing $\La_4$   to also satisfy the inequality $C_1\La_4<\La_3$, we see from (\ref{J6}), (\ref{Q6}) that if $T<4\sqrt{\La_0 y}/3$  then the path  $s\ra\Ga(\tau,s,T,x), \ T<s<\tau,$  lies in $\mathcal{D}_{y,T_0}$. In the case $T\ge4\sqrt{\La_0 y}/3$ we  observe that if $\tau>T$ satisfies (\ref{O6}) and $-\La_4[y+g_{2,A}(T,T)]<x<0$ then $\tau-T$ satisfies an inequality
\be \label{R6}
\tau-T \ \le \  \frac{C_3Tx}{[y+g_{2,A}(T,T)]} \ , \quad {\rm where \ } C_3  \ {\rm is  \ constant. }  
\ee
Hence the RHS of (\ref{Q6}) is negative provided $x<0$ satisfies the inequality
\be \label{S6}
|x| \ \le \ \frac{c_1[y+g_{2,A}(T,T)]^2}{C_2C_3^2T^2}  \ .
\ee
We conclude in this case that the path  $s\ra\Ga(\tau,s,T,x), \ T<s<\tau,$  lies in $\mathcal{D}_{y,T_0}$ provided $[x,T]$ satisfies (\ref{S6}).

Finally we need to show that if $\tau=\tau(x,y,T)$  then  $ F_0(x,y,\tau,T) <  -5\tilde{q}_0(x,y+g_{2,A}(T,T),T)/2$, which is the same as $q_0(x,y,T)\ge 5\tilde{q}_0(x,y+g_{2,A}(T,T),T)/2$. We first observe from (\ref{CV4}), (\ref{T6})  that since $x(\tau,s)<0, \ 0<s<\tau,$ we have  $\pa v(\tau,s)/\pa s \le 0$ for $T<s<\tau$ and $v(\tau,\tau)=0$. We conclude that
\be \label{Y6}
q_0(x,y,T) \ \le \ \tilde{q}_0(x,y+g_{2,A}(T,T),T) \ ,  \quad [x,T]\in\mathcal{D}_{y,T_0} \ , \ x<0 \ .
\ee
We assume now that $[x,T]$ lies in the domain   $\{[x,T]: \ 0<T<T_0/2,  \ -\La[y+g_{2,A}(T,T)]<x<0\}$, where $\La$  satisfies $0<\La\le \La_3$. It follows from (\ref{J6}) that $[x,T]\in\mathcal{D}_{y,T_0}$. Letting $x=x(\tau,T)$ we have from (\ref{T6}) that
\be \label{U6}
\tau-T \ \le \  \frac{\tau |x|}{c_1[y+g_{2,A}(\tau,\tau)]} \ \le \ \frac{\La \tau}{c_1} \ .
\ee
Choosing $\La\le c_1/3$ we see from (\ref{U6}) that $\tau-T\le \La T/[c_1-\La]\le T/2$. Observe from (\ref{AD4}) that
\be \label{V6}
1 \ \le \ \frac{y+g_{2,A}(\tau,\tau)}{y+g_{2,A}(T,T)} \ \le 1+C_4\La  
\ee
if  $T<4\sqrt{\La_0 y}/3$, where $C_4$ is constant. It follows then from (\ref{CV4}), (\ref{T6}), (\ref{V6}), upon using a lower bound for the integral of the RHS of (\ref{CV4}) on the interval $T<s<\tau$  similar to the one in (\ref{CW4}),  that 
\be \label{W6}
q_0(x,y,T) \ \ge \ [1+C_5\La]\tilde{q}_0(x,y+g_{2,A}(T,T),T) \ ,
\ee
where $C_5$ is constant. 
We assume  that $T\ge4\sqrt{\La_0 y}/3$ and that $[x,T]$  satisfies the inequality
\be \label{X6}
0<T<T_0/2, \quad -\frac{\La[y+g_{2,A}(T,T)]^2}{T^2} \ < \ x \ <0 \ .
\ee
Then the inequality (\ref{U6}) continues to hold, whence we see from (\ref{AD4}), (\ref{X6}) that (\ref{V6}) also holds. Similarly to before we see that
(\ref{W6}) holds in this case also. Now we choose $\La$ so that $C_5\La\le 1$.
\end{proof}
Observe from (\ref{BP3}) that we expect
\be \label{Z6}
\frac{\pa q_\ve(0,y,T)}{\pa x}-\frac{\pa q_0(0,y,T)}{\pa x} \ = \ \ve \frac{\pa^2 q_0(0,y,T)}{\pa x^2}\Big/\frac{\pa q_0(0,y,T)}{\pa x}+o(\ve) \quad {\rm as \ } \ve\ra 0.
\ee
This evidently suggests that the LHS of (\ref{Z6})  is $O(\ve)$ as $\ve\ra 0$.  Furthermore, we see from (\ref{W4}) that $\pa q_0(0,y,T)/\pa x\simeq y/T$, and 
from (\ref{AH4}) that $\pa^2 q_0(0,y,T)/\pa x^2\simeq T/y$ if $y\ge T^2$. Hence we expect the LHS of (\ref{Z6}) to be bounded by a constant times $\ve T^2/y^2+o(\ve)$.  
\begin{proposition}
Assume the function $A:[0,\infty)\ra\mathbb{R}$ is continuous non-negative, and let $q_\ve(x,y,T)$ be defined by (\ref{R2}), (\ref{U2}), and $q_0(x,y,T)$ by 
(\ref{AB2}), (\ref{B4}). Then for any $T_0>0$ there are constants $C_1,C_2$, depending only on $T_0$ and $\sup_{0\le t\le T_0} A(t)$, such that 
\be \label{AA6}
\left|\frac{\pa q_\ve(0,y,T)}{\pa x}-\frac{\pa q_0(0,y,T)}{\pa x}\right| \ \le \ \frac{C_2\ve }{y}\left[\frac{T^2}{y}+\frac{\ve T}{y^2}\right] \ , \quad 0<T\le T_0 \ ,
\ee
provided $0<T\le T_0, y\ge C_1 T^2,  \ve T<y^2$.
\end{proposition}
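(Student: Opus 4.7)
The upper bound $u_\ve \le u_0$, where $u_\bullet := \pa q_\bullet(0,y,T)/\pa x$, is immediate from the upper bound in (\ref{A5}) of Proposition 5.1, since by (\ref{W4}) that bound equals $-2\la(0,y,T) = u_0$. So it suffices to control $u_0 - u_\ve$ from above. The plan is to combine the stochastic control lower bound of Lemma 2.3 with a Taylor expansion of $q_0$ in a full two-sided neighborhood of $(0,y,T)$, for which the extended regularity of Lemma 6.1 is crucial.

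By Lemma 2.3 together with the elementary inequality (\ref{A6}),
\begin{equation*}
q_\ve(x,y,T) \ \ge \ q_0(x,y,T) \ - \ E\bigl[q_0(\sqrt{\ve}\,Z_\ve,\,y,\,\tau^*_{\ve,x,T})\bigr], \quad x>0,
\end{equation*}
where $Z_\ve$ and $\tau^*_{\ve,x,T}$ are as in Lemma 2.3. Subtracting $q_\ve(0,y,T) = q_0(0,y,T) = 0$, dividing by $x$, and letting $x\downarrow 0$,
\begin{equation*}
u_0 - u_\ve \ \le \ \limsup_{x\downarrow 0}\frac{1}{x}E\bigl[q_0(\sqrt{\ve}\,Z_\ve,\,y,\,\tau^*_{\ve,x,T})\bigr].
\end{equation*}
The hypothesis $y\ge C_1 T^2$ places $(0,T)$ in the interior of the extended domain of Lemma 6.1, where by Proposition 4.2 the function $q_0$ is $C^2$ in $x$ on a two-sided neighborhood. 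Since $q_0(0,y,t) = 0$ for all $t>0$, every pure $t$-derivative of $q_0$ at $x=0$ vanishes; together with (\ref{W4}) and the bound $|\pa_x^2 q_0(0,y,t)| \le Ct/y$ from (\ref{AH4}) and Corollary 4.1, this yields the Taylor expansion
\begin{equation*}
q_0(z,y,t) \ = \ z\,u_0(y,t) \ + \ \tfrac12 z^2\,\pa_x^2 q_0(0,y,t) \ + \ O\bigl(|z|^3 + z^2|t-T|\bigr),
\end{equation*}
with $u_0(y,\cdot)$ smooth in $t$.

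Substituting $z=\sqrt{\ve}Z_\ve$ and $t=\tau^*_{\ve,x,T}$ and taking expectation, the leading linear term is $\sqrt{\ve}\,u_0(y,T)E[Z_\ve]$; comparing with the expansion $q_0(x,y,T) = u_0(y,T)\,x + O(x^2)$, it will cancel the $u_0 x$ contribution provided $\sqrt{\ve}\,E[Z_\ve] = x(1+o(1))$ as $x\downarrow 0$. The surviving quadratic term $\tfrac{\ve}{2}E[Z_\ve^2]\,\pa_x^2 q_0(0,y,T)$ is then bounded using $|\pa_x^2 q_0(0,y,T)|\le CT/y$ together with $E[Z_\ve^2] = O(xT/y)$; the latter follows from the variance formula (\ref{AU2}) combined with a small-$x$ refinement of Proposition 5.3 giving $E[T-\tau^*_{\ve,x,T}] = O(xT/y)$. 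Dividing by $x$, this contributes the leading $\ve T^2/y^2$ term on the right-hand side of (\ref{AA6}). The subleading $\ve^2 T/y^3$ term arises from an $\ve$-order correction to the cancellation $\sqrt{\ve}E[Z_\ve] = x + O(\ldots)$ forced by a Cauchy-Schwarz estimate of the integral $\int_{\tau^*}^T(m_{1,A}/\sig^2_A)[\mu^*_\ve-\la]\,ds$ in (\ref{BS2}) against $q_\ve(x,y,T)$, which is itself $O(xy/T)$ by Proposition 3.2.

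The main obstacle is to establish the two sharp scaling estimates $\sqrt{\ve}\,E[Z_\ve] = x(1+o(1))$ and $E[T-\tau^*_{\ve,x,T}] = O(xT/y)$ as $x\downarrow 0$. The qualitative bound of Lemma 2.3 and the large-$x$ estimate (\ref{AD5}) of Proposition 5.2 are not sufficient here; one must instead rerun the martingale identity (\ref{BS2}) carefully, exploiting the vanishing terminal condition $X^*_\ve(T) = x\to 0$ to isolate the leading $x$-dependence from the other boundary terms appearing there. The standing hypothesis $\ve T < y^2$ enters at this stage, ensuring both that the Gaussian fluctuations $\sqrt{\ve}\,Z_\ve$ stay with high probability inside the neighborhood where the $C^2$ Taylor expansion is valid, and that the $|z|^3$ remainder is dominated by the $\ve^2 T/y^3$ term already present on the right-hand side of (\ref{AA6}). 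Once these refinements are in place, the pieces assemble immediately into the stated bound.
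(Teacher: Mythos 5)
Your opening observation is right: the upper bound $u_\ve \le u_0$ follows from (\ref{A5}) together with (\ref{W4}) since $\tfrac{2m_{1,A}(T)}{\sig^2_A(T)}g_{2,A}(T,T)=2\bigl[1-\tfrac{m_{2,A}(T)}{\sig^2_A(T)}\bigr]$, and the reduction $u_0-u_\ve \le \limsup_{x\downarrow 0}\tfrac1x E[q_0(\sqrt{\ve}Z_\ve,y,\tau^*_{\ve,x,T})]$ via Lemma 2.3 and (\ref{A6}), using the two-sided regularity from Lemma 6.1, is exactly the paper's starting point. But your central mechanism is wrong.

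You claim $\sqrt{\ve}\,E[Z_\ve]=x(1+o(1))$ as $x\downarrow 0$ and that this ``cancels'' the $u_0 x$ contribution. This has two problems. First, even if that scaling held, substituting into $q_\ve(x,y,T)\ge q_0(x,y,T)-E[q_0(\sqrt{\ve}Z_\ve,y,\tau^*)]$ and dividing by $x$ would cancel $u_0$ from both terms on the right entirely, leaving only $u_\ve \ge -[\text{small}]$, which is no improvement on $u_\ve\ge 0$ and cannot produce the bound (\ref{AA6}). For the argument to succeed one needs the \emph{whole} quantity $E[q_0(\sqrt{\ve}Z_\ve,y,\tau^*)]$ to be $o(x)$ of controlled order, not to contain an $O(x)$ piece that cancels $q_0(x,y,T)$. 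Second, the claimed scaling is itself false: the optional sampling theorem applied to the backward martingale $s\ra Z(s)$ (exactly as in Lemma 2.2) gives $E[Z(\tau^*_{\ve,x,T})]=0$, and the deviation of $Z_\ve=\tfrac{\sig^2_A(\tau^*)}{m_{1,A}(\tau^*)}Z(\tau^*)$ from the constant rescaling $\tfrac{\sig^2_A(T)}{m_{1,A}(T)}Z(\tau^*)$ is bounded by $(T-\tau^*)|Z(\tau^*)|$ (inequality (\ref{AH6})). Combined with $E[T-\tau^*]=O(xT/y)$ and $E[Z(\tau^*)^2]=O(x/(Ty))$, this forces $\sqrt{\ve}\,E[Z_\ve]$ to be $o(x)$, not $x(1+o(1))$; this is precisely what (\ref{AM6}), (\ref{AX6}), (\ref{BC6}) quantify, yielding $\limsup_{x\to 0}\tfrac1x\sqrt{\ve}\,|E[Z_\ve;\cdots]|\lesssim \ve T/y^2$. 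The useful contributions therefore come from the quadratic and mixed $T$-derivative terms of the Taylor expansion (the paper's (\ref{C6}), terms two and three), not from cancellation with the linear term. The paper also exploits a further cancellation between the first and third Taylor terms, via (\ref{BK6})--(\ref{BL6}), to improve a preliminary bound $C\ve/y$ to the sharper $\ve T^2/y^2 + \ve^2 T/y^3$; that structure is absent in your sketch. Finally, your Taylor remainder $O(|z|^3+z^2|t-T|)$ drops the $O(|z|\,|t-T|)$ mixed term, which is exactly what the paper's third term in (\ref{C6}) records and which must be estimated carefully via (\ref{BF6})--(\ref{BI6}).
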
 
\begin{proof} 
All constants in the following can be chosen to depend only on $T_0$ and $\sup_{0\le t\le T_0} A(t)$.
We define stopping times for the martingale  $s\ra Z(s), \ s<T,$ of (\ref{P2}). For $\del>0$ let $\tau_\del$ be given by
\be \label{AB6}
\tau_\del \ = \ \inf\left\{s: 0<s<T, \  \left|\frac{\sig^2_A(T)}{m_{1,A}(T)}Z(s')\right|<\del \ {\rm for \ } s<s'<T \ \right\} \ .
\ee
It is easy to see that $\tau_\del>0$ with probability $1$.  Since $\tau^*_{\ve,x,T}$, defined in the statement of Lemma 2.3, is also a stopping time for the martingale (\ref{P2}), it follows that
$\tilde{\tau}=\tilde{\tau}_{\ve,\del,x,T}=\tau_\del\vee\tau^*_{\ve,x,T}\vee(T/2)$ is a stopping time.
On  taking  expectations in (\ref{A6}) we have  from (\ref{BP2}) of Lemma 2.3 that for any $\del>0$,
\begin{multline} \label{AC6}
q_\ve(x,y,T) \ \ge \ \left[1-P\left(\tau^*_{\ve,x,T}<T/2 \right)-P\left(\tau^*_{\ve,x,T}<\tau_\del \right)\right] q_0(x,y,T)\\
 -E\left[ q_0(\sqrt{\ve}Z_\ve,y,\tau^*_{\ve,x,T}); \ \tau^*_{\ve,x,T}>\max\{\tau_\del,T/2\} \ \right] \ .
\end{multline}
Since the function $s\ra \sig^2_A(s)/m_{1,A}(s), \ s>0,$ is increasing, we see from (\ref{CE2}) that $|Z_\ve|\le \sig^2_A(T)|Z(\tau^*_{\ve,x,T})|/m_{1,A}(T)$. Hence if $\del$ is small enough we may use the Taylor expansion (\ref{C6}) to estimate the second expectation on the RHS of (\ref{AC6}).  
It follows from Proposition 4.2 and Lemma 6.1 that it is sufficient for $\del$ to lie in the interval
\be \label{AD6}
\sqrt{\ve} \del \le \La\min\left\{\frac{ y}{T^2},1\right\}[y+g_{2,A}(T/2,T/2)] \ , \quad {\rm where \ } \ \La \ {\rm depends \ only \ on \ } \ T_0 \ .
\ee

We show that
\be \label{AE6}
\lim_{x\ra0} P\left(\tau^*_{\ve,x,T}<T/2 \right) \ = \  0, \quad  \lim_{x\ra 0} P\left(\tau^*_{\ve,x,T}<\tau_\del \ \right)  \ = \  0 \ .
\ee
To prove the first limit in (\ref{AE6}) we use the inequality $\pa q_\ve(x,y,T)/\pa x\ge 0$, whence it follows that $X^*_\ve(s)\le X_\ve(s), \ 0<s<T,$ where $X_\ve(\cdot)$ is defined by (\ref{P2}). This was already observed just prior to Lemma 2.3.  We have from (\ref{L2}) that one can choose a constant $\nu$ with $0<\nu<1/2$, such that  $y_{\rm class}(s)<2x$ if $0<T-s<T\nu x/(y+T^2), $ provided $x<y+T^2, \ 0<T\le T_0$.   The first limit in (\ref{AE6}) follows if we can show that
\be \label{AF6}
\lim_{x\ra0}P\left(\sqrt{\ve}\inf_{T-T\nu x/(y+T^2)<s<T} Z(s)>-\frac{2m_{1,A}(T/2)x}{\sig^2_A(T/2)}\right) \ = \ 0 \ ,
\ee
since $P\left(\tau^*_{\ve,x,T}<T-T\nu x/(y+T^2)\right)$ is smaller than the probability in (\ref{AF6}). The limit in (\ref{AF6}) follows from the reflection principle. Similarly to (\ref{BZ2}) we have that the probability in (\ref{AF6}) is bounded in terms of a probability for the standard normal variable $Y$ by
\be \label{AG6}
P\left(|Y|<2x/\sqrt{\ve}\sig(x)\right)\ , \quad{\rm where \ }  \sig(x)^2 \ \ge \ \frac{c\nu Tx}{(y+T^2)} \ ,
\ee
 with $c>0$ a constant.  Since the probability (\ref{AG6}) is bounded by a constant times $\sqrt{x}$ the limit (\ref{AF6}) follows.  To prove the second limit
 in (\ref{AE6}) we argue similarly, using the inequality $P\left(\tau^*_{\ve,x,T}<\tau_\del\ \right)\le P\left(\tau^*_{\ve,x,T}< T-T\nu x/(y+T^2)\ \right)+
 P\left(\tau_\del>T-T\nu x/(y+T^2) \ \right)$.  Using the reflection principle again we see that \\
 $\lim_{x\ra 0}P\left(\tau_\del>T-T\nu x/(y+T^2) \ \right)=0$.
 
 We estimate the contribution of the first term in the Taylor expansion (\ref{C6}) to the expectation in (\ref{AC6}).  To do this we use the inequality
 \be \label{AH6}
\left|Z_\ve - \frac{\sig^2_A(T)}{m_{1,A}(T)}Z(\tau^*_{\ve,x,T})\right| \ \le \ C_1\left[T-\tau^*_{\ve,x,T}\right]\left|Z(\tau^*_{\ve,x,T})\right|  \ ,
\ee
 where $C_1$ is a constant.  Using the fact that 
 \be \label{AI6}
 s\ra Z(s)^2-\int_s^T \frac{m_{1,A}(s')^2}{\sig_A^4(s')}  \ ds' \ , \quad 0<s<T \ ,
 \ee
is a martingale, we have from (\ref{AI6}) and the optional stopping theorem that
\begin{multline} \label{AJ6}
E\left[ Z(\tau^*_{\ve,x,T})^2  \ ; \ \tau^*_{\ve,x,T}>T/2\ \right] \ \le \  \frac{C_2}{T^2}\left\{ E\left[T-\tau^*_{\ve,x,T}\right] +TP\left(\tau^*_{\ve,x,T}<T/2\right)\right\} \\
\le \frac{3C_2}{T^2} E\left[T-\tau^*_{\ve,x,T}\right] \ ,
\end{multline} 
where $C_2$ is constant.  In order to bound the RHS of (\ref{AJ6})  we use the lower bound (\ref{A5}) of Proposition 5.1. Recalling the definition of
$\tau_{\ve,{\rm linear},x,T}$ after (\ref{O5}),  we have  that $E\left[T-\tau^*_{\ve,x,T}\right] \ \le \ E\left[T-\tau_{\ve,{\rm linear},x,T}\right]$. We obtain then from (\ref{AH5}) an upper bound for $E\left[T-\tau^*_{\ve,x,T}\right]$, provided $y\ge C_3T^2$ where $C_3$ is constant. We may also obtain an inequality $E\left[(T-\tau^*_{\ve,x,T})^2\right]\le v_\ve(x)$, where  $v_\ve(\cdot)$ is the solution to a boundary value problem. Thus
\be \label{AK6}
-\frac{\ve}{2}\frac{d^2v_\ve(x)}{dx^2}+\frac{y}{C_5T} \frac{dv_\ve(x)}{dx} \ = \ 2u_\ve(x) \ ,  \ \ x>0, \quad v_\ve(0)=0 \ ,
\ee
where $u_\ve(\cdot)$ is the solution to (\ref{AJ5}).  Evidently we have that
\be \label{AL6}
v_\ve(x) \ = \ \left( \frac{C_5T }{y}\right)^3x\left[\ve+  \frac{xy}{C_5T}\right] \ .
\ee
It follows from (\ref{AH6}), upon using the Schwarz inequality and (\ref{AJ6})-(\ref{AL6}), that 
\be \label{AM6}
\sqrt{\ve} \limsup_{x\ra 0} \frac{1}{x} E\left[ \ \left|Z_\ve - \frac{\sig^2_A(T)}{m_{1,A}(T)}Z(\tau^*_{\ve,x,T})\right| ; \ \tau^*_{\ve,x,T}>T/2 \  \right] \ \le C_4\ve \frac{T}{y^2} \ ,
\ee
where $C_4$ is constant.

Applying the optional sampling theorem to the martingale $s\ra Z(s), \ 0<s<T,$ we have that
\begin{multline} \label{AN6}
\frac{\sig^2_A(T)}{m_{1,A}(T)}\left|E\left[ Z(\tau^*_{\ve,x,T})  \ ; \ \tau^*_{\ve,x,T}>\max\{T/2,   \tau_\del\} \ \right] \right| \\ \le \
\frac{\sig^2_A(T)}{m_{1,A}(T)}E[|Z(T/2)|;  \ \tau^*_{\ve,x,T}< T/2]+ \del P\left(\tau^*_{\ve,x,T}<\tau_\del\right) \  .
\end{multline}
To bound the first term on the RHS of (\ref{AN6}) we observe  that 
\be \label{AO6}
 \sqrt{\ve} \frac{\sig^2_A(T)}{m_{1,A}(T)}E[|Z(T/2)|;  \ \tau^*_{\ve,x,T}< T/2] \ \le \  C_5E\left[  |X_\ve(T/2)-x_{\rm class}(T/2)|; \    \tau_{\ve,{\rm linear},x,T}<T/2\right]  \    ,             
\ee
where $X_\ve(\cdot)$ is given by (\ref{Q5}) and $C_5$ is a constant.  From (\ref{R5}) and the Chebyshev inequality we have, using the inequality $y\ge C_3T^2$,  that
\be \label{AP6}
\left|x_{\rm class}(T/2)|\right| P\left( \tau_{\ve,{\rm linear},x,T}<T/2\right) \ \le \ \frac{C_6yv_\ve(x)}{T^2} \ ,
\ee
where $C_6$ is a constant.  Letting $X^*_\ve(s), \ s<T,$ be the diffusion with drift $\mu_\ve(x,y,T)=y/C_5T$ defined just after (\ref{AH5}),  we have that
\be \label{AQ6}
E\left[  X_\ve(T/2); \    \tau_{\ve,{\rm linear},x,T}<T/2\right]  \ \le \ E\left[  X^*_\ve(T/2); \    \tau^*_{\ve,{\rm linear},x,T}<T/2\right] \ .
\ee
Let $[x,t]\ra u_\ve(x,t), \ x>0, \ t>0,$ be the solution to the PDE
\be \label{AR6}
\frac{\pa u_\ve(x,t)}{\pa t}  \ = \ -\mu \frac{\pa u_\ve(x,t)}{\pa x} +\frac{\ve}{2}\frac{\pa^2u_\ve(x,t)}{\pa x^2} \ , \quad x>0, \ t>0, 
\ee
with boundary and initial conditions
\be \label{AS6}
u_\ve(0,t)= 0, \ \ t>0, \quad u_\ve(x,0)=x, \ x>0  .
\ee
Then one has
\be \label{AT6}
u_\ve(x,T-t) \ = \ E\left[  X^*_\ve(t); \    \tau^*_{\ve,{\rm linear},x,T}<t\right] \ , \quad t<T, \quad {\rm when \ } \mu=\frac{y}{C_5T} \ .
\ee
The solution to (\ref{AR6}), (\ref{AS6}) is given by
\be \label{AU6}
u_\ve(x,t) \ = \ \int_0^\infty G_{\ve,D}(x,x',t) x' \ dx' \ ,
\ee
where the Green's function $G_{\ve,D}$ has the formula
\be \label{AV6}
G_{\ve,D}(x,x',t) \ = \ \frac{1}{\sqrt{2\pi\ve t}}\exp\left[-\frac{(x-x'-\mu t)^2}{2\ve t}\right]\left\{1-\exp\left[-\frac{2xx'}{\ve t}\right]\right\} \ .
\ee
Using the inequality $1-e^{-z}\le z, \ z\ge 0,$ we have from (\ref{AU6}), (\ref{AV6}) that
\
\begin{multline} \label{AW6}
\limsup_{x\ra 0}\frac{1}{x} u_\ve(x,t) \\
 \le \  \frac{2}{\sqrt{2\pi}(\ve t)^{3/2}}\exp\left[-\frac{\mu^2 t}{2\ve}\right]\int_0^\infty e^{-\mu x'/\ve} x'^2 \ dx' \ = \ 
 \frac{4}{\sqrt{2\pi}} \left(\frac{\ve}{\mu^2 t} \right)^{3/2}\exp\left[-\frac{\mu^2 t}{2\ve}\right]\  \ .
\end{multline}
We conclude from (\ref{AL6}) and (\ref{AP6})-(\ref{AW6}), that the first term on the RHS of (\ref{AN6}) is bounded as
\be \label{AX6}
\sqrt{\ve}\limsup_{x\ra 0}\frac{1}{x}\frac{\sig^2_A(T)}{m_{1,A}(T)}E[|Z(T/2)|;  \ \tau^*_{\ve,x,T}< T/2] \ \le  \frac{C_7\ve T}{y^2} \quad {\rm if \ } \ve T\le y^2 , 
\ee
where $C_7$ is a constant. 

To bound the second term on the RHS of (\ref{AN6}) we use the inequality $ P\left(\tau^*_{\ve,x,T}<\tau_\del\right)\le P\left(\tau^*_{\ve,x,T}<\nu T\right)+
 P\left(\nu T<\tau^*_{\ve,x,T}<\tau_\del\right)$, where $1/2<\nu<1$ and $\nu$ is a suitably chosen constant.   We may then bound $\sqrt{\ve}\del P\left(\tau^*_{\ve,x,T}<\nu T\right)$ from the inequalities obtained in the previous paragraph. Assuming $y\ge T^2$, it follows from (\ref{AD6}) that we may take $\sqrt{\ve}\del=\La y$, whence  $\limsup_{x\ra 0}x^{-1} \sqrt{\ve}\del P\left(\tau^*_{\ve,x,T}<\nu T\right)\le C_8\ve T/y^2$ for some constant $C_8$. We estimate the second probability as
 \begin{multline} \label{AY6}
 P\left(\nu T<\tau^*_{\ve,x,T}<\tau_\del\right) \ \le \ P\left(\tau_\del>\nu T, \ \tau_{\ve,{\rm linear},x,T}<\tau_\del\right) \\
 \le \ P\left(\sup_{ \tau_{\ve,{\rm linear},x,T}\vee(\nu T)<s<T} |X_\ve(s)-x_{\rm class}(s)|>c_8\sqrt{\ve}\del \ \right) \ ,
 \end{multline}
 where $X_\ve(\cdot), \ x_{\rm class}(\cdot)$ are given in (\ref{Q5}), (\ref{R5}), and $c_8>0$ is a constant. We choose $\nu$ so that $-c_8\La y/2\le x_{\rm class}(s)\le x$ for $\nu T<s<T$.  Hence if $\nu T<s<T, \ 0<x<c_8\La y$ and $X_\ve(s)-x_{\rm class}(s)<-c_8\La y$ then $X_\ve(s)<0$. Since $X_\ve(s)>0$ for
 $\tau_{\ve,{\rm linear},x,T}<s<T$ it follows that if $s$ satisfies $\tau_{\ve,{\rm linear},x,T}\vee(\nu T)<s<T$ and $ |X_\ve(s)-x_{\rm class}(s)|>c_8\La y$, then
 $X_\ve(s)>c_8\La y/2$.  We see therefore, using the inequality $X^*_\ve(\cdot)\ge X_\ve(\cdot)$,  that the probability on the RHS of (\ref{AY6}) is bounded above by
 \be \label{AZ6}
   w_\ve(x) \ = \ P\left(\sup_{ \tau^*_{\ve,{\rm linear},x,T}<s<T} X^*_\ve(s)>c_8\La y/2 \right) \ .
 \ee
 The function $w_\ve:[0,c_8\La y/2]\ra [0,1]$ is the solution to the boundary value problem, 
\be \label{BA6}
-\frac{\ve}{2}\frac{d^2w_\ve(x)}{dx^2}+\frac{y}{C_5T} \frac{dw_\ve(x)}{dx} \ = \ 0 \ ,  \ \ 0<x<c_8\La y/2, \quad w_\ve(0)=0, \ w_\ve(c_8\La y/2)=1 \ ,
\ee
which has solution
\be \label{BB6}
w_\ve(x) \ = \ \left\{\exp\left[\frac{2xy}{C_5\ve T}\right]-1\right\}\Big/  \left\{\exp\left[\frac{c_8\La y^2}{C_5\ve T}\right]-1\right\} \ .
\ee
Taking limits in (\ref{BB6}) we see that
\be \label{BC6}
\La y\lim_{x\ra 0}\frac{1}{x}w_\ve(x) \ = \ \frac{2\La y^2}{C_5\ve T} \Big/  \left\{\exp\left[\frac{c_8\La y^2}{C_5\ve T}\right]-1\right\} \ \le  \frac{C_9\ve T}{y^2}  \ ,
\ee
for some constant $C_9$, provided $\ve T<y^2$. 

We conclude now a bound on the contribution of the first term in the Taylor expansion (\ref{C6}) to the RHS of (\ref{AC6}). Using the formula (\ref{W4}) and the inequality $y\ge C_3 T^2$,  we have that
\be \label{BD6}
\limsup_{x\ra 0}\frac{1}{x} \frac{\pa q_0(0,y,T)}{\pa x} \sqrt{\ve}\left|E[Z_\ve: \ \tau_\del\vee(T/2)<\tau^*_{\ve,x,T} \ ] \right|  \ \le \ \frac{C_{10}\ve}{y} \ ,
\ee
where $\del=\La y$ and $C_{10}$ is a constant.  To bound the contribution of the second term in the Taylor expansion (\ref{C6}) we  use (\ref{AL4}) to obtain the inequality 
$\left|\pa^2 q_0(x,y,T)/\pa x^2 \right|\ \le C_{11} T/y$, provided $|x|<\La y$ and $y\ge C_3 T^2$.  We have then from (\ref{AH5}), (\ref{AJ6}) that
\begin{multline} \label{BE6}
\limsup_{x\ra 0}\frac{1}{x}\ve \left| E\left[Z_\ve^2\int_0^1\int_0^1\la \  d\la \ d\mu  \frac{\pa^2 q_0(\la\mu\sqrt{\ve}Z_\ve ,y,T)}{\pa x^2};  \ \tau_\del\vee(T/2)<\tau^*_{\ve,x,T} \ \right]\right| \\
 \le \  \frac{C_{11}\ve T}{y}  \limsup_{x\ra 0}\frac{1}{x} E\left[Z_\ve^2;  \ \tau_\del\vee(T/2)<\tau^*_{\ve,x,T} \ \right] \ \le \ \frac{C_{12}\ve T^2}{y^2} \ ,
\end{multline}
where $C_{12}$ is a constant.  To bound the contribution of the final term in the Taylor expansion (\ref{C6}) we use the fact that $\pa q_0(0,y,t)/\pa t=0$, whence 
\be \label{BF6}
\frac{\pa q_0(x,y,t)}{\pa t} \ = \ x\int_0^1\frac{\pa^2 q_0(\mu x,y,t)}{\pa t\pa x'} \ d\mu \ ,
\ee
where we have from (\ref{AC2}) that
\be \label{BG6}
\frac{\pa^2 q_0(x',y,t)}{\pa t\pa x'}  \ = \ -\left[A(t)+\frac{1}{\sig_A^2(t)}\right]\frac{\pa q_0(x',y,t)}{\pa x'}-\left[\la(x',y,t)+\frac{\pa q_0(x',y,t)}{\pa x'}\right]\frac{\pa^2 q_0(x',y,t)}{\pa x'^2} \ .
\ee
It follows from (\ref{Z4}), (\ref{AL4})   there are constants $C_{11},C_{12}$ such that
\begin{eqnarray}  \label{BH6}
\left|\left[A(t)+\frac{1}{\sig_A^2(t)}\right]\frac{\pa q_0(x',y,t)}{\pa x'}\right| \ &\le& \ \frac{C_{11}y}{T^2} \ ,  \\
\left|\left[\la(x',y,t)+\frac{\pa q_0(x',y,t)}{\pa x'}\right]\frac{\pa^2 q_0(x',y,t)}{\pa x'^2}\right| \ &\le& \   C_{12} \label{BI6} \ ,
\end{eqnarray} 
provided $0<x'<\La y, \ T/2<t<T, $ and $y\ge C_3 T^2$.  Hence we may estimate the expectation of the final term in the Taylor expansion by combining our estimate on the expectation of the RHS of (\ref{AH6}) with   (\ref{BF6})-(\ref{BI6}). We obtain an inequality 
\begin{multline} \label{BJ6}
\limsup_{x\ra 0}\frac{1}{x} \left|E\left[       (T-\tau^*_{\ve,x,T})\int_0^1d\la \frac{\pa q_0(\sqrt{\ve}Z_\ve,y,\la\tau^*_{\ve,x,T}+(1-\la)T)}{\pa t} \ ;   \tau_\del\vee(T/2)<\tau^*_{\ve,x,T}        \right] \right|  \\
\le \ \frac{C_{13}\ve}{y} \quad {\rm for \ } \del=\La y, \ y\ge C_3T^2 \ .
\end{multline} 
It follows then from (\ref{BD6}), (\ref{BE6}), (\ref{BJ6}) that the LHS of (\ref{AA6}) is bounded by $C_{14}\ve/y$ provided $y\ge C_3 T^2$ and $\ve T\le y^2$. 

This bound on the LHS of (\ref{AA6}) may be improved by noting a cancellation in the Taylor expansion (\ref{C6}).  To see this we use the identity
\be \label{BK6}
\frac{\sig^2_A(T)}{m_{1,A}(T)} - \frac{\sig^2_A(\tau)}{m_{1,A}(\tau)} \ = \ \int_\tau^T \left[A(t)+\frac{1}{\sig_A^2(t)}\right] \frac{\sig^2_A(t)}{m_{1,A}(t)} \ dt \ .
\ee
Hence  using (\ref{BF6}), (\ref{BG6}), (\ref{BK6}) we have  that the sum of the first and third terms in the Taylor expansion (\ref{C6}) may be written as  
\begin{multline} \label{BL6}
\frac{\pa q_0(0,y,T)}{\pa x}\sqrt{\ve}Z_\ve
-\int_\tau^T dt \frac{\pa q_0(\sqrt{\ve}Z_\ve,y,t)}{\pa t} \\
= \ \frac{\pa q_0(0,y,T)}{\pa x}\sqrt{\ve}\frac{\sig^2_A(T)}{m_{1,A}(T)} Z(\tau) 
-\sqrt{\ve}\int_\tau^T dt \left[A(t)+\frac{1}{\sig_A^2(t)}\right]  \times \\
\left\{\frac{\sig^2_A(t)}{m_{1,A}(t)}\frac{\pa q_0(0,y,T)}{\pa x}-\frac{\sig^2_A(\tau)}{m_{1,A}(\tau)}\int_0^1d\mu\frac{\pa q_0(\mu \sqrt{\ve}Z_\ve,y,t)}{\pa x}\right\} Z(\tau) \\
+\sqrt{\ve}Z_\ve\int_\tau^Tdt\int_0^1d\mu\left[\la(\mu\sqrt{\ve}Z_\ve,y,t)+\frac{\pa q_0(\mu\sqrt{\ve}Z_\ve,y,t)}{\pa x'}\right]\frac{\pa^2 q_0(\mu\sqrt{\ve} Z_\ve,y,t)}{\pa x'^2} \ ,
\end{multline}
where $\tau=\tau^*_{\ve,x,T}$.  

To bound the expectation of the first term on the RHS of (\ref{BL6}) we observe that
\begin{multline} \label{BM6}
\limsup_{x\ra 0}\frac{1}{x}  \sqrt{\ve}\frac{\sig^2_A(T)}{m_{1,A}(T)}\left|E[Z(\tau^*_{\ve,x,T}): \ \tau_\del\vee(T/2)<\tau^*_{\ve,x,T} \ ] \right|  \\
 \le \ \limsup_{x\ra 0}\frac{1}{x} \left[u_\ve(x,T/2)+C_{15}y\left\{w_\ve(x)+P\left(\tau^*_{\ve,{\rm linear},x,T}<\nu T\right)\right\}\right] \ ,
\end{multline}
where we assume $y\ge C_3T^2, \ve T\le y^2,  \sqrt{\ve}\del=\La y$. Then from  (\ref{AW6}) and (\ref{BC6})  we see that $\limsup_{x\ra 0}x^{-1}[u_\ve(x,T/2)+yw_\ve(x)]\le C_{16}(\ve T/y^2)^2 $.  Instead of using the bound $P\left(\tau^*_{\ve,{\rm linear},x,T}<\nu T\right)\le v_\ve(x)/(1-\nu)^2T^2$ as in (\ref{AP6}),  we use the identity $P\left(\tau^*_{\ve,{\rm linear},x,T}<\nu T\right)=u_\ve(x,(1-\nu)T), $ where $[x,t]\ra u_\ve(x,t), \ x,t>0,$ is the solution to the PDE (\ref{AR6}) with boundary and  initial conditions $u_\ve(0,t)=0, \ u_\ve(x.0)=1$.  Similarly to (\ref{AW6}) we have now that 
\begin{multline} \label{BN6}
\limsup_{x\ra 0}\frac{1}{x} u_\ve(x,t) \\
 \le \  \frac{2}{\sqrt{2\pi}(\ve t)^{3/2}}\exp\left[-\frac{\mu^2 t}{2\ve}\right]\int_0^\infty e^{-\mu x'/\ve} x' \ dx' \ = \ 
 \frac{2}{\mu t\sqrt{2\pi}} \left(\frac{\ve}{\mu^2 t} \right)^{1/2}\exp\left[-\frac{\mu^2 t}{2\ve}\right]\  \ .
\end{multline}
It follows from (\ref{BN6}) that $y\limsup_{x\ra 0}x^{-1}P\left(\tau^*_{\ve,{\rm linear},x,T}<\nu T\right)\le C_{17}(\ve T/y^2)^2$. We conclude from (\ref{BM6})  that the 
$\limsup$ as $x\ra 0$ of $x^{-1}$ times the expectation
of the first term on the RHS of (\ref{BL6}) is bounded by $C_{18}\ve^2T/y^3$.  

Using (\ref{BI6}) and arguing as in the previous paragraph, we see from (\ref{AJ6}), (\ref{AL6}) and the Schwarz inequality  that the $\limsup$ as $x\ra 0$ of $x^{-1}$ times the expectation of the third term on the RHS of (\ref{BL6}) is bounded by $C_{19}\ve T^2/y^2$.  
The expectation of the second term on the RHS of (\ref{BL6}) is bounded by 
\begin{multline} \label{BO6}
\frac{C_{20}\sqrt{\ve}}{T}\frac{\pa q_0(0,y,T)}{\pa x} E\left[(T-\tau^*_{\ve,x,T})^2|Z(\tau^*_{\ve,x,T})|;   \tau_\del\vee(T/2)<\tau^*_{\ve,x,T} \right]  \\
+  C_{21}\ve T\sup_{0<x'<\sqrt{\ve}\del,T/2<t<T}\left|\frac{\pa^2 q_0(x',y,t)}{\pa x'^2} \right|E\left[(T-\tau^*_{\ve,x,T})Z(\tau^*_{\ve,x,T})^2;   \tau_\del\vee(T/2)<\tau^*_{\ve,x,T} \right] \ .
\end{multline}
We bound the first expectation  in (\ref{BO6}) by using the Schwarz inequality and the inequality $E\left[(T-\tau^*_{\ve,x,T})^4\right]\le E\left[(T-\tau^*_{\ve,{\rm linear}, x,T})^4\right]$. This latter expectation can be estimated by considering the function 
\be \label{BP6}
u_{\ve,\al}(x) \ = \ E\left[   \exp\left\{-\al(   T-\tau^*_{\ve,{\rm linear}, x,T})\right\}             \right] \ , \quad x,\al>0 \ .
\ee
Then $u_{\ve,\al}(\cdot)$ is the solution to a boundary value problem
\be \label{BQ6}
\frac{\ve}{2}\frac{d^2u_{\ve,\al}(x)}{dx^2}-\mu\frac{du_{\ve,\al}(x)}{dx} \ = \ \al u_{\ve,\al}(x) \ ,  \ \ x>0, \quad u_{\ve,\al}(0)=1 \ ,
\ee
with $\mu=y/C_5T$. 
 Evidently we have that
\be \label{BR6}
u_{\ve,\al} (x) \ = \ \exp\left[-\frac{x}{\ve}\left\{\sqrt{\mu^2+2\ve\al}-\mu\right\}\right] \ .
\ee
From (\ref{BR6}) we conclude that
\begin{multline} \label{BS6}
E\left[(T-\tau^*_{\ve,{\rm linear}, x,T})^4\right] \\ =  \left(\frac{\pa}{\pa \al}\right)^4 u_{\ve,\al}(x)\Big|_{\al=0} \
= \ \frac{15\ve^3 x}{\mu^7}+\frac{15\ve^2 x^2}{\mu^6}+\frac{6\ve x^3}{\mu^5}+\frac{x^4}{\mu^4} \ .
\end{multline}
It follows from (\ref{AJ6}), (\ref{BS6}) that the $\limsup$ of $x^{-1}$ times the first expectation in (\ref{BO6}) as $x\ra 0$  is bounded by  $C_{22}\ve^2T/y^3$. 

We also use the Schwarz inequality to bound the second expectation in (\ref{BO6}) by using the fact that  for $\al\in\mathbb{R}$ the function
 \be \label{BT6}
 s\ra \exp\left[ \al Z(s)-\frac{\al^2}{2}\int_s^T \frac{m_{1,A}(s')^2}{\sig_A^4(s')}  \ ds'\right] \ , \quad 0<s<T \ ,
 \ee
is also a martingale. On differentiating (\ref{BT6}) twice with respect to $\al$ and setting $\al=0$ we see that (\ref{AI6}) is a martingale.  On differentiating four times we have that
 \be \label{BU6}
 s\ra Z(s)^4-6Z(s)^2\int_s^T \frac{m_{1,A}(s')^2}{\sig_A^4(s')}  \ ds' +3\left(\int_s^T \frac{m_{1,A}(s')^2}{\sig_A^4(s')}  \ ds'\right)^2 , \quad 0<s<T \ ,
 \ee
is a martingale.  Hence we have that
\begin{multline} \label{BV6}
E\left[ Z(\tau^*_{\ve,x,T})^4  \ ; \ \tau^*_{\ve,x,T}>T/2\ \right] \\
 \le \  C_{23}\left\{\frac{1}{T^4}E\left[(T-\tau^*_{\ve,x,T})^2\right] +\frac{1}{T}E[Z(T/2)^2;  \ \tau^*_{\ve,x,T}< T/2] \right\} \ .
\end{multline} 
We have already seen that the first expectation on the RHS of (\ref{BV6}) is bounded by a constant times $v_\ve(x)$ of (\ref{AL6}).  To bound the second expectation we proceed in a similar way to how we bounded the expectation in (\ref{AO6}). We have that
\begin{multline} \label{BW6}
 \ve \frac{\sig^4_A(T)}{m_{1,A}(T)^2}E[Z(T/2)^2;  \ \tau^*_{\ve,x,T}< T/2] \\ 
 \le \  C_{24}E\left[  [X_\ve(T/2)-x_{\rm class}(T/2)]^2; \    \tau_{\ve,{\rm linear},x,T}<T/2\right]  \    ,             
\end{multline}
and the expectation on the RHS of (\ref{BW6}) can be bounded using solutions to the PDE (\ref{AR6}).  Thus we have that the $\limsup$ of $x^{-1}$ times the  RHS of (\ref{BW6}) as $x\ra 0$ is bounded by $C_{24}\ve^2  T^2/y^3$, whence the $\limsup$ of $x^{-1}$ times the  RHS of (\ref{BV6}) as $x\ra 0$ is bounded by 
$C_{25}\ve  /Ty^3$. We conclude that  the $\limsup$ of $x^{-1}$ times the second expectation in (\ref{BO6}) as $x\ra 0$  is bounded by  $C_{26}\ve^2T^3/y^4\le C_{27}\ve^2T/y^3$.
\end{proof}


\begin{thebibliography}{9}
\bibitem{bc}
Baldi, P.; Caramellino, L.
\newblock Asymptotics of hitting probabilities for general one-dimensional pinned diffusions.
\newblock \textit{The Annals of Applied Probability} {\bf 12}, (2002), 1071-1095.
\bibitem{bcr}
Baldi, P.; Caramellino, L.; Rossi, M.
\newblock  Large deviations of conditioned diffusions and applications.
\newblock \textit{Stochastic Processes and their Applications} {\bf 130}, (2020), 1289-1308.
\bibitem{cdw}
Conlon, J.; Dabkowski, M; Wu, J.
\newblock On Large Time Behavior and Selection Principle for a Diffusive Carr-Penrose Model.
\newblock  \textit{J. Nonlinear Science} {\bf 26} (2016) , 453-518.
\bibitem{cg}
Conlon, J.; Guha, M.
\newblock Stochastic Variational formulas for linear diffusion equations.
\newblock  \textit{Rev. Mat. Iberoam.} {\bf 30} (2014), 581-666. MR 3231211
\bibitem{conf}
Conforti, G.
\newblock Fluctuation of bridges, reciprocal characteristics and concentration of measure. 
\newblock  \textit{Ann. Inst. Henri Poincar\'{e} Probab. Stat.} {\bf 54} (2018), 1432-1463. MR 3825887
\bibitem{cr}
Conforti, G.; Von Renesse, M.
\newblock Couplings, gradient estimates and logarithmic Sobolev inequalities for Langevin bridges.
\newblock  \textit{ Probab. Theory Related Fields} {\bf 172} (2018), 493-524. MR 3851837
\bibitem{evans}
Evans, C.
\newblock Partial Differential Equations
\newblock \textit{Graduate Studies in Mathematics} {\bf 19}, Amer. Math. Soc. 1998.
\bibitem{fw}
Freidlin, M.; Wentzell, A.
\newblock Random Perturbations of Dynamical Systems, 3rd edition.
\newblock\textit{ Grundlehren der Mathematischen Wissenschaften} {\bf 260}, Springer 2012.
\bibitem{gn}
Giorno, V.; Nobile, A.
\newblock On the construction of a special class of time-inhomogeneous diffusion processes.
\newblock \textit{J. Stat. Phys.} {\bf 177} (2019), 299-323.  MR 4010780
\bibitem{svy} Salminen, P.; Vallois, P.; Yor, M.
\newblock On the excursion theory for linear diffusions.
\newblock \textit{Japan. J. Math.} {\bf 2}, (2007), 97-127.
\bibitem{pw}
Protter, M. and Weinberger, H.
\newblock \textit{Maximum principles in Differential Equations},
\newblock Springer-Verlag, New York, 1984.

\end{thebibliography}
\end{document}